\theoremstyle{plain}
\newtheorem{theorem}{Theorem}[section]
\newtheorem{lemma}[theorem]{Lemma}
\newtheorem{cor}[theorem]{Corollary}
\newtheorem{prop}[theorem]{Proposition}
\theoremstyle{definition}
\newtheorem{defi}[theorem]{Definition}
\newtheorem{example}[theorem]{Example}
\theoremstyle{remark}
\newtheorem{rem}[theorem]{Remark}
\numberwithin{equation}{section}
\newcommand{\gf}{\ensuremath{\mathbb{K}}}
\newcommand{\sg}[1]{\ensuremath{\mathbb{S}_{#1}}}
\newcommand{\ai}{\ensuremath{A_{\infty}}}
\newcommand{\cyc}[1]{\ensuremath{\mathbb{Z}/{#1}\mathbb{Z}}}
\newcommand{\cycsum}[1]{\ensuremath{\Theta_{#1}}}
\newcommand{\mat}[2]{\ensuremath{\mathrm{M}_{#1}\left(#2\right)}}
\newcommand{\cotimes}{\ensuremath{\hat{\otimes}}}
\newcommand{\innprod}[1][-,-]{\ensuremath{\langle #1 \rangle}}
\newcommand{\innprodloc}[1][-,-]{\ensuremath{\langle #1 \rangle_{\mathrm{loc}}}}
\newcommand{\dblinnprod}[1][-,-]{\ensuremath{\langle\!\langle #1 \rangle\!\rangle}}
\newcommand{\OTFT}[2]{\ensuremath{\mathbf{T}^{#1}_{#2}}}
\newcommand{\Morita}[1][\textgoth{A}]{\ensuremath{\mathcal{M}_{\gamma,\nu}^{#1}}}
\newcommand{\den}[1]{\ensuremath{|\Lambda_{#1}|}}
\newcommand{\Derdelim}[1]{\ensuremath{\mathrm{Der}#1}}
\newcommand{\DerLdelim}[1]{\ensuremath{\mathrm{Der}_{\mathrm{Loc}}#1}}
\newcommand{\csalg}[1]{\ensuremath{\widehat{S}\left(#1\right)}}
\newcommand{\csalgdelim}[1]{\ensuremath{\widehat{S} #1}}
\newcommand{\csalgP}[1]{\ensuremath{\widehat{S}_+\left(#1\right)}}
\newcommand{\csalgPdelim}[1]{\ensuremath{\widehat{S}_+ #1}}
\newcommand{\ctalg}[1]{\ensuremath{\widehat{T}\left(#1\right)}}
\newcommand{\ctalgdelim}[1]{\ensuremath{\widehat{T}#1}}
\newcommand{\ctalgP}[1]{\ensuremath{\widehat{T}_+\left(#1\right)}}
\newcommand{\smooth}[1]{\ensuremath{C^{\infty}(#1)}}
\newcommand{\smoothlimit}[1]{\ensuremath{C^{\infty}_{\geq 0}(#1)}}
\newcommand{\smoothsing}[1]{\ensuremath{C^{\infty}_{<0}(#1)}}
\newcommand{\dRham}[2][\bullet]{\ensuremath{\Omega^{#1}\left(#2\right)}}
\newcommand{\Locdelim}[1]{\ensuremath{\mathrm{Loc}#1}}
\newcommand{\ptree}[1][p]{\ensuremath{\mathscr{T}_{#1}}}
\newcommand{\intcomm}[1]{\ensuremath{\mathscr{O}_{\hbar}\left(#1\right)}}
\newcommand{\intcommdelim}[1]{\ensuremath{\mathscr{O}_{\hbar}#1}}
\newcommand{\intcommP}[1]{\ensuremath{\mathscr{O}_{\hbar}^+\left(#1\right)}}
\newcommand{\intcommI}[1]{\ensuremath{\mathscr{O}_{\hbar}^{\mathrm{Int}}\left(#1\right)}}
\newcommand{\intcommL}[1]{\ensuremath{\mathscr{O}_{\hbar}^{\mathrm{Loc}}\left(#1\right)}}
\newcommand{\intcommLI}[1]{\ensuremath{\mathscr{O}_{\hbar}^{\mathrm{LocInt}}\left(#1\right)}}
\newcommand{\intnuP}[1]{\ensuremath{\mathscr{N}_{\nu}\left(#1\right)}}
\newcommand{\intnoncomm}[1]{\ensuremath{\mathscr{N}_{\gamma,\nu}\left(#1\right)}}
\newcommand{\intnoncommdelim}[1]{\ensuremath{\mathscr{N}_{\gamma,\nu}#1}}
\newcommand{\intnoncommP}[1]{\ensuremath{\mathscr{N}_{\gamma,\nu}^+\left(#1\right)}}
\newcommand{\intnoncommI}[1]{\ensuremath{\mathscr{N}_{\gamma,\nu}^{\mathrm{Int}}\left(#1\right)}}
\newcommand{\intnoncommL}[1]{\ensuremath{\mathscr{N}_{\gamma,\nu}^{\mathrm{Loc}}\left(#1\right)}}
\newcommand{\intnoncommLP}[1]{\ensuremath{\mathscr{N}_{\gamma,\nu}^{\mathrm{Loc}+}\left(#1\right)}}
\newcommand{\intnoncommLI}[1]{\ensuremath{\mathscr{N}_{\gamma,\nu}^{\mathrm{LocInt}}\left(#1\right)}}
\newcommand{\intnoncommItree}[1]{\ensuremath{\mathscr{N}_{\gamma,\nu}^{\mathrm{Tree}}\left(#1\right)}}
\newcommand{\intnoncommLItree}[1]{\ensuremath{\mathscr{N}_{\gamma,\nu}^{\mathrm{LocTree}}\left(#1\right)}}
\newcommand{\intnoncommconstants}{\ensuremath{\mathscr{N}_{\gamma,\nu}}}
\newcommand{\intnoncommIconstants}{\ensuremath{\mathscr{N}_{\gamma,\nu}^{\mathrm{Int}}}}
\newcommand{\KontHam}[1]{\ensuremath{\mathscr{H}\left(#1\right)}}
\newcommand{\KontHamP}[1]{\ensuremath{\mathscr{H}_+\left(#1\right)}}
\newcommand{\KontHamL}[1]{\ensuremath{\mathscr{H}^{\mathrm{Loc}}\left(#1\right)}}
\newcommand{\KontHamPL}[1]{\ensuremath{\mathscr{H}_+^{\mathrm{Loc}}\left(#1\right)}}
\newcommand{\NCPreThy}[1]{\ensuremath{\mathbf{NCPreThy}\left(#1\right)}}
\newcommand{\NCPreThyL}[2]{\ensuremath{\mathbf{NCPreThy}_{#1}\left(#2\right)}}
\newcommand{\PreThy}[1]{\ensuremath{\mathbf{PreThy}\left(#1\right)}}
\newcommand{\NCThy}[1]{\ensuremath{\mathbf{NCThy}\left(#1\right)}}
\newcommand{\NCThyL}[2]{\ensuremath{\mathbf{NCThy}_{#1}\left(#2\right)}}
\newcommand{\Thy}[1]{\ensuremath{\mathbf{Thy}\left(#1\right)}}
\DeclareMathOperator{\Hom}{Hom}
\DeclareMathOperator{\Aut}{Aut}
\DeclareMathOperator{\Bij}{Bij}
\DeclareMathOperator{\Ker}{Ker}
\DeclareMathOperator{\Image}{Im}
\DeclareMathOperator{\Tr}{Tr}
\begin{document}
\title{The Batalin-Vilkovisky formalism in noncommutative effective field theory}
\author{Alastair Hamilton}
\address{Department of Mathematics and Statistics, Texas Tech University, Lubbock, TX 79409-1042. USA.} \email{alastair.hamilton@ttu.edu}
\begin{abstract}
We address the treatment of gauge theories within the framework that is formed from combining the machinery of noncommutative symplectic geometry, as introduced by Kontsevich, with Costello's approach to effective gauge field theories within the Batalin-Vilkovisky formalism; discussing the problem of quantization in this context, and identifying the relevant cohomology theory controlling this process. We explain how the resulting noncommutative effective gauge field theories produce classes in a compactification of the moduli space of Riemann surfaces, when we pass to the large length scale limit. Within this setting, the large $N$ correspondence of 't~Hooft---describing a connection between open string theories and gauge theories---appears as a relation between the noncommutative and commutative geometries. We use this correspondence to investigate and ultimately quantize a noncommutative analogue of Chern-Simons theory.
\end{abstract}
\keywords{Noncommutative geometry, Batalin-Vilkovisky formalism, effective field theory, renormalization, large $N$ limit, Chern-Simons theory, moduli spaces of Riemann surfaces, gauge theory, quantization}
\makeatletter
\@namedef{subjclassname@2020}{%
  \textup{2020} Mathematics Subject Classification}
\makeatother
\subjclass[2020]{81T12, 81T13, 81T15, 81T30, 81T35, 81T70, 81T75}
\maketitle
\footnotesize
\begin{spacing}{0.86}
\tableofcontents
\end{spacing}
\normalsize

\section{Introduction}

This paper continues the work that was started in \cite{NCRGF}, in which we integrated the machinery of noncommutative geometry that was introduced by Kontsevich in \cite{KontSympGeom}, with the framework of effective field theory that was developed by Costello in \cite{CosEffThy}. For the object that was produced from this marriage, we introduced the term `noncommutative effective field theory'---a term that is potentially somewhat at odds with the other types of structures that use the same title, as we will later explain. In this paper, we apply this framework of noncommutative effective field theory to the study of gauge theories in the Batalin-Vilkovisky (BV) formalism, following Costello's treatment in \cite{CosBVrenormalization, CosEffThy} of the commutative case.

The appearance of noncommutative geometry within the framework of effective field theory is a phenomena that may in this instance be explained as one which emerges as a consequence of replacing the usual world-\emph{line} picture of Feynman diagrams, for one in which the Feynman diagrams are formed by open \emph{strings}. Such diagrams are represented by \emph{ribbon graphs} \cite[\S 5.5]{OpAlgTopPhys}, which exhibit a form of \emph{cyclic} symmetry at each vertex, rather than the full group of symmetries provided by the symmetric group. Ultimately, this leads to a type of noncommutative geometry that is based on taking quotients by actions of the cyclic groups. As we will eventually observe, this phenomena will trace a path from noncommutative effective field theories, to classes in the moduli space of Riemann surfaces. Separately, we will also come to see that it provides us with a natural formalism in which to investigate large $N$ phenomena.

\subsection{Background}

\subsubsection{Noncommutative effective field theory}

The definition of an effective field theory that we will use for the rest of this paper was introduced by Costello in \cite{CosEffThy}, and was based on the work of Kadanoff \cite{KadScaling}, Wilson \cite{WilRGcrit, WilRenormScalar}, Polchinski \cite{PolRenormLag} and others. It consists of a family of (suitably local) interactions parameterized by a length (equivalently, energy) scale cut-off that are connected by the renormalization group flow. The definition of a \emph{noncommutative} effective field theory introduced in \cite{NCRGF}, is essentially the same, except that now our interactions live in a space based on the noncommutative geometry of \cite{KontSympGeom}, and the renormalization group flow is now defined using (stable) ribbon graphs. Many of the results proved by Costello in the commutative framework \cite{CosEffThy}, also hold in the noncommutative framework \cite{NCRGF}.

We should be careful to emphasize that the manner in which the noncommutative geometry that we study in this paper arises, is from replacing ordinary graphs with ribbon graphs---or equivalently, points with open strings---and not, per se, from some deformation quantization of the spacetime itself. Since the latter interpretation of the term `noncommutative effective field theory' appears to be quite common \cite{NoteonNCCSThy}, we do not wish to be misunderstood on this point; nevertheless, we feel that adopting this nomenclature for the structures that we study in this paper is appropriate, and is justified by the resulting efficiency of exposition.

\subsubsection{Batalin-Vilkovisky formalism}

The `Batalin-Vilkovisky formalism' is the eponymous term for the framework introduced in \cite{BaViGauge} for the quantization of gauge theories. In this framework, the space of fields is extended to include the (infinitesimal) generators of the gauge transformations (known as ghosts). The \emph{classical master equation} for the extended action is then the statement that the action functional on the space of fields is invariant under the symmetries defined by the gauge group.

In order for the path integral to have a well-defined value that does not depend upon the choice of a gauge-fixing condition, a constraint must be imposed on the action known as the \emph{quantum master equation}. Typically, constructing a solution to this equation involves deforming the original action. A solution that is constructed according to this protocol is called a \emph{quantization} of the original theory.

The treatment of the BV formalism within effective field theory---particularly the subject of the quantum master equation---is quite technical. A comprehensive discussion of the problem was provided by Costello in \cite[\S 5]{CosEffThy}. Here, he specified a well-defined version of the quantum master equation at every length scale, and proved that the renormalization group flow connects the solutions at these different length scales. This allowed him to extend his definition of an effective field theory to the setting of gauge theories.

In this paper we establish noncommutative analogues of these results. We provide a definition for the classical master equation in this framework based on the noncommutative symplectic geometry of Kontsevich \cite{KontSympGeom}. The constraint that we arrive at in this manner turns out to be slightly stronger than the naive condition of gauge invariance that we might otherwise impose, cf. Proposition \ref{prop_CMEgauge}. Likewise, we formulate a version of the quantum master equation at every length scale, and prove that the renormalization group flow transforms one solution into another. The formulation for the quantum master equation that we employ comes from studying the degeneration of Riemann surfaces upon approaching the boundary of the moduli space \cite{baran, HamCompact, KontAiry, LooCompact}.

Quantization, as mentioned above, is the process of deforming a solution to the classical master equation into a solution to the quantum master equation. In general, such a deformation problem is typically controlled by some underlying cohomology theory, cf. \cite{GerDeform}. In this paper we identify the cohomology theory that controls this process in noncommutative effective field theory, applying the methods used by Costello in \cite{CosEffThy}. One immediate application that we demonstrate is the independence of the notion of a theory in this setting upon the choice of a gauge-fixing condition. This is expressed formally by the statement that the simplicial set of theories forms a Kan fibration over the set of gauge-fixing conditions, cf. \cite[\S 5.11.2]{CosEffThy}.

\subsubsection{The large $N$ string/gauge theory correspondence}

The connection between string theories and the large $N$ phenomena of gauge theories goes back to the work of 't~Hooft \cite{tHooftplanar}, in which he showed that when calculations in $U(N)$ gauge theories are expanded in powers of the rank $N$, the terms in the perturbation series are indexed by ribbon graphs---which describe diagrams formed by open strings---with the planar diagrams dominating the expansion.

This general phenomena is captured in the notion of a noncommutative effective field theory \cite[\S 5.1]{NCRGF}. Here, the renormalization group flow is defined using ribbon graphs, and the noncommutative geometry is intimately connected to the structure of these graphs. To explain the link in more detail, we must describe two transformations that may be applied to noncommutative effective field theories.

The first transformation takes any noncommutative theory and produces a commutative theory in the sense of Costello \cite{CosEffThy}. This transformation has a fairly simple description, which basically amounts to forgetting the extra structure present on the noncommutative side.

The second transformation is derived from the structure maps of a two-dimensional Open Topological Field Theory (OTFT), which---as is well-known \cite{AtiyahTFT}---is the same thing as a Frobenius algebra. The space of fields is transformed by forming the tensor product with this Frobenius algebra. Hence, when we apply this transformation to a space of fields based on the de Rham algebra, and the Frobenius algebra that we use is the algebra of $N$-by-$N$ matrices, the space of fields that we will produce will describe connections---the principal objects of gauge theories.

Both of the two transformations described above are compatible with the structures that facilitate the machinery of the BV formalism, so that when we combine them, we have a way to produce a large $N$ family of gauge theories from a single noncommutative theory; see Diagram \eqref{dig_largeN}.

In principle, this allows us to study the large $N$ phenomena of gauge theories (or at least, those based on the gauge group $U(N)$) by studying the single noncommutative theory that generates them via this correspondence. A concrete example is provided in Section \ref{sec_powerseries}; a powerseries may be associated to any theory modulo constants, and the powerseries for the gauge theories may be computed from the powerseries for the noncommutative theory by making a substitution that depends upon the rank $N$.

Furthermore, there is a way to go backwards, and deduce properties of the noncommutative theory from the corresponding gauge theories. This mechanism is provided (somewhat indirectly) by the Loday-Quillen-Tsygan Theorem \cite{LodayQuillen, Tsygan}. It allows us, for instance, to deduce that the interaction for a noncommutative theory satisfies the quantum master equation if and only if the same is true for the gauge theories that it generates; see Section \ref{sec_vanishcriteria}. This will be applied later when we come to consider a noncommutative analogue of Chern-Simons theory.

\subsubsection{Algebraic structures on the cohomology of the fields---links to classes in the moduli space of Riemann surfaces}

In \cite{CosEffThy}, Costello described an algebraic structure that is produced by an effective gauge field theory on the cohomology of the fields in the length scale limit $L\to \infty$. This structure is a solution to the quantum master equation at the infinite length scale. (This quantum master equation, it should be mentioned, is in fact easier to define, as the cohomology will be finite-dimensional, in contrast to the space of fields itself.) The algebraic structures so produced may be described as an enhancement of the notion of a (cyclic) $L_{\infty}$-algebra \cite{ZwiClosedstring}. From such algebraic structures, it is possible to produce a class in a chain complex that is spanned by oriented graphs, cf. \cite{KontFeyn}. In general, graph homology classes are known to be related, through Chern-Simons theory \cite{KontVass, KontFeyn}, to invariants of knots and links.

This entire story has a noncommutative counterpart. In this paper, we demonstrate that a noncommutative theory will produce a solution to the (noncommutative) quantum master equation on the cohomology of the fields at the infinite length scale limit. The algebraic structures that are produced in this way may be described as an enhancement of the notion of a (cyclic) $A_{\infty}$-algebra \cite{HiroCyclic}. From such algebraic structures, it is possible to produce classes in chain complexes spanned by ribbon graphs, cf. \cite{baran, HamQME, KontFeyn}; or equivalently, by the results of Harer \cite{HarOrbicells}, Mumford, Penner \cite{PenOrbicells} and Thurston, in certain compactifications of the moduli space of Riemann surfaces.

The two stories told above are compatible with the preceding picture of the large $N$ correspondence that we have just sketched. There is a natural map from the ribbon graph complex that is used by the noncommutative theories, to the graph complex used by the commutative theories. If a large $N$ family of gauge theories is generated by a single noncommutative theory under the large $N$ correspondence described above, then the same can be said for the respective classes in the graph complexes that we have just defined, with a diagram similar to Diagram \eqref{dig_largeN} describing the relation. If we consider for a moment, the relationship between Gromov-Witten invariants and  Reshethikin-Turaev link invariants that was proposed by Gopakumar-Vafa in \cite{GopVafa} (see \cite{AuKoGVLargeN} and Equation (26) of \cite{BriGOVconjecture}), it seems natural to view this whole picture that we have just described as a more general incarnation of this sort of phenomena.

\subsubsection{Noncommutative Chern-Simons theory}

As an application of the ideas and methods that we develop in this paper, we examine a noncommutative analogue of Chern-Simons theory. This noncommutative analogue was introduced in \cite{NCRGF}, where it arises as a fairly simple-minded generalization of ordinary Chern-Simons theory in which world-lines are replaced with open strings---a picture consistent, for instance, with that described in~\cite{CosTCFT}. In \cite{NCRGF} we demonstrated that the effective gauge field theories that this noncommutative Chern-Simons theory generates under the large $N$ correspondence, were precisely the $U(N)$ Chern-Simons theories.

We consider the problem of quantizing this noncommutative Chern-Simons theory within our formalism, and we show that in the presence of a flat Riemannian metric, such a canonical quantization exists (modulo constants); in fact, there is no need to deform the tree-level Chern-Simons action, which provides by itself the required solution to the quantum master equation.

This particular result was shown in \cite[\S 15]{CosBVrenormalization} to hold in the commutative case for the gauge groups $U(N)$ (amongst others). The proof in that case relies to a great degree on the work of Kontsevich in \cite{KontFeyn}, and his analysis of certain integrals defined over compactifications of configuration spaces of points. It is reasonable to presume that a similar analysis may be carried out for our noncommutative Chern-Simons theory, and that this would yield the same desired result---in this context, we mention the recent work of Cieliebak-Volkov \cite{CiVoCSStrTop}. However, we prefer to deduce this result as a simple consequence of the large $N$ correspondence using the vanishing criteria that may be deduced from the Loday-Quillen-Tsygan Theorem.

It should be mentioned that in the current paper, we limit ourselves to working over a compact manifold, and that there are not many flat compact three-manifolds (in fact, there are precisely six orientable ones). In principle, dealing with curved spacetimes should not present a problem. In \cite{CosEffThy} Costello proves that the set of theories forms a sheaf over the spacetime manifold. Since flat metrics are always available locally, this implies the existence of a quantization over any curved spacetime. The same should hold true in the noncommutative case; however, such a treatment, and the extra technology it would require us to introduce, would add considerably to the length of the current article. For this reason, we plan to take up this problem in a separate paper.

Adding to the list of problems that lie outside the scope of the current paper, it is worth mentioning that the same techniques applied above to Chern-Simons theory, should also be available for Yang-Mills theory; that is, there should be a noncommutative analogue of Yang-Mills theory that generates the usual $U(N)$ Yang-Mills gauge theories under the large $N$ correspondence, and with which we might hope to be able to analyze some of their large $N$ phenomena.

\subsection{Acknowledgements}

The author would like to thank Owen Gwilliam, who first introduced the author to the subject of large $N$ limits some years ago. The author would also like to thank Dmitri Pavlov and Mahmoud Zeinalian for a number of helpful conversations.

\subsection{Layout of the paper}

We begin in Section \ref{sec_freethy} by setting up some of the basic framework for what will follow. We provide the definition for a free theory in the Batalin-Vilkovisky formalism, including the definition for a gauge-fixing condition,  following those definitions provided in \cite{CosEffThy}. Additionally, we collect some important identities for the heat kernel of a generalized Laplacian.

In Section \ref{sec_ClassicBV}, we define and investigate the classical master equation in the BV formalism from the perspective of Kontsevich's noncommutative symplectic geometry \cite{KontSympGeom}. We give an interpretation in noncommutative geometry---which may be of some independent interest---of the action of the ghosts on the space of fields, and of the gauge symmetry constraint on the action functional that is determined by the classical master equation. We then explain how to construct a noncommutative analogue of Chern-Simons theory using the protocol and methods described in this section.

In Section \ref{sec_RGFlow} we recall the significant body of material that we will need from \cite{NCRGF}. We begin with the definition and properties of the renormalization group flow, which in turn leads to the definition of a noncommutative effective field theory (which we refer to as a pretheory in this article). We then recall the main theorem stating that there is a one-to-one correspondence between pretheories and local functionals.

In Section \ref{sec_QuantumBV}, we bring in Costello's perspective from \cite[\S 5.9]{CosEffThy} and define the BV quantum master equation in the framework of noncommutative geometry by matching it to the length scale regularization parameter of the theory. We then provide a description of this quantum master equation in terms of Feynman diagrams and operations that contract the edges of a ribbon graph. We subsequently use this formulation to prove one of our main theorems, Theorem \ref{thm_QMEflow}, a formula describing the compatibility of the quantum master equation with the renormalization group flow. Finally, we make use of those results from \cite{NCRGF} that arise from an application of the Loday-Quillen-Tsygan Theorem, to give an equivalent characterization of the noncommutative quantum master equation in terms of the commutative quantum master equation. This description will be employed later when we come to discuss the large $N$ correspondence.

In Section \ref{sec_NCEffThyBV}, we provide the definition of a noncommutative effective field theory within the framework of the BV formalism, and use this to formulate a version of the large $N$ correspondence. Separately, we give a sufficient condition for a noncommutative theory to produce a solution to the infinite length scale quantum master equation in the limit $L\to \infty$, and hence to produce a class in (a compactification of) the moduli space of Riemann surfaces. Next, we identify and develop the obstruction theory that describes the quantization process, and use this to prove the independence of a theory on the choice of a gauge-fixing condition.

Lastly, in Section \ref{sec_NCCSTheory} we introduce and study our noncommutative analogue of Chern-Simons theory, and prove that it has a natural quantization in a flat metric.

At the end of the paper, we include a short appendix on Topological Vector Spaces in which we collect a number of basic definitions and results.

\subsection{Notation and conventions}

Throughout the paper we work over a ground field $\mathbb{K}$ which is either the real numbers $\mathbb{R}$ or the complex numbers $\mathbb{C}$, and with $\mathbb{Z}$-graded locally convex Hausdorff topological vector spaces over $\gf$. We will denote the algebra of $N$-by-$N$ matrices with entries in $\gf$ by $\mat{N}{\gf}$. Our convention will be to work with cohomologically graded spaces. Consequently, we define the suspension of a graded topological vector space $\mathcal{V}$ by $\Sigma\mathcal{V}^i:=\mathcal{V}^{i+1}$.

Given topological vector spaces $\mathcal{V}$ and $\mathcal{W}$, we will denote the space of \emph{continuous} $\gf$-linear maps by
\[ \Hom_{\gf}(\mathcal{V},\mathcal{W}). \]
We denote the continuous $\gf$-linear dual of $\mathcal{V}$ by $\mathcal{V}^{\dag}$. These spaces always carry the strong topology of uniform convergence on bounded sets.

We will denote the completed projective tensor product of two locally convex Hausdorff topological vector spaces $\mathcal{V}$ and $\mathcal{W}$ by
\[ \mathcal{V}\cotimes\mathcal{W}. \]
We will reserve the notation $\otimes$ for the ordinary algebraic tensor product. The permutation
\[ \mathcal{V}\cotimes\mathcal{W}\to\mathcal{W}\cotimes\mathcal{V}, \qquad v\otimes w \mapsto (-1)^{|v||w|}w\otimes v \]
will be denoted by $\tau_{\mathcal{V},\mathcal{W}}$ (or frequently, just by $\tau$).

Given a locally convex Hausdorff topological vector space $\mathcal{V}$, we define the completed symmetric and tensor algebras of $\mathcal{V}$---along with their nonunital counterparts---by:
\begin{align}
\label{eqn_symmalg}
\csalg{\mathcal{V}} &:= \prod_{n=0}^\infty \bigl[\mathcal{V}^{\cotimes n}\bigr]_{\sg{n}} & \csalgP{\mathcal{V}} &:= \prod_{n=1}^\infty \bigl[\mathcal{V}^{\cotimes n}\bigr]_{\sg{n}} \\
\label{eqn_tensalg}
\ctalg{\mathcal{V}} &:= \prod_{n=0}^\infty \mathcal{V}^{\cotimes n} & \ctalgP{\mathcal{V}} &:= \prod_{n=1}^\infty \mathcal{V}^{\cotimes n}.
\end{align}
Here the symmetric group is denoted by $\sg{n}$. Throughout the paper we adopt the convention of denoting coinvariants using a subscript and invariants using a superscript.

We will denote the algebra of smooth $\gf$-valued functions on a smooth manifold $M$ by $\smooth{M,\gf}$, with smooth real-valued functions being denoted simply by $\smooth{M}$. More generally, if $E$ is a vector bundle over $M$ then $\mathcal{E}:=\Gamma(M,E)$ will denote the space of smooth sections of this bundle. We equip $\mathcal{E}$ with the $C^{\infty}$-topology of uniform convergence of sections and their derivatives on compact sets that is defined by Definition \ref{def_Cinftopology}. With this topology, $\mathcal{E}$ is a nuclear Fr\'echet space. The de Rham algebra of $\gf$-valued forms on $M$ will be denoted by $\dRham{M,\gf}$, with real forms simply denoted by $\dRham{M}$.

We will denote the number of elements in a finite set $X$ by $|X|$. In formulas, we will use $\mathds{1}$ to denote the identity map on a set.

\section{Free theories} \label{sec_freethy}

Here we will introduce, following \cite{CosEffThy}, the definition of a free theory in the framework of the Batalin-Vilkovisky (BV) formalism. This will provide the initial framework necessary to begin describing interacting theories and their quantization.

\subsection{Free BV-theories and gauge-fixing conditions}

\subsubsection{Basic definitions}

\begin{defi}
A \emph{free BV-theory} consists of the following:
\begin{enumerate}
\item
A smooth compact manifold $M$.
\item
A $\mathbb{Z}$-graded vector bundle $E$ over $M$. The space of smooth sections of this bundle will be denoted by $\mathcal{E}:=\Gamma(M,E)$.
\item
A local pairing on $E$, of degree minus-one. This is a map of vector bundles
\[ \innprodloc:E\otimes E \to \den{M}\underset{\mathbb{R}}{\otimes}\gf. \]
Here $\den{M}$ denotes the real density bundle concentrated in degree zero. The pairing over each fiber must be nondegenerate and graded skew-symmetric.
\item
A differential operator $Q:\mathcal{E}\to\mathcal{E}$ of degree one satisfying $Q^2=0$.
\end{enumerate}

The local pairing $\innprodloc$ defines an integration pairing
\[ \innprod:\mathcal{E}\otimes\mathcal{E}\to\gf, \qquad  \innprod[s_1,s_2]:=\int_M \innprodloc[s_1,s_2]. \]
We impose the compatibility requirement
\[ \innprod[Qs_1,s_2] +(-1)^{|s_1|}\innprod[s_1,Qs_2] = 0, \quad\text{for all }s_1,s_2\in\mathcal{E}. \]
\end{defi}

Note that the pairing $\innprod$ is nondegenerate in the sense that the map
\[ \mathcal{E}\to\mathcal{E}^{\dag}, \qquad s\mapsto\innprod[s,-] \]
is injective.

The kinetic term for a free BV-theory will be given by
\begin{equation} \label{eqn_kineticterm}
\frac{1}{2}\innprod[Qs,s], \quad s\in\mathcal{E}^0.
\end{equation}
To make sense of this requires a choice of gauge.

\begin{defi} \label{def_gaugefixing}
A \emph{gauge-fixing operator} for a free BV-theory $\mathcal{E}$ is a differential operator
\[ Q^{\mathrm{GF}}:\mathcal{E}\to\mathcal{E} \]
of degree minus-one such that:
\begin{itemize}
\item
$(\mathcal{E},Q^{\mathrm{GF}})$ is a chain complex, that is $(Q^{\mathrm{GF}})^2=0$.
\item
$Q^{\mathrm{GF}}$ is self-adjoint, that is
\[ \innprod[Q^{\mathrm{GF}}s_1,s_2] = (-1)^{|s_1|}\innprod[s_1,Q^{\mathrm{GF}}s_2], \quad\text{for all }s_1,s_2\in\mathcal{E}. \]
\item
The operator
\[ H:=[Q,Q^{\mathrm{GF}}]=QQ^{\mathrm{GF}}+Q^{\mathrm{GF}}Q \]
on $\mathcal{E}$ is a generalized Laplacian; that is, a second order differential operator whose principal symbol $\sigma_2(H)$ is multiplication by a Riemannian metric $g$ on $M$, see \cite[\S 2.1]{BerGetVer}.
\end{itemize}
\end{defi}

In order for such a gauge-fixing to exist, it is necessary for the complex $(\mathcal{E},Q)$ to be elliptic.

\begin{rem}
If $\mathcal{E}$ is a free BV-theory and $Q^{\mathrm{GF}}$ is a gauge-fixing operator for it, then the generalized Laplacian $H$ defined above completes the list of structures required for $\mathcal{E}$ to form a free theory in the sense of Definition 2.1 of \cite{NCRGF}. This allows us to apply the results of \cite{NCRGF} throughout the rest of the paper.
\end{rem}

One example which we will be of particular interest in this paper is Chern-Simons theory.

\begin{example} \label{exm_CStheory}
We start with a Lie algebra $\mathfrak{g}$ with a symmetric nondegenerate pairing $\innprod_{\mathfrak{g}}$ and consider the graded vector bundle
\[E:=\Sigma\Lambda^{\bullet} T^*M\underset{\mathbb{R}}{\otimes}\mathfrak{g} \]
on a compact oriented manifold $M$ of dimension three. The local pairing on $E$ is defined using the pairing $\innprod_{\mathfrak{g}}$ on $\mathfrak{g}$ along with the exterior multiplication on $T^*M$ and orientation of $M$; note that this becomes skew-symmetric following the shift in parity. The space of fields is then
\[ \mathcal{E}=\Sigma\Omega^{\bullet}(M)\underset{\mathbb{R}}{\otimes}\mathfrak{g} \]
and taking the differential operator $Q$ to be the exterior derivative completes the list of structures required for a free BV-theory.

A gauge-fixing operator may be defined by choosing a Riemannian metric $g$ on $M$. We may then define $Q^{\mathrm{GF}}$ to be the usual Hodge adjoint of the exterior derivative $Q$. Since the dimension of $M$ is odd, $Q^{\mathrm{GF}}$ will be self-adjoint with respect to the pairing on $\mathcal{E}$.

Note that everything works just as above over a manifold of arbitrary odd-dimension, so long as we are willing to give up our $\mathbb{Z}$-grading for a $\mathbb{Z}/2\mathbb{Z}$-grading.
\end{example}

\subsubsection{Families of gauge-fixing conditions} \label{sec_gaugefixingfamily}

Since there will be more than one choice of gauge-fixing operator for any given free BV-theory, we will ultimately have to account for the effects of varying the gauge-fixing condition. This leads us to consider parameterized families of gauge-fixing operators.

Our parameter space $X$ will be a smooth manifold with corners. For our purposes, it suffices to assume that $X$ is an $n$-simplex $\Delta^n$. We then consider the graded commutative topological $\gf$-algebra $\mathcal{A}:=\Gamma(X,A)$ formed by taking the global sections of some sheaf $A$ of graded commutative $\gf$-algebras. We will denote the commutative multiplication on $\mathcal{A}$ by $\mu_{\mathcal{A}}$. Again, for our purposes we may limit our consideration to the case when $\mathcal{A}$ is either the algebra of smooth functions $\smooth{X,\gf}$ or the de Rham algebra $\dRham{X,\gf}$.

Those uninterested in considering families of gauge-fixing conditions may take $X$ to be a point, in which case $\mathcal{A}=\gf$ and hence $\mathcal{A}$ consequently disappears from all formulas. More generally, any point $p\in X$ determines a morphism of algebras from $\mathcal{A}$ to $\gf$. This allows us to pick out the gauge-fixing condition assigned to that point.

\begin{defi} \label{def_gaugefixingfamily}
Let $\mathcal{E}$ be a free BV-theory and $\mathcal{A}$ be as above. Suppose that
\[ Q^{\mathrm{GF}}:\mathcal{E}\to\mathcal{E}\cotimes\mathcal{A} \]
is an operator of degree minus-one and, given any point $p\in X$, denote the operator formed by composing $Q^{\mathrm{GF}}$ with the map from $\mathcal{A}$ to $\gf$ determined by the point $p$ by
\[ Q^{\mathrm{GF}}_p:\mathcal{E}\to\mathcal{E}. \]

We say that $Q^{\mathrm{GF}}$ is a \emph{family of gauge-fixing operators} if:
\begin{itemize}
\item
The canonical $\mathcal{A}$-linear extension
\[ (\mathds{1}\cotimes\mu_{\mathcal{A}}) (Q^{\mathrm{GF}}\cotimes\mathds{1}) :\mathcal{E}\cotimes\mathcal{A}\to\mathcal{E}\cotimes\mathcal{A}, \]
which we also denote by $Q^{\mathrm{GF}}$, is a differential operator.
\item
$(\mathcal{E}\cotimes\mathcal{A},Q^{\mathrm{GF}})$ is a chain complex, that is $(Q^{\mathrm{GF}})^2=0$.
\item
$Q^{\mathrm{GF}}$ is self-adjoint with respect to the pairing
\begin{equation} \label{eqn_innprodA}
\innprod_{\mathcal{A}} := \bigl(\innprod\cotimes\mu_{\mathcal{A}}\bigr) \bigl(\mathds{1}\cotimes\tau\cotimes\mathds{1}\bigr) :\bigl(\mathcal{E}\cotimes\mathcal{A}\bigr)\cotimes\bigl(\mathcal{E}\cotimes\mathcal{A}\bigr) \to \mathcal{A}.
\end{equation}
\item
For every point $p\in X$,
\[ H_p:=[Q^{\mathrm{GF}}_p,Q]:\mathcal{E}\to\mathcal{E} \]
is a generalized Laplacian.
\end{itemize}
\end{defi}

Consequently, if $Q^{\mathrm{GF}}$ is a family of gauge-fixing operators then $Q^{\mathrm{GF}}_p$ will be a gauge-fixing operator in the sense of Definition \ref{def_gaugefixing} for every point $p\in X$.

\begin{example} \label{exm_gaugefixing}
Suppose that $X:=\Delta^n$ and that we have a degree minus-one operator
\[ Q^{\mathrm{GF}}:\mathcal{E}\to\mathcal{E}\cotimes\smooth{X,\gf}. \]
Set $\mathcal{A}:=\dRham{X,\gf}$ and consider $Q^{\mathrm{GF}}$ as an operator from $\mathcal{E}$ to $\mathcal{E}\cotimes\mathcal{A}$, as in Definition \ref{def_gaugefixingfamily} above. Then $Q^{\mathrm{GF}}$ forms a family of gauge-fixing operators if and only if:
\begin{itemize}
\item
the canonical $\smooth{X,\gf}$-linear extension
\[ (\mathds{1}\cotimes\mu) (Q^{\mathrm{GF}}\cotimes\mathds{1}) :\mathcal{E}\cotimes\smooth{X,\gf}\to\mathcal{E}\cotimes\smooth{X,\gf} \]
is a differential operator, and
\item
for every point $p\in X$, the operator $Q^{\mathrm{GF}}_p$ is a gauge-fixing operator in the sense of Definition \ref{def_gaugefixing}.
\end{itemize}
\end{example}

In practice, Example \ref{exm_gaugefixing} will be how we generate our families of gauge-fixing operators.

\begin{example} \label{exm_metricgaugefixing}
Suppose that $g_p$, $p\in X$ is a family of Riemannian metrics on a compact oriented Riemannian manifold $M$ of dimension $n$; that is, a metric
\[ g\in\Gamma\bigl(M\times X,(T^*M)^{\otimes 2}\bigr) \]
on the pullback of the tangent bundle along the projection map. Using the orientation and the metric on this bundle gives us a star operator
\[ \ast:\Gamma(M\times X,\Lambda^k T^* M)\longrightarrow\Gamma(M\times X,\Lambda^{n-k} T^* M) \]
in the usual way. Note that we may identify
\[ \Gamma(M\times X,\Lambda^{\bullet} T^* M) = \dRham{M}\cotimes\smooth{X} \]
using Proposition \ref{prop_tensorsections}, and that $\ast$ is obviously $\smooth{X}$-linear as it is a map of vector bundles. We may then define
\[ Q^{\mathrm{GF}}:=(-1)^{n(k+1)+1}\ast(Q\cotimes\mathds{1})\ast :\dRham[k]{M}\cotimes\smooth{X}\longrightarrow\dRham[k-1]{M}\cotimes\smooth{X}, \]
where $Q$ is the exterior derivative. This will define a family of gauge-fixing conditions for Chern-Simons theory, as described in Example \ref{exm_CStheory}, using the process described in Example \ref{exm_gaugefixing}.
\end{example}

\begin{rem}
Suppose that $\mathcal{A}=\dRham{X,\gf}$ and denote the de Rham differential by $d_{\mathrm{DR}}$. If $Q^{\mathrm{GF}}$ is a family of gauge-fixing operators for a free BV-theory $\mathcal{E}$ then the operator
\begin{equation} \label{eqn_canonicalLaplacian}
H:=\bigl[Q\cotimes\mathds{1}+\mathds{1}\cotimes d_{\mathrm{DR}},Q^{\mathrm{GF}}\bigr]: \mathcal{E}\cotimes\mathcal{A}\to\mathcal{E}\cotimes\mathcal{A}
\end{equation}
completes the list of structures required for $\mathcal{E}$ to form a family of free theories over $\mathcal{A}$, in the sense of Definition 2.9 of \cite{NCRGF}. This fact will allow us to make extensive use of the results of \cite{NCRGF} throughout the paper.
\end{rem}

\subsubsection{Heat kernels and convolution operators}

For the sake of brevity in what follows, we will formulate everything henceforth and for the rest of the paper, for families of gauge-fixing operators; reminding the reader who may wish to temporarily avoid this level of abstraction that this may be achieved by taking the parameter space $X$ to be a point. We begin with the definition of the convolution operator.

\begin{defi}
Let $\mathcal{E}$ be a free BV-theory and $\mathcal{A}$ be as above. The convolution map
\begin{equation} \label{eqn_convolutionmap}
\star:\mathcal{E}\cotimes\mathcal{E}\cotimes\mathcal{A} \longrightarrow \Hom_{\gf}(\mathcal{E}\cotimes\mathcal{A},\mathcal{E}\cotimes\mathcal{A})
\end{equation}
is defined as follows. Given $K\in\mathcal{E}\cotimes\mathcal{E}\cotimes\mathcal{A}$ and $s\in\mathcal{E}\cotimes\mathcal{A}$ we define
\[ K\star s := (-1)^{|K|}\bigl(\mathds{1}\cotimes\innprod_{\mathcal{A}}\bigr)[K\otimes s]\in\mathcal{E}\cotimes\mathcal{A}, \]
where $\innprod_{\mathcal{A}}$ is defined by \eqref{eqn_innprodA}.
\end{defi}

The following facts are easily verified using the (skew) self-adjoint properties of the operators $Q$ and $Q^{\mathrm{GF}}$.

\begin{lemma} \label{lem_convolutionidentities}
Let $\mathcal{E}$ be a free BV-theory and $\mathcal{A}$ be as above:
\begin{enumerate}
\item
The convolution map \eqref{eqn_convolutionmap} is injective.
\item
The map \eqref{eqn_convolutionmap} is a map of chain complexes, that is;
\[ \bigl[(Q\cotimes\mathds{1}\cotimes\mathds{1} + \mathds{1}\cotimes Q\cotimes\mathds{1} + \mathds{1}\cotimes\mathds{1}\cotimes d_{\mathrm{DR}})K\bigr]\star = \bigl[Q\cotimes\mathds{1} + \mathds{1}\cotimes d_{\mathrm{DR}},K\star\bigr]. \]
\item
Let $Q^{\mathrm{GF}}$ be a family of gauge-fixing operators for $\mathcal{E}$ over $\mathcal{A}$, then
\[ \bigl[(\mathds{1}\cotimes\mathds{1}\cotimes\mu)\bigl((\mathds{1}\cotimes\tau\cotimes\mathds{1})(Q^{\mathrm{GF}}\cotimes\mathds{1}\cotimes\mathds{1}) - (\mathds{1}\cotimes Q^{\mathrm{GF}}\cotimes\mathds{1})\bigr)K\bigr]\star = [Q^{\mathrm{GF}},K\star]. \]
\end{enumerate}
\end{lemma}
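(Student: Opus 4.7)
All three assertions follow from unpacking the definition of $\star$ on elementary tensors and extending by continuity. Given $K = e_1\otimes e_2\otimes a$ and $s = e\otimes b$, one computes directly
\[ K\star s \;=\; (-1)^{|K|+|a||e|}\,\innprod[e_2,e]\cdot e_1\otimes(ab), \]
and each item is then established by routine manipulation combined with the appropriate property of $\innprod$, $Q$, or $Q^{\mathrm{GF}}$.

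For (1), the plan is to note that $K\star$ is right $\mathcal{A}$-linear and hence determined by its restriction to the subspace $\mathcal{E}\otimes 1_{\mathcal{A}}\subset\mathcal{E}\cotimes\mathcal{A}$. The induced map $\mathcal{E}\cotimes\mathcal{E}\cotimes\mathcal{A}\to\Hom_{\gf}(\mathcal{E},\mathcal{E}\cotimes\mathcal{A})$ factors through $\mathcal{E}\cotimes\mathcal{E}^{\dag}\cotimes\mathcal{A}$, via the injection induced in the middle factor by the nondegeneracy of $\innprod$ noted immediately after the definition of a free BV-theory, followed by the canonical continuous embedding of a completed projective tensor product into the relevant Hom space, which is available because $\mathcal{E}$ is a nuclear Fr\'echet space.

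For (2), the plan is to apply $D:=Q\cotimes\mathds{1}+\mathds{1}\cotimes d_{\mathrm{DR}}$ to $(K\star)(s)$ and expand using the graded Leibniz rule. The contributions in which the derivative lands on $e$ or on $b$ are transferred onto the middle factors $e_2$ or $a$ of $K$ using the skew-adjointness identity $\innprod[Qe_2,e]+(-1)^{|e_2|}\innprod[e_2,Qe]=0$ and the derivation property of $d_{\mathrm{DR}}$ with respect to $\mu_{\mathcal{A}}$, respectively. Collecting all the terms then reproduces the convolution of $(Q\cotimes\mathds{1}\cotimes\mathds{1}+\mathds{1}\cotimes Q\cotimes\mathds{1}+\mathds{1}\cotimes\mathds{1}\cotimes d_{\mathrm{DR}})K$ against $s$, which is precisely the left-hand side.

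For (3), the calculation proceeds analogously, except that the self-adjointness relation $\innprod[Q^{\mathrm{GF}}e_2,e]=(-1)^{|e_2|}\innprod[e_2,Q^{\mathrm{GF}}e]$ carries a sign opposite to its $Q$-counterpart; consequently, moving $Q^{\mathrm{GF}}$ from $e$ onto $e_2$ contributes with a sign opposite to the post-composition term in which $Q^{\mathrm{GF}}$ acts on $e_1$, which is exactly the \emph{difference} visible on the left-hand side. The permutation $\tau$ enters because $Q^{\mathrm{GF}}:\mathcal{E}\to\mathcal{E}\cotimes\mathcal{A}$ emits an $\mathcal{A}$-factor when it acts on the first tensor slot of $K$, and this factor must be shuffled past $e_2$ before being combined with $a$ via $\mu$. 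The main obstacle throughout is the careful bookkeeping of Koszul signs produced by $\tau$, by the degree $-1$ of $\innprod$, and by the gradings of $Q$ and $Q^{\mathrm{GF}}$; the substantive content is entirely contained in the (skew-)adjointness axioms and in the nondegeneracy of the pairing.
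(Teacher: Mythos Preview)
Your proposal is correct and follows precisely the approach the paper indicates: the paper does not give a detailed proof but simply states that the identities are ``easily verified using the (skew) self-adjoint properties of the operators $Q$ and $Q^{\mathrm{GF}}$,'' and your plan carries out exactly this verification on elementary tensors, invoking nondegeneracy of $\innprod$ for injectivity and the (skew-)adjointness axioms for the two commutator identities.
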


Set $\mathcal{A}:=\dRham{X,\gf}$ and suppose that $Q^{\mathrm{GF}}$ is a family of gauge-fixing operators for a free BV-theory $\mathcal{E}$. Consider the operator $H$ defined by Equation \eqref{eqn_canonicalLaplacian}. The results from the appendix to Chapter 9 of \cite{BerGetVer} assert the existence of a heat kernel
\[ K \in \smooth{0,\infty}\underset{\mathbb{R}}{\cotimes}\mathcal{E}\cotimes\mathcal{E}\cotimes\mathcal{A} \]
for this operator.

This means the following. If we denote by $K_t\in\mathcal{E}\cotimes\mathcal{E}\cotimes\mathcal{A}$, the value of the heat kernel at a point $t>0$, then the heat kernel is characterized by the following two properties:
\begin{equation} \label{eqn_heatkernelproperty}
\frac{\mathrm{d}}{\mathrm{d}t}\left(K_t\star s\right) = -H\left(K_t\star s\right) \quad\text{and}\quad \lim_{t\to 0}\left(K_t\star s\right) = s.
\end{equation}

Note that the heat kernel $K$ must have total degree one, since the convolution operator $K_t \star$ must have degree zero. Since $H$ is self-adjoint with respect to the pairing $\innprod_{\mathcal{A}}$, the convolution operator $K_t \star$ will be self-adjoint too, from which it follows that the heat kernel $K_t$ will be symmetric in $\mathcal{E}$.

The following identities for the heat kernel $K$ are also easily verified.

\begin{lemma} \label{lem_heatkernelidentities}
Set $\mathcal{A}:=\dRham{X,\gf}$ and suppose that $Q^{\mathrm{GF}}:\mathcal{E}\to\mathcal{E}\cotimes\mathcal{A}$ is a family of gauge-fixing operators for a free BV-theory $\mathcal{E}$. Consider the heat kernel $K$ for the operator $H$ defined by \eqref{eqn_canonicalLaplacian}:
\begin{enumerate}
\item \label{itm_heatkernelidentities1}
The heat kernel $K$ is a cycle; that is for all $t>0$,
\[ (Q\cotimes\mathds{1}\cotimes\mathds{1} + \mathds{1}\cotimes Q\cotimes\mathds{1} + \mathds{1}\cotimes\mathds{1}\cotimes d_{\mathrm{DR}})K_t = 0. \]
\item \label{itm_heatkernelidentities2}
For all $t>0$,
\[ (\mathds{1}\cotimes\mathds{1}\cotimes\mu)\bigl((\mathds{1}\cotimes\tau\cotimes\mathds{1})(Q^{\mathrm{GF}}\cotimes\mathds{1}\cotimes\mathds{1}) - (\mathds{1}\cotimes Q^{\mathrm{GF}}\cotimes\mathds{1})\bigr)K_t = 0. \]
\end{enumerate}
\end{lemma}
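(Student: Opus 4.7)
The plan is to deduce each identity from the injectivity of the convolution map, part (1) of Lemma \ref{lem_convolutionidentities}, after rewriting the relevant expressions as commutators with the heat semigroup $K_t\star$. Write $Q' := Q\cotimes\mathds{1} + \mathds{1}\cotimes d_{\mathrm{DR}}$. Applying parts (2) and (3) of Lemma \ref{lem_convolutionidentities} to $K_t$ identifies the convolution of the left-hand side of part (\ref{itm_heatkernelidentities1}) with $[Q', K_t\star]$, and the convolution of the left-hand side of part (\ref{itm_heatkernelidentities2}) with $[Q^{\mathrm{GF}}, K_t\star]$. Thus it is enough to show that $Q'$ and $Q^{\mathrm{GF}}$ both commute with $K_t\star$ for every $t>0$.

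The key observation is that each of $Q'$ and $Q^{\mathrm{GF}}$ commutes with the generalized Laplacian $H$. Since $Q\cotimes\mathds{1}$ and $\mathds{1}\cotimes d_{\mathrm{DR}}$ graded-commute and each squares to zero, we have $(Q')^2=0$; combining this with $(Q^{\mathrm{GF}})^2=0$ and the expression $H = Q'Q^{\mathrm{GF}} + Q^{\mathrm{GF}}Q'$ coming from \eqref{eqn_canonicalLaplacian}, a direct computation gives $Q'H = Q'Q^{\mathrm{GF}}Q' = HQ'$, and similarly $Q^{\mathrm{GF}}H = Q^{\mathrm{GF}}Q'Q^{\mathrm{GF}} = HQ^{\mathrm{GF}}$.

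Given this, fix $s\in\mathcal{E}\cotimes\mathcal{A}$ and consider $u(t) := Q'(K_t\star s) - K_t\star(Q's)$. The heat equation \eqref{eqn_heatkernelproperty} together with $[Q',H]=0$ gives $\partial_t u(t) = -Hu(t)$, while the initial-condition part of \eqref{eqn_heatkernelproperty} yields $\lim_{t\to 0} u(t) = 0$. Uniqueness of solutions to this Cauchy problem---available from the standard theory of generalized Laplacians cited in the discussion preceding the lemma---then forces $u\equiv 0$, so $[Q', K_t\star] = 0$; the identical argument with $Q^{\mathrm{GF}}$ in place of $Q'$ handles part (\ref{itm_heatkernelidentities2}), and injectivity closes both parts. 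The main obstacle is essentially the invocation of heat-equation uniqueness; everything else is formal once the commutator translations of Lemma \ref{lem_convolutionidentities} and the vanishing of $[Q',H]$ and $[Q^{\mathrm{GF}},H]$ are in place.
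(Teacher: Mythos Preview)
Your proof is correct and follows essentially the same approach as the paper: translate each identity via Lemma~\ref{lem_convolutionidentities} into a commutator with $K_t\star$, show that the relevant operator commutes with $H$, then use the heat equation with zero initial data (the paper cites Lemma~2.16 of \cite{BerGetVer} for uniqueness) to conclude the commutator vanishes, and finish by injectivity of convolution.
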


\begin{proof}
In both cases the logic is similar.
\begin{enumerate}
\item
Since $Q^2$ and $d_{\mathrm{DR}}^2$ both vanish, it follows that
\[ [Q\cotimes\mathds{1} + \mathds{1}\cotimes d_{\mathrm{DR}},H] = 0. \]
It then follows from \eqref{eqn_heatkernelproperty} that the commutator
\[ [Q\cotimes\mathds{1} + \mathds{1}\cotimes d_{\mathrm{DR}},K_t \star] \]
satisfies the heat equation and converges to zero as $t\to 0$. It must therefore vanish, cf. Lemma 2.16 of \cite{BerGetVer}. The claimed identity now follows as a consequence of Lemma \ref{lem_convolutionidentities}.
\item
Again, since $(Q^{\mathrm{GF}})^2=0$, it follows that $[Q^{\mathrm{GF}},H]$ vanishes. As before, it follows that $[Q^{\mathrm{GF}},K_t \star]$ satisfies the heat equation and converges to zero, and hence must vanish. Now apply Lemma \ref{lem_convolutionidentities}.
\end{enumerate}
\end{proof}

\subsubsection{Propagators}

Given a free BV-theory $\mathcal{E}$, a \emph{propagator} is a symmetric degree zero tensor
\[ P\in\bigl[\mathcal{E}\cotimes\mathcal{E}\bigr]^{\sg{2}}. \]
More generally, a \emph{family of propagators} over $\mathcal{A}$ is a tensor
\[ P\in\mathcal{E}\cotimes\mathcal{E}\cotimes\mathcal{A} \]
of total degree zero that is symmetric in $\mathcal{E}$; that is, it satisfies $(\tau\cotimes\mathds{1})[P]=P$.

There is a canonical propagator---which we will now define and use throughout the rest of the paper---associated to any gauge-fixing operator for a free BV-theory. More generally, any family of gauge-fixing operators will define a family of propagators.

\begin{defi} \label{def_canonicalpropagator}
Set $\mathcal{A}:=\dRham{X,\gf}$ and let $Q^{\mathrm{GF}}:\mathcal{E}\to\mathcal{E}\cotimes\mathcal{A}$ be a family of gauge-fixing operators for a free BV-theory $\mathcal{E}$. Let $K$ be the heat kernel for the operator $H$ defined by \eqref{eqn_canonicalLaplacian}. The family of propagators associated to this family of gauge-fixing conditions is defined by
\begin{equation} \label{eqn_canonicalpropagator}
P(\varepsilon,L) := \bigl(\mathds{1}_{\mathcal{E}\cotimes\mathcal{E}}\cotimes\mu\bigr) \bigl(\mathds{1}_{\mathcal{E}}\cotimes\tau\cotimes\mathds{1}_{\mathcal{A}}\bigr) \left(\int_{t=\varepsilon}^L\mathrm{d}t\cotimes Q^{\mathrm{GF}}\cotimes\mathds{1}_{\mathcal{E}\cotimes\mathcal{A}}\right)[K]; \quad 0<\varepsilon,L<\infty.
\end{equation}
By definition, $P(L,\varepsilon)=-P(\varepsilon,L)$.
\end{defi}

The fact that Equation \eqref{eqn_canonicalpropagator} defines a propagator---that is, the propagator obeys the relevant symmetry condition---follows from the fact that the commutator $[Q^{\mathrm{GF}},H]$ vanishes and the self-adjoint property of the gauge-fixing operator, see Example 2.11 of \cite{NCRGF}.

This propagator has the important property that it interpolates for the heat flow.

\begin{lemma} \label{lem_kernelhomotopy}
Set $\mathcal{A}:=\dRham{X,\gf}$ and let $P(\varepsilon,L)\in\mathcal{E}\cotimes\mathcal{E}\cotimes\mathcal{A}$ be the family of propagators defined by Equation \eqref{eqn_canonicalpropagator} that is assigned to a family of gauge-fixing operators $Q^{\mathrm{GF}}$ for a free BV-theory $\mathcal{E}$. Then $P(\varepsilon,L)$ is a chain homotopy for the heat flow; that is, for all~$\varepsilon,L>0$,
\[ (Q\cotimes\mathds{1}\cotimes\mathds{1} + \mathds{1}\cotimes Q\cotimes\mathds{1} + \mathds{1}\cotimes\mathds{1}\cotimes d_{\mathrm{DR}})P(\varepsilon,L) = K_{\varepsilon} - K_L. \]
\end{lemma}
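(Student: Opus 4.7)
My plan is to test both sides against the convolution map and use its injectivity (Lemma \ref{lem_convolutionidentities}(1)) to reduce the problem to an identity of operators on $\mathcal{E}\cotimes\mathcal{A}$. Writing $\bar{Q}:=Q\cotimes\mathds{1}\cotimes\mathds{1}+\mathds{1}\cotimes Q\cotimes\mathds{1}+\mathds{1}\cotimes\mathds{1}\cotimes d_{\mathrm{DR}}$, Lemma \ref{lem_convolutionidentities}(2) converts the left-hand side into the commutator $\bigl[Q\cotimes\mathds{1}+\mathds{1}\cotimes d_{\mathrm{DR}},\,P(\varepsilon,L)\star\bigr]$, so it suffices to show that this commutator equals the operator $(K_\varepsilon - K_L)\star$.

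Next, I would unwind the definition of $P(\varepsilon,L)$ from Equation \eqref{eqn_canonicalpropagator} to identify the convolution operator $P(\varepsilon,L)\star$, up to sign, with the composition $Q^{\mathrm{GF}}\circ F$, where $F:=\int_\varepsilon^L K_t\star\,dt$. The point is that applying $Q^{\mathrm{GF}}$ to the first tensor factor of $K_t$ before pairing $s$ against the second is the same operation, up to the sign tracked by the $(-1)^{|K|}$ in the definition of $\star$, as first convolving $K_t$ with $s$ and then applying $Q^{\mathrm{GF}}$ to the result in $\mathcal{E}\cotimes\mathcal{A}$.

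With this rewriting in hand, the graded Leibniz rule applied to $\bigl[Q\cotimes\mathds{1}+\mathds{1}\cotimes d_{\mathrm{DR}},\,Q^{\mathrm{GF}}\circ F\bigr]$ produces two terms: the ``outer'' term $\bigl[Q\cotimes\mathds{1}+\mathds{1}\cotimes d_{\mathrm{DR}},\,Q^{\mathrm{GF}}\bigr]\circ F$, which equals $H\circ F$ by the definition of $H$ in Equation \eqref{eqn_canonicalLaplacian}; and the ``inner'' term $Q^{\mathrm{GF}}\circ\bigl[Q\cotimes\mathds{1}+\mathds{1}\cotimes d_{\mathrm{DR}},\,F\bigr]$, which vanishes because the commutator inside the integral equals $[\bar{Q}K_t]\star$ by Lemma \ref{lem_convolutionidentities}(2), and $\bar{Q}K_t=0$ by Lemma \ref{lem_heatkernelidentities}(1). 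The defining property \eqref{eqn_heatkernelproperty} then identifies $H\circ(K_t\star)$ with $-\tfrac{d}{dt}(K_t\star)$, and the fundamental theorem of calculus gives $H\circ F = K_\varepsilon\star - K_L\star = (K_\varepsilon-K_L)\star$, as required.

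The main technical obstacle I expect is the bookkeeping of signs, originating from the factor $(-1)^{|K|}$ in the definition of the convolution, the transpositions $\tau$ involved in pulling $Q^{\mathrm{GF}}$ from the first to the second factor in Equation \eqref{eqn_canonicalpropagator}, and the graded Leibniz step for operators of odd degree. Once those signs are untangled, however, every remaining ingredient is either the defining formula \eqref{eqn_canonicalLaplacian} for $H$, the heat equation \eqref{eqn_heatkernelproperty}, or one of the two lemmas already established, so the conclusion follows immediately by the injectivity of the convolution map.
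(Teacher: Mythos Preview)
Your argument is correct, but it takes a different route from the paper's. You pass to the operator picture via the convolution map $\star$ and use its injectivity (Lemma~\ref{lem_convolutionidentities}(1)), reducing the identity in $\mathcal{E}\cotimes\mathcal{E}\cotimes\mathcal{A}$ to an identity between operators on $\mathcal{E}\cotimes\mathcal{A}$; the paper instead works directly at the tensor level, applying $\bar{Q}$ to the integrand in the definition of $P(\varepsilon,L)$, invoking Lemma~\ref{lem_heatkernelidentities}\eqref{itm_heatkernelidentities1} to collapse one term, assembling the result into $H$ applied to the first factor of $K_t$, and then using the tensor-level heat equation $\frac{d}{dt}K_t = -(\mathds{1}\cotimes\mu)(\mathds{1}\cotimes\tau\cotimes\mathds{1})(H\cotimes\mathds{1})K_t$ together with the fundamental theorem of calculus. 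Both proofs rely on the same three ingredients---the definition of $H$, closedness of $K_t$, and the heat equation---and the underlying computation is essentially the same; the paper's route is marginally shorter since it avoids the detour through $\star$ and back, while your operator formulation makes the graded-Leibniz structure of the calculation more transparent. Your caveat about signs is well placed: the identification $P(\varepsilon,L)\star = \pm\,Q^{\mathrm{GF}}\circ F$ does hold, but pinning down the sign requires tracking the $(-1)^{|K|}$ in the definition of $\star$ (note $|K_t|=1$ while $|P|=0$) together with the Koszul signs from the $\tau$ in \eqref{eqn_canonicalpropagator}; once that is done, the final sign comes out as stated.
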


\begin{proof}
Note that it follows from the heat equation \eqref{eqn_heatkernelproperty} that
\[ \frac{\mathrm{d}}{\mathrm{d}t} K_t = -(\mathds{1}_{\mathcal{E}\cotimes\mathcal{E}}\cotimes\mu) (\mathds{1}\cotimes\tau\cotimes\mathds{1}) (H\cotimes\mathds{1}_{\mathcal{E}\cotimes\mathcal{A}})K_t. \]
Now we calculate
\begin{displaymath}
\begin{split}
(Q\cotimes\mathds{1}\cotimes\mathds{1} &+ \mathds{1}\cotimes Q\cotimes\mathds{1} + \mathds{1}\cotimes\mathds{1}\cotimes d_{\mathrm{DR}})P(\varepsilon,L) \\
=& \int_{t=\varepsilon}^L (\mathds{1}_{\mathcal{E}\cotimes\mathcal{E}}\cotimes\mu) (\mathds{1}\cotimes\tau\cotimes\mathds{1})\bigl( (Q\cotimes\mathds{1}+\mathds{1}\cotimes d_{\mathrm{DR}})Q^{\mathrm{GF}}\cotimes\mathds{1}_{\mathcal{E}\cotimes\mathcal{A}}\bigr)K_t\,\mathrm{d}t \\
&- \int_{t=\varepsilon}^L (\mathds{1}_{\mathcal{E}\cotimes\mathcal{E}}\cotimes\mu) (\mathds{1}\cotimes\tau\cotimes\mathds{1}) (Q^{\mathrm{GF}}\cotimes\mathds{1}_{\mathcal{E}\cotimes\mathcal{A}}) (\mathds{1}\cotimes Q\cotimes\mathds{1} + \mathds{1}\cotimes\mathds{1}\cotimes d_{\mathrm{DR}})K_t\,\mathrm{d}t \\
=& \int_{t=\varepsilon}^L (\mathds{1}_{\mathcal{E}\cotimes\mathcal{E}}\cotimes\mu) (\mathds{1}\cotimes\tau\cotimes\mathds{1})\bigl( \bigl((Q\cotimes\mathds{1}+\mathds{1}\cotimes d_{\mathrm{DR}})Q^{\mathrm{GF}}+Q^{\mathrm{GF}}Q\bigr)\cotimes\mathds{1}_{\mathcal{E}\cotimes\mathcal{A}}\bigr)K_t\,\mathrm{d}t \\
=& \int_{t=\varepsilon}^L (\mathds{1}_{\mathcal{E}\cotimes\mathcal{E}}\cotimes\mu) (\mathds{1}\cotimes\tau\cotimes\mathds{1}) (H\cotimes\mathds{1}_{\mathcal{E}\cotimes\mathcal{A}})K_t\,\mathrm{d}t = -\int_{t=\varepsilon}^L \frac{\mathrm{d}}{\mathrm{d}t} K_t \,\mathrm{d}t = K_{\varepsilon} - K_L,
\end{split}
\end{displaymath}
where on the first line we use the Leibniz identity for $d_{\mathrm{DR}}$, on the second line we use Lemma~\ref{lem_heatkernelidentities}\eqref{itm_heatkernelidentities1} and on the last line we use the heat equation.
\end{proof}

\subsection{Local distributions and operators}

In what follows we will need the notion of locality, which we introduce here following \cite{CosEffThy}.

\subsubsection{Definition of locality}

If $\mathcal{E}$ is the space of sections of a vector bundle $E$, then $\mathcal{E}^{\cotimes l}$ will be the space of sections of its external tensor product $E^{\boxtimes l}$; hence, the dual space will be a space of distributions.

\begin{defi} \label{def_localdistribution}
Let $\mathcal{E}:=\Gamma(M,E)$ be the space of sections of a vector bundle $E$ over a compact manifold $M$. We say that a distribution in $(\mathcal{E}^{\cotimes l})^{\dag}$ is a \emph{local distribution} if it can be written as a finite sum of distributions of the form
\begin{equation} \label{eqn_localdistribution}
s_1,\ldots,s_l \longmapsto \int_M D_1 s_1\cdots D_l s_l\,\mathrm{d}\varrho,
\end{equation}
where $D_1,\ldots D_l:\mathcal{E}\to\smooth{M,\gf}$ are differential operators and $\mathrm{d}\varrho$ is a density.

More generally, if $\mathcal{A}$ is a commutative topological algebra of the form described in Section \ref{sec_gaugefixingfamily} then we will say that an operator from $\mathcal{E}^{\cotimes l}$ to $\mathcal{A}$ is a \emph{local $\mathcal{A}$-valued distribution} if it can be written as a finite sum of operators of the form \eqref{eqn_localdistribution}, where now each $D_i$ is a map from $\mathcal{E}$ to $\smooth{M,\gf}\cotimes\mathcal{A}$ whose $\mathcal{A}$-linear extension to $\mathcal{E}\cotimes\mathcal{A}$ is a differential operator.
\end{defi}

\begin{example}
The kinetic term \eqref{eqn_kineticterm} for a free BV-theory provides an example of a local distribution.
\end{example}

\begin{defi}
Let $\mathcal{E}:=\Gamma(M,E)$ be the space of smooth sections of a vector bundle $E$ over a compact manifold $M$. We say that a map from $\mathcal{E}^{\cotimes l}$ to $\mathcal{E}$ is a \emph{local operator} if it can be written as a finite sum of operators of the form
\begin{equation} \label{eqn_localoperator}
s_1,\ldots,s_l \longmapsto (D_1 s_1\cdots D_l s_l)s,
\end{equation}
where $D_1,\ldots D_l:\mathcal{E}\to\smooth{M,\gf}$ are differential operators and $s\in\mathcal{E}$.

More generally, given a commutative topological algebra $\mathcal{A}$ of the form described in Section \ref{sec_gaugefixingfamily}, we say that an operator from $\mathcal{E}^{\cotimes l}$ to $\mathcal{E}\cotimes\mathcal{A}$ is a \emph{$\mathcal{A}$-valued local operator} if it can be written as a finite sum of operators of the form \eqref{eqn_localoperator}, where now each $D_i$ is a map from $\mathcal{E}$ to $\smooth{M,\gf}\cotimes\mathcal{A}$ whose $\mathcal{A}$-linear extension to $\mathcal{E}\cotimes\mathcal{A}$ is a differential operator and $s\in\mathcal{E}\cotimes\mathcal{A}$. We will denote the space of $\mathcal{A}$-valued local operators by $\Locdelim{(\mathcal{E}^{\cotimes l},\mathcal{E}\cotimes\mathcal{A})}$.
\end{defi}

\begin{rem} \label{rem_localdistribution}
Using integration by parts and the nondegeneracy of the local pairing $\innprodloc$, we may write any local $\mathcal{A}$-valued distribution \eqref{eqn_localdistribution} as
\[ s_1,\ldots,s_l \longmapsto {\innprod[X(s_1,\ldots,s_{l-1}),s_l]}_{\mathcal{A}} \]
for some $\mathcal{A}$-valued local operator $X$. Conversely, any operator of the above form will be a local $\mathcal{A}$-valued distribution.
\end{rem}

\subsubsection{Local vector fields} \label{sec_LocVecfields}

Let $\mathcal{V}$ be a nuclear Fr\'echet space and consider the completed tensor algebra $\ctalgdelim{(\mathcal{V}^{\dag})}$ defined by \eqref{eqn_tensalg}. Denote the Lie algebra consisting of all those (continuous) derivations on $\ctalgdelim{(\mathcal{V}^{\dag})}$ by
\[ \Derdelim{\bigl(\ctalgdelim{(\mathcal{V}^{\dag})}\bigr)}. \]
We will call any such a derivation a \emph{vector field} on $\mathcal{V}$. Since any vector field is completely determined by its values on the generators $\mathcal{V}^{\dag}$, we may identify the underlying vector space of this Lie algebra as
\[ \Derdelim{\bigl(\ctalgdelim{(\mathcal{V}^{\dag})}\bigr)} = \Hom_{\gf}\bigl(\mathcal{V}^{\dag},\ctalgdelim{(\mathcal{V}^{\dag})}\bigr) = \prod_{n=0}^{\infty} \Hom_{\gf}\bigl(\mathcal{V}^{\dag},(\mathcal{V}^{\dag})^{\cotimes n}\bigr)
= \prod_{n=0}^{\infty} \Hom_{\gf}\bigl(\mathcal{V}^{\dag},(\mathcal{V}^{\cotimes n})^{\dag}\bigr), \]
where at the last step we have used Proposition \ref{prop_tensordual}. Now, if $\mathcal{E}$ is the space of smooth sections of a vector bundle $E$ over a compact manifold $M$, then we may identify the last vector space above as
\begin{equation} \label{eqn_DerequalsHom}
\Derdelim{\bigl(\ctalgdelim{(\mathcal{E}^{\dag})}\bigr)} = \prod_{n=0}^{\infty} \Hom_{\gf}\bigl(\mathcal{E}^{\cotimes n},\mathcal{E}\bigr);
\end{equation}
where we have used the fact that both $\mathcal{E}$ and $\mathcal{E}^{\cotimes n}$ are reflexive. More generally, if $\mathcal{A}$ is a commutative topological algebra of the form described in Section \ref{sec_gaugefixingfamily}, then we may consider the space of \emph{$\mathcal{A}$-valued vector fields}
\begin{equation} \label{eqn_DerAequalsHomA}
\Derdelim^{\mathcal{A}}{\bigl(\ctalgdelim{(\mathcal{E}^{\dag})}\bigr)} := \prod_{n=0}^{\infty} \Hom_{\gf}\bigl(\mathcal{E}^{\cotimes n},\mathcal{E}\bigr)\cotimes\mathcal{A} = \prod_{n=0}^{\infty} \Hom_{\gf}\bigl(\mathcal{E}^{\cotimes n},\mathcal{E}\cotimes\mathcal{A}\bigr).
\end{equation}
This will again be a Lie algebra, as we will shortly see. If $X$ is an $\mathcal{A}$-valued vector field on $\mathcal{E}$ then we will denote the corresponding linear operators by
\[ X_n:\mathcal{E}^{\cotimes n}\to\mathcal{E}\cotimes\mathcal{A}, \quad n\geq 0. \]

It is well-known that the commutator bracket on the left-hand side of \eqref{eqn_DerequalsHom} arises from the Gerstenhaber pre-Lie bracket on the right-hand side of the same, see \cite{GerPreLie, StaBracket}. More generally, the Gerstenhaber bracket may be extended to $\mathcal{A}$-valued vector fields, providing them with the structure of a Lie algebra. Given
\[ X_n:\mathcal{E}^{\cotimes n}\to\mathcal{E}\cotimes\mathcal{A} \quad\text{and}\quad Y_m:\mathcal{E}^{\cotimes m}\to\mathcal{E}\cotimes\mathcal{A}, \]
we define $X_n\circ_i Y_m$ for $i=1,\ldots, m$ as the composite
\begin{equation} \label{eqn_Gerstbracket}
\xymatrix{
\mathcal{E}^{\cotimes n+m-1} \ar^<<<<<<<<<<<<<<{\mathds{1}_{\mathcal{E}^{\cotimes i-1}}\cotimes X_n\cotimes\mathds{1}_{\mathcal{E}^{\cotimes m-i}}}[rrr] &&&  \mathcal{E}^{\cotimes i}\cotimes\mathcal{A}\cotimes\mathcal{E}^{\cotimes m-i} \ar@{=}[r] & \mathcal{E}^{\cotimes m}\cotimes\mathcal{A} \ar^{Y_m\cotimes\mathds{1}}[r] & \mathcal{E}\cotimes\mathcal{A}^{\cotimes 2} \ar^{\mathds{1}\cotimes\mu}[r] & \mathcal{E}\cotimes\mathcal{A}.
}
\end{equation}
The Lie and pre-Lie bracket are then defined by:
\begin{displaymath}
\begin{split}
X_n\circ Y_m &:= \sum_{i=1}^m X_n\circ_i Y_m, \\
[X_n,Y_m] &:= X_n\circ Y_m - (-1)^{|X_n||Y_m|}Y_m\circ X_n.
\end{split}
\end{displaymath}

We may consider the Lie subalgebra of $\Derdelim^{\mathcal{A}}{\bigl(\ctalgdelim{(\mathcal{E}^{\dag})}\bigr)}$ that arises from taking just the local operators on the right-hand side of \eqref{eqn_DerAequalsHomA}, which we denote by
\[ \DerLdelim^{\mathcal{A}}{\bigl(\ctalgdelim{(\mathcal{E}^{\dag})}\bigr)} = \prod_{n=0}^{\infty} \Locdelim{\bigl(\mathcal{E}^{\cotimes n},\mathcal{E}\cotimes\mathcal{A}\bigr)}. \]
We will refer to these as \emph{local ($\mathcal{A}$-valued) vector fields}. That they form a Lie subalgebra follows tautologically from the definition of a local operator and the Gerstenhaber bracket.

The Lie algebra $\Derdelim{\bigl(\ctalgdelim{(\mathcal{E}^{\dag})}\bigr)}$ acts canonically on $\ctalgdelim{(\mathcal{E}^{\dag})}$. More generally, the Lie algebra $\Derdelim^{\mathcal{A}}{\bigl(\ctalgdelim{(\mathcal{E}^{\dag})}\bigr)}$ acts on the space of $\mathcal{A}$-valued distributions
\[ \ctalgdelim{(\mathcal{E}^{\dag})}\cotimes\mathcal{A} = \prod_{n=0}^{\infty}\Hom_{\gf}\bigl(\mathcal{E}^{\cotimes n},\mathcal{A}\bigr). \]
The latter action is defined using the same formula \eqref{eqn_Gerstbracket} that was used to define the Gerstenhaber pre-Lie bracket, except that now $Y_m$ is an $\mathcal{A}$-valued distribution. The Lie subalgebra of \emph{local} $\mathcal{A}$-valued vector fields $\DerLdelim^{\mathcal{A}}{\bigl(\ctalgdelim{(\mathcal{E}^{\dag})}\bigr)}$ then acts on the subspace formed by the \emph{local} $\mathcal{A}$-valued distributions.

A story similar to that described above also holds in the commutative context, for the completed symmetric algebra $\csalg{\mathcal{E}^{\dag}}$; but for the sake of brevity, we will not elaborate further.

\section{Classical BV-geometry} \label{sec_ClassicBV}

In this section we will explain how to adapt the ideas and methods from \cite[\S 5.3]{CosEffThy} and \cite{SchwarzBV}---which describe the basic framework for the Batalin-Vilkovisky formalism in the context of commutative geometry---to the framework of noncommutative geometry which was introduced by Kontsevich in \cite{KontSympGeom}. We will work in the general framework of $\mathcal{A}$-valued distributions and vector fields, where $\mathcal{A}$ is a commutative topological algebra of the form described in Section \ref{sec_gaugefixingfamily}.

\subsection{Symplectic structures}

Our starting point in this section will be a free BV-theory, which includes as part of the data a skew-symmetric pairing on the space of fields that plays the role of a symplectic structure. We will explain how this symplectic structure leads to a Lie bracket on the space of Hamiltonian functionals.

\subsubsection{Noncommutative geometry}

Given a free BV-theory $\mathcal{E}$, we wish to describe the space in which our functionals will live, and on which we will build our BV-structures. In the framework of commutative geometry, the correct space is the space of polynomial functions on the fields; that is, the completed symmetric algebra $\csalgdelim{(\mathcal{E}^{\dag})}$ defined by \eqref{eqn_symmalg}. In the context of noncommutative geometry, the correct space was introduced by Kontsevich in \cite{KontSympGeom, KontFeyn}.

\begin{defi} \label{def_KontHam}
Given a free BV-theory $\mathcal{E}$, define
\[ \KontHam{\mathcal{E}}:=\prod_{n=0}^{\infty}\bigl[(\mathcal{E}^{\dag})^{\cotimes n}\bigr]_{\cyc{n}} \quad\text{and}\quad \KontHamP{\mathcal{E}}:=\prod_{n=1}^{\infty}\bigl[(\mathcal{E}^{\dag})^{\cotimes n}\bigr]_{\cyc{n}}, \]
where in the above we have taken the \emph{cyclic} coinvariants. More generally, given $\mathcal{A}$ as above, we define
\[ \KontHam{\mathcal{E},\mathcal{A}} := \KontHam{\mathcal{E}}\cotimes\mathcal{A} \quad\text{and}\quad \KontHamP{\mathcal{E},\mathcal{A}} := \KontHamP{\mathcal{E}}\cotimes\mathcal{A}. \]
\end{defi}

\subsubsection{Symplectic vector fields and Hamiltonian functionals} \label{sec_SympandHam}

Recall from Section \ref{sec_LocVecfields} that vector fields were defined to be derivations of the completed tensor algebra. We now define, following \cite[\S 5.3.2]{CosEffThy}, what it means for such a vector field to be symplectic.

\begin{defi}
Let $\mathcal{E}$ be a free BV-theory. We say that an $\mathcal{A}$-valued vector field $X$ on $\mathcal{E}$ is a \emph{symplectic vector field} if for all $n\geq 0$, the map
\begin{equation} \label{eqn_cyclicmap}
\mathcal{E}^{\cotimes n+1}\to\mathcal{A}, \qquad s_1,\ldots,s_n,s_0\mapsto{\innprod[X_n(s_1,\ldots,s_n),s_0]}_{\mathcal{A}}
\end{equation}
is cyclically symmetric, where $X_n:\mathcal{E}^{\cotimes n}\to\mathcal{E}\cotimes\mathcal{A}$ denotes the operator associated to $X$ by \eqref{eqn_DerAequalsHomA} and $\innprod_{\mathcal{A}}$ was defined by \eqref{eqn_innprodA}.
\end{defi}

\begin{rem}
This terminology derives from the observation in Theorem 10.6 of \cite{HtopNC} that this cyclic symmetry condition (when $\mathcal{A}=\gf$) is equivalent to the condition that this vector field annihilates a certain symplectic form in the framework of noncommutative geometry that was introduced and described in \cite{GinzNC, KontSympGeom}. It seems reasonable to assume that this broad framework of noncommutative geometry could ultimately be transported to our current setting, which uses topological vector spaces, and that in such a framework symplectic vector fields would have an equivalent characterization. However, to do so would presently take us too far afield.
\end{rem}

The symplectic vector fields form a Lie subalgebra of the Lie algebra of all vector fields, as we will presently see. Given a locally convex Hausdorff topological vector space $\mathcal{V}$, consider the cyclic symmetrization map
\begin{equation} \label{eqn_cyclicsum}
\cycsum{n}:\bigl[\mathcal{V}^{\cotimes n}\bigr]_{\cyc{n}}\longrightarrow\bigl[\mathcal{V}^{\cotimes n}\bigr]^{\cyc{n}}, \qquad x\mapsto\sum_{\varsigma\in\cyc{n}}\varsigma\cdot x.
\end{equation}

\begin{lemma} \label{lem_sympvfieldbracket}
Let $\mathcal{E}$ be a free BV-theory and let
\[ X_n:\mathcal{E}^{\cotimes n}\to\mathcal{E}\cotimes\mathcal{A} \quad\text{and}\quad Y_m:\mathcal{E}^{\cotimes m}\to\mathcal{E}\cotimes\mathcal{A} \]
be two symplectic $\mathcal{A}$-valued vector fields, where $n+m>0$.

For all $s_1,\ldots,s_{n+m}\in\mathcal{E}$ we have,
\[ {\innprod[{[X_n,Y_m](s_1,\ldots,s_{n+m-1}),s_{n+m}}]}_{\mathcal{A}} =  {\innprod}_{\mathcal{A}} (Y_m\cotimes X_n)\cycsum{n+m}(s_1,\ldots,s_{n+m}). \]
In particular, $[X_n,Y_m]$ is a symplectic vector field.
\end{lemma}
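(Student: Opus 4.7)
The plan is to evaluate ${\innprod[{[X_n,Y_m](s_1,\ldots,s_{n+m-1}),s_{n+m}}]}_{\mathcal{A}}$ by expanding the bracket as
\[ [X_n,Y_m] = \sum_{i=1}^{m} X_n\circ_i Y_m - (-1)^{|X_n||Y_m|}\sum_{j=1}^{n} Y_m\circ_j X_n, \]
and then identifying each of the resulting $n+m$ summands with one of the $n+m$ cyclic rotations appearing on the right-hand side. The basic idea is that the symplectic hypothesis on $Y_m$ (resp.\ $X_n$) is precisely what allows one to rewrite the pairing with $s_{n+m}$ of an expression in which $X_n$ has been inserted into $Y_m$ (resp.\ $Y_m$ has been inserted into $X_n$) in the target form ${\innprod[Y_m(\ldots),X_n(\ldots)]}_{\mathcal{A}}$.

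Fix $i\in\{1,\ldots,m\}$. Unwinding the definition of $\circ_i$ from \eqref{eqn_Gerstbracket}, the corresponding summand of the left-hand side becomes
\[ \pm{\innprod[Y_m(s_1,\ldots,s_{i-1},X_n(s_i,\ldots,s_{i+n-1}),s_{i+n},\ldots,s_{n+m-1}),s_{n+m}]}_{\mathcal{A}} \]
with a Koszul sign from commuting $X_n$ past $s_1,\ldots,s_{i-1}$. The cyclic symmetry of the $(m+1)$-argument functional $(a_1,\ldots,a_m,a_0)\mapsto{\innprod[Y_m(a_1,\ldots,a_m),a_0]}_{\mathcal{A}}$ provided by the symplectic hypothesis on $Y_m$ then lets me rotate the block $X_n(s_i,\ldots,s_{i+n-1})$ into the pairing slot, producing
\[ \pm{\innprod[Y_m(s_{i+n},\ldots,s_{n+m},s_1,\ldots,s_{i-1}),X_n(s_i,\ldots,s_{i+n-1})]}_{\mathcal{A}}, \]
which is the value of ${\innprod}_{\mathcal{A}}(Y_m\cotimes X_n)$ on the cyclic rotation of $s_1\otimes\cdots\otimes s_{n+m}$ in which the ``$X_n$-block'' starts at position $i$. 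For each $j\in\{1,\ldots,n\}$ the summand $Y_m\circ_j X_n$ is handled symmetrically: using cyclic symmetry of $X_n$ to move $Y_m(s_j,\ldots,s_{j+m-1})$ into the pairing slot and then applying the graded skew-symmetry of $\innprod_{\mathcal{A}}$ yields ${\innprod[Y_m(s_j,\ldots,s_{j+m-1}),X_n(s_{j+m},\ldots,s_{n+m},s_1,\ldots,s_{j-1})]}_{\mathcal{A}}$, corresponding to the cyclic rotation in which the $X_n$-block begins at position $j+m$. As $i$ ranges over $\{1,\ldots,m\}$ and $j+m$ over $\{m+1,\ldots,n+m\}$, these $n+m$ summands together exhaust the $n+m$ cyclic rotations of $(s_1,\ldots,s_{n+m})$ contributing to $\cycsum{n+m}$.

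The main obstacle is the bookkeeping of Koszul signs: the signs from the Koszul rule when each cyclic rotation is performed, from the degree $-1$ graded skew-symmetry of $\innprod_{\mathcal{A}}$, and from the $-(-1)^{|X_n||Y_m|}$ prefactor attached to the second family, must combine to give precisely the sign prescribed by the cyclic group action on $\mathcal{E}^{\cotimes n+m}$ used in \eqref{eqn_cyclicsum}. The combinatorial match of summands with cyclic rotations established above forces these signs to agree, but a slot-by-slot verification (setting aside the degrees $|X_n|$, $|Y_m|$, $|s_k|$ and tracking each rotation) is needed to confirm this rigorously. Finally, the ``in particular'' assertion is immediate: since $\cycsum{n+m}$ lands in the cyclic invariants of $\mathcal{E}^{\cotimes n+m}$, the right-hand side of the displayed identity is a cyclically symmetric function of $(s_1,\ldots,s_{n+m})$, hence so is the left-hand side, which is exactly the condition required for $[X_n,Y_m]$ to be a symplectic vector field.
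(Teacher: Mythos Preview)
Your proposal is correct and follows essentially the same approach as the paper: the paper's proof simply states that this is a straightforward calculation using the definition of the Gerstenhaber bracket, the cyclic symmetry of the map \eqref{eqn_cyclicmap}, and the skew-symmetry of the pairing $\innprod$, which is exactly the computation you have spelled out in detail.
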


\begin{proof}
This is a straightforward calculation using the definition of the Gerstenhaber bracket, the cyclic symmetry of the map \eqref{eqn_cyclicmap} and the skew symmetry of the pairing $\innprod$.
\end{proof}

We can now introduce, following \cite[\S 5.3.2]{CosEffThy}, the definition of a Hamiltonian functional. Given a free BV-theory $\mathcal{E}$, consider the isomorphism
\begin{equation} \label{eqn_HamtoCycinvariants}
\KontHam{\mathcal{E},\mathcal{A}} \longrightarrow \prod_{l=0}^{\infty}\Hom_{\gf}\bigl(\mathcal{E}^{\cotimes l},\mathcal{A}\bigr)^{\cyc{l}}
\end{equation}
which takes
\[ h_l\in\bigl[(\mathcal{E}^{\dag})^{\cotimes l}\bigr]_{\cyc{l}}\cotimes\mathcal{A} = \Hom_{\gf}\bigl(\mathcal{E}^{\cotimes l},\mathcal{A}\bigr)_{\cyc{l}} \]
to $h_l\cycsum{l}\in\Hom_{\gf}(\mathcal{E}^{\cotimes l},\mathcal{A})^{\cyc{l}}$.

\begin{defi} \label{def_Hamfunctional}
Let $\mathcal{E}$ be a free BV-theory and $h\in\KontHam{\mathcal{E},\mathcal{A}}$. Write $h$ as
\[ h=\sum_{l=0}^{\infty}h_l, \quad h_l\in\bigl[(\mathcal{E}^{\dag})^{\cotimes l}\bigr]_{\cyc{l}}\cotimes\mathcal{A}. \]
We will say that $h$ is an \emph{$\mathcal{A}$-valued Hamiltonian functional} if there is an $\mathcal{A}$-valued vector field $X$ on $\mathcal{E}$ such that for all $l\geq 1$,
\begin{equation} \label{eqn_HamiltonianFunctional}
h_l\cycsum{l}(s_1,\ldots,s_l) = {\innprod[X_{l-1}(s_1,\ldots,s_{l-1}),s_l]}_{\mathcal{A}}, \quad\text{for all }s_1,\ldots,s_l\in\mathcal{E}.
\end{equation}
\end{defi}

Note that this condition places no restriction on the constant term $h_0\in\mathcal{A}$. Such a vector field $X$, if it exists, must be symplectic. Due to the nondegeneracy of the pairing $\innprod$ on the fields $\mathcal{E}$, there can be only one such vector field associated to a Hamiltonian functional $h$; denote this vector field by $X^h$.

We may now identify Hamiltonian functionals with symplectic vector fields, cf. \cite[\S 5.3.2]{CosEffThy} and \cite[\S 2]{GinzNC}.

\begin{prop} \label{prop_HamSympcorrespondence}
Let $\mathcal{E}$ be a free BV-theory. The map
\[ h\mapsto X^h \]
defines a one-to-one correspondence of degree one between the subspace of $\KontHamP{\mathcal{E},\mathcal{A}}$ consisting of all Hamiltonian functionals and the Lie subalgebra of $\Derdelim^{\mathcal{A}}{\bigl(\ctalgdelim{(\mathcal{E}^{\dag})}\bigr)}$ consisting of all symplectic vector fields.
\end{prop}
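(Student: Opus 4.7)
The plan is to establish the correspondence by explicitly constructing the inverse map, and checking injectivity and surjectivity separately, with the nondegeneracy of $\innprod$ and the isomorphism \eqref{eqn_HamtoCycinvariants} doing essentially all of the work.

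First I would record that the map $h \mapsto X^h$ is well-defined by Definition \ref{def_Hamfunctional}, and that the uniqueness of $X^h$ for a given $h$ is exactly the remark made after that definition, which follows from the injectivity of the map $s \mapsto \innprod[s,-]$. For injectivity of $h \mapsto X^h$, suppose $X^h = 0$. Then for every $l \geq 1$ the right-hand side of \eqref{eqn_HamiltonianFunctional} vanishes identically in $s_1,\ldots,s_l$, so $h_l \cycsum{l} = 0$ as an element of $\Hom_{\gf}\bigl(\mathcal{E}^{\cotimes l},\mathcal{A}\bigr)^{\cyc{l}}$. Since \eqref{eqn_HamtoCycinvariants} is an isomorphism, this forces $h_l = 0$ for every $l \geq 1$ and hence $h = 0$ in $\KontHamP{\mathcal{E},\mathcal{A}}$; note that restricting to $\KontHamP$ rather than $\KontHam$ precisely excludes the constant-term degeneracy, since any $h_0 \in \mathcal{A}$ lies in the kernel.

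For surjectivity, I would invert the construction. Given a symplectic $\mathcal{A}$-valued vector field $X$ with components $X_n : \mathcal{E}^{\cotimes n} \to \mathcal{E}\cotimes\mathcal{A}$, I define for each $l \geq 1$,
\[ \phi_l(s_1,\ldots,s_l) := {\innprod[X_{l-1}(s_1,\ldots,s_{l-1}),s_l]}_{\mathcal{A}}. \]
The defining condition that $X$ is symplectic is precisely the statement that each $\phi_l$ is cyclically symmetric, i.e. lies in $\Hom_{\gf}\bigl(\mathcal{E}^{\cotimes l},\mathcal{A}\bigr)^{\cyc{l}}$. Applying the inverse of the isomorphism \eqref{eqn_HamtoCycinvariants} produces a unique $h_l \in \bigl[(\mathcal{E}^{\dag})^{\cotimes l}\bigr]_{\cyc{l}} \cotimes \mathcal{A}$ with $h_l \cycsum{l} = \phi_l$. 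Setting $h := \sum_{l\geq 1} h_l \in \KontHamP{\mathcal{E},\mathcal{A}}$, the identity \eqref{eqn_HamiltonianFunctional} holds tautologically, so $h$ is a Hamiltonian functional and $X^h = X$.

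Finally, the degree statement is a matter of counting: since the pairing $\innprod$ has degree minus-one, comparing degrees in \eqref{eqn_HamiltonianFunctional} gives $|h_l| = |X^h_{l-1}| - 1$, so the correspondence raises total degree by one. The one step which genuinely deserves attention --- more a point to verify carefully than a real obstacle --- is the isomorphism \eqref{eqn_HamtoCycinvariants} itself, which reduces to the splitting of the finite group action of $\cyc{l}$ on the completed tensor product $(\mathcal{E}^{\dag})^{\cotimes l} \cotimes \mathcal{A}$ via the averaging operator $\tfrac{1}{l}\cycsum{l}$. This is standard for finite group actions over a field of characteristic zero, but requires a brief check that the averaging operator is continuous and commutes with the projective tensor product. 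Lemma \ref{lem_sympvfieldbracket} then ensures that the image is closed under the Gerstenhaber bracket, consistent with the Lie subalgebra appearing on the right-hand side of the correspondence.
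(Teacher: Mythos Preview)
Your proof is correct and follows the same approach as the paper, which simply states that the result ``follows directly from the fact that the map \eqref{eqn_HamtoCycinvariants} is an isomorphism.'' You have spelled out what that one line entails: injectivity and surjectivity both reduce to \eqref{eqn_HamtoCycinvariants} being a bijection, with the symplectic condition on $X$ matching up exactly with cyclic invariance of the pairing map $\phi_l$.
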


\begin{proof}
This follows directly from the fact that the map \eqref{eqn_HamtoCycinvariants} is an isomorphism.
\end{proof}

Recall from Section \ref{sec_LocVecfields} that the Lie algebra $\Derdelim^{\mathcal{A}}{\bigl(\ctalgdelim{(\mathcal{E}^{\dag})}\bigr)}$ of $\mathcal{A}$-valued vector fields acts on the space of $\mathcal{A}$-valued distributions $\ctalgdelim{(\mathcal{E}^{\dag})}\cotimes\mathcal{A}$. This action descends to the quotient space $\KontHam{\mathcal{E},\mathcal{A}}$. In fact, we may arrive at the following formula for this action:
\begin{equation} \label{eqn_vfieldaction}
X_n(h_l) = \mu\bigl(h_l\cycsum{l}\cotimes\mathds{1}\bigr)\bigl(\mathds{1}_{\mathcal{E}^{\cotimes l-1}}\cotimes X_n\bigr)\in\Hom_{\gf}\bigl(\mathcal{E}^{\cotimes n+l-1},\mathcal{A}\bigr)_{\cyc{(n+l-1)}},
\end{equation}
for all $X_n\in\Hom_{\gf}\bigl(\mathcal{E}^{\cotimes n},\mathcal{E}\cotimes\mathcal{A}\bigr)$, $h_l\in\Hom_{\gf}\bigl(\mathcal{E}^{\cotimes l},\mathcal{A}\bigr)_{\cyc{l}}$ and $l\geq 1$.

Using this action we may define a bracket on the Hamiltonian functionals in $\KontHam{\mathcal{E},\mathcal{A}}$, cf. \cite{GinzNC, KontSympGeom}.

\begin{defi} \label{def_Hambracket}
Let $\mathcal{E}$ be a free BV-theory. Given two Hamiltonian functionals $f,h\in\KontHam{\mathcal{E},\mathcal{A}}$ define
\begin{equation} \label{eqn_Hambracket}
\{f,h\} := (-1)^{|f||h|+|h|}X^f(h)\in\KontHam{\mathcal{E},\mathcal{A}}.
\end{equation}
Note that \eqref{eqn_Hambracket} is still well-defined even if $h$ is not a Hamiltonian functional, providing that $f$ still is. In this way, the definition of the bracket may be extended to the case when only one of the two functionals is Hamiltonian.
\end{defi}

This bracket has degree one. We will see presently that it defines an (odd) Lie bracket on the space of Hamiltonian functionals which corresponds to the commutator bracket on symplectic vector fields under the correspondence defined by Proposition \ref{prop_HamSympcorrespondence}.

\begin{prop} \label{prop_oddbracket}
Let $\mathcal{E}$ be a free BV-theory and $f,h\in\KontHam{\mathcal{E},\mathcal{A}}$ be any two Hamiltonian functionals. Then $\{f,h\}$ is also a Hamiltonian functional and
\begin{equation} \label{eqn_HamtoSympbracket}
[X^f,X^h] = (-1)^{|f||h|+|h|}X^{\{f,h\}}.
\end{equation}

Consequently, $\{-,-\}$ defines a Lie bracket of degree one on the subspace of $\KontHam{\mathcal{E},\mathcal{A}}$ consisting of all Hamiltonian functionals. Furthermore, the Lie subalgebra of $\Derdelim^{\mathcal{A}}{\bigl(\ctalgdelim{(\mathcal{E}^{\dag})}\bigr)}$ consisting of all symplectic vector fields acts on this subspace. More generally, the Lie algebra of Hamiltonian functionals acts on $\KontHam{\mathcal{E},\mathcal{A}}$ according to Equation \eqref{eqn_Hambracket}.
\end{prop}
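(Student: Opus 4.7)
The plan is to leverage the bijective correspondence of Proposition~\ref{prop_HamSympcorrespondence} between positive-arity Hamiltonian functionals and symplectic vector fields. The candidate symplectic vector field for the bracket $\{f,h\}$ will be
\[ X^{\{f,h\}} := (-1)^{|f||h|+|h|}[X^f,X^h], \]
which is automatically symplectic by Lemma~\ref{lem_sympvfieldbracket}. The heart of the argument will be to verify that the defining condition (\ref{eqn_HamiltonianFunctional}) for $\{f,h\}$ to be Hamiltonian holds with this vector field; the identity (\ref{eqn_HamtoSympbracket}) is then just the statement that $X^{\{f,h\}}$ is the symplectic vector field associated to $\{f,h\}$ under Proposition~\ref{prop_HamSympcorrespondence}.

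I would carry out this verification component by component. Writing $X^f_n$ and $X^h_m$ for the arity-$n$ and arity-$m$ pieces of the two vector fields, Definition~\ref{def_Hambracket} combined with the action formula (\ref{eqn_vfieldaction}) expresses the arity-$(n+m)$ contribution to $\{f,h\}\cycsum{n+m}$ as a cyclic sum of expressions of the form
\[ h_{m+1}\cycsum{m+1}\bigl(s_{\sigma(1)},\ldots,s_{\sigma(m)},X^f_n(s_{\sigma(m+1)},\ldots,s_{\sigma(m+n)})\bigr). \]
On the other hand, Lemma~\ref{lem_sympvfieldbracket} rewrites $\innprod[{[X^f_n,X^h_m](-),-}]_{\mathcal{A}}$ as $\innprod_{\mathcal{A}}(X^h_m\cotimes X^f_n)\cycsum{n+m}$; substituting the Hamiltonian relation (\ref{eqn_HamiltonianFunctional}) for $h_{m+1}$ back into each $\innprod[X^h_m(-),-]_{\mathcal{A}}$ factor converts this into exactly the same cyclic sum. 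Matching the two expressions with the correct Koszul signs will simultaneously prove that $\{f,h\}$ is Hamiltonian and deliver the identity (\ref{eqn_HamtoSympbracket}).

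Once these are established, the remaining assertions follow formally. Graded skew-symmetry and the Jacobi identity for $\{-,-\}$ transfer, via Proposition~\ref{prop_HamSympcorrespondence} and (\ref{eqn_HamtoSympbracket}), from the corresponding identities for the commutator bracket on $\Derdelim^{\mathcal{A}}{\bigl(\ctalgdelim{(\mathcal{E}^{\dag})}\bigr)}$; since the kernel of the correspondence consists solely of constants in $\mathcal{A}$, any potential ambiguity in constant terms can be settled by separate inspection of the $(n,m)=(0,1)$ contribution to the relevant brackets. The action of the Lie subalgebra of symplectic vector fields on the subspace of Hamiltonian functionals is then immediate, since every symplectic vector field equals $X^f$ for some Hamiltonian $f$ modulo constants. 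Finally, the formula (\ref{eqn_Hambracket}) continues to make sense when $h$ is an arbitrary element of $\KontHam{\mathcal{E},\mathcal{A}}$, and defines a Lie algebra action of the Hamiltonian functionals on $\KontHam{\mathcal{E},\mathcal{A}}$ because the vector fields $X^{\{f_1,f_2\}}$ and $[X^{f_1},X^{f_2}]$ agree up to sign.

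The main obstacle will be the careful bookkeeping of Koszul signs: matching the cyclic symmetrization in Lemma~\ref{lem_sympvfieldbracket} against the cyclic structure implicit in (\ref{eqn_vfieldaction}), and tracking the permutations needed to reassemble $\innprod_{\mathcal{A}}(X^h_m\cotimes X^f_n)$ into an evaluation of $h_{m+1}\cycsum{m+1}$. No new ideas are required beyond those already used in Lemma~\ref{lem_sympvfieldbracket}---only some combinatorial patience---but this is the one part of the argument that is not essentially formal.
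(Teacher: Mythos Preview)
Your approach is correct and essentially the same as the paper's: the paper condenses your main verification into the single line $(-1)^{|f||h|+|h|}\{f,h\} = X^f(h) = \innprod_{\mathcal{A}}(X^h\cotimes X^f)$, obtained exactly as you describe from \eqref{eqn_vfieldaction} and \eqref{eqn_HamiltonianFunctional}, after which Lemma~\ref{lem_sympvfieldbracket} gives \eqref{eqn_HamtoSympbracket} immediately. Your separate worry about constant terms is unnecessary---since $\{f,h\}$ is \emph{defined} as $\pm X^f(h)$, the Jacobi identity follows from \eqref{eqn_HamtoSympbracket} together with the fact that $h\mapsto X^f(h)$ is a genuine Lie algebra action, not merely from injectivity of the correspondence modulo constants.
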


\begin{rem}
Note that since $\{-,-\}$ is an \emph{odd} Lie bracket, it will be graded \emph{symmetric}.
\end{rem}

\begin{proof}[Proof of Proposition \ref{prop_oddbracket}]
Note that if
\[ f\in\Hom_{\gf}\bigl(\mathcal{E}^{\cotimes n},\mathcal{A}\bigr)_{\cyc{n}} \quad\text{and}\quad h\in\Hom_{\gf}\bigl(\mathcal{E}^{\cotimes m},\mathcal{A}\bigr)_{\cyc{m}} \]
are Hamiltonian functionals then it follows from Equation \eqref{eqn_vfieldaction} and \eqref{eqn_HamiltonianFunctional} that
\begin{equation} \label{eqn_Poisbrack}
(-1)^{|f||h|+|h|}\{f,h\} = X^f(h) = \innprod_{\mathcal{A}}\bigl(X^h \cotimes X^f\bigr).
\end{equation}
Equation \eqref{eqn_HamtoSympbracket} now follows directly from Lemma \ref{lem_sympvfieldbracket}. The symmetry and Jacobi identities for the odd bracket $\{-,-\}$ then follow from this equation. That symplectic vector fields preserve the subspace of Hamiltonian functionals is now just a consequence of Proposition \ref{prop_HamSympcorrespondence}.
\end{proof}

\subsubsection{Local functionals}

One important property of local functionals which we will now establish is that they are always Hamiltonian. Recall that $\KontHam{\mathcal{E},\mathcal{A}}$ is a quotient of the space $\ctalgdelim{(\mathcal{E}^{\dag})}\cotimes\mathcal{A}$ of $\mathcal{A}$-valued distributions.

\begin{defi}
Let $\mathcal{E}$ be a free BV-theory. We will say that a functional in $\KontHam{\mathcal{E},\mathcal{A}}$ is \emph{local} if it can be represented by a local $\mathcal{A}$-valued distribution from $\ctalgdelim{(\mathcal{E}^{\dag})}\cotimes\mathcal{A}$. We will denote the subspace of $\KontHam{\mathcal{E},\mathcal{A}}$ consisting of all local functionals by $\KontHamL{\mathcal{E},\mathcal{A}}$. Likewise, $\KontHamPL{\mathcal{E},\mathcal{A}}$ will denote the corresponding subspace of $\KontHamP{\mathcal{E},\mathcal{A}}$.
\end{defi}

\begin{prop} \label{prop_localisHam}
Let $\mathcal{E}$ be a free BV-theory. Every local functional $h\in\KontHamL{\mathcal{E},\mathcal{A}}$ is Hamiltonian and $\KontHamL{\mathcal{E},\mathcal{A}}$ forms a Lie subalgebra of the Lie algebra of Hamiltonian functionals under the bracket $\{-,-\}$ defined by \eqref{eqn_Hambracket}.

Furthermore, the map $h\mapsto X^h$ of Proposition \ref{prop_HamSympcorrespondence} determines a one-to-one correspondence between $\KontHamPL{\mathcal{E},\mathcal{A}}$ and the Lie subalgebra of $\DerLdelim^{\mathcal{A}}{\bigl(\ctalgdelim{(\mathcal{E}^{\dag})}\bigr)}$ that consists of all local symplectic vector fields.
\end{prop}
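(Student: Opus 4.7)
The plan is to derive this proposition as an essentially formal consequence of Remark \ref{rem_localdistribution}, the bijection in Proposition \ref{prop_HamSympcorrespondence}, and the observation in Section \ref{sec_LocVecfields} that local vector fields form a Lie subalgebra of $\Derdelim^{\mathcal{A}}{\bigl(\ctalgdelim{(\mathcal{E}^{\dag})}\bigr)}$.

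For the first claim, let $h=\sum_{l\geq 0} h_l \in \KontHamL{\mathcal{E},\mathcal{A}}$. For each $l\geq 1$, I would apply the cyclic symmetrization to obtain $h_l\cycsum{l} \in \Hom_{\gf}(\mathcal{E}^{\cotimes l},\mathcal{A})^{\cyc{l}}$, and observe that if $h_l$ admits a local representative $\tilde{h}_l$ then $h_l\cycsum{l}=\sum_{\varsigma\in\cyc{l}}\varsigma\cdot\tilde{h}_l$ is a finite sum of local distributions (since the expression \eqref{eqn_localdistribution} is preserved under permutations of its inputs), and hence is itself local. Invoking Remark \ref{rem_localdistribution}, I would produce for each $l\geq 1$ a local operator $X_{l-1}^h\in\Locdelim{(\mathcal{E}^{\cotimes l-1},\mathcal{E}\cotimes\mathcal{A})}$ satisfying Equation \eqref{eqn_HamiltonianFunctional}. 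These assemble into a local symplectic $\mathcal{A}$-valued vector field $X^h$, establishing that $h$ is Hamiltonian with $X^h$ local.

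The forward direction of the bijection in the third claim is then immediate. For the reverse direction, given a local symplectic vector field $X\in\DerLdelim^{\mathcal{A}}{\bigl(\ctalgdelim{(\mathcal{E}^{\dag})}\bigr)}$, I would apply Remark \ref{rem_localdistribution} in the opposite direction: the distribution $s_1,\ldots,s_l\mapsto \innprod[X_{l-1}(s_1,\ldots,s_{l-1}),s_l]_{\mathcal{A}}$ is a local $\mathcal{A}$-valued distribution (by construction, as a composition of integration against the local pairing with a local operator) which is cyclically invariant because $X$ is symplectic. Dividing by $l$ and projecting to coinvariants produces a local representative of $h_l$, so the Hamiltonian functional $h\in\KontHamP{\mathcal{E},\mathcal{A}}$ corresponding to $X$ via Proposition \ref{prop_HamSympcorrespondence} in fact lies in $\KontHamPL{\mathcal{E},\mathcal{A}}$.

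For the Lie subalgebra claim, given $f,h\in\KontHamL{\mathcal{E},\mathcal{A}}$, it follows from Proposition \ref{prop_oddbracket} together with Equation \eqref{eqn_Poisbrack} that $\{f,h\}=(-1)^{|f||h|+|h|}\innprod_{\mathcal{A}}(X^h\cotimes X^f)$ in the cases with both degrees positive (the constant-term cases contribute zero to the bracket, since $X$ annihilates constants). Since $X^f,X^h$ are local by the first claim and the Gerstenhaber bracket preserves locality, $[X^f,X^h]$ is a local symplectic vector field; by the bijection just established, its Hamiltonian $\{f,h\}$ lies in $\KontHamL{\mathcal{E},\mathcal{A}}$. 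The principal technical step—and the only point requiring genuine care—is the passage between local representatives in the cyclic coinvariants $[(\mathcal{E}^{\dag})^{\cotimes l}]_{\cyc{l}}\cotimes\mathcal{A}$ and locality of the cyclic symmetrization, which is handled by the averaging argument described above, and which relies on the ground field $\gf$ having characteristic zero.
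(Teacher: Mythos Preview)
Your proposal is correct and follows essentially the same route as the paper: both arguments reduce everything to Remark \ref{rem_localdistribution}, which provides the passage in both directions between local functionals and local symplectic vector fields, and then deduce closure under the bracket from the fact that local vector fields preserve locality. Your write-up is somewhat more explicit about the cyclic averaging step, and you close the Lie subalgebra claim via $[X^f,X^h]$ and the bijection whereas the paper phrases it as ``local vector fields act on local functionals,'' but these are equivalent formulations of the same argument.
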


\begin{proof}
It follows from Remark \ref{rem_localdistribution} that:
\begin{itemize}
\item
every local functional $h\in\KontHamL{\mathcal{E},\mathcal{A}}$ is Hamiltonian,
\item
that the symplectic vector field $X^h$ that is associated to this local functional is a local vector field, and
\item
that if $X^h$ is a \emph{local} symplectic vector field defined by a Hamiltonian functional $h$, then the functional $h$ itself must be local.
\end{itemize}

From this and Equation \eqref{eqn_Hambracket} it follows, since local vector fields act on local functionals, that the bracket of two local functionals is another local functional.
\end{proof}

\begin{example} \label{exm_kinetic}
Let $\mathcal{E}$ be a free BV-theory and consider the kinetic term for our gauge theory, which is
\begin{equation} \label{eqn_kinetic}
\kappa := \frac{1}{2}\innprod (Q\cotimes\mathds{1}):\mathcal{E}\cotimes\mathcal{E}\to\gf.
\end{equation}
This has degree zero and is symmetric in $\mathcal{E}$. Since this is a local distribution, it defines a local functional which we will also denote by $\kappa\in\KontHamL{\mathcal{E}}$. The symplectic vector field corresponding to $\kappa$ is just the operator
\[ X^{\kappa} = Q:\mathcal{E}\to\mathcal{E}. \]
Hence, for all $h\in\KontHam{\mathcal{E}}$;
\[ \{\kappa,h\} = (-1)^{|h|}Q(h). \]
By Proposition \ref{prop_HamSympcorrespondence} and Equation \eqref{eqn_HamtoSympbracket}, since $Q^2=0$, it follows that $\{\kappa,\kappa\}=0$.
\end{example}

\subsection{Classical master equation}

We now describe the classical master equation in noncommutative geometry. This is the starting point for the quantization procedure in the Batalin-Vilkovisky formalism. In the framework of commutative geometry, the classical master equation arises from the (infinitesimal) action of the gauge group on the space of fields. The equation imposes a symmetry constraint on the action functional, namely that it is invariant under the action of the gauge group. We will explain how these ideas are modified in the context of noncommutative geometry and the corresponding meaning of the classical master equation in this context.

\subsubsection{The classical master equation in noncommutative geometry}

\begin{defi}
Let $\mathcal{E}$ be a free BV-theory and $S\in\KontHam{\mathcal{E},\mathcal{A}}$ be a Hamiltonian functional of degree zero. We say that $S$ satisfies the \emph{classical master equation} if
\[ \{S,S\} = 0. \]
\end{defi}

\begin{example} \label{exm_CMEinteraction}
If $\kappa\in\KontHamL{\mathcal{E}}\subset\KontHamL{\mathcal{E},\mathcal{A}}$ is the kinetic term for our free BV-theory given by \eqref{eqn_kinetic} and $I\in\KontHamL{\mathcal{E},\mathcal{A}}$ is a Hamiltonian functional of degree zero, then from Example \ref{exm_kinetic} we see that the action functional $S:=\kappa+I$ satisfies the classical master equation if and only if
\begin{equation} \label{eqn_CMEinteraction}
QI+\frac{1}{2}\{I,I\}=0.
\end{equation}
We will call \eqref{eqn_CMEinteraction} the \emph{classical master equation for the interaction $I$}; or, by an abuse of terminology, simply the classical master equation.
\end{example}

\subsubsection{Local algebra structures}

In the framework of noncommutative geometry, the action of the gauge group is replaced by a bimodule structure.

\begin{defi}
Let $U$ be a vector bundle over a compact manifold $M$ and $\mathcal{U}:=\Gamma(M,U)$ be its space of smooth sections. A \emph{local algebra} structure on $\mathcal{U}$ is a local operator
\[ \mathcal{U}\cotimes\mathcal{U}\to\mathcal{U} \]
satisfying the associativity constraint.

If $V$ is another vector bundle over $M$ and $\mathcal{V}$ denotes its space of smooth sections, then a \emph{local $\mathcal{U}$-bimodule} structure on $\mathcal{V}$ consists of a pair of local operators
\[ \mathcal{U}\cotimes\mathcal{V}\to\mathcal{V} \quad\text{and}\quad \mathcal{V}\cotimes\mathcal{U}\to\mathcal{V} \]
satisfying the usual associativity constraints.

The data of a \emph{twisted local $\mathcal{U}$-bimodule} structure on $\mathcal{V}$ consists of a local $\mathcal{U}$-bimodule structure, together with a local operator
\[ d:\mathcal{U}\to\mathcal{V} \]
of degree zero, which is a derivation with respect to the structures above. We will call any such operator $d$ a \emph{twisting operator}.
\end{defi}

Note that a local bimodule is the same thing as a twisted local bimodule whose twisting operator vanishes, cf. \cite[\S 1.7]{NijRicDeform}. If our spaces $\mathcal{U}$ and $\mathcal{V}$ happen to be graded, then the maps above must be homogeneous of degree zero.

\begin{example} \label{exm_AUMmodule}
Let $A$ be a finite-dimensional $\gf$-algebra and take $U$ to be the trivial $A$-bundle over a compact manifold $M$, so that
\[ \mathcal{U} = \smooth{M}\underset{\mathbb{R}}{\otimes} A. \]
This has the canonical structure of a local algebra.

Consider now the vector bundle $V:=T^*M\otimes_{\mathbb{R}} A$, whose space of sections
\[ \mathcal{V} = \Gamma(M,T^*M)\underset{\mathbb{R}}{\otimes} A \]
has a canonical local $\mathcal{U}$-bimodule structure. The exterior derivative
\[ d_{\mathrm{DR}}\otimes\mathds{1}:\smooth{M}\underset{\mathbb{R}}{\otimes} A\longrightarrow\Gamma(M,T^*M)\underset{\mathbb{R}}{\otimes} A \]
provides $\mathcal{V}$ with the structure of a twisted local $\mathcal{U}$-bimodule.
\end{example}

\subsubsection{Fields and ghosts}

Given a local algebra $\mathcal{U}$ and a twisted local $\mathcal{U}$-bimodule $\mathcal{V}$, we form the graded topological vector space
\[ \mathcal{X}:=\mathcal{U}\oplus\Sigma^{-1}\mathcal{V}. \]
The algebra, bimodule and twisting local operators give this space the structure of a differential graded algebra. Explicitly, the multiplication $m$ is defined according to the Koszul sign rule by
\[ m(u_1,u_2)=u_1u_2, \quad m(u,v)=(-1)^{|u|}uv, \quad m(v,u)=vu, \quad m(v_1,v_2)=0; \]
for all $u_1,u_2,u\in\mathcal{U}$ and $v_1,v_2,v\in\mathcal{V}$. The differential $d$ is defined by the twisting operator, which vanishes on $\Sigma^{-1}\mathcal{V}$.

Next we take the suspension of this graded topological vector space, which is
\[ \Sigma\mathcal{X} = \Sigma\mathcal{U}\oplus\mathcal{V}. \]
We call the component $\Sigma\mathcal{U}$ the \emph{ghosts} and the component $\mathcal{V}$ the \emph{fields}. The multiplication $m$ and differential $d$ combine to form the components of a local vector field $X^{\mathrm{gauge}}$ of degree one on $\Sigma\mathcal{X}$. Explicitly, they are related by the equations:
\begin{equation} \label{eqn_Xgauge}
\begin{split}
X^{\mathrm{gauge}}_2 &:= \Sigma\circ m\circ(\Sigma^{-1})^{\cotimes 2} :(\Sigma\mathcal{X})^{\cotimes 2}\to\Sigma\mathcal{X}, \\
X^{\mathrm{gauge}}_1 &:= \Sigma\circ d\circ\Sigma^{-1}:\Sigma\mathcal{X}\to\Sigma\mathcal{X}.
\end{split}
\end{equation}
It is well-known, cf. for instance \cite{GetJonCyclic}, that the identities for $d$ and $m$ are equivalent to the equation
\begin{equation} \label{eqn_XgaugeCME}
[X^{\mathrm{gauge}},X^{\mathrm{gauge}}] = 0.
\end{equation}

\subsubsection{Anti-fields and anti-ghosts}

\begin{defi} \label{def_anticonstruction}
Given a graded vector bundle $W$ over a compact manifold $M$, we define the vector bundle
\[ W^! := W^*\underset{\mathbb{R}}{\otimes}\den{M} = \Hom_{\gf}\Bigl(W,\den{M}\underset{\mathbb{R}}{\otimes}\gf\Bigr); \]
where $W^*$ denotes the dual vector bundle. There is then a skew-symmetric nondegenerate local pairing of degree minus-one on the bundle $W\oplus\Sigma^{-1}W^!$ given by
\[ \innprodloc[f,w] := f(w), \quad f\in W^!,w\in W.\]
\end{defi}

\begin{example} \label{exm_dformsharp}
Suppose that $W$ is a graded vector bundle over a compact oriented manifold $M$ of dimension $n$ and consider the graded bundle
\[ V:=\Lambda^k T^*M\underset{\mathbb{R}}{\otimes}W, \]
where $\Lambda^k T^*M$ is concentrated in degree zero. Since $M$ is oriented, we may identify the density bundle $\den{M}$ with $\Lambda^n T^*M$. It follows then that we may identify
\[ V^! = \Lambda^{n-k} T^*M\underset{\mathbb{R}}{\otimes}W^*. \]
The bundles may be identified in such a way that the local pairing between $V^!$ and $V$ is then given by
\[ \innprodloc[\omega_{n-k}\otimes f,\omega_k\otimes w] = \omega_{n-k}\omega_kf(w); \]
for all $\omega_{n-k}\in\Lambda^{n-k} T^*M$, $\omega_k\in\Lambda^k T^*M$, $f\in W^*$ and $w\in W$.
\end{example}

Let $\mathcal{U}=\Gamma(M,U)$ be a local algebra and $\mathcal{V}=\Gamma(M,V)$ be a local $\mathcal{U}$-bimodule and consider the graded bundle
\[ \Sigma U\oplus V, \]
whose sections consist of the ghosts and fields, as before. Performing the construction of Definition \ref{def_anticonstruction}, we obtain the bundle
\begin{equation} \label{eqn_antifieldbundle}
E:=\Sigma U\oplus V\oplus \Sigma^{-1}V^!\oplus\Sigma^{-2} U^!,
\end{equation}
which has a nondegenerate skew-symmetric local pairing of degree minus-one. Denoting the sections of $U^!$ and $V^!$ by $\mathcal{U}^!$ and $\mathcal{V}^!$ respectively, we call the space $\Sigma^{-1}\mathcal{V}^!$ the \emph{anti-fields} and the space $\Sigma^{-2}\mathcal{U}^!$ the \emph{anti-ghosts}. In this way, $\mathcal{E}:=\Gamma(M,E)$ is made up of ghosts, fields, anti-fields and anti-ghosts.

\begin{example} \label{exm_Frobeniusbundle}
Let $\textgoth{A}$ be a bundle of graded Frobenius algebras over a compact oriented three-manifold $M$. This means that there is a nondegenerate degree zero trace map from the bundle $\textgoth{A}$ to the trivial line bundle $\gf$. The trace map provides an identification of the bundle $\textgoth{A}$ with its dual $\textgoth{A}^*$.

Setting $U:=\textgoth{A}$, the sections $\mathcal{U}$ form a local algebra. Setting $V:=T^*M\otimes_{\mathbb{R}}\textgoth{A}$, the sections $\mathcal{V}$ form a local $\mathcal{U}$-bimodule. From Example \ref{exm_dformsharp} we see that we may identify
\[ U^!=\Lambda^3 T^*M\underset{\mathbb{R}}{\otimes}\textgoth{A} \quad\text{and}\quad V^!=\Lambda^2 T^*M\underset{\mathbb{R}}{\otimes}\textgoth{A}. \]
These bundles may be identified in such a manner that the local pairing on the bundle
\[ E = \Sigma\Lambda^{\bullet}T^*M\underset{\mathbb{R}}{\otimes}\textgoth{A}, \]
which is defined by \eqref{eqn_antifieldbundle}, is given by the trace pairing on $\textgoth{A}$ and the multiplication of exterior powers. Note that in the above expression for the graded bundle $E$, the exterior algebra is given its usual grading. Consequently, the sections of $E$ are
\[ \mathcal{E} = \Sigma\dRham{M}\underset{\mathbb{R}}{\otimes}\textgoth{A}. \]
The skew-symmetric pairing on $\mathcal{E}$ derived from the local pairing on the bundle is then given by;
\begin{equation} \label{eqn_Frobeniuspairing}
\begin{split}
\innprod[\omega_1,\omega_2] &= (-1)^{|\omega_1|}\int_M \Tr(\omega_1\wedge\omega_2), \\
\innprod[\omega_3,\omega_0] &= (-1)^{|\omega_3|}\int_M \Tr(\omega_3\wedge\omega_3);
\end{split}
\end{equation}
for all $\omega_i\in\dRham[i]{M}\otimes_{\mathbb{R}}\textgoth{A}$. Here $|\omega_i|$ refers to the degree in $\dRham[i]{M}\otimes_{\mathbb{R}}\textgoth{A}$, so that if $\textgoth{A}$ is concentrated in degree zero then $|\omega_i|=i$.
\end{example}

\subsubsection{Encoding the local algebraic structures}

Let $\mathcal{U}$ be a local algebra and $\mathcal{V}$ be a twisted local $\mathcal{U}$-bimodule. Following Lemma 3.5.1 of \cite[\S 5.3.5]{CosEffThy}, the algebraic structure of the ghosts and fields may be encoded in a local action functional on
\[ \mathcal{E} = \Sigma\mathcal{U}\oplus\mathcal{V}\oplus\Sigma^{-1}\mathcal{V}^!\oplus\Sigma^{-2}\mathcal{U}^!. \]
Consider the subspace
\[ \ctalgdelim{\bigl([\Sigma\mathcal{U}\oplus\mathcal{V}]^{\dag}\bigr)}\cotimes\bigl[\Sigma^{-1}\mathcal{V}^!\oplus\Sigma^{-2}\mathcal{U}^!\bigr]^{\dag}\subset\KontHamP{\mathcal{E}}. \]
We say that the functionals in this subspace are \emph{linear on the fibers of the projection from $\mathcal{E}$ to $\Sigma\mathcal{U}\oplus\mathcal{V}$.}

\begin{lemma} \label{lem_Sgauge}
For every local algebra $\mathcal{U}$ and twisted local $\mathcal{U}$-bimodule $\mathcal{V}$, there is a unique local action functional
\[ S^{\mathrm{gauge}}\in\KontHamPL{\mathcal{E}} \]
such that:
\begin{itemize}
\item
$S^{\mathrm{gauge}}$ is linear on the fibers of the projection from $\mathcal{E}$ to $\Sigma\mathcal{U}\oplus\mathcal{V}$, and
\item
the local symplectic vector field on $\mathcal{E}$ associated to $S^{\mathrm{gauge}}$ by the correspondence of Proposition \ref{prop_localisHam} extends the vector field $X^{\mathrm{gauge}}$ that is defined on $\Sigma\mathcal{U}\oplus\mathcal{V}$ by Equation \eqref{eqn_Xgauge}.
\end{itemize}
Furthermore, $S^{\mathrm{gauge}}$ has degree zero and satisfies the classical master equation
\[ \{S^{\mathrm{gauge}},S^{\mathrm{gauge}}\} = 0. \]
\end{lemma}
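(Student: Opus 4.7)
My plan is to build $S^{\mathrm{gauge}}$ directly as the cyclic coinvariant class of an explicit tensor built from $X^{\mathrm{gauge}}$ and the pairing, and then read off all the stated properties from the correspondences in Propositions \ref{prop_HamSympcorrespondence}, \ref{prop_localisHam} and \ref{prop_oddbracket}. Concretely, for each $n\geq 1$ I would take the representative
$$\tilde{S}^{\mathrm{gauge}}_{n+1}(s_1,\ldots,s_n,s_0) := \innprod[X^{\mathrm{gauge}}_n(s_1,\ldots,s_n),s_0]\in(\mathcal{E}^{\dag})^{\cotimes(n+1)},$$
where $X^{\mathrm{gauge}}_n$ is extended from $(\Sigma\mathcal{U}\oplus\mathcal{V})^{\cotimes n}$ to $\mathcal{E}^{\cotimes n}$ by declaring it to vanish whenever any input lies in $\Sigma^{-1}\mathcal{V}^!\oplus\Sigma^{-2}\mathcal{U}^!$, and then take $S^{\mathrm{gauge}}$ to be the sum of the images of these representatives in the cyclic coinvariants. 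Locality is immediate since $X^{\mathrm{gauge}}_n$ is assembled from the local operators $m$ and $d$ and $\innprodloc$ is local; the degree is zero by the combined degrees of $\innprod$ (degree $-1$) and $X^{\mathrm{gauge}}$ (degree $+1$); and linearity on the fibers of the projection $\mathcal{E}\to\Sigma\mathcal{U}\oplus\mathcal{V}$ is built into the representative, since only the slot $s_0$ is paired nontrivially with $\Sigma^{-1}\mathcal{V}^!\oplus\Sigma^{-2}\mathcal{U}^!$.

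Next, I would verify the extension property for the associated vector field $X^{S^{\mathrm{gauge}}}$. Substituting $s_1,\ldots,s_n\in\Sigma\mathcal{U}\oplus\mathcal{V}$ and $s_0\in\Sigma^{-1}\mathcal{V}^!\oplus\Sigma^{-2}\mathcal{U}^!$ into Equation \eqref{eqn_HamiltonianFunctional}, I would observe that among the $n+1$ cyclic rotations appearing in $\cycsum{n+1}$, only the identity contributes a nonzero value---the other rotations would place an anti-field or anti-ghost into an input slot of $X^{\mathrm{gauge}}_n$, where it vanishes by construction. This reduces the Hamiltonian equation to
$$\innprod[X^{S^{\mathrm{gauge}}}_n(s_1,\ldots,s_n),s_0] = \innprod[X^{\mathrm{gauge}}_n(s_1,\ldots,s_n),s_0],$$
and the nondegeneracy of $\innprod$ yields the extension property. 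Uniqueness follows from the same argument: any other candidate $S'$ would satisfy the same reduced equation, so that $S'-S^{\mathrm{gauge}}$ is a functional linear on fibers whose representative has vanishing pairing with every anti-stuff $s_0$, forcing it to be zero.

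For the classical master equation, I would first use Proposition \ref{prop_oddbracket}, Equation \eqref{eqn_HamtoSympbracket}, to identify the vector field associated to $\{S^{\mathrm{gauge}},S^{\mathrm{gauge}}\}$ with the commutator $[X^{S^{\mathrm{gauge}}},X^{S^{\mathrm{gauge}}}]$, and observe that on inputs lying entirely in $\Sigma\mathcal{U}\oplus\mathcal{V}$ this restricts to $[X^{\mathrm{gauge}},X^{\mathrm{gauge}}]$, which vanishes by Equation \eqref{eqn_XgaugeCME}. To bootstrap to the vanishing of $\{S^{\mathrm{gauge}},S^{\mathrm{gauge}}\}$ itself, I would argue that the bracket of two Hamiltonian functionals each linear on fibers is again linear on fibers: the symplectic pairing underlying $\{-,-\}$ contracts a ghost/field variable against an anti-ghost/anti-field variable, so each term of $\{S^{\mathrm{gauge}},S^{\mathrm{gauge}}\}$ is a product of a piece linear in anti-stuff and a piece independent of it, and thus remains linear in anti-stuff overall. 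Applying the uniqueness analysis of the previous paragraph to $\{S^{\mathrm{gauge}},S^{\mathrm{gauge}}\}$ then forces it to vanish. I expect the main technical delicacy to lie in justifying this linearity-on-fibers argument cleanly within the cyclic/noncommutative framework, since the extended vector field $X^{S^{\mathrm{gauge}}}$ has nontrivial components on mixed ghost/field and anti-stuff inputs whose interplay with the Gerstenhaber bracket has to be tracked.
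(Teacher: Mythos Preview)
Your proposal is correct and follows essentially the same approach as the paper: the explicit representative $\tilde{S}^{\mathrm{gauge}}_{n+1}$ is exactly the paper's formula \eqref{eqn_Sgauge}, and your uniqueness/extension arguments match the paper's. For the classical master equation, the paper also reduces to showing $[X',X']=0$ via Proposition \ref{prop_oddbracket}; the ``technical delicacy'' you flag about linearity on fibers is what the paper handles by the dual computation---it observes that $X'$ sends $[\Sigma^{-1}\mathcal{V}^!\oplus\Sigma^{-2}\mathcal{U}^!]^{\dag}$ into tensors with exactly one anti-stuff factor, deduces the same for $[X',X']$, and then uses the cyclic (symplectic) symmetry of $[X',X']$ to rotate the anti-stuff input into the output slot, reducing to the already-handled ghost/field case. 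This rotation trick is precisely the clean justification your last paragraph anticipates needing.
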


\begin{proof}
Set $\mathcal{X}:=\mathcal{U}\oplus\Sigma^{-1}\mathcal{V}$ and denote the projection from $\mathcal{E}$ to $\Sigma\mathcal{X}$ by $\pi_{\mathcal{X}}$. A vector field $X'$ on $\mathcal{E}$ extends a vector field $X$ on $\Sigma\mathcal{X}$ if its restriction to $\ctalgdelim{([\Sigma\mathcal{X}]^{\dag})}$ agrees with $X$. This is equivalent to the equation
\[ \pi_{\mathcal{X}}\circ X'_n = X_n\circ\pi_{\mathcal{X}}^{\cotimes n}, \quad\text{for all }n\geq 0. \]

Note that if $S\in\KontHamP{\mathcal{E}}$ is a Hamiltonian functional that is linear on the fibers of $\pi_{\mathcal{X}}$, and whose corresponding symplectic vector field $X'$ on $\mathcal{E}$ extends a vector field $X$ on $\Sigma\mathcal{X}$, then
\begin{displaymath}
\begin{split}
S(x_1,\ldots,x_n,y) &= S\cycsum{n+1}(x_1,\ldots,x_n,y) = \innprod[X'_n(x_1,\ldots,x_n),y] \\
&= \innprod[\pi_{\mathcal{X}} X'_n(x_1,\ldots,x_n),y] = \innprod[X_n(x_1,\ldots,x_n),y];
\end{split}
\end{displaymath}
for all $x_1,\ldots,x_n\in\Sigma\mathcal{X}$ and $y\in\Sigma^{-1}\mathcal{V}^!\oplus\Sigma^{-2}\mathcal{U}^!$. It follows that $S^{\mathrm{gauge}}$ is unique and is given by the formula
\begin{equation} \label{eqn_Sgauge}
S^{\mathrm{gauge}}(x_1,\ldots,x_n,y) = \innprod[X^{\mathrm{gauge}}_n(x_1,\ldots,x_n),y].
\end{equation}
By Proposition \ref{prop_HamSympcorrespondence} and \ref{prop_oddbracket}, the functional $S^{\mathrm{gauge}}$ satisfies the classical master equation if and only if the vector field $[X',X']$ vanishes, where $X'$ is the symplectic vector field corresponding to $S^{\mathrm{gauge}}$. Note that since $X'$ extends $X^{\mathrm{gauge}}$, it follows from Equation \eqref{eqn_XgaugeCME} that $[X',X']$ vanishes on $[\Sigma\mathcal{X}]^{\dag}$.

To prove that it vanishes on the complementary subspace of $\mathcal{E}^{\dag}$, we note that it follows from the fact that $S^{\mathrm{gauge}}$ is linear on the fibers of $\pi_{\mathcal{X}}$ that the vector field $X'$ will map this complimentary subspace to
\[ X'\bigl(\bigl[\Sigma^{-1}\mathcal{V}^!\oplus\Sigma^{-2}\mathcal{U}^!\bigr]^{\dag}\bigr) \subset \prod_{n=0}^\infty\prod_{k=0}^n \bigl([\Sigma\mathcal{X}]^{\dag}\bigr)^{\cotimes k}\cotimes\bigl[\Sigma^{-1}\mathcal{V}^!\oplus\Sigma^{-2}\mathcal{U}^!\bigr]^{\dag}\cotimes\bigl([\Sigma\mathcal{X}]^{\dag}\bigr)^{\cotimes n-k}. \]
It then follows that the same thing will be true of the vector field $[X',X']$, since $X'$ extends $X^{\mathrm{gauge}}$, which maps $\ctalgdelim{([\Sigma\mathcal{X}]^{\dag})}$ to itself.

Now note that for all $x_0,\ldots,x_n\in\Sigma\mathcal{X}$ and $y\in\Sigma^{-1}\mathcal{V}^!\oplus\Sigma^{-2}\mathcal{U}^!$,
\begin{multline*}
\innprod[{[X',X'](x_1,\ldots,x_k,y,x_{k+1},\ldots x_n),x_0}] = \pm\innprod[{[X',X'](x_{k+1},\ldots x_n,x_0,\ldots,x_k),y}] \\
= \pm\innprod[{[X^{\mathrm{gauge}},X^{\mathrm{gauge}}](x_{k+1},\ldots x_n,x_0,\ldots,x_k),y}] = 0;
\end{multline*}
where we have used the fact that $[X',X']$ is symplectic and extends $[X^{\mathrm{gauge}},X^{\mathrm{gauge}}]$, the latter of which vanishes. It therefore follows that $[X',X']$ vanishes on the subspace of $\mathcal{E}^{\dag}$ that is complementary to $[\Sigma\mathcal{X}]^{\dag}$, and hence on the whole of $\mathcal{E}^{\dag}$, as claimed.
\end{proof}

\subsubsection{Gauge symmetry}

Suppose that $\mathcal{U}$ is a local algebra and $\mathcal{V}$ is a twisted local $\mathcal{U}$-bimodule with twisting operator $d$. Given $a\in\mathcal{U}$ we may consider the vector fields
\[ da \in \mathcal{V}=\Hom_{\gf}(\gf,\mathcal{V}) \quad\text{and}\quad [a,-]:v\mapsto [a,v] \in\Hom_{\gf}(\mathcal{V},\mathcal{V}). \]
We obtain a map of graded Lie algebras
\[ \mathcal{U}\to\Derdelim{\bigl(\ctalgdelim{(\mathcal{V}^{\dag})}\bigr)}, \qquad a\mapsto -da + [a,-] \]
from the commutator algebra of $\mathcal{U}$ to the Lie algebra of vector fields on $\mathcal{V}$. Strictly speaking, the latter is equipped with the opposite commutator bracket since taking the dual reverses the order of composition.

\begin{defi}
Given a local algebra $\mathcal{U}$ and a twisted local $\mathcal{U}$-bimodule $\mathcal{V}$, we will say that a functional $S^{\mathcal{V}}\in\KontHam{\mathcal{V}}$ is \emph{invariant} under the action of $\mathcal{U}$ if
\[ (-da + [a,-])S^{\mathcal{V}} = 0, \quad\text{for all }a\in\mathcal{U}. \]
\end{defi}

Recall that we have a local vector field $X^{\mathrm{gauge}}$ defined on $\Sigma\mathcal{U}\oplus\mathcal{V}$ by Equation \eqref{eqn_Xgauge} and a local functional $S^{\mathrm{gauge}}\in\KontHamPL{\mathcal{E}}$ defined by Lemma \ref{lem_Sgauge}.

\begin{prop} \label{prop_CMEgauge}
Let $\mathcal{U}$ be a local algebra and $\mathcal{V}$ be a twisted local $\mathcal{U}$-bimodule. Given a local functional $S^{\mathcal{V}}\in\KontHamL{\mathcal{V}}$ of degree zero, consider the local action functional
\[ S := S^{\mathrm{gauge}} + S^{\mathcal{V}} \in\KontHamL{\mathcal{E}}. \]
\begin{enumerate}
\item \label{itm_CMEgauge1}
The functional $S^{\mathcal{V}}$ satisfies the classical master equation.
\item \label{itm_CMEgauge2}
The functional $S$ satisfies the classical master equation if and only if $X^{\mathrm{gauge}}S^{\mathcal{V}}$ vanishes. The latter holds if and only if for all $a\in\mathcal{U}$ and $l\geq 1$;
\begin{displaymath}
\begin{split}
& S^{\mathcal{V}}_1(da) = 0, \quad\text{and} \\
& S^{\mathcal{V}}_{l+1}\cycsum{l+1}(da,v_1,\ldots,v_l) = S^{\mathcal{V}}_l\cycsum{l}(a v_1,v_2,\ldots,v_l) - (-1)^pS^{\mathcal{V}}_l\cycsum{l}(v_1,\ldots,v_{l-1},v_l a);
\end{split}
\end{displaymath}
where $p:=|a|(|v_1|+\cdots+|v_l|)$, the map $\cycsum{l}$ is defined by \eqref{eqn_cyclicsum} and $S^{\mathcal{V}}_l\in\bigl[(\mathcal{V}^{\dag})^{\cotimes l}\bigr]_{\cyc{l}}$ denote the components of $S^{\mathcal{V}}$, as in Definition \ref{def_Hamfunctional}.
\item \label{itm_CMEgauge3}
If $S$ satisfies the classical master equation then the functional $S^{\mathcal{V}}$ will be invariant under the action of $\mathcal{U}$.
\end{enumerate}
\end{prop}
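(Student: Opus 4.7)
For part (\ref{itm_CMEgauge1}), I observe that the Hamiltonian vector field $X^{S^{\mathcal{V}}}$ on $\mathcal{E}$ associated to $S^{\mathcal{V}}\in\KontHamL{\mathcal{V}}\subset\KontHamL{\mathcal{E}}$ takes inputs only from the field subspace $\mathcal{V}\subset\mathcal{E}$ and outputs only into the anti-field subspace $\Sigma^{-1}\mathcal{V}^!\subset\mathcal{E}$. Indeed, $S^{\mathcal{V}}$ evaluates to zero on any tuple containing a non-field entry, so by Equation \eqref{eqn_HamiltonianFunctional} the vector field $X^{S^{\mathcal{V}}}$ is nonzero only on field inputs; and the nondegeneracy of the pairing $\innprod$ together with the decomposition \eqref{eqn_antifieldbundle} forces the output of $X^{S^{\mathcal{V}}}$ to lie in the summand of $\mathcal{E}$ paired with $\mathcal{V}$, namely $\Sigma^{-1}\mathcal{V}^!$. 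Since $\mathcal{V}$ and $\Sigma^{-1}\mathcal{V}^!$ are distinct summands of $\mathcal{E}$, the Gerstenhaber composition $X^{S^{\mathcal{V}}}\circ_i X^{S^{\mathcal{V}}}$ vanishes for every $i$, hence $[X^{S^{\mathcal{V}}}, X^{S^{\mathcal{V}}}] = 0$; applying Equation \eqref{eqn_HamtoSympbracket} via Proposition \ref{prop_HamSympcorrespondence} then yields $\{S^{\mathcal{V}}, S^{\mathcal{V}}\} = 0$.

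For part (\ref{itm_CMEgauge2}), I first expand the bracket using the graded symmetry noted in the remark following Proposition \ref{prop_oddbracket}:
\[ \{S, S\} = \{S^{\mathrm{gauge}}, S^{\mathrm{gauge}}\} + 2\{S^{\mathrm{gauge}}, S^{\mathcal{V}}\} + \{S^{\mathcal{V}}, S^{\mathcal{V}}\}. \]
By Lemma \ref{lem_Sgauge} and part (\ref{itm_CMEgauge1}), the first and third summands vanish; since both functionals are of degree zero, Definition \ref{def_Hambracket} reduces to $\{S^{\mathrm{gauge}}, S^{\mathcal{V}}\} = X^{S^{\mathrm{gauge}}}(S^{\mathcal{V}})$. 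By Lemma \ref{lem_Sgauge} the vector field $X^{S^{\mathrm{gauge}}}$ extends $X^{\mathrm{gauge}}$, and because $S^{\mathcal{V}}$ sees only field inputs, formula \eqref{eqn_vfieldaction} shows that only those components of $X^{S^{\mathrm{gauge}}}$ which take $\Sigma\mathcal{U}\oplus\mathcal{V}$-inputs and output into $\mathcal{V}$ contribute, and these coincide with the corresponding components of $X^{\mathrm{gauge}}$. Hence the CME for $S$ is equivalent to $X^{\mathrm{gauge}}(S^{\mathcal{V}}) = 0$. To derive the explicit characterization, I substitute the formulas \eqref{eqn_Xgauge} for $X^{\mathrm{gauge}}_1$ and $X^{\mathrm{gauge}}_2$ into \eqref{eqn_vfieldaction}: the $X^{\mathrm{gauge}}_1$-contribution, encoding the twisting operator, produces the term $S^{\mathcal{V}}_{l+1}\cycsum{l+1}(da, v_1, \ldots, v_l)$, while the $X^{\mathrm{gauge}}_2$-contribution, encoding the bimodule multiplication, produces a cyclic sum of insertions of type $[a, v_i]$. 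The main technical obstacle is then verifying that this cyclic sum collapses to the two boundary terms $S^{\mathcal{V}}_l\cycsum{l}(av_1, v_2, \ldots, v_l)$ and $-(-1)^p S^{\mathcal{V}}_l\cycsum{l}(v_1, \ldots, v_{l-1}, v_l a)$; this reduces to a careful but routine telescoping argument using the cyclic invariance of $S^{\mathcal{V}}_l\cycsum{l}$ together with the Koszul sign rule.

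Finally, part (\ref{itm_CMEgauge3}) is a direct consequence of part (\ref{itm_CMEgauge2}): the vanishing condition $X^{\mathrm{gauge}}(S^{\mathcal{V}}) = 0$, evaluated on a tuple containing a single ghost input $a\in\mathcal{U}$ and remaining field inputs $v_1, \ldots, v_l\in\mathcal{V}$, is precisely the identity $(-da + [a, -])S^{\mathcal{V}} = 0$ under the Lie algebra map $\mathcal{U}\to\Derdelim{\bigl(\ctalgdelim{(\mathcal{V}^{\dag})}\bigr)}$ introduced just before the proposition, which is exactly the statement of $\mathcal{U}$-invariance for $S^{\mathcal{V}}$.
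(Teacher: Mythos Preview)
Your arguments for part \eqref{itm_CMEgauge1} and the first half of \eqref{itm_CMEgauge2} are correct and essentially match the paper. The paper phrases \eqref{itm_CMEgauge1} slightly differently---the symplectic vector field $Y$ associated to $S^{\mathcal{V}}$ has vanishing $\mathcal{V}$-component, hence annihilates $S^{\mathcal{V}}\in\KontHam{\mathcal{V}}$ directly---but this is equivalent to what you wrote.

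For the explicit characterization in \eqref{itm_CMEgauge2}, your description of the computation as a ``telescoping argument'' is not accurate. Nothing cancels. The paper rewrites the sum $\sum_{i}S^{\mathcal{V}}_l(\mathds{1}^{\otimes i-1}\otimes X_\bullet\otimes\mathds{1}^{\otimes l-i})$ using that we work modulo the cyclic action, replacing each insertion at position $i$ by an insertion at a fixed position composed with the shift $z_{l+1}^{i-1}$; summing over $i$ then produces the cyclic symmetrization $\cycsum{l}$ or $\cycsum{l+1}$. The left and right module components $X_L$, $X_R$ are kept separate throughout and yield the two terms $S^{\mathcal{V}}_l\cycsum{l}(av_1,\ldots)$ and $S^{\mathcal{V}}_l\cycsum{l}(\ldots,v_la)$ directly---not via a commutator $[a,v_i]$ summed over $i$ that then collapses. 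This is a minor mischaracterization.

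There is however a genuine gap in your argument for \eqref{itm_CMEgauge3}. The condition $X^{\mathrm{gauge}}S^{\mathcal{V}}=0$ evaluated on a single-ghost tuple is the identity displayed in \eqref{itm_CMEgauge2}, which involves $av_1$ at the first slot and $v_la$ at the last. The invariance condition $(-da+[a,-])S^{\mathcal{V}}=0$ is an equality of \emph{classes} in $\KontHam{\mathcal{V}}$, and after unwinding formula \eqref{eqn_vfieldaction} it involves $[a,v_l]$ at a single slot. These are not ``precisely'' the same; as the paper stresses in the Introduction, the classical master equation here is strictly \emph{stronger} than gauge invariance. You need an actual argument that the former implies the latter. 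The paper supplies one via the constant vector field $Y_a:=\Sigma a\in\Sigma\mathcal{U}$: since $Y_aS^{\mathcal{V}}=0$ (ghost insertion into a fields-only functional), one gets
\[ Y_a\bigl(X^{\mathrm{gauge}}S^{\mathcal{V}}\bigr)=[Y_a,X^{\mathrm{gauge}}]S^{\mathcal{V}}=(da-[a,-])S^{\mathcal{V}}, \]
and the left side vanishes once $X^{\mathrm{gauge}}S^{\mathcal{V}}=0$. Alternatively one may check by hand that the two boundary terms $S^{\mathcal{V}}_l\cycsum{l}(av_1,v_2,\ldots,v_l)$ and $S^{\mathcal{V}}_l\cycsum{l}(v_1,\ldots,v_{l-1},av_l)$ differ by a cyclic shift of the $v$'s and hence coincide in $\cyc{l}$-coinvariants---but that is the step your proposal omits.
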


\begin{rem}
Note that the theorem still holds if we replace $S^{\mathcal{V}}$ with $-S^{\mathcal{V}}$, and likewise with $S^{\mathrm{gauge}}$.
\end{rem}

\begin{proof}[Proof of Proposition \ref{prop_CMEgauge}]
Let $Y$ be the symplectic vector field on $\mathcal{E}$ that is associated to the local functional $S^{\mathcal{V}}$ by Proposition \ref{prop_HamSympcorrespondence}. We have
\[ \innprod[{Y_l(x_1,\ldots,x_l),v'}] = S^{\mathcal{V}}_{l+1}\cycsum{l+1}(x_1,\ldots,x_l,v') = 0, \]
for all $x_1,\ldots,x_l\in\mathcal{E}$ and $v'\in\Sigma^{-1}\mathcal{V}^!$. From this it follows that the $\mathcal{V}$ component of $Y_l$ vanishes for all $l\geq 0$, or equivalently, that the vector field $Y$ vanishes on $\ctalgdelim{(\mathcal{V}^{\dag})}$. Hence
\[ \{S^{\mathcal{V}},S^{\mathcal{V}}\} = YS^{\mathcal{V}} = 0. \]

Having now proven \eqref{itm_CMEgauge1}, it follows from Lemma \ref{lem_Sgauge} that
\[ \{S,S\} = 2\{S^{\mathrm{gauge}},S^{\mathcal{V}}\} = 2X^{\mathrm{gauge}}S^{\mathcal{V}}. \]
To compute the right-hand side, denote the canonical $n$-cycle by $z_n:=(1\ldots n)$ and the components of the vector field $X^{\mathrm{gauge}}$ defined by \eqref{eqn_Xgauge} by
\[ X_d:\Sigma\mathcal{U}\to\mathcal{V}, \quad X_L:\Sigma\mathcal{U}\cotimes\mathcal{V}\to\mathcal{V} \quad\text{and}\quad X_R:\mathcal{V}\cotimes\Sigma\mathcal{U}\to\mathcal{V}. \]
We compute
\begin{displaymath}
\begin{split}
X^{\mathrm{gauge}}S^{\mathcal{V}} &= \sum_{l=1}^{\infty}\sum_{i=1}^l S^{\mathcal{V}}_l\bigl(\mathds{1}^{\cotimes i-1}\cotimes(X_d+X_L+X_R)\cotimes\mathds{1}^{\cotimes l-i}\bigr) \\
&= S^{\mathcal{V}}_1 X_d + \sum_{l=1}^{\infty}\sum_{i=1}^{l+1} S^{\mathcal{V}}_{l+1}(\mathds{1}^{\cotimes i-1}\cotimes X_d\cotimes\mathds{1}^{\cotimes l+1-i})z_{l+1}^{i-1} \\
&+ \sum_{l=1}^{\infty}\sum_{i=1}^l S^{\mathcal{V}}_l(\mathds{1}^{\cotimes i-1}\cotimes X_L\cotimes\mathds{1}^{\cotimes l-i})z_{l+1}^{i-1} + S^{\mathcal{V}}_l(\mathds{1}^{\cotimes i-1}\cotimes X_R\cotimes\mathds{1}^{\cotimes l-i})z_{l+1}^i \\
&= S^{\mathcal{V}}_1 X_d + \sum_{l=1}^{\infty} \bigl(S^{\mathcal{V}}_{l+1}\cycsum{l+1}(X_d\cotimes\mathds{1}^{\cotimes l}) + S^{\mathcal{V}}_l\cycsum{l}(X_L\cotimes\mathds{1}^{\cotimes l-1}) + S^{\mathcal{V}}_l\cycsum{l}(\mathds{1}^{\cotimes l-1}\cotimes X_R)z_{l+1}^{-1}\bigr).
\end{split}
\end{displaymath}
On the second line we have used the fact that we are working modulo the action of the cyclic group. This proves \eqref{itm_CMEgauge2}.

Finally, given $a\in\mathcal{U}$ consider the vector field
\[ Y_a := \Sigma a \in\Sigma\mathcal{U}=\Hom_{\gf}(\gf,\Sigma\mathcal{U}) \]
on $\Sigma\mathcal{X}=\Sigma\mathcal{U}\oplus\mathcal{V}$. We have
\[ Y_a X^{\mathrm{gauge}}S^{\mathcal{V}} = [Y_a,X^{\mathrm{gauge}}]S^{\mathcal{V}} = (da - [a,-])S^{\mathcal{V}}, \]
from which \eqref{itm_CMEgauge3} follows.
\end{proof}

\subsubsection{Noncommutative Chern-Simons theory} \label{sec_NCChernSimonsCME}

As an example of the preceding constructions, we will consider a noncommutative analogue of Chern-Simons theory.

\begin{example} \label{exm_NCCStheory}
The initial data for our construction consists of a compact oriented three-manifold $M$ and a Frobenius algebra $\textgoth{A}$, which for the sake of simplicity we will assume is concentrated in degree zero. The local algebra $\mathcal{U}$ and twisted local $\mathcal{U}$-bimodule $\mathcal{V}$ are then defined according to Example \ref{exm_AUMmodule}.

Applying the construction of Example \ref{exm_Frobeniusbundle} we obtain the space
\[ \mathcal{E} = \Sigma\dRham{M}\underset{\mathbb{R}}{\otimes}\textgoth{A}. \]
This has a skew-symmetric pairing of degree minus-one given by Equation \eqref{eqn_Frobeniuspairing}. From Lemma \ref{lem_Sgauge} we obtain a local action functional $S^{\mathrm{gauge}}\in\KontHam{\mathcal{E}}$ defined by Equation \eqref{eqn_Sgauge}. Define a local action functional $S^{\mathcal{V}}\in\KontHam{\mathcal{V}}$ on the fields $\mathcal{V}$ by;
\begin{displaymath}
\begin{split}
S^{\mathcal{V}}_2(A_1,A_2) &:= \frac{1}{2}\int_M \Tr(A_1\wedge dA_2), \\
S^{\mathcal{V}}_3(A_1,A_2,A_3) &:= \frac{1}{3}\int_M\Tr(A_1\wedge A_2\wedge A_3).
\end{split}
\end{displaymath}
It may be checked directly that $S^{\mathcal{V}}$ satisfies the constraints of Proposition \ref{prop_CMEgauge}\eqref{itm_CMEgauge2}. Therefore, the local functional
\[ S := S^{\mathrm{gauge}} + S^{\mathcal{V}} \]
satisfies the classical master equation.

It may be computed directly that $S=\kappa+I$, where $\kappa$ is the kinetic term defined by \eqref{eqn_kinetic} for the free BV-theory that is given by taking $Q$ to be the exterior derivative on $\mathcal{E}$, and the interaction term is given by
\begin{equation} \label{eqn_CSinteraction}
I(A_1,A_2,A_3) := (-1)^{|A_2|}\frac{1}{3}\int_M\Tr(A_1\wedge A_2\wedge A_3), \quad A_1,A_2,A_3\in\mathcal{E}.
\end{equation}
This free BV-theory is the same as that defined by Example \ref{exm_CStheory} for the commutator algebra of $\textgoth{A}$. From Example \ref{exm_CMEinteraction} we see that the interaction $I$ satisfies the classical master equation \eqref{eqn_CMEinteraction}.
\end{example}

\section{The renormalization group flow} \label{sec_RGFlow}

In the preceding section, we discussed the analogue of the classical master equation for the BV-formalism in the context of noncommutative geometry. As in \cite{CosEffThy}, in order to discuss the quantum master equation, it will first be necessary to recall from \cite{NCRGF} some of the basic details and results concerning the formulation of the notion of an effective field theory within the framework of noncommutative geometry. This is because the naive quantum master equation suffers from the same singular behavior that manifests itself in the renormalization of effective field theories, and so must instead be formulated as a family of equations parameterized according to the length scale of the theory.

In this section we will recall the definition of the renormalization group flow in noncommutative geometry and the notion of an effective field theory. Following \cite{CosEffThy}, we will refer to the latter as a ``pretheory'', since we too wish to reserve the word ``theory'' in the context of the BV-formalism for those structures satisfying the quantum master equation. Our aim here is to expeditiously and concisely recall the necessary results from \cite{NCRGF}, which the reader may wish to consult if they are interested in additional details.

In what follows, $\mathcal{A}$ will denote a commutative topological algebra of the form described in Section \ref{sec_gaugefixingfamily}.

\subsection{Feynman amplitudes}

The renormalization group flow is defined through the use of Feynman amplitudes, and these have different definitions in commutative and noncommutative geometry.

\subsubsection{Attaching topological vector spaces to a set}

Here we recall some basic material from the theory of modular operads \cite{GetKap}.

\begin{defi}
Let $\mathcal{V}$ be a $\mathbb{Z}$-graded nuclear space and $X$ a finite set of cardinality $n$. Define
\[ \mathcal{V}(\!(X)\!):=\Biggl[\bigoplus_{h\in\Bij\left(\{1,\ldots,n\},X\right)}\mathcal{V}^{\cotimes n}\Biggr]^{\sg{n}}, \]
where $\Bij\left(\{1,\ldots,n\},X\right)$ denotes the set of bijections and $\sg{n}$ acts simultaneously on both this set of bijections and the tensor product $\mathcal{V}^{\cotimes n}$. This is a functor on the groupoid of finite sets.
\end{defi}

There are canonical maps
\begin{equation} \label{eqn_attachmapssym}
\bigl[\mathcal{V}^{\cotimes n}\bigr]^{\sg{n}}\to\mathcal{V}(\!(X)\!)\to\bigl[\mathcal{V}^{\cotimes n}\bigr]_{\sg{n}}
\end{equation}
and canonical identifications
\[ \mathcal{V}(\!(X\sqcup Y)\!) = \mathcal{V}(\!(X)\!)\cotimes\mathcal{V}(\!(Y)\!). \]
Additionally, when $\mathcal{V}$ is a nuclear Fre\'chet space then by Proposition \ref{prop_tensordual} we may identify
\[ \mathcal{V}^{\dag}(\!(X)\!) = \mathcal{V}(\!(X)\!)^{\dag}. \]

More generally, given a finite set $\mathscr{X}$ of cardinality $k$ and a function $\mathcal{V}$ that assigns to every element $X$ of $\mathscr{X}$, a nuclear space $\mathcal{V}_X$, we define
\[ \underset{X\in\mathscr{X}}{\widehat{\bigotimes}}\mathcal{V}_X := \Biggl[\bigoplus_{\mathbf{X}\in\Bij(\{1,\ldots,k\},\mathscr{X})}\mathcal{V}_{X_1}\cotimes\ldots\cotimes\mathcal{V}_{X_k}\Biggr]^{\sg{k}}. \]

When $\mathscr{X}$ is a finite collection of finite sets and $\mathcal{V}$ is a nuclear space, we may identify
\[ \underset{X\in\mathscr{X}}{\widehat{\bigotimes}}\mathcal{V}(\!(X)\!) = \mathcal{V}\Bigl(\!\Bigl(\coprod_{X\in\mathscr{X}}X\Bigr)\!\Bigr). \]
Now suppose that we are given a function $v$ that assigns to every set $X$ in $\mathscr{X}$, a tensor $v_X$ in $\mathcal{V}(\!(X)\!)$. Suppose also that every tensor $v_X$ has even degree, except possibly one of them. Then there is a canonically defined tensor
\[ \underset{X\in\mathscr{X}}{\otimes} v_X \in \underset{X\in\mathscr{X}}{\widehat{\bigotimes}}\mathcal{V}(\!(X)\!) = \mathcal{V}\Bigl(\!\Bigl(\coprod_{X\in\mathscr{X}}X\Bigr)\!\Bigr). \]

\subsubsection{Stable graphs in commutative geometry}

Here we will recall from \cite{CosEffThy} the definition of the Feynman amplitude in commutative geometry.

\begin{defi} \label{def_stablegraph}
A \emph{stable graph} $\Gamma$ consists of the following data:
\begin{itemize}
\item
A finite set $H(\Gamma)$, whose members are called the \emph{half-edges} of $\Gamma$.
\item
A finite set $V(\Gamma)$, whose members are called the \emph{vertices} of $\Gamma$.
\item
A map $\rho_{\Gamma}:H(\Gamma)\to V(\Gamma)$. We call the preimage of a vertex $v$ of $\Gamma$ under this map the set of \emph{half-edges incident to the vertex $v$}. By an abuse of notation, this preimage will be denoted simply by the same letter $v$. We define the valency of $v$ to be its cardinality~$|v|$.
\item
An involution $\kappa_{\Gamma}:H(\Gamma)\to H(\Gamma)$. The fixed points of this involution are called the \emph{legs} of $\Gamma$. The subset of all legs will be denoted by $L(\Gamma)$. Those orbits generated by $\kappa_{\Gamma}$ that form unordered pairs will be called the \emph{edges} of $\Gamma$. The set of all edges will be denoted by~$E(\Gamma)$.
\item
A map $l$ that assigns to every vertex $v\in V(\Gamma)$ a nonnegative integer $l(v)$, called the \emph{loop number} of $v$.
\end{itemize}
We impose the following restriction on the valencies of the vertices:
\[ 2l(v)+|v|\geq 3, \quad\text{for all }v\in V(\Gamma). \]

We define the \emph{loop number} of the stable graph $\Gamma$ by
\[ \ell(\Gamma) := \mathbf{b}_1(\Gamma) + \sum_{v\in V(\Gamma)}l(v), \]
where $\mathbf{b}_1(\Gamma)$ is the first Betti number of the graph.
\end{defi}

\begin{rem}
In \cite{GetKap} the term ``genus'' was used in place of ``loop number'', but as we will soon see, this terminology will conflict with some of our subsequent definitions.
\end{rem}

\begin{defi}
Given a free BV-theory $\mathcal{E}$ we define
\[ \intcomm{\mathcal{E}}:=\csalgdelim{\bigl(\mathcal{E}^{\dag}\bigr)}[[\hbar]] = \prod_{i=0}^\infty\hbar^i\csalgdelim{\bigl(\mathcal{E}^{\dag}\bigr)} = \gf[[\hbar]]\cotimes\csalgdelim{\bigl(\mathcal{E}^{\dag}\bigr)}. \]
We can write any functional $I\in\intcomm{\mathcal{E}}$ as a series
\[ I = \sum_{i,j=0}^{\infty}\hbar^i I_{ij}, \quad\text{where } I_{ij}\in\bigl[(\mathcal{E}^{\dag})^{\cotimes j}\bigr]_{\sg{j}}. \]
We will say that $I$ is an \emph{interaction} if it has degree zero and if for all $i,j\geq 0$;
\begin{equation} \label{eqn_interactionconstraints}
I_{ij} = 0, \quad\text{whenever }2i+j<3.
\end{equation}

More generally, we will say that
\[ I\in\intcomm{\mathcal{E},\mathcal{A}}:=\intcomm{\mathcal{E}}\cotimes\mathcal{A} \]
is a \emph{family of interactions} if it has total degree zero and satisfies the constraints prescribed by \eqref{eqn_interactionconstraints}, where now~$I_{ij}\in\bigl[(\mathcal{E}^{\dag})^{\cotimes j}\bigr]_{\sg{j}}\cotimes\mathcal{A}$. We will denote the subspace of $\intcomm{\mathcal{E},\mathcal{A}}$ consisting of those functionals satisfying \eqref{eqn_interactionconstraints} by $\intcommP{\mathcal{E},\mathcal{A}}$, and the subspace consisting of all families of interactions by $\intcommI{\mathcal{E},\mathcal{A}}$.
\end{defi}

Before proceeding to provide the definition of the Feynman amplitude, we note that if $X$ is any finite set of cardinality $n$ then there is a canonical map
\begin{equation} \label{eqn_attachtovertexsym}
\xymatrix{
\bigl[(\mathcal{E}^{\dag})^{\cotimes n}\bigr]_{\sg{n}}\cotimes\mathcal{A} \ar[rr]^{\sum_{\varsigma\in\sg{n}}\varsigma} && \bigl[(\mathcal{E}^{\dag})^{\cotimes n}\bigr]^{\sg{n}}\cotimes\mathcal{A} \ar[r] & \mathcal{E}^{\dag}(\!(X)\!)\cotimes\mathcal{A},
}
\end{equation}
which uses the left-hand map of \eqref{eqn_attachmapssym}. Additionally, for any such set $X$ there is a map
\[ \mathcal{A}(\!(X)\!)\to\bigl[\mathcal{A}^{\cotimes n}\bigr]_{\sg{n}}\to\mathcal{A}, \]
which uses the right-hand map of \eqref{eqn_attachmapssym} as the first arrow and the commutative multiplication $\mu_{\mathcal{A}}$ on $\mathcal{A}$ as the last.

\begin{defi}
Let $\mathcal{E}$ be a free BV-theory, $I\in\intcommI{\mathcal{E},\mathcal{A}}$ be a family of interactions and $P\in\mathcal{E}\cotimes\mathcal{E}\cotimes\mathcal{A}$ be a family of propagators. Given a stable graph $\Gamma$, we attach the interaction $I$ to each vertex $v\in V(\Gamma)$ by considering the image of $I_{l(v)|v|}$ under the map \eqref{eqn_attachtovertexsym}, which we denote by
\[ I(v)\in\mathcal{E}^{\dag}(\!(v)\!)\cotimes\mathcal{A}, \quad v\in V(\Gamma). \]
We then define
\begin{displaymath}
\begin{split}
I(\Gamma) := \underset{v\in V(\Gamma)}{\otimes} I(v) \in \underset{v\in V(\Gamma)}{\widehat{\bigotimes}}\bigl(\mathcal{E}^{\dag}(\!(v)\!)\cotimes\mathcal{A}\bigr) &= \mathcal{E}^{\dag}\bigl(\!\bigl(H(\Gamma)\bigr)\!\bigr)\cotimes\mathcal{A}\bigl(\!\bigl(V(\Gamma)\bigr)\!\bigr) \\
&= \Hom_{\gf}\bigl(\mathcal{E}\bigl(\!\bigl(H(\Gamma)\bigr)\!\bigr),\mathcal{A}\bigl(\!\bigl(V(\Gamma)\bigr)\!\bigr)\bigr).
\end{split}
\end{displaymath}

Next, we attach $P$ to every edge $e$ of $\Gamma$ using the left-hand map of \eqref{eqn_attachmapssym} and denote the result by
\[ P(e)\in\mathcal{E}(\!(e)\!)\cotimes\mathcal{A}, \quad e\in E(\Gamma). \]
We then define
\begin{equation} \label{eqn_graphpropagator}
P(\Gamma) := \underset{e\in E(\Gamma)}{\otimes} P(e) \in \underset{e\in E(\Gamma)}{\widehat{\bigotimes}}\bigl(\mathcal{E}(\!(e)\!)\cotimes\mathcal{A}\bigr) = \mathcal{E}\Bigl(\!\Bigl(\bigcup_{e\in E(\Gamma)}e\Bigr)\!\Bigr)\cotimes\mathcal{A}\bigl(\!\bigl(E(\Gamma)\bigr)\!\bigr).
\end{equation}

We define the \emph{Feynman amplitude}
\[ F_{\Gamma}(I,P)\in\Hom_{\gf}\bigl(\mathcal{E}\bigl(\!\bigl(L(\Gamma)\bigr)\!\bigr),\mathcal{A}\bigr) = \mathcal{E}^{\dag}\bigl(\!\bigl(L(\Gamma)\bigr)\!\bigr)\cotimes\mathcal{A} \]
by evaluating $I(\Gamma)$ on $P(\Gamma)$, that is we form the composite
\begin{equation} \label{eqn_Feynmanamplitude}
\xymatrix{
\mathcal{E}\bigl(\!\bigl(L(\Gamma)\bigr)\!\bigr) \ar[rr]^-{x\mapsto P(\Gamma)\otimes x} \ar[d]_{F_{\Gamma}(I,P)} &&
\mathcal{E}\Bigl(\!\Bigl(\underset{e\in E(\Gamma)}{\bigcup}e\Bigr)\!\Bigr)\cotimes\mathcal{A}\bigl(\!\bigl(E(\Gamma)\bigr)\!\bigr)\cotimes\mathcal{E}\bigl(\!\bigl(L(\Gamma)\bigr)\!\bigr) \ar@{=}[d] \\
\mathcal{A} &
\mathcal{A}\bigl(\!\bigl(V(\Gamma)\bigr)\!\bigr)\cotimes\mathcal{A}\bigl(\!\bigl(E(\Gamma)\bigr)\!\bigr) \ar[l]_-{\mu} &
\mathcal{E}\bigl(\!\bigl(H(\Gamma)\bigr)\!\bigr)\cotimes\mathcal{A}\bigl(\!\bigl(E(\Gamma)\bigr)\!\bigr) \ar[l]_-{I(\Gamma)\cotimes\mathds{1}}
}
\end{equation}

The \emph{Feynman weight}
\[ w_{\Gamma}(I,P)\in\bigl[(\mathcal{E}^{\dag})^{\cotimes j}\bigr]_{\sg{j}}\cotimes\mathcal{A}, \]
where $j:=|L(\Gamma)|$ is the number of legs of $\Gamma$, is obtained from the Feynman amplitude $F_{\Gamma}(I,P)$ by applying the right-hand map of \eqref{eqn_attachmapssym}.
\end{defi}

\subsubsection{Stable ribbon graphs in noncommutative geometry} \label{sec_stabribgraph}

We now recall from \cite{NCRGF} the definition of the Feynman amplitude for a stable ribbon graph. This takes place in the framework of noncommutative geometry that was introduced by Kontsevich in \cite{KontSympGeom, KontFeyn} and further developed in \cite{baran}.

\begin{defi}
A \emph{cyclic decomposition} of a set $X$ is a partition of $X$ into cyclically ordered subsets.
\end{defi}

This is precisely the same thing as a permutation on $X$, which defines a cyclic decomposition of $X$ through its factorization into a product of disjoint cycles.

\begin{defi}
A \emph{stable ribbon graph} consists of the following data:
\begin{itemize}
\item
A finite set of half-edges $H(\Gamma)$.
\item
A finite set of vertices $V(\Gamma)$.
\item
A map $\rho_{\Gamma}:H(\Gamma)\to V(\Gamma)$, which determines the incident half-edges of a vertex, as explained previously in Definition \ref{def_stablegraph}.
\item
An involution $\kappa_{\Gamma}:H(\Gamma)\to H(\Gamma)$ as before, describing the legs and edges of the graph.
\item
A cyclic decomposition $C(v)$ of the incident half-edges of every vertex $v\in V(\Gamma)$.
\item
Maps $g$ and $b$ that assign nonnegative integers $g(v)$ and $b(v)$ to every vertex $v$ of $\Gamma$, called the \emph{genus} and \emph{boundary} of $v$ respectively.
\end{itemize}
We define the \emph{loop number} of a vertex $v$ of $\Gamma$ by
\[ l(v) := 2g(v)+b(v)+|C(v)|-1. \]
We impose the requirement that for all $v\in V(\Gamma)$,
\[ |C(v)|+b(v)>0 \quad\text{and}\quad 2l(v)+|v|\geq 3. \]
\end{defi}

Stable ribbon graphs were introduced by Kontsevich in \cite{KontAiry}, where they were used to parameterize regions in compactifications of the moduli space of Riemann surfaces. The edges of a stable ribbon graph may be contracted, which in turn describes the degeneration of the corresponding Riemann surface. In the following definition we will simply summarize the rules, but more detailed discussions are available from \cite{ChLaOTFT, HamCompact, KontAiry, Mondello}.

\begin{defi} \label{def_contractedge}
Let $\Gamma$ be a stable ribbon graph. Given an edge $e$ of $\Gamma$, we may form a new stable ribbon graph $\Gamma/e$ by contracting the edge $e$ according to the following set of rules:
\begin{enumerate}
\item
If the edge $e$ joins distinct vertices $v_1$ and $v_2$ of $\Gamma$, then we delete this edge and join these two vertices to form a new vertex $v$ in $\Gamma/e$. The cyclic decomposition of $v$ is inherited unaltered from those of $v_1$ and $v_2$, except that the cycles $c_1$ of $v_1$ and $c_2$ of $v_2$ that intersect $e$ are joined in the obvious manner to form a new cycle $c$ of $v$, see for instance Figure \ref{fig_ContractEdgeBracket} in Section \ref{sec_BVFeynmandiagrams}. The genus and boundary of $v$ are then defined as the sum of those for $v_1$ and $v_2$.

The only exception to the above rule occurs when both $c_1$ and $c_2$ contain just a single half-edge, for then the resulting cycle $c$ would be empty. Instead of this, the boundary $b(v)$ is further increased by one.

\item
If the edge $e$ is a loop that joins a vertex $v$ to itself, then we distinguish two cases:
\begin{enumerate}
\item
If the endpoints of $e$ lie in distinct cycles $c_1$ and $c_2$ of $v$, then we delete the edge $e$ and these cycles are joined to form a new cycle $c$ of the vertex $v$, in the same manner as above. The genus $g(v)$ is increased by one and the boundary $b(v)$ is unaltered.

The only exception occurs, as before, when both $c_1$ and $c_2$ consist of a single half-edge. In this case, as above, we increase $b(v)$ by one.
\item
Finally, if the endpoints of $e$ both lie in the same cycle $c$ of $v$, then the edge $e$ is deleted and the cycle $c$ is split into two cycles $c_1$ and $c_2$, with canonically defined cyclic orderings inherited from that on $c$; see for instance Figure \ref{fig_ContractLoopCobracket} in Section \ref{sec_BVFeynmandiagrams}. The genus $g(v)$ and boundary $b(v)$ are unaltered.

An exception to this rule occurs when the endpoints of $e$ lie next to each other in the cyclic ordering on $c$, for in this case one of the cycles $c_1$ or $c_2$ would be empty. Instead of allowing an empty cycle, the boundary $b(v)$ is increased by one---unless \emph{both} $c_1$ and $c_2$ would be empty, in which case the boundary $b(v)$ is increased by \emph{two}.
\end{enumerate}
\end{enumerate}
\end{defi}

It may be checked that the result of contracting the edges of a stable ribbon graph does not depend upon the order in which they are contracted. To define the number of boundary components of a stable ribbon graph, we do the following.

\begin{defi}
Given a connected\footnote{By convention, the empty graph is not connected.} stable ribbon graph $\Gamma$, we denote the collection of all cycles of $\Gamma$ by
\[ C(\Gamma):=\bigcup_{v\in V(\Gamma)} C(v). \]
Denote by $c_{\Gamma}$, the permutation of the half-edges of $\Gamma$ that is formed by combining the permutations associated to the cyclic decompositions $C(v)$, for each vertex $v$ of $\Gamma$.

Define the permutation $\beta_{\Gamma}:=c_{\Gamma}\kappa_{\Gamma}$. Count the number of cycles (including $1$-cycles) in the cyclic decomposition of $\beta_{\Gamma}$ and denote the result by $\beta_{\Gamma}^{\#}$.

We define the genus and the number of boundary components of $\Gamma$ by
\begin{displaymath}
\begin{split}
g(\Gamma) &:= 1 - |V(\Gamma)| + \frac{1}{2}\bigl(|E(\Gamma)|+|C(\Gamma)|-\beta_{\Gamma}^{\#}\bigr) + \sum_{v\in V(\Gamma)} g(v), \\
B(\Gamma) &:= \beta_{\Gamma}^{\#} + \sum_{v\in V(\Gamma)} b(v).
\end{split}
\end{displaymath}
The \emph{loop number} of $\Gamma$ is defined by
\begin{equation} \label{eqn_loopnum}
\ell(\Gamma) := 2g(\Gamma) + B(\Gamma) - 1 = \mathbf{b}_1(\Gamma) + \sum_{v\in V(\Gamma)} l(v).
\end{equation}

We will also define
\[ b(\Gamma):=B(\Gamma) - |C(\Gamma/\Gamma)|, \]
where we use $\Gamma/\Gamma$ to denote the graph $\Gamma$ with all of its edges contracted.
\end{defi}

As a result of the rules introduced in Definition \ref{def_contractedge}, the numbers $g(\Gamma)$, $B(\Gamma)$ and $b(\Gamma)$ do not change when we contract an edge in a graph. In particular, $g(\Gamma)$ and $b(\Gamma)$ are both nonnegative. These numbers do indeed track the genus and the number of boundary components of a surface associated to the graph, see \cite[\S 3.1.3]{NCRGF} for details.

\begin{defi}
Given a free BV-theory $\mathcal{E}$, we define
\[ \intnuP{\mathcal{E}} := \csalgP{\KontHam{\mathcal{E}}}, \]
where $\KontHam{\mathcal{E}}$ was defined by Definition \ref{def_KontHam}. We then define
\[ \intnoncomm{\mathcal{E}} := \intnuP{\mathcal{E}}[[\gamma]] = \gf[[\gamma]]\cotimes\intnuP{\mathcal{E}}. \]
\end{defi}

In order to explain our choice of notation above, denote the generator of $\gf$ by $\nu$ and write
\begin{displaymath}
\begin{split}
\intnoncomm{\mathcal{E}} \subset \gf[[\gamma]]\cotimes\csalg{\KontHam{\mathcal{E}}} &= \gf[[\gamma]]\cotimes\csalg{\gf\oplus\KontHamP{\mathcal{E}}} \\
&= \gf[[\gamma]]\cotimes\csalg{\gf}\cotimes\csalg{\KontHamP{\mathcal{E}}} = \gf[[\gamma,\nu]]\cotimes\csalg{\KontHamP{\mathcal{E}}}.
\end{split}
\end{displaymath}

We may write any $I\in\intnoncomm{\mathcal{E}}$ as a series
\[ I = \sum_{i,j,k=0}^{\infty} \gamma^i\nu^j I_{ijk}, \quad\text{where }I_{ijk}\in\bigl[\KontHamP{\mathcal{E}}^{\cotimes k}\bigr]_{\sg{k}}. \]
Note that by the definition, $I_{ijk}$ vanishes when $j=k=0$. We may also write
\[ I_{ijk} = \sum_{l=0}^{\infty} I_{ijkl}, \]
where~$I_{ijkl}\in\bigl[\KontHamP{\mathcal{E}}^{\cotimes k}\bigr]_{\sg{k}}$ is a homogenous tensor of order $l$ in $\mathcal{E}^{\dag}$.

\begin{defi}
Let $\mathcal{E}$ be a free BV-theory. We say the $I\in\intnoncomm{\mathcal{E}}$ is an \emph{interaction} if it has degree zero and if
\begin{equation} \label{eqn_interactionNCconstraints}
I_{ijkl} = 0, \quad\text{whenever } 2(2i+j+k-1)+l<3.
\end{equation}

More generally, we will say that
\[ I\in\intnoncomm{\mathcal{E},\mathcal{A}}:=\intnoncomm{\mathcal{E}}\cotimes\mathcal{A} \]
is a \emph{family of interactions} if it has total degree zero and satisfies the constraints prescribed by \eqref{eqn_interactionNCconstraints}, where now~$I_{ijkl}\in\bigl[\KontHamP{\mathcal{E}}^{\cotimes k}\bigr]_{\sg{k}}\cotimes\mathcal{A}$ and has homogeneous order $l$ in $\mathcal{E}^{\dag}$. We will denote by $\intnoncommP{\mathcal{E},\mathcal{A}}$, the subspace of $\intnoncomm{\mathcal{E},\mathcal{A}}$ that consists of those $I$ which satisfy \eqref{eqn_interactionNCconstraints}, and the subspace consisting of all families of interactions by $\intnoncommI{\mathcal{E},\mathcal{A}}$.
\end{defi}

Before we define the Feynman amplitude for stable ribbon graphs, we begin by noting that if $X$ is a cyclically ordered set of cardinality $n$ and $\mathcal{V}$ is a $\mathbb{Z}$-graded nuclear space, then there are canonical maps
\[ \bigl[\mathcal{V}^{\cotimes n}\bigr]^{\cyc{n}}\to\mathcal{V}(\!(X)\!)\to\bigl[\mathcal{V}^{\cotimes n}\bigr]_{\cyc{n}}. \]
It follows that we have maps
\begin{equation} \label{eqn_attachmapscyc}
\begin{split}
& \xymatrix{
\KontHam{\mathcal{E}} \ar[r] & \bigl[(\mathcal{E}^{\dag})^{\cotimes n}\bigr]_{\cyc{n}} \ar[r]^{\cycsum{n}} & \bigl[(\mathcal{E}^{\dag})^{\cotimes n}\bigr]^{\cyc{n}} \ar[r] & \mathcal{E}^{\dag}(\!(X)\!)
} \\
& \xymatrix{
\mathcal{E}^{\dag}(\!(X)\!) \ar[r] & \bigl[(\mathcal{E}^{\dag})^{\cotimes n}\bigr]_{\cyc{n}} \ar[r] & \KontHam{\mathcal{E}}
}
\end{split}
\end{equation}
where $\cycsum{n}$ was defined by \eqref{eqn_cyclicsum}, and we have used the projections from and inclusions into $\KontHam{\mathcal{E}}$.

More generally, if $C$ is a cyclic decomposition of a set $X$ which consists of $k$ cycles then there are maps
\begin{align}
\label{eqn_attachmapscycdecompon}
& \xymatrix{
\bigl[\KontHamP{\mathcal{E}}^{\cotimes k}\bigr]_{\sg{k}} \ar[rr]^{\sum_{\varsigma\in\sg{k}}\varsigma} \ar[rrrd] && \bigl[\KontHamP{\mathcal{E}}^{\cotimes k}\bigr]^{\sg{k}} \ar[r] & \KontHamP{\mathcal{E}}(\!(C)\!) = \underset{c\in C}{\bigotimes}\KontHamP{\mathcal{E}} \ar[d] \\
&&& \underset{c\in C}{\bigotimes}\mathcal{E}^{\dag}(\!(c)\!) = \mathcal{E}^{\dag}(\!(X)\!)
} \\
\label{eqn_attachmapscycdecompoff}
& \xymatrix{
\mathcal{E}^{\dag}(\!(X)\!) = \underset{c\in C}{\bigotimes}\mathcal{E}^{\dag}(\!(c)\!) \ar[r] & \underset{c\in C}{\bigotimes}\KontHamP{\mathcal{E}} = \KontHamP{\mathcal{E}}(\!(C)\!) \ar[r] & \bigl[\KontHamP{\mathcal{E}}^{\cotimes k}\bigr]_{\sg{k}}
}
\end{align}
which use the maps defined by \eqref{eqn_attachmapssym} and \eqref{eqn_attachmapscyc}.

The following example provides us with a canonical way to represent our interactions.

\begin{example} \label{exm_cyclicwordproduct}
Suppose we begin with nonnegative integers $k$ and $l$ and a list $r_1,\ldots,r_k$ of $k$ positive integers whose sum is $l$. We will denote by $C_{\mathbf{r}}$, the canonical cyclic decomposition consisting of $k$ cycles on the set of integers between $1$ and $l$ defined by
\begin{equation} \label{eqn_cycdecpartition}
(1,\ldots,r_1)(r_1+1,\ldots,r_1+r_2)\ldots(l-r_k+1,\ldots,l).
\end{equation}
Given a tensor
\[ x = x_{11}\cdots x_{1r_1}x_{21}\cdots x_{2r_2}\cdots x_{k1}\cdots x_{kr_k} \in (\mathcal{E}^{\dag})^{\cotimes l} = \mathcal{E}^{\dag}(\!(\{1,\ldots,l\})\!), \]
the image of $x$ under the map \eqref{eqn_attachmapscycdecompoff} defined by the cyclic decomposition $C_{\mathbf{r}}$ is the product of cyclic words
\[ (x_{11}\cdots x_{1r_1})(x_{21}\cdots x_{2r_2})\cdots (x_{k1}\cdots x_{kr_k}). \]

Using the above, we may write for any family of interactions $I\in\intnoncomm{\mathcal{E},\mathcal{A}}$;
\begin{equation} \label{eqn_intcycrep}
I_{ijkl} = \sum_{\begin{subarray}{c} r_1,\ldots,r_k\geq 1: \\ r_1+\cdots+r_k=l \end{subarray}}\lceil I_{ijkl}^{\mathbf{r}}\rceil, \quad\text{for some } I_{ijkl}^\mathbf{r}\in(\mathcal{E}^{\dag})^{\cotimes l}\cotimes\mathcal{A};
\end{equation}
where $\lceil I_{ijkl}^{\mathbf{r}}\rceil$ denotes the image of $I_{ijkl}^{\mathbf{r}}$ under the map \eqref{eqn_attachmapscycdecompoff} defined by the cyclic decomposition $C_{\mathbf{r}}$. Note in particular that the choice of the $I_{ijkl}^{\mathbf{r}}$ are not unique.
\end{example}

\begin{defi} \label{def_FeynmanNCamplitude}
Let $\mathcal{E}$ be a free BV-theory, $I\in\intnoncommI{\mathcal{E},\mathcal{A}}$ be a family of interactions and $P\in\mathcal{E}\cotimes\mathcal{E}\cotimes\mathcal{A}$ be a family of propagators. Given a stable ribbon graph $\Gamma$, we attach the interaction $I$ to every vertex $v$ of $\Gamma$ by considering the image of $I_{g(v)b(v)|C(v)||v|}$ under the map \eqref{eqn_attachmapscycdecompon} that is defined by the cyclic decomposition $C(v)$ of the vertex $v$, and we denote this image by
\[ I(v)\in\mathcal{E}^{\dag}(\!(v)\!)\cotimes\mathcal{A}, \quad v\in V(\Gamma). \]
As before, we then define
\[ I(\Gamma) := \underset{v\in V(\Gamma)}{\otimes} I(v) \in \Hom_{\gf}\bigl(\mathcal{E}\bigl(\!\bigl(H(\Gamma)\bigr)\!\bigr),\mathcal{A}\bigl(\!\bigl(V(\Gamma)\bigr)\!\bigr)\bigr). \]

Likewise, following Equation \eqref{eqn_graphpropagator}, we attach $P$ to every edge of $\Gamma$ to form
\[ P(\Gamma) \in \mathcal{E}\Bigl(\!\Bigl(\bigcup_{e\in E(\Gamma)}e\Bigr)\!\Bigr)\cotimes\mathcal{A}\bigl(\!\bigl(E(\Gamma)\bigr)\!\bigr). \]
The Feynman amplitude
\[ F_{\Gamma}(I,P) \in \mathcal{E}^{\dag}\bigl(\!\bigl(L(\Gamma)\bigr)\!\bigr)\cotimes\mathcal{A} \]
is then defined by \eqref{eqn_Feynmanamplitude}, as before.
\end{defi}

To define the Feynman weight of a stable ribbon graph, we will need to explain how we arrive at a canonical cyclic decomposition of the legs.

\begin{defi}
We will say that a stable ribbon graph (or stable graph) is a \emph{corolla} if it has only one vertex and no edges.
\end{defi}

Since a stable ribbon graph that is a corolla is permitted to have legs, there is a canonical cyclic decomposition of the legs of such a corolla, which is determined by the cyclic decomposition of the lone vertex. More generally, contracting all the edges of a connected stable ribbon graph $\Gamma$ produces a corolla with the same set of legs as $\Gamma$; this provides $\Gamma$ with a cyclic decomposition of its legs.

\begin{defi}
Given a connected stable ribbon graph $\Gamma$, we will call the cyclic decomposition of the legs of $\Gamma$ that is obtained by contracting all of its edges the \emph{canonical decomposition}.
\end{defi}

\begin{defi}
Let $\mathcal{E}$ be a free BV-theory, $I\in\intnoncommI{\mathcal{E},\mathcal{A}}$ be a family of interactions and $P\in\mathcal{E}\cotimes\mathcal{E}\cotimes\mathcal{A}$ be a family of propagators. Given a connected stable ribbon graph $\Gamma$, we define the Feynman weight
\[ w_{\Gamma}(I,P)\in\csalg{\KontHamP{\mathcal{E}}}\cotimes\mathcal{A} \]
to be the image of the Feynman amplitude $F_{\Gamma}(I,P)$ under the map \eqref{eqn_attachmapscycdecompoff} that is determined by the canonical decomposition of the legs of $\Gamma$.
\end{defi}

\subsubsection{Amplitudes of diagrams with distinguished vertices and edges} \label{sec_specialFamplitudes}

We now consider some important variants of Feynman amplitudes that will be relevant when we come to discuss the quantum master equation in Section \ref{sec_QME}.

\begin{defi}
Given a stable ribbon graph $\Gamma$ and an edge $e$ of $\Gamma$, we call the pair $(\Gamma,e)$ a \emph{stable ribbon graph with a distinguished edge}. A morphism between two such objects consists of an isomorphism of stable ribbon graphs that respects the distinguished edges.

A \emph{stable ribbon graph with a distinguished vertex} is a pair $(\Gamma,v)$, where $\Gamma$ is a stable ribbon graph and $v$ is a vertex of $\Gamma$. A morphism between two such objects is an isomorphism of stable ribbon graphs that respects the distinguished vertex.
\end{defi}

These graphs give rise to the following corresponding notions of Feynman amplitudes.

\begin{defi}
Let $\mathcal{E}$ be a free BV-theory and $(\Gamma,e)$ be a stable ribbon graph with a distinguished edge. Suppose that we are given
\[ I\in\intnoncommI{\mathcal{E},\mathcal{A}} \quad\text{and}\quad P,K\in\mathcal{E}\cotimes\mathcal{E}\cotimes\mathcal{A}; \]
where $P$ is a propagator, and $K$ is symmetric in $\mathcal{E}$ but may have any degree.

We define the Feynman amplitude
\[ F_{(\Gamma,e)}(I;P,K)\in \mathcal{E}^{\dag}\bigl(\!\bigl(L(\Gamma)\bigr)\!\bigr)\cotimes\mathcal{A} \]
in precisely the way described by Definition \ref{def_FeynmanNCamplitude}, except that we replace $P(\Gamma)$ with
\[ (P,K)(\Gamma) := K(e)\otimes\Bigl(\underset{e'\neq e}{\otimes} P(e')\Bigr) \in \mathcal{E}\Bigl(\!\Bigl(\bigcup_{e'\in E(\Gamma)}e'\Bigr)\!\Bigr)\cotimes\mathcal{A}\bigl(\!\bigl(E(\Gamma)\bigr)\!\bigr). \]
As before, we define the Feynman weight $w_{(\Gamma,e)}(I;P,K)$ to be the image of the Feynman amplitude under the map \eqref{eqn_attachmapscycdecompoff} that is determined by the canonical decomposition of the legs of $\Gamma$.
\end{defi}

\begin{defi}
Let $\mathcal{E}$ be a free BV-theory and $(\Gamma,v)$ be a stable ribbon graph with a distinguished vertex. Suppose that we are given
\[ I\in\intnoncommI{\mathcal{E},\mathcal{A}}, \quad J\in\intnoncommP{\mathcal{E},\mathcal{A}} \quad\text{and}\quad P\in\mathcal{E}\cotimes\mathcal{E}\cotimes\mathcal{A}; \]
where $P$ is a propagator and $J$ may have any degree.

We define the Feynman amplitude
\[ F_{(\Gamma,v)}(I,J;P) \in \mathcal{E}^{\dag}\bigl(\!\bigl(L(\Gamma)\bigr)\!\bigr)\cotimes\mathcal{A} \]
in exactly the manner described by Definition \ref{def_FeynmanNCamplitude}, except that we replace $I(\Gamma)$ with
\[ (I,J)(\Gamma) := J(v)\otimes\Bigl(\underset{v'\neq v}{\otimes} I(v')\Bigr) \in \Hom_{\gf}\bigl(\mathcal{E}\bigl(\!\bigl(H(\Gamma)\bigr)\!\bigr),\mathcal{A}\bigl(\!\bigl(V(\Gamma)\bigr)\!\bigr)\bigr). \]
Again, we define the Feynman weight $w_{(\Gamma,v)}(I,J;P)$ to be the image of the Feynman amplitude under the map \eqref{eqn_attachmapscycdecompoff}.
\end{defi}

We now consider two categories. The first category is the category of connected stable ribbon graphs with a distinguished edge. The second category may be defined as follows.

\begin{defi} \label{def_2ndCat}
We may define a category whose objects consist of triples
\[ ((\Gamma,v),\iota,G); \]
where:
\begin{itemize}
\item
$(\Gamma,v)$ is a connected stable ribbon graph with a distinguished vertex,
\item
$G$ is a connected stable ribbon graph having precisely one edge and satisfying:
\[ b(G)=b(v), \quad g(G)=g(v); \]
\item
$\iota$ is a bijective map from the legs of $G$ to the incident half-edges of $v$ that respects the cyclic decompositions.
\end{itemize}
A morphism from $((\Gamma,v),\iota,G)$ to $((\Gamma',v'),\iota',G')$ is a pair of isomorphisms
\[ \phi:(\Gamma,v)\to(\Gamma',v') \quad\text{and}\quad \psi:G\to G' \]
that are compatible with the respective maps $\iota$ and $\iota'$.
\end{defi}

Given a stable ribbon graph $\Gamma$ and an edge $e$ of $\Gamma$, we may consider the stable ribbon graph $G(e)$ that consists of just this single edge $e$ and the either one or two vertices of $\Gamma$ that are joined to it. Note that the legs of $G(e)$ are the incident half-edges of the vertex $v$ of $\Gamma/e$ that is formed when we contract the edge $e$ in $\Gamma$, and this gives us a map $\iota$ satisfying the above conditions, which is just the identity map on elements; see Figure \ref{fig_graph}.

\begin{figure}[htp]
\centering
\includegraphics{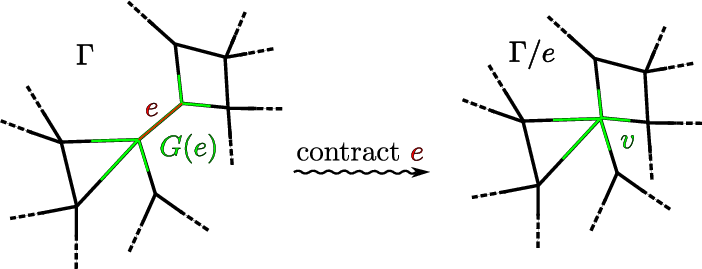}
\caption{A correspondence defining an equivalence of categories.}
\label{fig_graph}
\end{figure}

\begin{theorem} \label{thm_catequiv}
The category of connected stable ribbon graphs with a distinguished edge and the category defined by Definition \ref{def_2ndCat} are equivalent under the correspondence
\[ (\Gamma,e) \mapsto ((\Gamma/e,v),\iota,G(e)). \]
\end{theorem}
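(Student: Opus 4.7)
The plan is to establish the equivalence by constructing an explicit quasi-inverse functor $\Psi$ and verifying directly that the two compositions are naturally isomorphic to the respective identity functors. Given an object $((\Gamma,v),\iota,G)$ in the target category, define $\Psi((\Gamma,v),\iota,G)$ to be the pair $(\Gamma',e)$ obtained by the following gluing operation: remove the vertex $v$ from $\Gamma$ while retaining its incident half-edges as ``dangling'' half-edges, take a disjoint copy of $G$, and glue the legs of $G$ to the dangling half-edges of $\Gamma$ using the bijection $\iota$. The distinguished edge $e$ is the unique edge of $G$. The involution $\kappa_{\Gamma'}$ is inherited from $\Gamma$ on the untouched half-edges and from $G$ on the half-edges of $e$; the vertex set is $(V(\Gamma)\setminus\{v\})\sqcup V(G)$ with cyclic decompositions inherited from $\Gamma$ and $G$ respectively; the genus and boundary labels at the vertices of $G$ inside $\Gamma'$ are those of $G$.

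Next, I would verify that $\Gamma'$ is in fact a connected stable ribbon graph. Connectedness follows from the connectedness of both $\Gamma$ and $G$ together with the fact that $\iota$ identifies every leg of $G$ with an incident half-edge of $v$. The stability condition $2l(w)+|w|\geq 3$ at every vertex $w$ of $G$, viewed inside $\Gamma'$, is inherited from the stability of $G$ since the valency, genus, boundary, and number of cycles at $w$ are the same in both graphs. For vertices of $\Gamma$ distinct from $v$, stability is unaffected by the construction.

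The principal step is to verify that $\Psi$ is inverse to the functor $\Phi:(\Gamma,e)\mapsto((\Gamma/e,v),\iota,G(e))$ up to natural isomorphism, and the main technical content is the claim that $\Gamma'/e\cong\Gamma$ via an isomorphism carrying the new contracted vertex to $v$. This reduces to verifying that the contraction rules of Definition \ref{def_contractedge}, applied to the edge $e$ of $G\subset\Gamma'$, reproduce precisely the cyclic decomposition $C(v)$, the genus $g(v)$, and the boundary $b(v)$. The matching of genera and boundaries is automatic from the hypotheses $g(G)=g(v)$ and $b(G)=b(v)$ in Definition \ref{def_2ndCat}, since contracting the unique edge of $G$ produces a corolla of genus $g(G)$ and boundary $b(G)$. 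The matching of cyclic decompositions is the assertion that the canonical decomposition on the legs of $G/e$ agrees with $C(v)$ under $\iota$, which is exactly the compatibility condition built into the definition of a morphism $\iota$ in Definition \ref{def_2ndCat}. Conversely, starting from $(\Gamma,e)$, the graph $G(e)$ consisting of $e$ together with its endpoint vertices (labeled by the genus/boundary data inherited from $\Gamma$) and retaining the remaining half-edges at those vertices as legs clearly satisfies $\Psi\Phi(\Gamma,e)\cong(\Gamma,e)$, with the natural isomorphism being the identity on half-edges.

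The hardest part of the argument, I expect, will be handling the exceptional edge-contraction cases listed in Definition \ref{def_contractedge}---in particular, the cases in which an empty cycle is produced and compensated by increasing $b(v)$ by one or two. In those situations, the inverse construction must interpret a portion of $b(v)$ as arising from the contraction itself rather than from $b$-data at vertices of $G$, and one must verify that the casework in $G$ (loop edge versus bridge, same-cycle endpoints versus distinct-cycle endpoints, adjacent versus non-adjacent in the cyclic order) is in bijective correspondence with the casework appearing when contracting the distinguished edge of the graph in the source category. Once this dictionary between cases is laid out, the checks that $\Psi$ is functorial and that $\Phi\Psi\cong\mathrm{id}$ on morphisms are routine, since morphisms on both sides reduce to an isomorphism of the underlying stable ribbon graph respecting the distinguished data, and these data correspond precisely under $\Phi$ and $\Psi$.
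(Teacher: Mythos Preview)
Your proposal is correct in outline and would yield a valid proof, but it takes a different route from the paper. The paper does not prove this result from scratch; it simply observes that the two categories in question are subcategories of larger categories treated in Theorem~3.24 of \cite{NCRGF}, and that the equivalence established there restricts to the present one. So the paper's proof is a one-line reduction to an external result, whereas you construct an explicit quasi-inverse $\Psi$ by ``blowing up'' the distinguished vertex $v$ into the one-edge graph $G$ via the gluing map $\iota$, and then verify $\Phi\Psi\cong\id$ and $\Psi\Phi\cong\id$ directly.

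Your direct construction has the advantage of being self-contained: a reader does not need to consult \cite{NCRGF} or extract the present statement as a special case of a more general combinatorial equivalence. The paper's approach is more economical, since the general result is already available and the restriction is routine. One remark on your argument: your concern about the ``exceptional'' contraction cases in Definition~\ref{def_contractedge} (empty cycles compensated by incrementing $b$) is largely defused by the observation you already make, namely that $g(G)$ and $b(G)$ are contraction-invariant and equal the genus and boundary of the lone vertex of $G/e$. Hence the matching $g(G)=g(v)$, $b(G)=b(v)$, together with the compatibility of $\iota$ with the canonical decomposition of $L(G)$, forces the contracted vertex of $\Gamma'/e$ to have exactly the data of $v$, with no separate case analysis required. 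The casework you anticipate is absorbed into the invariance of $g$ and $b$ under edge contraction, so this step is less delicate than you suggest.
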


\begin{proof}
This is a special instance of Theorem 3.24 of \cite{NCRGF}. The two categories above are subcategories of the categories covered by the cited theorem that correspond under the equivalence that is defined there.
\end{proof}

Later, we will also need a definition for the Feynman amplitude corresponding to such triples.

\begin{defi}
Let $\mathcal{E}$ be a free BV-theory and $((\Gamma,v),\iota,G)$ be a triple from the category described in Definition \ref{def_2ndCat}. Suppose that we are given
\[ I\in\intnoncommI{\mathcal{E},\mathcal{A}} \quad\text{and}\quad P,K\in\mathcal{E}\cotimes\mathcal{E}\cotimes\mathcal{A}; \]
where $P$ is a propagator, and $K$ is symmetric in $\mathcal{E}$ but may have any degree.

We define the Feynman amplitude
\[ F_{((\Gamma,v),\iota,G)}(I;P,K)\in \mathcal{E}^{\dag}\bigl(\!\bigl(L(\Gamma)\bigr)\!\bigr)\cotimes\mathcal{A} \]
in precisely the way described by Definition \ref{def_FeynmanNCamplitude}, without changing $P(\Gamma)$, except that we replace $I(\Gamma)$ with
\[ (I,K)(\Gamma) := \iota^{\#}(F_G(I,K))\otimes\Bigl(\underset{v'\neq v}{\otimes} I(v')\Bigr), \]
where we have attached $K$ to the lone edge of $G$. The corresponding Feynman weight $w_{((\Gamma,v),\iota,G)}(I;P,K)$ is defined as before.
\end{defi}

\subsection{Properties of the renormalization group flow}

We are now ready to introduce the renormalization group flow and recall some of the basic results that we will need in order to eventually describe the role of the quantum master equation in noncommutative geometry and effective field theory.

\subsubsection{The renormalization group flow in commutative geometry}

We begin by recalling from \cite[\S 2.3]{CosEffThy} the definition of the renormalization group flow in commutative geometry and its most basic and fundamental properties.

\begin{defi}
Let $\mathcal{E}$ be a free BV-theory and suppose that we are given families of interactions and propagators
\[ I\in\intcommI{\mathcal{E},\mathcal{A}} \quad\text{and}\quad P\in\mathcal{E}\cotimes\mathcal{E}\cotimes\mathcal{A}. \]
We define the renormalization group flow by
\[ W(I,P) := \sum_{\Gamma} \frac{\hbar^{\ell(\Gamma)}}{|\Aut(\Gamma)|}w_{\Gamma}(I,P) \in \intcommI{\mathcal{E},\mathcal{A}}, \]
where the sum is taken over all (isomorphism classes of) connected stable graphs.
\end{defi}

The flow is continuous, and satisfies the following fundamental identities that describe the action of the group of propagators on the space of interactions; see Lemma 3.3.1 and Lemma 3.4.1 of \cite[\S 2.3]{CosEffThy}.

\begin{theorem}
Given families
\[ I\in\intcommI{\mathcal{E},\mathcal{A}} \quad\text{and}\quad P_1,P_2\in\mathcal{E}\cotimes\mathcal{E}\cotimes\mathcal{A} \]
of interactions and propagators for a free BV-theory $\mathcal{E}$ we have:
\begin{align*}
W(I,0) &= I, \\
W(I,P_1+P_2) &= W(W(I,P_1),P_2).
\end{align*}
\end{theorem}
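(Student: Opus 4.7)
The plan is to prove the two identities directly from the definition of $W$ as a sum over connected stable graphs. For the identity $W(I,0)=I$, note that attaching the zero propagator to any edge $e$ of a graph $\Gamma$ forces $P(e)=0$, so $w_\Gamma(I,0)$ vanishes unless $\Gamma$ has no edges. The only edgeless connected stable graphs are corollas, and a corolla with loop number $\ell$ and $j$ legs has automorphism group $\sg{j}$ and contributes (up to the factor $1/|\sg{j}|$) the image of $I_{\ell j}$ under the canonical map $\bigl[(\mathcal{E}^{\dag})^{\cotimes j}\bigr]_{\sg{j}} \to \bigl[\mathcal{E}^{\dag}(\!(L(\Gamma))\!)\bigr] \to \bigl[(\mathcal{E}^{\dag})^{\cotimes j}\bigr]_{\sg{j}}$, which is multiplication by $|\sg{j}|$. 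These factors cancel and the sum reproduces $I$ term by term.

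For the composition identity, the key observation is that the Feynman amplitude depends multilinearly on the tensor $P(\Gamma) = \otimes_e P(e)$ assigned to the edges; consequently expanding $P(e) = P_1(e) + P_2(e)$ on every edge gives
\[ w_\Gamma(I,P_1+P_2) = \sum_{c:E(\Gamma)\to\{1,2\}} w_\Gamma^c(I,P_1,P_2), \]
where $w_\Gamma^c$ denotes the weight obtained by inserting $P_{c(e)}$ on edge $e$. I would then reorganize this sum over pairs $(\Gamma,c)$ by grouping according to the ``outer'' connected stable graph $\Gamma'$ obtained by contracting all edges colored $1$ and retaining only those colored $2$. Each $(\Gamma,c)$ decomposes uniquely up to isomorphism as the pair consisting of $\Gamma'$ together with a connected stable graph $\Gamma'_v$ at every vertex $v\in V(\Gamma')$, whose legs are identified with the incident half-edges of $v$.

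Under this decomposition, multiplicativity of the Feynman amplitude over vertices and edges factorizes the weight as
\[ w_\Gamma^c(I,P_1,P_2) = w_{\Gamma'}\!\Bigl(\bigl\{\hbar^{\ell(\Gamma'_v)}w_{\Gamma'_v}(I,P_1)\bigr\}_{v\in V(\Gamma')}, P_2\Bigr), \]
where the notation indicates that the effective vertex data at $v\in V(\Gamma')$ is the Feynman weight obtained by summing over subgraphs $\Gamma'_v$ with the appropriate number of legs and loop number $l(v) = \ell(\Gamma'_v)$. Summing over all choices of $\{\Gamma'_v\}_{v\in V(\Gamma')}$ collapses these effective vertices into the components of $W(I,P_1)$ attached at each vertex of $\Gamma'$, and then summing over $\Gamma'$ produces $W(W(I,P_1),P_2)$.

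The main obstacle is the bookkeeping of automorphism factors and the $\hbar$-grading. For the $\hbar$-grading one uses the additivity $\mathbf{b}_1(\Gamma) = \mathbf{b}_1(\Gamma') + \sum_v \mathbf{b}_1(\Gamma'_v)$ together with the definition of $\ell$ to confirm $\hbar^{\ell(\Gamma)} = \hbar^{\ell(\Gamma')}\prod_v \hbar^{\ell(\Gamma'_v)}$. For the automorphism factors the standard orbit–stabilizer argument shows that the number of colorings $c$ producing a fixed decomposition $(\Gamma', \{\Gamma'_v\})$ equals $|\Aut(\Gamma)|/\bigl(|\Aut(\Gamma')|\prod_v |\Aut(\Gamma'_v)|\bigr)$, so that
\[ \sum_{c}\frac{1}{|\Aut(\Gamma)|} = \frac{1}{|\Aut(\Gamma')|\prod_v|\Aut(\Gamma'_v)|}, \]
which is exactly the combinatorial factor needed to match the product of weights on the right-hand side. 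Combined with the factorization of the Feynman weight and the $\hbar$-grading above, this completes the identification $W(I,P_1+P_2) = W(W(I,P_1),P_2)$.
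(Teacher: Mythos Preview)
The paper does not give its own proof of this statement; it simply cites Lemma 3.3.1 and Lemma 3.4.1 of \cite[\S 2.3]{CosEffThy}. Your approach---expand $(P_1+P_2)(\Gamma)$ multilinearly, reorganize by contracting the colour-$1$ edges, and match automorphism factors---is exactly the standard argument that appears there, so in spirit you are reproducing the intended proof.

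However, the automorphism identity you state is not correct as written. You claim that the number of colourings $c$ of $\Gamma$ yielding a fixed decomposition $(\Gamma',\{\Gamma'_v\})$ equals $|\Aut(\Gamma)|\big/\bigl(|\Aut(\Gamma')|\prod_v|\Aut(\Gamma'_v)|\bigr)$. Take $\Gamma$ to be a single vertex of loop number~$0$ with three legs and one self-loop, so $|\Aut(\Gamma)|=3!\cdot 2=12$. Colouring the loop by~$1$ yields $\Gamma'$ a corolla with $l=1$ and three legs, so $|\Aut(\Gamma')|=6$, while $\Gamma'_v=\Gamma$. Your formula gives $12/(6\cdot 12)=1/6$, which is not even an integer; in fact there is exactly one such colouring. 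The issue is that the decomposition data is not just the pair $(\Gamma',\{\Gamma'_v\})$ of isomorphism classes: one must also record the bijections $\iota_v$ identifying the legs of $\Gamma'_v$ with the incident half-edges of $v$ in $\Gamma'$, and the correct statement is an equivalence of groupoids between colourings $(\Gamma,c)$ and such triples $((\Gamma',v),\iota_v,\Gamma'_v)$, so that $|\Aut(\Gamma,c)|$ matches the automorphism group of the triple. Summing over the $\iota_v$ then absorbs the factor $\prod_v |v|!$ coming from the symmetrization map \eqref{eqn_attachtovertexsym} used to attach $W(I,P_1)_{l(v),|v|}$ at each vertex of $\Gamma'$. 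Once this is corrected---either by the groupoid-equivalence argument in the style of Theorem~\ref{thm_catequiv}, or by explicitly tracking the $|v|!$ factors---your proof goes through.
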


\subsubsection{The renormalization group flow in noncommutative geometry} \label{sec_rgflownoncomm}

We now recall from \cite{NCRGF} the corresponding details for the renormalization group flow in noncommutative geometry.

\begin{defi}
Let
\[ I\in\intnoncommI{\mathcal{E},\mathcal{A}} \quad\text{and}\quad P\in\mathcal{E}\cotimes\mathcal{E}\cotimes\mathcal{A} \]
be families of interactions and propagators for a free BV-theory $\mathcal{E}$. We define
\begin{equation} \label{eqn_RGFnoncomm}
W(I,P) := \sum_{\Gamma} \frac{\gamma^{g(\Gamma)}\nu^{b(\Gamma)}}{|\Aut(\Gamma)|}w_{\Gamma}(I,P) \in \intnoncommI{\mathcal{E},\mathcal{A}},
\end{equation}
where now the sum is taken over all connected stable \emph{ribbon} graphs.
\end{defi}

The following is Theorem 3.30 of \cite{NCRGF}.

\begin{theorem} \label{thm_intgrpact}
Let $\mathcal{E}$ be a free BV-theory. Given families of interactions and propagators
\[ I\in\intnoncommI{\mathcal{E},\mathcal{A}} \quad\text{and}\quad P_1,P_2\in\mathcal{E}\cotimes\mathcal{E}\cotimes\mathcal{A} \]
we have:
\begin{align}
\label{eqn_intgrpactzero}
W(I,0) &= I, \\
\label{eqn_intgrpactsum}
W(I,P_1+P_2) &= W(W(I,P_1),P_2).
\end{align}
\end{theorem}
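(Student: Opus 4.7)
The plan is to prove the two identities separately: \eqref{eqn_intgrpactzero} is a short combinatorial check, while \eqref{eqn_intgrpactsum} is the substantive statement and proceeds by two-coloring the edges of each graph in the expansion and then rearranging.

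For the vanishing property \eqref{eqn_intgrpactzero}, I would observe that the edge-propagator tensor $P(\Gamma)$ given by \eqref{eqn_graphpropagator} is a tensor product over the edges of $\Gamma$; setting $P=0$ therefore forces $w_\Gamma(I,0)=0$ as soon as $\Gamma$ has a single edge, and only corollas survive. A connected corolla $\Gamma$ is determined up to isomorphism by its vertex data $(g(v),b(v),C(v),|v|)$, its canonical leg-decomposition coincides with $C(v)$, and its Feynman weight is the image of the single tensor $I_{g(v)b(v)|C(v)||v|}$ under the symmetrization maps \eqref{eqn_attachmapscycdecompon} and \eqref{eqn_attachmapscycdecompoff}. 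Writing each $I_{ijkl}$ in its cyclic-word form as in Example \ref{exm_cyclicwordproduct} and applying a standard orbit-stabilizer argument to the action of $\sg{k}$ and the cyclic groups on representatives, one finds that the sum $\sum_\Gamma \gamma^{g(\Gamma)}\nu^{b(\Gamma)}|\Aut(\Gamma)|^{-1} w_\Gamma(I,0)$ reassembles $I = \sum_{ijkl} \gamma^i\nu^j I_{ijkl}$ exactly.

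For the semigroup identity \eqref{eqn_intgrpactsum}, the starting point is that $P(\Gamma)$ is linear in each edge slot, so
\[ w_\Gamma(I,P_1+P_2) = \sum_{E(\Gamma)=E_1\sqcup E_2} w_\Gamma^{E_1,E_2}(I;P_1,P_2), \]
where $w_\Gamma^{E_1,E_2}$ denotes the weight obtained by attaching $P_1$ to the edges in $E_1$ and $P_2$ to those in $E_2$. This recasts $W(I,P_1+P_2)$ as a sum over connected stable ribbon graphs $\Gamma$ equipped with a two-coloring of their edges. I would then set up a bijection between such colored graphs and pairs $(\Gamma_{\mathrm{out}},\{\Gamma^{(v)}_{\mathrm{in}}\}_v)$, where $\Gamma_{\mathrm{out}}$ is obtained by contracting all $E_1$-edges of $\Gamma$ and each $\Gamma^{(v)}_{\mathrm{in}}$ is the maximal connected $E_1$-subgraph that collapses onto the vertex $v$ of $\Gamma_{\mathrm{out}}$; Theorem \ref{thm_catequiv} (in iterated form) supplies the compatibility between the cyclic decomposition $C(v)$ and the canonical decomposition of the legs of $\Gamma^{(v)}_{\mathrm{in}}$. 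Expanding $W(W(I,P_1),P_2)$ according to its definition, the $W(I,P_1)$-factor attached at each vertex $v$ of an outer graph unfolds into precisely a sum of Feynman weights $w_{\Gamma^{(v)}_{\mathrm{in}}}(I,P_1)$ ranging over exactly these inner graphs, weighted by $\gamma^{g(\Gamma^{(v)}_{\mathrm{in}})}\nu^{b(\Gamma^{(v)}_{\mathrm{in}})}$.

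To conclude, I would match scalar coefficients on the two sides. The additivity relations
\[ g(\Gamma) = g(\Gamma_{\mathrm{out}}) + \sum_v g(\Gamma^{(v)}_{\mathrm{in}}), \qquad b(\Gamma) = b(\Gamma_{\mathrm{out}}) + \sum_v b(\Gamma^{(v)}_{\mathrm{in}}) \]
follow from the fact, noted after Definition \ref{def_contractedge}, that edge contraction preserves both genus and boundary, so the $\gamma$- and $\nu$-powers agree. The hard part will be the automorphism identity: one must verify that the sum of $|\Aut(\Gamma)|^{-1}$ over those $\Gamma$ producing a fixed pair $(\Gamma_{\mathrm{out}},\{\Gamma^{(v)}_{\mathrm{in}}\}_v)$ equals $|\Aut(\Gamma_{\mathrm{out}})|^{-1}\prod_v|\Aut(\Gamma^{(v)}_{\mathrm{in}})|^{-1}$. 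This is the standard orbit-stabilizer computation familiar from the commutative case \cite[\S 2.3]{CosEffThy}, but in the ribbon setting it requires careful attention to the cyclic automorphisms of the decompositions $C(v)$ and of the canonical leg-decompositions of the inner graphs; this combinatorial bookkeeping is the core technical content of Theorem 3.30 of \cite{NCRGF}.
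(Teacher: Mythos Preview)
The paper does not actually prove this theorem in the text; it simply records it as Theorem 3.30 of \cite{NCRGF} and moves on. Your sketch is the correct and standard argument, and it is essentially what that reference carries out: kill all non-corolla terms when $P=0$, and for the semigroup law expand $(P_1+P_2)(\Gamma)$ multilinearly into a sum over two-colorings of $E(\Gamma)$, then reorganize by contracting the $P_1$-colored subgraph.

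One small correction: the equivalence you need between a two-colored connected stable ribbon graph and the data $(\Gamma_{\mathrm{out}},\{\Gamma^{(v)}_{\mathrm{in}}\}_v,\{\iota_v\}_v)$ is not literally an iteration of Theorem~\ref{thm_catequiv}, which only handles a single distinguished edge. The general statement---contracting an arbitrary connected subgraph rather than a single edge---is Theorem~3.24 of \cite{NCRGF}, of which Theorem~\ref{thm_catequiv} is explicitly noted to be a special case. Citing that directly is cleaner than appealing to an ``iterated form.'' Apart from this, your outline is sound and the automorphism-count identity you flag is indeed the main combinatorial work.
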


\subsubsection{The tree-level flow}

The description of the renormalization group flow at the tree level is significantly simpler. We will see later in Section \ref{sec_CMEtreelevelflow} that it describes how solutions to the classical master equation transform and hence will be important when we come to talk about the process of quantization in the BV-formalism.

\begin{defi} \label{def_tree}
We will say that a stable ribbon graph (or stable graph) is a \emph{tree} if it is connected and has loop number zero.
\end{defi}

Trees are instances of ordinary ribbon graphs, cf. \cite[\S 5.5]{OpAlgTopPhys}; though the converse is not true.

\begin{defi}
Let $\mathcal{E}$ be a free BV-theory. We will say that $I\in\intnoncommI{\mathcal{E},\mathcal{A}}$ is a \emph{tree-level interaction} if
\[ I_{ijk}=0, \quad\text{whenever }(i,j,k)\neq(0,0,1). \]
The subspace of $\intnoncommI{\mathcal{E},\mathcal{A}}$ consisting of all tree-level interactions will be denoted by~$\intnoncommItree{\mathcal{E},\mathcal{A}}$.
\end{defi}

Consider the projection
\[ \intnoncommI{\mathcal{E},\mathcal{A}}\to\intnoncommItree{\mathcal{E},\mathcal{A}}, \qquad I\mapsto I_{001}. \]
The following is Proposition 3.36 of \cite{NCRGF}.

\begin{prop} \label{prop_treeflow}
Let $P\in\mathcal{E}\cotimes\mathcal{E}\cotimes\mathcal{A}$ be a family of propagators for a free BV-theory $\mathcal{E}$. Define $W^{\mathrm{Tree}}(I,P)$ for a tree-level interaction by the same formula as \eqref{eqn_RGFnoncomm}, except that now the sum is taken over all \emph{trees}. Then the following diagram commutes:
\begin{displaymath}
\xymatrix{
\intnoncommI{\mathcal{E},\mathcal{A}} \ar[d] \ar[rr]^{W(-,P)} && \intnoncommI{\mathcal{E},\mathcal{A}} \ar[d] \\ \intnoncommItree{\mathcal{E},\mathcal{A}} \ar[rr]^{W^{\mathrm{Tree}}(-,P)} && \intnoncommItree{\mathcal{E},\mathcal{A}}
}
\end{displaymath}
\end{prop}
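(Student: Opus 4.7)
The plan is to track precisely which stable ribbon graphs in the sum \eqref{eqn_RGFnoncomm} defining $W(I,P)$ survive under the projection $I \mapsto I_{001}$, and show that they are exactly the trees in the sense of Definition \ref{def_tree}. Commutativity of the diagram will then follow from the observation that on such graphs, only the tree-level part $I_{001}$ of $I$ is ever attached to a vertex.

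First I would unpack what ``projecting onto $\intnoncommItree{\mathcal{E},\mathcal{A}}$'' means Feynman-diagrammatically. By the definition of a tree-level interaction, the $(0,0,1)$ component picks out those terms of $W(I,P)$ that carry $\gamma^0\nu^0$ and lie in the length-one symmetric summand $\KontHamP{\mathcal{E}}$ of $\csalg{\KontHamP{\mathcal{E}}}\cotimes\mathcal{A}$. For a stable ribbon graph $\Gamma$ contributing to \eqref{eqn_RGFnoncomm}, the $\gamma$-exponent coming from $\Gamma$ is $g(\Gamma)+\sum_{v\in V(\Gamma)}g(v)$ and the $\nu$-exponent is $b(\Gamma)+\sum_{v\in V(\Gamma)}b(v)$, since $I(v)$ extracts the component $I_{g(v)b(v)|C(v)||v|}$ from the series expansion of $I$. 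Meanwhile, the Feynman weight $w_\Gamma(I,P)$ lies in the length-$|C(\Gamma/\Gamma)|$ symmetric summand by construction, since the canonical decomposition of the legs has exactly $|C(\Gamma/\Gamma)|$ cycles.

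Next I would combine these observations with the arithmetic of $\ell(\Gamma)$ to pin down the contributing graphs. Requiring $g(\Gamma)=0$, $b(\Gamma)=0$, and $g(v)=b(v)=0$ for every vertex leaves us with $\ell(\Gamma)=\mathbf{b}_1(\Gamma)+\sum_v(|C(v)|-1)$ by \eqref{eqn_loopnum}, while $b(\Gamma)=B(\Gamma)-|C(\Gamma/\Gamma)|=0$ combined with $|C(\Gamma/\Gamma)|=1$ forces $B(\Gamma)=1$. The formula $\ell(\Gamma)=2g(\Gamma)+B(\Gamma)-1$ then gives $\ell(\Gamma)=0$, so $\Gamma$ is a tree. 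Conversely, for a tree the vertex stability constraint $2l(v)+|v|\geq 3$ with $l(v)=0$ forces $|v|\geq 3$, which rules out the exceptional case in Definition \ref{def_contractedge} of a single-half-edge cycle being contracted; this ensures that iteratively contracting edges of a tree produces a single corolla with $|C(\Gamma/\Gamma)|=1$, whence $b(\Gamma)=0$ as required. Thus the contributing graphs are exactly the trees.

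Finally, on any tree $\Gamma$ all vertex data collapses to $(g(v),b(v),|C(v)|)=(0,0,1)$, so the local amplitude $I(v)$ depends only on $I_{001|v|}$, i.e.\ only on the tree-level part $I_{001}$ of $I$. Hence $w_\Gamma(I,P)=w_\Gamma(I_{001},P)$ for every tree $\Gamma$, and the factor $\gamma^{g(\Gamma)}\nu^{b(\Gamma)}$ reduces to $1$. Summing,
\[
W(I,P)_{001} \;=\; \sum_{\text{trees }\Gamma} \frac{1}{|\Aut(\Gamma)|}\, w_\Gamma(I_{001},P) \;=\; W^{\mathrm{Tree}}(I_{001},P),
\]
which is the commutativity of the diagram. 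The main obstacle is the bookkeeping in the second step: one must verify that the stability bound $2l(v)+|v|\geq 3$ genuinely precludes the exceptional contraction cases of Definition \ref{def_contractedge}, so that the expected combinatorial identity $|C(\Gamma/\Gamma)|=1$ for trees holds without degenerate contributions; everything else is essentially a repackaging of the definitions.
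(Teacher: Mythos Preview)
The paper does not actually prove this proposition; it simply records it as Proposition~3.36 of \cite{NCRGF}. Your argument supplies the direct combinatorial verification one would want, and it is essentially correct.

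One bookkeeping slip: the $\gamma$-exponent of the term indexed by $\Gamma$ in \eqref{eqn_RGFnoncomm} is $g(\Gamma)$, not $g(\Gamma)+\sum_{v}g(v)$; by the paper's definition $g(\Gamma)$ already contains $\sum_{v}g(v)$ as a summand, and likewise $b(\Gamma)$ already contains $\sum_{v}b(v)$. This double-counting is harmless for your conclusions, since $g(\Gamma)=0$ still forces each $g(v)=0$ (the remaining ``base'' summand being nonnegative), and similarly for $b$. In the converse direction you verify $|C(\Gamma/\Gamma)|=1$ and $b(\Gamma)=0$ for a tree but omit $g(\Gamma)=0$; this is immediate from $\ell(\Gamma)=2g(\Gamma)+B(\Gamma)-1=0$ together with $g(\Gamma)\geq 0$ and $B(\Gamma)\geq 1$, and deserves a sentence. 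Finally, your claim that the exceptional contraction case never arises is correct but slightly under-argued: you invoke $|v|\geq 3$ only for the \emph{initial} vertices, whereas one must observe that in a tree every contraction is of type~(1) in Definition~\ref{def_contractedge} and the resulting merged cycle has $|c_1|+|c_2|-2\geq 4$ elements, so cycle sizes only grow and the single-half-edge exception is never triggered.
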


\subsubsection{Transformation properties of the flow}

The definition of the renormalization group flow in noncommutative geometry has two important properties. The first is that it lifts the renormalization group flow defined in commutative geometry. The second is that it is compatible with the notion of a two-dimensional Open Topological Field Theory (OTFT), which are of course more commonly known as Frobenius algebras \cite{AtiyahTFT}. Here we will recall from \cite{NCRGF} these two basic facts, beginning with the first.

\begin{defi} \label{def_mapNCtoCom}
Let $\mathcal{E}$ be a free BV-theory and consider the natural quotient map
\[ \sigma:\KontHamP{\mathcal{E}}\to\csalgPdelim{(\mathcal{E}^{\dag})}\subset\intcomm{\mathcal{E}} \]
in which we pass from $\cyc{n}$ to $\sg{n}$-coinvariants. Extend this to a map
\[ \sigma_{\gamma,\nu}:\intnoncomm{\mathcal{E}}\to\intcomm{\mathcal{E}} \]
by defining
\[ \sigma_{\gamma,\nu}(\gamma^i\nu^j x_1\cdots x_k) = \hbar^{2i+j+k-1}\sigma(x_1)\cdots\sigma(x_k), \quad x_1,\ldots,x_k\in\KontHamP{\mathcal{E}}. \]
\end{defi}

The following is Theorem 3.38 of \cite{NCRGF}.

\begin{theorem} \label{thm_flowNCtoCom}
Let $\mathcal{E}$ be a free BV-theory and $P\in\mathcal{E}\cotimes\mathcal{E}\cotimes\mathcal{A}$ be a family of propagators. The map $\sigma_{\gamma,\nu}$ intertwines the commutative and noncommutative renormalization group flows; that is, the following diagram commutes:
\begin{displaymath}
\xymatrix{
\intnoncommI{\mathcal{E},\mathcal{A}} \ar[rr]^{W(-,P)} \ar[d]_{\sigma_{\gamma,\nu}} && \intnoncommI{\mathcal{E},\mathcal{A}} \ar[d]^{\sigma_{\gamma,\nu}} \\
\intcommI{\mathcal{E},\mathcal{A}} \ar[rr]^{W(-,P)} && \intcommI{\mathcal{E},\mathcal{A}}
}
\end{displaymath}
\end{theorem}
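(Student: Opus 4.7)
The plan is to organize the sum over connected stable ribbon graphs defining the noncommutative flow $W(I,P)$ by fibering over the natural forgetful map to connected stable graphs. Given a stable ribbon graph $\Gamma$, let $\Gamma^{\flat}$ denote the underlying stable graph obtained by forgetting the cyclic decompositions and setting $l(v) := 2g(v) + b(v) + |C(v)| - 1$ at each vertex; this is well-defined since $l(v)$ is precisely the loop number in the sense of Definition \ref{def_stablegraph}. Using $b(\Gamma) + |C(\Gamma/\Gamma)| = B(\Gamma)$ and $\ell(\Gamma) = 2g(\Gamma) + B(\Gamma) - 1 = \ell(\Gamma^{\flat})$, one verifies that $\sigma_{\gamma,\nu}$ converts the coefficient $\gamma^{g(\Gamma)}\nu^{b(\Gamma)}$ in front of the ribbon weight $w_{\Gamma}(I,P)$---which carries $|C(\Gamma/\Gamma)|$ tensor factors in $\KontHamP{\mathcal{E}}$ by construction---into $\hbar^{\ell(\Gamma^{\flat})}$, matching exactly the coefficient in front of the symmetric weight $w_{\Gamma^{\flat}}(\sigma_{\gamma,\nu}(I),P)$ appearing in the commutative flow.

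The heart of the argument is a vertex-local identity. By the definition of $\sigma_{\gamma,\nu}$, the component of the commutative interaction $\sigma_{\gamma,\nu}(I)$ at vertex loop number $l$ and valency $n$ is $\sum_{(g,b,k):\,2g+b+k-1 = l}\sigma(I_{g,b,k,n})$, where $\sigma$ applied to $I_{g,b,k,n}\in[\KontHamP{\mathcal{E}}^{\cotimes k}]_{\sg{k}}\cotimes\mathcal{A}$ sends each cyclic factor to its image in $\csalgPdelim{(\mathcal{E}^{\dag})}$. I claim that, correspondingly, at each vertex $v$ of valency $n$ in a stable graph, summing the noncommutative attachment \eqref{eqn_attachmapscycdecompon} of $I_{g,b,k,n}$ over all nonnegative integers $(g,b,k)$ with $2g+b+k-1 = l$ and all cyclic decompositions $C(v)$ of the incident half-edges into $k$ cycles reproduces the single commutative attachment of $\sum_{(g,b,k)}\sigma(I_{g,b,k,n})$ via \eqref{eqn_attachtovertexsym}. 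Since the propagators $P$ attached to the edges are the same on both sides and symmetric in $\mathcal{E}$, this vertex-local identity globalizes to an equality between $\sigma$ applied to the ribbon Feynman amplitude and the commutative Feynman amplitude of $\sigma_{\gamma,\nu}(I)$.

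Finally, the automorphism factors are compared using the groupoid cardinality principle: the group $\Aut(\Gamma^{\flat})$ acts on the set of ribbon decorations of a fixed stable graph $\Gamma^{\flat}$, with stabilizer at a decoration $D$ equal to the automorphism group of the resulting stable ribbon graph. Hence
\[ \sum_{[\Gamma]\,:\,\Gamma^{\flat} = \Gamma_0}\frac{1}{|\Aut(\Gamma)|}\sigma_{\gamma,\nu}\bigl(\gamma^{g(\Gamma)}\nu^{b(\Gamma)}w_{\Gamma}(I,P)\bigr) = \frac{\hbar^{\ell(\Gamma_0)}}{|\Aut(\Gamma_0)|}\sum_{D}\sigma\bigl(w_{(\Gamma_0,D)}(I,P)\bigr), \]
and applying the vertex-local identity at every vertex collapses the inner sum into $w_{\Gamma_0}(\sigma_{\gamma,\nu}(I),P)$, which is exactly the $\Gamma_0$-contribution to $W(\sigma_{\gamma,\nu}(I),P)$. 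The main obstacle is the vertex-local identity, which requires careful bookkeeping of three superimposed layers of symmetrization---cyclic symmetrization within each cycle of $C(v)$, the $\sg{k}$-action across the $k$ cycles, and the summation over all cyclic decompositions with a given number of cycles---in order to confirm that their combined effect reproduces the plain $\sg{n}$-symmetrization on the commutative side.
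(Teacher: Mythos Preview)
The paper does not supply a proof here; the result is imported as Theorem~3.38 of \cite{NCRGF}. Your outline follows the natural strategy and is correct: fiber the noncommutative sum over the forgetful map $\Gamma\mapsto\Gamma^{\flat}$, match the coefficients via $2g(\Gamma)+b(\Gamma)+|C(\Gamma/\Gamma)|-1=\ell(\Gamma)=\ell(\Gamma^{\flat})$, handle the automorphism factors by orbit--stabilizer, and reduce to a vertex-local identity. The combinatorial core you single out---that the triple symmetrization (cyclic within each cycle, $\sg{k}$ across the cycles, and the sum over cyclic decompositions with $k$ parts) reproduces the full $\sg{n}$-symmetrization in \eqref{eqn_attachtovertexsym}---is exactly the point. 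It follows from the bijection between pairs consisting of a bijection $\{1,\ldots,n\}\to v$ together with an ordered composition $n=r_1+\cdots+r_k$, and triples consisting of a cyclic decomposition $C$ of $v$ into $k$ cycles, an ordering of the cycles, and a linear ordering of each cycle compatible with its cyclic order. Since you flag this step as the main obstacle without executing it, the proposal is a correct plan rather than a complete argument, but nothing in it would fail.
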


Next we address the second property. Suppose that $\textgoth{A}$ is a (unital) $\mathbb{Z}$-graded Frobenius algebra, which means that it has a degree zero nondegenerate trace map $\Tr$ for which there is a unique symmetric degree zero tensor
\[ \innprod_{\textgoth{A}}^{-1} = \sum_i x_i\otimes y_i \in \textgoth{A}\otimes \textgoth{A} \]
satisfying
\[ a = \sum_i \Tr(ax_i)y_i, \quad\text{for all }a\in \textgoth{A}. \]

We may define a family of multilinear maps from our Frobenius algebra by considering the maps that are assigned to standard surfaces by the OTFT that is constructed from $\textgoth{A}$. Here we will be as brief as possible, simply providing formulas; but more details are available from \cite{NCRGF} as well as \cite[\S 3]{ChLaOTFT}.

\begin{defi}
Let $\textgoth{A}$ be a Frobenius algebra and consider the degree zero central elements
\[ \Xi_{\mathrm{bdry}} := \sum_i x_i y_i \quad\text{and}\quad \Xi_{\mathrm{gen}} := \sum_{i,j} (-1)^{|x_j||y_i|}x_ix_jy_iy_j. \]
These are used to define multilinear maps
\[
\OTFT{g,b}{r_1,\ldots,r_k}:\textgoth{A}^{\otimes r_1}\otimes\cdots\otimes \textgoth{A}^{\otimes r_k}\to\gf; \quad g,b\geq 0; r_1,\ldots,r_k\geq 1;
\]
according to the formula
\begin{multline*}
\OTFT{g,b}{r_1,\ldots,r_k}(a_{11},\ldots,a_{1r_1};\ldots;a_{k1},\ldots, a_{kr_k}) := \\
\sum_{i_1,\ldots,i_k} (-1)^p \Tr\bigl(x_{i_k}\cdots x_{i_1}\bigr)\Tr\bigl(\Xi_{\mathrm{bdry}}^b\Xi_{\mathrm{gen}}^g y_{i_1}a_{11}\cdots a_{1r_1}\cdots y_{i_k}a_{k1}\cdots a_{kr_k}\bigr),
\end{multline*}
where the sign given by the Koszul sign rule is $p:=\sum_{1\leq t<s\leq k}\sum_{j=1}^{r_t}|y_{i_s}||a_{tj}|$.
\end{defi}

An important example, with implications for the large $N$ correspondence, occurs when we take $\textgoth{A}$ to be a matrix algebra.

\begin{example} \label{exm_matrixOTFT}
Consider the Frobenius algebra of $N$-by-$N$ matrices $\textgoth{A}:=\mat{N}{\gf}$. The OTFT maps are
\[ \OTFT{g,b}{r_1,\ldots,r_k}(A_{11},\ldots,A_{1r_1};\ldots;A_{k1},\ldots,A_{kr_k}) = N^b\Tr(A_{11}\cdots A_{1r_1})\cdots\Tr(A_{k1}\cdots A_{kr_k}). \]
Details may be found in \cite[\S 4.5]{GiGqHaZeLQT}.
\end{example}

The following construction for a free theory is similar to Example \ref{exm_Frobeniusbundle}.

\begin{example} \label{exm_OTFTfreethy}
Let $\mathcal{E}=\Gamma(M,E)$ be a free BV-theory and $\textgoth{A}$ be a differential graded Frobenius algebra and consider the tensor product $E\otimes \textgoth{A}$ of the vector bundle $E$ with the trivial bundle $\textgoth{A}$. Denote the sections by
\[ \mathcal{E}_{\textgoth{A}}:=\Gamma(M,E\otimes \textgoth{A}) = \mathcal{E}\otimes \textgoth{A}. \]
We may define a local pairing on this bundle by
\[ \innprodloc[v_1\otimes a_1,v_2\otimes a_2]^{\textgoth{A}} := (-1)^{|a_1||v_2|}\innprodloc[v_1,v_2]\Tr(a_1 a_2).  \]
We may give $\mathcal{E}_{\textgoth{A}}$ the structure of a free BV-theory by defining the corresponding differential operator by
\[ Q_{\textgoth{A}} := (Q\otimes\mathds{1}+\mathds{1}\otimes d_{\textgoth{A}}):\mathcal{E}\otimes\textgoth{A}\to\mathcal{E}\otimes\textgoth{A}. \]

Now suppose that
\[ Q^{\mathrm{GF}}:\mathcal{E}\to\mathcal{E}\cotimes\mathcal{A} \]
is a family of gauge-fixing operators for $\mathcal{E}$. We may define a family of gauge-fixing operators for $\mathcal{E}_{\textgoth{A}}$ by
\[ Q^{\mathrm{GF}}_{\textgoth{A}} := (\mathds{1}\cotimes\tau)(Q^{\mathrm{GF}}\cotimes\mathds{1}):\mathcal{E}_{\textgoth{A}}\to\mathcal{E}_{\textgoth{A}}\cotimes\mathcal{A}. \]
The heat kernel for this family of gauge-fixing operators will be
\begin{equation} \label{eqn_Frobeniusheatkernel}
K_{\textgoth{A}} = K\otimes\innprod_{\textgoth{A}}^{-1} \in \smooth{0,\infty}\underset{\mathbb{R}}{\cotimes}\mathcal{E}_{\textgoth{A}}\cotimes\mathcal{E}_{\textgoth{A}}\cotimes\mathcal{A},
\end{equation}
where $K$ is the heat kernel for the family $Q^{\mathrm{GF}}$. Consequently, if $P_{\textgoth{A}}$ denotes the family of propagators on $\mathcal{E}_{\textgoth{A}}$ defined by Equation \eqref{eqn_canonicalpropagator} for the family $Q^{\mathrm{GF}}_{\textgoth{A}}$, then
\begin{equation} \label{eqn_OTFTpropagator}
P_{\textgoth{A}} = P\otimes\innprod_{\textgoth{A}}^{-1} \in \mathcal{E}_{\textgoth{A}}\cotimes\mathcal{E}_{\textgoth{A}}\cotimes\mathcal{A}, \end{equation}
where $P$ is the family of propagators associated by \eqref{eqn_canonicalpropagator} to the family $Q^{\mathrm{GF}}$.

We will be particularly interested in the case when $\mathcal{E}$ is the de Rham algebra and $\textgoth{A}$ is a matrix algebra, which will lead us to field theories based on spaces of connections.
\end{example}

\begin{defi} \label{def_OTFTtransformation}
Suppose that $\mathcal{E}$ is a free BV-theory and $\textgoth{A}$ is a Frobenius algebra and recall from Example \ref{exm_cyclicwordproduct} that for any $I\in\intnoncomm{\mathcal{E}}$ we may represent
\[ I_{ijkl} = \sum_{\mathbf{r}}\lceil I_{ijkl}^{\mathbf{r}}\rceil \]
for some (nonunique) choice of distributions $I_{ijkl}^\mathbf{r}\in(\mathcal{E}^{\dag})^{\cotimes l}$.

We use the above representation to define a map
\[ \Morita:\intnoncomm{\mathcal{E}}\longrightarrow\intnoncomm{\mathcal{E}_{\textgoth{A}}}, \qquad I=\sum_{i,j,k,l,\mathbf{r}}\gamma^i\nu^j\lceil I_{ijkl}^{\mathbf{r}}\rceil \longmapsto \sum_{i,j,k,l,\mathbf{r}}\gamma^i\nu^j\lceil I_{ijkl}^{\mathbf{r}}\otimes\OTFT{i,j}{\mathbf{r}}\rceil; \]
where
\[ I_{ijkl}^{\mathbf{r}}\otimes\OTFT{i,j}{\mathbf{r}}\in(\mathcal{E}^{\dag})^{\cotimes l}\otimes(\textgoth{A}^{\dag})^{\otimes l} = (\mathcal{E}_{\textgoth{A}}^{\dag})^{\cotimes l} \]
and $\lceil I_{ijkl}^{\mathbf{r}}\otimes\OTFT{i,j}{\mathbf{r}}\rceil$ denotes, as before, its image in $\intnoncomm{\mathcal{E}_{\textgoth{A}}}$ under the map \eqref{eqn_attachmapscycdecompoff} determined by the cyclic decomposition $C_{\mathbf{r}}$ defined by \eqref{eqn_cycdecpartition}.
\end{defi}

The symmetry properties for an OTFT ensure that the map $\Morita$ is well-defined, see \cite[\S 3.3.1]{NCRGF}. Theorem 3.44 of \cite{NCRGF} states that this map is compatible with the renormalization group flow.

\begin{theorem} \label{thm_flowOTFT}
Let $\mathcal{E}$ be a free BV-theory and $\textgoth{A}$ be a Frobenius algebra. Given a family of propagators $P\in\mathcal{E}\cotimes\mathcal{E}\cotimes\mathcal{A}$, consider the family of propagators $P_{\textgoth{A}}$ for $\mathcal{E}_{\textgoth{A}}$ that is defined by Equation \eqref{eqn_OTFTpropagator}. Then the following diagram commutes:
\begin{displaymath}
\xymatrix{
\intnoncommI{\mathcal{E},\mathcal{A}} \ar[rr]^{W(-,P)} \ar[d]_{\Morita} && \intnoncommI{\mathcal{E},\mathcal{A}} \ar[d]^{\Morita} \\
\intnoncommI{\mathcal{E}_{\textgoth{A}},\mathcal{A}} \ar[rr]^{W(-,P_{\textgoth{A}})} && \intnoncommI{\mathcal{E}_{\textgoth{A}},\mathcal{A}}
}
\end{displaymath}
\end{theorem}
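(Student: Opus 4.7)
The plan is to reduce to a per-graph identity and then exploit the factorization $P_{\textgoth{A}}=P\otimes\innprod_{\textgoth{A}}^{-1}$ together with the sewing axiom for a two-dimensional Open TFT. Since both compositions in the square are given by $\tfrac{\gamma^{g(\Gamma)}\nu^{b(\Gamma)}}{|\Aut(\Gamma)|}$-weighted sums over isomorphism classes of connected stable ribbon graphs, it suffices to prove, for every such $\Gamma$, that
\[ \Morita\bigl(w_{\Gamma}(I,P)\bigr) = w_{\Gamma}\bigl(\Morita(I),P_{\textgoth{A}}\bigr). \]

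To unwind the right-hand side, I would write $I$ in the canonical cyclic form $I_{ijkl}=\sum_{\mathbf{r}}\lceil I_{ijkl}^{\mathbf{r}}\rceil$ of Example \ref{exm_cyclicwordproduct}, so that $\Morita(I)$ attaches an OTFT factor $\OTFT{g(v),b(v)}{\mathbf{r}(v)}$ at each vertex $v$ of $\Gamma$. Because the pairing on $\mathcal{E}_{\textgoth{A}}$ factors (up to a Koszul sign) as the pairing on $\mathcal{E}$ tensored with the trace pairing on $\textgoth{A}$, and because $P_{\textgoth{A}}=P\otimes\innprod_{\textgoth{A}}^{-1}$, the Feynman amplitude $F_{\Gamma}(\Morita(I),P_{\textgoth{A}})$ splits as $F_{\Gamma}(I,P)$ tensored with a purely Frobenius-algebraic amplitude $F^{\textgoth{A}}_{\Gamma}$ obtained by placing $\OTFT{g(v),b(v)}{\mathbf{r}(v)}$ at every vertex and gluing them along the edges using $\innprod_{\textgoth{A}}^{-1}$.

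The key step is to identify $F^{\textgoth{A}}_{\Gamma}$ with the single OTFT operator $\OTFT{g(\Gamma),b(\Gamma)}{\mathbf{r}(\Gamma)}$ attached to the canonical cyclic decomposition $\mathbf{r}(\Gamma)$ of the legs of $\Gamma$ from Section \ref{sec_stabribgraph}. This is exactly the content of two-dimensional Open TFT: the operator assigned to an oriented bordered surface depends only on its topological type, and gluing is governed by the Frobenius sewing identity $a=\sum_{i}\Tr(ax_i)y_i$. I would establish this by induction on $|E(\Gamma)|$, contracting one edge at a time. The rules of Definition \ref{def_contractedge} describe exactly how $g(v)$, $b(v)$, and the cycle data of $\Gamma$ evolve under contraction, and in each case the matching algebraic identity is forced: the ``loop within a single cycle'' case produces an extra $\Xi_{\mathrm{gen}}$ (hence the increment in $g$), while the degenerate sub-cases in which a cycle would become empty produce $\Xi_{\mathrm{bdry}}$ (hence the increment in $b$ by one or two). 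The base case is the corolla $\Gamma/\Gamma$, where $F^{\textgoth{A}}_{\Gamma/\Gamma}=\OTFT{g(\Gamma),b(\Gamma)}{\mathbf{r}(\Gamma)}$ by definition.

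Once this identification is in hand, the right-hand side becomes $w_{\Gamma}(I,P)\otimes\lceil\OTFT{g(\Gamma),b(\Gamma)}{\mathbf{r}(\Gamma)}\rceil$, which matches $\Morita\bigl(w_{\Gamma}(I,P)\bigr)$ by Definition \ref{def_OTFTtransformation}. Summing over $\Gamma$ with the common automorphism weights and collecting powers of $\gamma$ and $\nu$ finishes the proof. The main obstacle is the case-by-case matching of the contraction rules of Definition \ref{def_contractedge} with the Frobenius-algebraic sewing identities---especially the degenerate sub-cases where a cycle collapses and the boundary count jumps---but once the inductive framework is in place each case reduces to a direct manipulation of $\Xi_{\mathrm{bdry}}$, $\Xi_{\mathrm{gen}}$ and the defining identity for the dual basis $\{x_i,y_i\}$.
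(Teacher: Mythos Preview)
Your argument is correct and follows the natural strategy: reduce to a single graph, split the amplitude via the tensor factorization $P_{\textgoth{A}}=P\otimes\innprod_{\textgoth{A}}^{-1}$, and identify the purely Frobenius-algebraic factor $F^{\textgoth{A}}_{\Gamma}$ with the single OTFT operator $\OTFT{g(\Gamma),b(\Gamma)}{\mathbf{r}(\Gamma)}$ by induction on the number of edges, matching the contraction rules of Definition~\ref{def_contractedge} to the sewing identities of the Frobenius algebra. The case analysis you flag---in particular the degenerate sub-cases where cycles collapse and the boundary count jumps---is indeed the technical heart, but each case is a direct manipulation of $\Xi_{\mathrm{bdry}}$, $\Xi_{\mathrm{gen}}$ and the dual-basis identity, as you say.

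Note, however, that the present paper does not actually prove this theorem: it is stated here without argument and attributed to Theorem~3.44 of \cite{NCRGF}. So there is no proof in this paper against which to compare yours. Your reconstruction is the expected one, and the surrounding discussion (the construction of $\Morita$ from the OTFT structure maps, the references to \cite{ChLaOTFT} and \cite{HamNCBV}) strongly suggests that the cited proof proceeds along the same lines.
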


\subsection{Pretheories}

In this section we will use the renormalization group flow to introduce the definition of a pretheory. In \cite{NCRGF} these were referred to as effective field theories; but as we have explained, as in \cite{CosEffThy}, we wish to reserve this nomenclature for theories satisfying the quantum master equation.

\subsubsection{Local functionals}

The definition of a theory will contain a locality requirement based on Definition \ref{def_localdistribution}.

\begin{defi}
Given a free BV-theory $\mathcal{E}$, we will say that a functional $I\in\intcomm{\mathcal{E},\mathcal{A}}$ is \emph{local} if for all $i,j\geq 0$,
\[ I_{ij}\in\bigl[(\mathcal{E}^{\dag})^{\cotimes j}\bigr]_{\sg{j}}\cotimes\mathcal{A} \]
may be represented by a local $\mathcal{A}$-valued distribution. We will denote the subspace of all local functionals by $\intcommL{\mathcal{E},\mathcal{A}}$ and the subspace of all local interactions by $\intcommLI{\mathcal{E},\mathcal{A}}$.

Likewise, in the noncommutative case we will say that a functional $I\in\intnoncomm{\mathcal{E},\mathcal{A}}$ is \emph{local} if for every $i,j,k,l\geq 0$, there is a representation
\[ I_{ijkl} = \sum_{\mathbf{r}}\lceil I_{ijkl}^{\mathbf{r}}\rceil, \quad I_{ijkl}^\mathbf{r}\in(\mathcal{E}^{\dag})^{\cotimes l}\cotimes\mathcal{A} \]
of the form \eqref{eqn_intcycrep} in which every $I_{ijkl}^{\mathbf{r}}$ is a local $\mathcal{A}$-valued distribution. Denote by $\intnoncommL{\mathcal{E},\mathcal{A}}$ the subspace consisting of all local functionals, and by $\intnoncommLI{\mathcal{E},\mathcal{A}}$ the subspace consisting of all local interactions.
\end{defi}

Note that the map $\sigma_{\gamma,\nu}$ of Definition \ref{def_mapNCtoCom} and the map $\Morita$ defined by Definition \ref{def_OTFTtransformation} both send local functionals to local functionals.

\subsubsection{Renormalization and counterterms}

Theories are constructed using counterterms through the process of renormalization. The first definition we require is that of a renormalization scheme.

\begin{defi}
Denote by $\smoothlimit{0,1}$, the subspace of $\smooth{0,1}$ consisting of all those functions that admit a limit as $\varepsilon\to 0$. A choice of complimentary subspace
\[ \smooth{0,1} = \smoothlimit{0,1}\oplus\smoothsing{0,1}. \]
will be called a \emph{renormalization scheme}.
\end{defi}

By Theorem 13.4.3 of \cite[\S 2.13.4]{CosEffThy}, there is a series of local counterterms for any local interaction in $\intcomm{\mathcal{E},\mathcal{A}}$. Theorem 4.9 of \cite{NCRGF} below deals with the noncommutative case.

\begin{theorem}
Let $I\in\intnoncommLI{\mathcal{E},\mathcal{A}}$ be a family of local interactions for a free BV-theory $\mathcal{E}$. Given a family of gauge-fixing operators $Q^{\mathrm{GF}}$, let $P(\varepsilon,L)$ be the propagator defined by Equation \eqref{eqn_canonicalpropagator} of Definition \ref{def_canonicalpropagator}. Then there is a unique series of purely singular local counterterms
\[ I^{\mathrm{CT}}\in\intnoncommI{\mathcal{E},\mathcal{A}}\underset{\mathbb{R}}{\cotimes}\smooth{0,1} \]
such that:
\begin{itemize}
\item
for all $i,j,k,l\geq 0$;
\[ I^{\mathrm{CT}}_{ijkl}\in\intnoncommL{\mathcal{E},\mathcal{A}}\underset{\mathbb{R}}{\otimes}\smoothsing{0,1}, \]
\item
and for all $L>0$, the limit
\[ \lim_{\varepsilon\to 0}W\bigl(I-I^{\mathrm{CT}}(\varepsilon),P(\varepsilon,L)\bigr) \]
exists, where convergence takes place in $\intnoncommI{\mathcal{E},\mathcal{A}}$. In the above expression, $I^{\mathrm{CT}}(\varepsilon)\in\intnoncommI{\mathcal{E},\mathcal{A}}$ denotes the value of $I^{\mathrm{CT}}$ at the point $\varepsilon\in (0,1)$.
\end{itemize}
\end{theorem}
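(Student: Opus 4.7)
The plan is to construct the counterterms inductively, following the strategy used by Costello in \cite[\S 2.13.4]{CosEffThy} for the commutative case, but organizing the induction according to a filtration on $\intnoncomm{\mathcal{E},\mathcal{A}}$ that is adapted to stable ribbon graphs. Concretely, I would order the components $I_{ijkl}$ by lexicographic weight on the pair $\bigl(2i+j+k-1,\,l\bigr)$, so that by the constraint \eqref{eqn_interactionNCconstraints} and Equation \eqref{eqn_RGFnoncomm}, the contribution of a stable ribbon graph $\Gamma$ to $W(I,P(\varepsilon,L))$ at weight $\alpha$ involves only vertices of strictly smaller weight, with the sole exception of the corolla contribution from $I$ itself. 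This filtration will allow us to close the induction at each step.

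Assume inductively that purely singular local counterterms have been constructed at all weights strictly less than $\alpha$, collected into $I^{\mathrm{CT}}_{<\alpha}(\varepsilon)$, such that the truncation of $W\bigl(I - I^{\mathrm{CT}}_{<\alpha}(\varepsilon), P(\varepsilon,L)\bigr)$ to weights $<\alpha$ admits a finite limit as $\varepsilon \to 0$ for every $L > 0$. Consider the weight-$\alpha$ component
\[ J_\alpha(\varepsilon, L) := \bigl[W\bigl(I - I^{\mathrm{CT}}_{<\alpha}(\varepsilon), P(\varepsilon,L)\bigr)\bigr]_\alpha. \]
The core analytic step, which I expect to be the main obstacle, is the following locality lemma: the part of $J_\alpha(\varepsilon, L)$ that is singular as $\varepsilon \to 0$ lies in $\intnoncommL{\mathcal{E},\mathcal{A}} \underset{\mathbb{R}}{\otimes}\smoothsing{0,1}$ and is independent of $L$. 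The proof of this lemma rests on the small-$t$ asymptotic expansion of the heat kernel $K_t$ for the generalized Laplacian $H$: when the expansion is substituted into the Feynman amplitude $F_\Gamma$ associated to each stable ribbon graph $\Gamma$ contributing at weight $\alpha$, the singular contributions collapse onto the diagonal of $M^{V(\Gamma)}$ and can be re-expressed as local $\mathcal{A}$-valued distributions. Crucially, the cyclic and ribbon decorations enter only through the symmetrization maps \eqref{eqn_attachmapscycdecompon} and \eqref{eqn_attachmapscycdecompoff}, not through the analytic integrands; hence the commutative renormalization analysis of \cite[\S 2.13.4]{CosEffThy} can be transported to the noncommutative framework by replacing the sum over stable graphs with the sum over stable ribbon graphs and tracking the cyclic attachment data. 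The $L$-independence of the singular part follows from Lemma \ref{lem_kernelhomotopy}, since differentiation in $L$ introduces only smooth heat-kernel factors.

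Granted the locality lemma, I would define $I^{\mathrm{CT}}_\alpha(\varepsilon)$ to be the negative of the singular part of $J_\alpha(\varepsilon, L)$, projected onto $\intnoncommL{\mathcal{E},\mathcal{A}} \underset{\mathbb{R}}{\otimes}\smoothsing{0,1}$ via the splitting provided by the chosen renormalization scheme $\smooth{0,1} = \smoothlimit{0,1}\oplus\smoothsing{0,1}$. Using the group law \eqref{eqn_intgrpactsum} and the fact that inserting a purely singular local counterterm at lower weight does not produce further divergences at weight $\alpha$ beyond those already accounted for, this choice ensures that
\[ \lim_{\varepsilon \to 0} W\bigl(I - (I^{\mathrm{CT}}_{<\alpha} + I^{\mathrm{CT}}_\alpha)(\varepsilon), P(\varepsilon,L)\bigr) \]
exists through weight $\alpha$, completing the induction step. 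The full counterterm $I^{\mathrm{CT}}$ is then obtained by assembling the $I^{\mathrm{CT}}_\alpha$ for all weights.

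For uniqueness, suppose $\tilde I^{\mathrm{CT}}$ satisfies the same two properties. Their difference $D(\varepsilon) := \tilde I^{\mathrm{CT}}(\varepsilon) - I^{\mathrm{CT}}(\varepsilon)$ lies in $\intnoncommL{\mathcal{E},\mathcal{A}}\underset{\mathbb{R}}{\otimes}\smoothsing{0,1}$, while the assumption that both choices produce convergent flows forces $\lim_{\varepsilon\to 0} D(\varepsilon)$ to exist in every weight (by an induction on weight mirroring the construction above, using \eqref{eqn_intgrpactzero} and \eqref{eqn_intgrpactsum}). Since elements of $\smoothsing{0,1}$ that admit a limit as $\varepsilon \to 0$ must vanish, $D \equiv 0$ weight by weight, giving uniqueness.
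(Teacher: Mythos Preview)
The paper does not actually prove this theorem; it is quoted verbatim as Theorem~4.9 of \cite{NCRGF}, with the proof deferred entirely to that reference. So there is no in-paper argument to compare against.

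That said, your outline is the expected one and matches the strategy of \cite{NCRGF}, which in turn transplants Costello's inductive renormalization from \cite[\S 2.13]{CosEffThy} to the ribbon-graph setting. The two observations you single out are exactly the right ones: first, that the analytic content of the Feynman amplitude depends only on the underlying graph and the local integrands, with the cyclic decorations entering only through the linear maps \eqref{eqn_attachmapscycdecompon}--\eqref{eqn_attachmapscycdecompoff}; second, that the $L$-derivative of the singular part vanishes because it introduces a smooth $K_L$ factor. One small point worth tightening: your claim that ``inserting a purely singular local counterterm at lower weight does not produce further divergences at weight~$\alpha$ beyond those already accounted for'' is not automatic and is where the filtration does real work. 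The correct statement is that any graph using the new counterterm $I^{\mathrm{CT}}_\alpha$ at a vertex, together with at least one edge, lands at strictly higher weight than~$\alpha$; so at weight~$\alpha$ the only new contribution from $I^{\mathrm{CT}}_\alpha$ is the corolla term, which by \eqref{eqn_intgrpactzero} simply subtracts the singular part. This is what closes the induction, and it follows from the valency constraint in \eqref{eqn_interactionNCconstraints} together with the additivity of loop number in \eqref{eqn_loopnum}.
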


\begin{rem}
We emphasize that in the above, the use of $\otimes$ denotes the ordinary algebraic tensor product in which only finite sums are allowed.
\end{rem}

A very basic result is that no counterterms are required for trees, see Lemma 4.13 of \cite{NCRGF}.

\begin{lemma} \label{lem_treecterterm}
Suppose that $I\in\intnoncommI{\mathcal{E},\mathcal{A}}$ is a family of interactions for a free BV-theory $\mathcal{E}$ for which $I_{001}$ is a local interaction, and let $P(\varepsilon,L)$ be a family of propagators of the form \eqref{eqn_canonicalpropagator} defined from a family of gauge-fixing operators $Q^{\mathrm{GF}}$.
\begin{enumerate}
\item \label{itm_treecterterm1}
Let $\Gamma$ be a connected stable ribbon graph such that:
\begin{itemize}
\item
the first Betti number $\mathbf{b}_1(\Gamma)$ vanishes, and
\item
the loop number of every vertex of $\Gamma$, except possibly one, vanishes.
\end{itemize}
Then for all $L>0$, the limit
\[ \lim_{\varepsilon\to 0} F_{\Gamma}(I,P(\varepsilon,L)) \]
exists in $\Hom_{\gf}(\mathcal{E}(\!(L(\Gamma))\!),\mathcal{A})$. Moreover, it then converges to zero as $L\to 0$, unless $\Gamma$ is a corolla.
\item \label{itm_treecterterm2}
The tree-level counterterms vanish,
\[ I_{001}^{\mathrm{CT}} = 0. \]
\end{enumerate}
\end{lemma}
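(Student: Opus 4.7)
I will prove part (1) by induction on $|E(\Gamma)|$, with part (2) following quickly from part (1) together with Proposition \ref{prop_treeflow}. The base case is $|E(\Gamma)|=0$: then $\Gamma$ is a corolla with a single vertex $v$, the Feynman amplitude equals $I(v)$ with no dependence on $\varepsilon$ or $L$, and both convergence claims are trivially satisfied (with the corolla being exactly the case excluded from the $L\to 0$ claim).

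For the inductive step, assume $\Gamma$ is a tree with at least one edge. Since at most one vertex of $\Gamma$ carries positive loop number, there exists a leaf vertex $v_0$ (incident to exactly one edge $e_0$ of $\Gamma$) whose loop number is zero; the interaction $I(v_0)$ is therefore drawn from $I_{001}$, which is local by hypothesis. I plan to reorganize the contraction so that the pairing at $v_0$ and along $e_0$ is performed first. Using the local form \eqref{eqn_localdistribution} of $I(v_0)$, this pairing produces, for each choice of smooth external-leg inputs $s_1,\ldots, s_{k-1}$ at $v_0$, an ``effective propagated input''
\[
s_0(x) \;:=\; \int_M (D_1 s_1)(y)\cdots(D_{k-1}s_{k-1})(y)\,(D_k)_y\bigl[P(\varepsilon,L)(y,x)\bigr]\,\mathrm{d}\varrho(y),
\]
which then enters $F_{\Gamma/e_0}(I,P(\varepsilon,L))$ at the far endpoint of $e_0$. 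The contracted graph $\Gamma/e_0$ again satisfies the hypotheses of part (1)—it is a tree with one fewer vertex in which at most one vertex carries positive loop number—so the inductive hypothesis yields the desired convergence of $F_\Gamma(I,P(\varepsilon,L))$ provided I can show that $s_0$ converges as $\varepsilon\to 0$ continuously in the smooth inputs $s_1,\ldots,s_{k-1}$.

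This last convergence is the principal analytic obstacle. Because the product $(D_1 s_1)\cdots(D_{k-1}s_{k-1})$ is a smooth function on the compact manifold $M$, the question reduces to showing that the convolution operator associated to $P(0,L)$—namely the integral operator $\int_0^L Q^{\mathrm{GF}}e^{-tH}\,\mathrm{d}t$—is continuous on $\mathcal{E}\cotimes\mathcal{A}$. Standard heat-kernel estimates on the compact manifold $M$ supply this: although $P(0,L)$ has diagonal singularities as a kernel, the smoothing operator $\int_0^L e^{-tH}\,\mathrm{d}t$ is bounded on $\mathcal{E}$, and post-composition with the first-order differential operator $Q^{\mathrm{GF}}$ preserves continuity. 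The $L\to 0$ claim then follows from the same reduction: every non-corolla tree acquires at least one factor of the form $P(0,L)\star$, and $P(0,L)\to 0$ in the operator topology as $L\to 0$ because the $t$-integration collapses.

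Part (2) now follows immediately. By Proposition \ref{prop_treeflow}, the tree-level component of $W(I-I^{\mathrm{CT}}(\varepsilon),P(\varepsilon,L))$ coincides with $W^{\mathrm{Tree}}(I_{001}-I^{\mathrm{CT}}_{001}(\varepsilon),P(\varepsilon,L))$, a sum of Feynman amplitudes indexed by trees whose vertices all have loop number zero. Each such amplitude falls within the scope of part (1), so its $\varepsilon\to 0$ limit exists with no subtraction whatsoever; the uniqueness of the counterterm series then forces $I^{\mathrm{CT}}_{001}=0$.
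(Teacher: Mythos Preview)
The paper does not give its own proof of this lemma; it simply cites Lemma~4.13 of \cite{NCRGF}. Your inductive leaf-removal argument is the standard one and is essentially correct, so there is no meaningful divergence to discuss. Two small points of precision are worth tightening before you write it up.

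First, the graph to which you feed the inductive hypothesis is not $\Gamma/e_0$. Contracting $e_0$ merges $v_0$ with its neighbour into a single vertex at which you would have to place an \emph{interaction}, not an external input. What you actually want is $\Gamma':=\Gamma\setminus\{v_0,e_0\}$, in which the half-edge of $e_0$ at the far endpoint becomes a new leg receiving your effective input $s_0$. This $\Gamma'$ is again a tree with one fewer edge, the interactions at all surviving vertices are unchanged, and at most one of them carries positive loop number---so the inductive hypothesis applies cleanly. Since both $F_{\Gamma'}(I,P(\varepsilon,L))$ and $s_0$ depend on $\varepsilon$, you should invoke Lemma~\ref{lem_compositionconverge} to conclude that the composite converges.

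Second, your displayed formula for $s_0$ does not quite typecheck: the operators $D_i$ of Definition~\ref{def_localdistribution} land in $C^\infty(M,\gf)$, so $(D_k)_y[P(\varepsilon,L)(y,x)]$ is a scalar and the integral does not return a section of $E$. The clean formulation uses Remark~\ref{rem_localdistribution}: write the local distribution at $v_0$ as ${\innprod[X(\,\cdot\,,\ldots,\,\cdot\,),\,\cdot\,]}_{\mathcal{A}}$ for a local operator $X$, rotate the edge half-edge into the last slot (using cyclic symmetry at $v_0$), and obtain
\[
s_0 \;=\; P(\varepsilon,L)\star X(s_1,\ldots,s_{k-1})\in\mathcal{E}\cotimes\mathcal{A}.
\]
Your analytic claim then becomes exactly the statement that the convolution operator $P(\varepsilon,L)\star=\int_\varepsilon^L Q^{\mathrm{GF}}(K_t\star)\,\mathrm{d}t$ converges on $\mathcal{E}\cotimes\mathcal{A}$ as $\varepsilon\to 0$ and tends to zero as $L\to 0$, which is immediate from the strong continuity of $t\mapsto K_t\star$ down to $t=0$ together with continuity of $Q^{\mathrm{GF}}$.
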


\subsubsection{The structure of a pretheory}

We begin with the definition of a pretheory in noncommutative geometry, which follows Definition 8.4.1 of a pretheory from \cite[\S 5.8.4]{CosEffThy} by replacing the space of interactions $\intcomm{\mathcal{E}}$ with $\intnoncomm{\mathcal{E}}$.

\begin{defi} \label{def_NCprethy}
Let $\mathcal{E}$ be a free BV-theory and $Q^{\mathrm{GF}}:\mathcal{E}\to\mathcal{E}\cotimes\mathcal{A}$ be a family of gauge-fixing operators. Let $P(\varepsilon,L)$ be the family of propagators that is defined from $Q^{\mathrm{GF}}$ by Equation \eqref{eqn_canonicalpropagator}.

Suppose that
\[ I\in\intnoncommI{\mathcal{E},\mathcal{A}}\underset{\mathbb{R}}{\cotimes}\smooth{0,\infty} \]
is an $L$-parameterized family of interactions, with $I[L]\in\intnoncommI{\mathcal{E},\mathcal{A}}$ denoting the interaction defined at the point $L>0$. We say that $I$ defines a (noncommutative) \emph{pretheory} if:
\begin{itemize}
\item
the renormalization group equation
\begin{equation} \label{eqn_RGequation}
I[L'] = W(I[L],P(L,L'))
\end{equation}
holds for all $L,L'>0$ and
\item
the interactions are asymptotically local; that is for all $i,j,k,l\geq 0$ there is a small $L$ asymptotic expansion
\[ I_{ijkl}[L]\simeq\sum_{r=1}^{\infty}f_r(L)\Phi_r, \]
where $f_r\in\smooth{0,\infty}$ and $\Phi_r\in\intnoncommLI{\mathcal{E},\mathcal{A}}$ are local interactions.
\end{itemize}
We will denote the set of all pretheories by
\[ \NCPreThy{\mathcal{E},\mathcal{A}}\subset\intnoncommI{\mathcal{E},\mathcal{A}}\underset{\mathbb{R}}{\cotimes}\smooth{0,\infty}. \]
When $\mathcal{A}=\gf$, we will denote this simply by $\NCPreThy{\mathcal{E}}$.
\end{defi}

\begin{rem} \label{rem_asymplocal}
By the term asymptotic expansion above, we mean that there is a monotonically increasing sequence of integers $d_R$, tending to infinity, such that for all $R$
\[ \lim_{L\to 0}L^{-d_R}\biggl(I_{ijkl}[L]-\sum_{r=1}^R f_r(L)\Phi_r\biggr) = 0. \]
\end{rem}

\begin{rem} \label{rem_prethy}
The notion of a (commutative) pretheory, introduced in \cite[\S 5.8.4]{CosEffThy}, is defined as above, except that we replace $\intnoncomm{\mathcal{E},\mathcal{A}}$ everywhere with $\intcomm{\mathcal{E},\mathcal{A}}$ and $I_{ijkl}[L]$ with $I_{ij}[L]$. We will denote the set of all such (commutative) pretheories by $\PreThy{\mathcal{E},\mathcal{A}}$
\end{rem}

\begin{rem} \label{rem_GFdependence}
The definition of a pretheory depends upon the family of gauge-fixing operators $Q^{\mathrm{GF}}$. We will denote the set of all pretheories by
\[ \NCPreThy{\mathcal{E},\mathcal{A};Q^{\mathrm{GF}}} \quad\text{and}\quad \PreThy{\mathcal{E},\mathcal{A};Q^{\mathrm{GF}}} \]
when we want to make this dependence explicit. When $\mathcal{A}=\gf$, we will simply omit $\mathcal{A}$ from the notation.
\end{rem}

More generally, we will need the notion of a level $p$ pretheory, which is provided through the use of a filtration based on the loop number.

\begin{defi} \label{def_NCfiltration}
Let $\mathcal{E}$ be a free BV-theory. Given $I\in\intnoncomm{\mathcal{E},\mathcal{A}}$ define
\[ I_{[n]} := \sum_{\begin{subarray}{c} i,j,k\geq 0: \\ 2i+j+k-1 = n\end{subarray}} \gamma^i\nu^j I_{ijk}. \]
A complete Hausdorff decreasing filtration on $\intnoncomm{\mathcal{E},\mathcal{A}}$ is defined by taking $F_p\intnoncomm{\mathcal{E},\mathcal{A}}$ to be the subspace of $\intnoncomm{\mathcal{E},\mathcal{A}}$ that consists of all those functionals $I$ satisfying
\[ I_{[n]}=0, \quad\text{for all } 0\leq n<p. \]
\end{defi}

Note that the tree-level part of an interaction $I\in\intnoncommI{\mathcal{E},\mathcal{A}}$ is given by $I_{[0]}=I_{001}$. Hence the space of tree-level interactions may  be described in terms of the filtration by
\[ \intnoncommItree{\mathcal{E},\mathcal{A}} = \intnoncommI{\mathcal{E},\mathcal{A}}/F_1 \intnoncommI{\mathcal{E},\mathcal{A}}. \]

The renormalization group flow is well-defined modulo $F_p$, which allows us to introduce the definition of a level $p$ pretheory; see Proposition 4.25 of \cite{NCRGF}, whose results we recall in Proposition \ref{prop_filterformula} at the end of this section.

\begin{defi}
Let $\mathcal{E}$ be a free BV-theory and $Q^{\mathrm{GF}}:\mathcal{E}\to\mathcal{E}\cotimes\mathcal{A}$ be a family of gauge-fixing operators. Let $P(\varepsilon,L)$ be the family of propagators that is defined from $Q^{\mathrm{GF}}$ by Equation \eqref{eqn_canonicalpropagator}.

Given $p\geq 0$ we say that
\[ I\in\bigl(\intnoncommI{\mathcal{E},\mathcal{A}}/F_{p+1}\intnoncommI{\mathcal{E},\mathcal{A}}\bigr)\underset{\mathbb{R}}{\cotimes}\smooth{0,\infty} \]
is a \emph{level $p$ (noncommutative) pretheory} if:
\begin{itemize}
\item
the renormalization group equation \eqref{eqn_RGequation} holds, and
\item
each $I_{ijkl}[L]$ is asymptotically local as $L\to 0$.
\end{itemize}

We denote the set of level $p$ pretheories by
\[ \NCPreThyL{p}{\mathcal{E},\mathcal{A}}\subset\bigl(\intnoncommI{\mathcal{E},\mathcal{A}}/F_{p+1}\intnoncommI{\mathcal{E},\mathcal{A}}\bigr)\underset{\mathbb{R}}{\cotimes}\smooth{0,\infty}. \]
A level zero pretheory will be called a \emph{tree-level pretheory}.
\end{defi}

\begin{rem}
As in Remark \ref{rem_prethy}, one can similarly define the notion of a (commutative) level $p$ pretheory, though we will not require it for our purposes in this paper.
\end{rem}

As in \cite{CosEffThy}, pretheories are produced from local interactions through renormalization.

\begin{defi} \label{def_intrenormalized}
Let $\mathcal{E}$ be a free BV-theory and $P(\varepsilon,L)$ be the family of propagators defined by Equation \eqref{eqn_canonicalpropagator} for a family of gauge-fixing operators $Q^{\mathrm{GF}}$. Given a local interaction $I\in\intnoncommLI{\mathcal{E},\mathcal{A}}$, we define a renormalized interaction
\[ I^\mathrm{R}[L] :=  \lim_{\varepsilon\to 0}W\bigl(I-I^{\mathrm{CT}}(\varepsilon),P(\varepsilon,L)\bigr), \quad L>0. \]
\end{defi}

The following is Theorem 4.35 of \cite{NCRGF}. It is the noncommutative analogue of one of the main theorems of \cite[\S 2.13.4]{CosEffThy}, Theorem 13.4.3.

\begin{theorem} \label{thm_locinteffthy}
Let $\mathcal{E}$ be a free BV-theory and $Q^{\mathrm{GF}}:\mathcal{E}\to\mathcal{E}\cotimes\mathcal{A}$ be a family of gauge-fixing operators.
\begin{enumerate}
\item
There is a one-to-one correspondence between local interactions and pretheories;
\[ \intnoncommLI{\mathcal{E},\mathcal{A}}\longrightarrow\NCPreThy{\mathcal{E},\mathcal{A}}, \qquad I\mapsto I^{\mathrm{R}}. \]
\item
There is a one-to-one correspondence between local interactions modulo $F_{p+1}$ and level $p$ pretheories;
\[ \intnoncommLI{\mathcal{E},\mathcal{A}}/F_{p+1}\intnoncommLI{\mathcal{E},\mathcal{A}}\longrightarrow\NCPreThyL{p}{\mathcal{E},\mathcal{A}}, \qquad I\mapsto I^{\mathrm{R}}. \]
\end{enumerate}
\end{theorem}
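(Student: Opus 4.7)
The plan is to prove this as the noncommutative analogue of Theorem 13.4.3 of \cite{CosEffThy}, and indeed this is essentially Theorem 4.35 of \cite{NCRGF}; the proof breaks into two parts: (i) verify that the renormalization map $I \mapsto I^{\mathrm{R}}$ actually produces a pretheory, and (ii) construct its inverse using the filtration of Definition \ref{def_NCfiltration}.

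For part (i), I would first check the renormalization group equation \eqref{eqn_RGequation} directly using the semigroup property \eqref{eqn_intgrpactsum} of Theorem \ref{thm_intgrpact} together with continuity of $W$ in its arguments:
\[
W\bigl(I^{\mathrm{R}}[L], P(L,L')\bigr) = \lim_{\varepsilon\to 0} W\bigl(W(I - I^{\mathrm{CT}}(\varepsilon), P(\varepsilon, L)), P(L,L')\bigr) = \lim_{\varepsilon\to 0} W\bigl(I - I^{\mathrm{CT}}(\varepsilon), P(\varepsilon, L')\bigr) = I^{\mathrm{R}}[L'].
\]
The asymptotic locality of $I^{\mathrm{R}}_{ijkl}[L]$ as $L\to 0$ would then follow from a careful small-$L$ analysis of each Feynman weight $w_{\Gamma}(I - I^{\mathrm{CT}}(\varepsilon), P(\varepsilon, L))$: trees contribute local pieces with no counterterm correction needed by Lemma \ref{lem_treecterterm}\eqref{itm_treecterterm2}, and higher loop contributions may be handled by decomposing each stable ribbon graph into a tree skeleton with small loop-subgraph insertions, which reduces the question to the small-$L$ behavior of corollas with loops, whose asymptotic local expansions assemble into the required series $\sum_r f_r(L)\Phi_r$.

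For part (ii), given a pretheory $J$, I would construct the corresponding local interaction $I$ by induction on $p$ using the level-$p$ statement. At the base case $p=0$, Proposition \ref{prop_treeflow} combined with Lemma \ref{lem_treecterterm} shows that the tree-level part $J_{001}[L]$ is itself the tree-level renormalization group image of a unique local tree-level interaction in $\intnoncommLI{\mathcal{E},\mathcal{A}}/F_1$, extracted as the $L\to 0$ limit (no counterterms are needed at this level). For the inductive step, suppose $I^{[p]}\in \intnoncommLI{\mathcal{E},\mathcal{A}}/F_{p}$ realizes $J$ modulo $F_p$. Then the difference $J[L] - (I^{[p]})^{\mathrm{R}}[L]$ lives in $F_p/F_{p+1}$, and its small-$L$ asymptotic expansion (which is well-defined because $J$ is a pretheory) produces both a local piece, to be added to $I^{[p]}$ to form $I^{[p+1]}$, and a purely singular piece, which matches the new counterterm at this level. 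Since the filtration $F_\bullet$ is complete Hausdorff by Definition \ref{def_NCfiltration}, passing to the inverse limit over $p$ yields a well-defined element of $\intnoncommLI{\mathcal{E},\mathcal{A}}$ whose renormalized flow recovers $J$; uniqueness follows because the inductive data at each level is uniquely determined by the pretheory modulo $F_{p+1}$.

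The main obstacle is the bookkeeping underlying asymptotic locality, both for the forward direction and for extracting the new local piece at each inductive level. The heart of the argument lies in showing that after subtracting the unique singular counterterms provided by the renormalization scheme, the remaining regular asymptotic coefficients are genuinely local $\mathcal{A}$-valued distributions in the sense of Definition \ref{def_localdistribution}. This essentially recapitulates the proof of Theorem 4.35 of \cite{NCRGF}, which in turn follows the template set out in Chapter 2 of \cite{CosEffThy} with the commutative tensor algebra replaced by the construction $\KontHam{\mathcal{E}}$ of Definition \ref{def_KontHam} and with ordinary stable graphs replaced by stable ribbon graphs throughout. The level $p$ version of the correspondence is established along the way as the inductive stepping-stone, and part (2) of the statement is precisely the assertion that this inductive construction is already meaningful after truncation by $F_{p+1}$.
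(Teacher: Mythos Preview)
The paper does not actually prove this theorem: it is stated there as a citation, ``The following is Theorem 4.35 of \cite{NCRGF},'' with no accompanying argument. Your proposal is therefore not competing with a proof in the paper but rather sketching the proof of the cited result, and your outline is consistent with the approach taken in \cite{NCRGF} (and its commutative template in \cite{CosEffThy}): establish that $I\mapsto I^{\mathrm{R}}$ lands in pretheories via the semigroup identity \eqref{eqn_intgrpactsum} and asymptotic-locality estimates on Feynman weights, then build the inverse inductively up the filtration $F_\bullet$.

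One point worth sharpening in your inductive step: the correction at level $p$ is not extracted directly from the asymptotic expansion of $J[L]-(I^{[p]})^{\mathrm{R}}[L]$ in the way you describe. Rather, one first shows (as in Proposition \ref{prop_prethyliftgrpact}) that any two level $(p+1)$ pretheories lifting the same level $p$ pretheory differ by a sum over $(p+1)$-trees with a single high-loop vertex, and it is the local functional sitting at that vertex that furnishes the new piece of $I$. The asymptotic expansion enters earlier, in the construction of counterterms and in verifying that the renormalized weights have the required small-$L$ behavior, but the inverse map itself is governed by the torsor structure under $\mathcal{G}_{p+1}$ rather than by reading off coefficients of an expansion.
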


From the preceding theorem, we clearly see that the fiber of level $(p+1)$ pretheories that sit over a given level $p$ pretheory is acted upon freely and transitively by the group
\begin{equation} \label{eqn_liftgroup}
\mathcal{G}_{p+1} := F_{p+1}\intnoncommLI{\mathcal{E},\mathcal{A}}/F_{p+2}\intnoncommLI{\mathcal{E},\mathcal{A}}.
\end{equation}
To describe the action of this group on the fiber, we introduce the following terminology.

\begin{defi}
We will say that a stable ribbon graph (or stable graph) $\Gamma$ is a \emph{$p$-tree} if:
\begin{itemize}
\item
it is connected,
\item
the first Betti number $\mathbf{b}_1(\Gamma)$ vanishes, and
\item
the loop number of every vertex of $\Gamma$ vanishes, except one, which must have loop number $p$.
\end{itemize}
We will denote the class of $p$-trees by $\ptree$. Note that a $0$-tree is the same thing as a regular tree, as defined by Definition \ref{def_tree}.
\end{defi}

The action of $\mathcal{G}_{p+1}$ on the fiber is described by Lemma 4.34 of \cite{NCRGF}.

\begin{prop} \label{prop_prethyliftgrpact}
Let $\mathcal{E}$ be a free BV-theory, $Q^{\mathrm{GF}}:\mathcal{E}\to\mathcal{E}\cotimes\mathcal{A}$ be a family of gauge-fixing operators and $p$ be a nonnegative integer.

For all $I\in\intnoncommLI{\mathcal{E},\mathcal{A}}/F_{p+2}\intnoncommLI{\mathcal{E},\mathcal{A}}$ and $J\in\mathcal{G}_{p+1}$,
\begin{displaymath}
(I+J)^{\mathrm{R}}[L] = I^{\mathrm{R}}[L]  + \sum_{\Gamma\in\ptree[p+1]}\frac{\gamma^{g(\Gamma)}\nu^{b(\Gamma)}}{|\Aut(\Gamma)|} \lim_{\varepsilon\to 0}w_{\Gamma}\bigl(I_{[0]}+J,P(\varepsilon,L)\bigr),
\end{displaymath}
where the sum is taken over all $(p+1)$-trees $\Gamma$.
\end{prop}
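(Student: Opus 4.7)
The plan is to unwind the definition
\[(I+J)^{\mathrm{R}}[L] = \lim_{\varepsilon \to 0} W\bigl(I + J - (I+J)^{\mathrm{CT}}(\varepsilon),\, P(\varepsilon,L)\bigr)\]
and compare it with $I^{\mathrm{R}}[L]$ by expanding the renormalization group flow multilinearly over the vertex labelings of each connected stable ribbon graph by $I$, $J$, and the (purely singular) counterterms. The three essential ingredients are the filtration $F_\bullet$ of Definition \ref{def_NCfiltration}, Lemma \ref{lem_treecterterm}, and the uniqueness clause in the counterterm theorem.

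First I would isolate, modulo $F_{p+2}$, which stable ribbon graphs can contribute to the difference $(I+J)^{\mathrm{R}}[L] - I^{\mathrm{R}}[L]$. The filtration level of the weight $w_\Gamma$ equals $\ell(\Gamma) = \mathbf{b}_1(\Gamma) + \sum_{v\in V(\Gamma)} l(v)$, so with $J \in F_{p+1}\intnoncommLI{\mathcal{E},\mathcal{A}}$ any graph carrying two or more $J$-labeled vertices sits in $F_{2(p+1)} \subseteq F_{p+2}$ and drops out. For a graph with exactly one $J$-labeled vertex $v_0$ to contribute modulo $F_{p+2}$ we further require $\mathbf{b}_1(\Gamma) = 0$ and $l(v)=0$ for every other vertex, forcing $(g(v),b(v),|C(v)|) = (0,0,1)$ and hence the attached interaction to be precisely $I_{001} = I_{[0]}$. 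So only the $(p+1)$-trees survive, with $J$ placed at the unique vertex of loop number $p+1$; since that vertex is the only one of nonzero loop number, every automorphism of $\Gamma$ must fix it, and no stabilizer factor distinct from $|\Aut(\Gamma)|$ intrudes.

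Second I would establish that $(I+J)^{\mathrm{CT}} \equiv I^{\mathrm{CT}} \pmod{F_{p+2}}$, so that replacing $(I+J)^{\mathrm{CT}}$ by $I^{\mathrm{CT}}$ in the expansion above is legitimate. Applying the same filtration analysis to $W\bigl(I + J - I^{\mathrm{CT}}(\varepsilon),\, P(\varepsilon,L)\bigr)$, and using $I^{\mathrm{CT}}_{[0]}=0$ from Lemma \ref{lem_treecterterm}(\ref{itm_treecterterm2}), yields modulo $F_{p+2}$
\[W\bigl(I + J - I^{\mathrm{CT}}(\varepsilon), P(\varepsilon,L)\bigr) \equiv W\bigl(I - I^{\mathrm{CT}}(\varepsilon), P(\varepsilon,L)\bigr) + \sum_{\Gamma \in \ptree[p+1]} \frac{\gamma^{g(\Gamma)}\nu^{b(\Gamma)}}{|\Aut(\Gamma)|}\, w_\Gamma\bigl(I_{[0]} + J, P(\varepsilon,L)\bigr).\]
The first right-hand term has an $\varepsilon \to 0$ limit by construction of $I^{\mathrm{CT}}$, and each $(p+1)$-tree summand converges as $\varepsilon \to 0$ by Lemma \ref{lem_treecterterm}(\ref{itm_treecterterm1}), whose hypothesis is exactly the defining property of a $(p+1)$-tree. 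Thus $I^{\mathrm{CT}}$ already furnishes a series of purely singular counterterms rendering $W(I+J-I^{\mathrm{CT}},P)$ convergent modulo $F_{p+2}$, and the uniqueness clause forces $(I+J)^{\mathrm{CT}} - I^{\mathrm{CT}} \in F_{p+2}$.

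Combining the two steps and subtracting $I^{\mathrm{R}}[L]$ yields exactly the claimed formula. The main obstacle I anticipate is the second step: one has to verify carefully that the uniqueness of the purely singular counterterm series respects the filtration, so that the fact that $I^{\mathrm{CT}}$ works modulo $F_{p+2}$ rigorously (not merely heuristically) forces $(I+J)^{\mathrm{CT}} \equiv I^{\mathrm{CT}}$ to that order. Everything else is a combinatorial bookkeeping exercise within the RG flow expansion, guided by the constraint $\ell(\Gamma) \leq p+1$.
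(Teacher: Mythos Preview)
Your proposal is correct. The paper does not supply its own proof of this proposition; it simply cites Lemma 4.34 of \cite{NCRGF}. Your argument is the natural one and matches the structure one would expect: the first combinatorial step (filtering the graph sum by the number and placement of $J$-vertices) is precisely the content of Proposition \ref{prop_filterformula}, which the paper also imports from \cite{NCRGF}, applied at level $p+1$; the second step (comparing counterterms modulo $F_{p+2}$) is the additional ingredient needed to pass from the raw renormalization group flow to the renormalized interaction $I^{\mathrm{R}}$.

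The concern you flag about uniqueness of counterterms respecting the filtration is legitimate but resolves cleanly: the counterterm construction is carried out inductively in the loop number, so the singular part at each filtration level is determined by the data at that level and below. Since you have shown that $W\bigl(I+J-I^{\mathrm{CT}}(\varepsilon),P(\varepsilon,L)\bigr)$ converges modulo $F_{p+2}$, the inductive characterization of the counterterms forces $(I+J)^{\mathrm{CT}}_{ijkl} = I^{\mathrm{CT}}_{ijkl}$ whenever $2i+j+k-1 \leq p+1$, which is exactly the congruence you need.
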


In Section \ref{sec_obsdef} we will also require the following formula from Proposition 4.25 of \cite{NCRGF}.

\begin{prop} \label{prop_filterformula}
Let $\mathcal{E}$ be a free BV-theory and $P\in\mathcal{E}\cotimes\mathcal{E}\cotimes\mathcal{A}$ be a family of propagators. For every $I\in\intnoncommI{\mathcal{E},\mathcal{A}}$ and $J\in F_p\intnoncommI{\mathcal{E},\mathcal{A}}$ we have, providing that $p>0$, the formula
\[ W(I+J,P) = W(I,P) + \sum_{\Gamma\in\ptree} \frac{\gamma^{g(\Gamma)}\nu^{b(\Gamma)}}{|\Aut(\Gamma)|}w_{\Gamma}\bigl(I_{[0]}+J_{[p]},P\bigr) \mod F_{p+1}. \]
\end{prop}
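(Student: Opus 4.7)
The plan is to expand $W(I+J,P) - W(I,P)$ by multilinearity of the Feynman weight in the vertex data, and then use the filtration hypothesis on $J$ together with a loop-number count to see that the contributions which survive modulo $F_{p+1}$ come exactly from the $p$-trees. Since $w_\Gamma(-,P)$ is multilinear in the interaction attached at each vertex, for each connected stable ribbon graph $\Gamma$ I would write
\[
w_\Gamma(I+J,P) - w_\Gamma(I,P) \;=\; \sum_{\varnothing \neq S \subseteq V(\Gamma)} w_\Gamma^S(I,J,P),
\]
where $w_\Gamma^S(I,J,P)$ denotes the weight obtained by attaching $J_{g(v)b(v)|C(v)||v|}$ at each $v \in S$ and $I_{g(v)b(v)|C(v)||v|}$ at each remaining vertex. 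Since $J \in F_p\intnoncommI{\mathcal{E},\mathcal{A}}$, the component $J_{ijk}$ vanishes unless $2i+j+k-1 \geq p$, so a subset $S$ can only contribute if every $v \in S$ has loop number $l(v) = 2g(v)+b(v)+|C(v)|-1 \geq p$.

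Next, I would pin down the filtration degree at which the weight of $\Gamma$ lives. Since $w_\Gamma(I,P)$ takes values in $\bigl[\KontHamP{\mathcal{E}}^{\cotimes |C(\Gamma/\Gamma)|}\bigr]_{\sg{|C(\Gamma/\Gamma)|}}\cotimes\mathcal{A}$ and the flow carries the prefactor $\gamma^{g(\Gamma)}\nu^{b(\Gamma)}$, the contribution of $\Gamma$ to $W(I,P)$ sits in filtration degree $2g(\Gamma)+b(\Gamma)+|C(\Gamma/\Gamma)|-1 = \ell(\Gamma)$, using $b(\Gamma) = B(\Gamma) - |C(\Gamma/\Gamma)|$ and \eqref{eqn_loopnum}. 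Reducing modulo $F_{p+1}$ therefore restricts the sum to graphs with $\ell(\Gamma) \leq p$. Combined with $\ell(\Gamma) = \mathbf{b}_1(\Gamma) + \sum_{v} l(v)$ and the constraint $l(v) \geq p$ for each $v \in S$, the inequality
\[
p \;\geq\; \mathbf{b}_1(\Gamma) + |S|\cdot p + \sum_{v \notin S} l(v),
\]
combined with $p > 0$, forces $\mathbf{b}_1(\Gamma)=0$, $|S|=1$, $l(v)=0$ for every $v$ outside $S$, and $l(v_0)=p$ for the unique $v_0 \in S$: these are precisely the defining conditions of a $p$-tree with distinguished vertex $v_0$.

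Finally, I would match the surviving expression with the right-hand side of the stated formula. For a $p$-tree $\Gamma$ with distinguished vertex $v_0$, the weight $w_\Gamma(I_{[0]}+J_{[p]},P)$ attaches $(I_{[0]}+J_{[p]})_{g(v)b(v)|C(v)||v|}$ at each $v$; since $I_{[0]}=I_{001}$ only contributes at vertices with $l(v)=0$ (producing $I_{001|v|}$) while $J_{[p]}=\sum_{2i+j+k-1=p}\gamma^i\nu^j J_{ijk}$ only contributes at vertices with $l(v)=p$, this weight coincides with $w_\Gamma^{\{v_0\}}(I,J,P)$. Summing over $p$-trees with the prefactor $\gamma^{g(\Gamma)}\nu^{b(\Gamma)}/|\Aut(\Gamma)|$ then yields the claimed identity. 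The main obstacle will be verifying carefully that the filtration index of a weighted graph is exactly $\ell(\Gamma)$; once this has been correctly identified, the rest is the combinatorial bookkeeping above, and the role of the hypothesis $p>0$ is to make the inequality $|S|\cdot p \geq p$ strong enough to force $|S|=1$.
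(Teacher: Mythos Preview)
Your argument is correct. The paper does not actually prove this proposition here; it is quoted from \cite{NCRGF} (Proposition~4.25 there), so there is no in-paper proof to compare against. Your direct combinatorial argument---multilinear expansion over subsets $S\subseteq V(\Gamma)$, identification of the filtration degree of a graph contribution as $\ell(\Gamma)$ via $2g(\Gamma)+b(\Gamma)+|C(\Gamma/\Gamma)|-1=2g(\Gamma)+B(\Gamma)-1$, and the loop-number inequality forcing $|S|=1$, $\mathbf{b}_1(\Gamma)=0$, $l(v_0)=p$, $l(v)=0$ otherwise---is exactly the natural route and is what one expects the cited proof to do as well. One small point worth making explicit in your write-up: in a $p$-tree the distinguished vertex $v_0$ is the \emph{unique} vertex of loop number $p>0$, so every automorphism of $\Gamma$ fixes $v_0$; this is why the sum over $(\Gamma,S)$ with the weight $1/|\Aut(\Gamma)|$ collapses cleanly to the sum over $p$-trees in the stated formula without any orbit-counting correction.
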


\section{Quantum BV-geometry} \label{sec_QuantumBV}

We are now ready to begin to describe the geometry of the quantum master equation in our framework of noncommutative geometry. Such an extension of the work of Kontsevich~\cite{KontSympGeom} on noncommutative symplectic geometry was considered by Barannikov in~\cite{baran}. While Kontsevich's original work was concerned with the classical master equation, Barannikov's provides a description of the quantum master equation in terms of modular operads \cite{GetKap}.

The situation presented here has some significant differences from those addressed in the above cited references. Most significantly, our space $\mathcal{E}$ is infinite-dimensional, and hence we will need to apply Costello's technique \cite{CosBVrenormalization} of renormalization in order to make sense of the quantum master equation. Additionally, and importantly, our space $\intnoncomm{\mathcal{E}}$ contains a parameter $\nu$ which, as we will see later in Section \ref{sec_largeN} and Section \ref{sec_powerseries}, plays a fundamental role in the description of large $N$ limits of gauge theories; cf. \cite{GiGqHaZeLQT}.

In what follows, $\mathcal{A}$ will be the de Rham algebra on some smooth manifold $X$ with corners.

\subsection{Batalin-Vilkovisky algebras}

Our first task will be to provide $\intnoncomm{\mathcal{E}}$ with the structure of a Batalin-Vilkovisky (BV) algebra. This will be built from a Lie bialgebra structure on $\KontHam{\mathcal{E}}$ that was first discovered by Movshev in \cite{movshevcobracket}.

\subsubsection{Lie algebra and bialgebra structures} \label{sec_Liebialgebra}

The Lie bracket on $\KontHam{\mathcal{E},\mathcal{A}}$ was first described by Kontsevich in \cite{KontSympGeom} and is directly related to the Lie bracket defined by Equation \eqref{eqn_Hambracket} in Section \ref{sec_SympandHam}.

\begin{defi}
Let $\mathcal{E}$ be a free BV-theory and suppose that we are given a $K\in\mathcal{E}\cotimes\mathcal{E}\cotimes\mathcal{A}$ of degree one which is symmetric in $\mathcal{E}$. Given
\[ f=f_1\cdots f_m\cdot a\in(\mathcal{E}^{\dag})^{\otimes m}\otimes\mathcal{A} \quad\text{and}\quad g=g_1\cdots g_n\cdot a'\in(\mathcal{E}^{\dag})^{\otimes n}\otimes\mathcal{A} \]
we define
\[ \{f,g\}_K := \sum_{i=1}^m\sum_{j=1}^n (-1)^{p_{ij}} f_{i+1}\cdots f_m\cdot f_1\cdots f_{i-1}\cdot g_{j+1}\cdots g_n\cdot g_1\cdots g_{j-1}\cdot \bigl((g_j\otimes f_i)[K] a a'\bigr), \]
where
\[ p_{ij} := \bigl(|f_i|+|a|+1\bigr)\sum_{r=1}^n |g_i| + \sum_{r=1}^m |f_i| + \sum_{1\leq r\leq i <s\leq m}|f_r||f_s| + \sum_{1\leq r\leq j <s\leq n}|g_r||g_s| \]
is the sign given by the Koszul sign rule.

The above formula gives a well-defined Lie bracket of degree one
\begin{equation} \label{eqn_bracket}
\{-,-\}_K: \KontHam{\mathcal{E},\mathcal{A}}\cotimes\KontHam{\mathcal{E},\mathcal{A}}\to\KontHam{\mathcal{E},\mathcal{A}},
\end{equation}
as well as a Lie bracket on $\intcomm{\mathcal{E},\mathcal{A}}$ defined by the same formula and also denoted by $\{-,-\}_K$.
\end{defi}

As the input for the above bracket, we may use the heat kernel defined from a family of gauge-fixing conditions. In the limit, we recover the bracket defined by Equation \eqref{eqn_Hambracket}.

\begin{lemma} \label{lem_bracketlimit}
Let $\mathcal{E}$ be a free BV-theory with a family of gauge-fixing operators $Q^{\mathrm{GF}}$, and let $K_t$ be the heat kernel corresponding to this family. Define
\[ \{f,h\}_L := -\{f,h\}_{K_L} \]
for $f,h\in\KontHam{\mathcal{E},\mathcal{A}}$. Then for any two functionals $f,h\in\KontHam{\mathcal{E},\mathcal{A}}$, if one of them is Hamiltonian then we have
\[ \lim_{L\to 0} \{f,h\}_L = \{f,h\}. \]
\end{lemma}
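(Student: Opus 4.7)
The plan is to reduce the claim to the defining property of the heat kernel, $\lim_{L\to 0}K_L\star s=s$, by rewriting $\{f,h\}_{K_L}$ in a form whose $L$-dependence is concentrated in a single convolution. By the graded symmetry of the odd bracket $\{-,-\}_{K_L}$, it suffices to treat the case in which $f$ is Hamiltonian, with associated symplectic vector field $X^f$ and $\{f,h\}=(-1)^{|f||h|+|h|}X^f(h)$ from Definition \ref{def_Hambracket} and Equation \eqref{eqn_Poisbrack}.

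Choosing elementary representatives $f=f_1\otimes\cdots\otimes f_m$ and $h=h_1\otimes\cdots\otimes h_n$ for the corresponding cyclic classes, I would unfold the defining sum for $\{f,h\}_{K_L}$ given in Section \ref{sec_Liebialgebra}. Each summand carries a single scalar contraction $(h_j\otimes f_i)[K_L]\in\mathcal{A}$, obtained by evaluating $h_j\otimes f_i\in\mathcal{E}^{\dag}\cotimes\mathcal{E}^{\dag}$ against the two $\mathcal{E}$-slots of $K_L$. Fixing $j$ and summing over $i=1,\ldots,m$, cyclic invariance in $\KontHam{\mathcal{E},\mathcal{A}}$ reorganises the inner sum so that the surviving factors of $f$ assemble into the cyclic symmetrization of $f$, evaluated on a tuple whose final slot is contracted into one $\mathcal{E}$-factor of $K_L$. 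The Hamiltonian identity \eqref{eqn_HamiltonianFunctional} then rewrites this cyclic symmetrization as ${\innprod[X^f_{m-1}(s_1,\ldots,s_{m-1}),\,K_L\star \tilde{h}_j]}_{\mathcal{A}}$, where $\tilde{h}_j$ is a test element built from $h_j$ and the surrounding factors of $h$, and the entire $L$-dependence now sits inside the convolution $K_L\star$. Taking $L\to 0$ and invoking the heat-kernel limit \eqref{eqn_heatkernelproperty} together with the continuity of the pairing $\innprod_{\mathcal{A}}$ produces ${\innprod[X^f_{m-1}(s_1,\ldots,s_{m-1}),\tilde{h}_j]}_{\mathcal{A}}$; summing over $j$ and comparing with the action formula \eqref{eqn_vfieldaction} identifies the result as $X^f(h)$ up to the sign prescribed by the convention $\{f,h\}_L=-\{f,h\}_{K_L}$ and Definition \ref{def_Hambracket}, establishing the claim.

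The principal technical hurdle is the passage from the bilinear contraction $(h_j\otimes f_i)[K_L]$ to the convolution form ${\innprod[K_L\star(-),-]}_{\mathcal{A}}$, since an arbitrary element of $\mathcal{E}^{\dag}$ need not be of the shape $\innprod[\tilde{s},-]$ for $\tilde{s}\in\mathcal{E}$. The point is that once the Hamiltonian identity \eqref{eqn_HamiltonianFunctional} has been applied to the sum over $i$, one side of each residual contraction against $K_L$ is a value of $X^f_{m-1}$, which takes values precisely in $\mathcal{E}\cotimes\mathcal{A}$; this is the slot through which $K_L$ can be absorbed into a convolution by Lemma \ref{lem_convolutionidentities}. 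The remaining work is to track Koszul signs through the cyclic rearrangement and to verify that the $L\to 0$ limit commutes with the finite sums, which follows from the joint continuity of $\innprod_{\mathcal{A}}$, the smoothness of $K_L$ for $L>0$, and the heat-flow convergence $K_L\star s\to s$. Restricting to each fixed multidegree in $\KontHam{\mathcal{E},\mathcal{A}}$ reduces the statement to a finite sum, sidestepping the infinite-product structure of that space.
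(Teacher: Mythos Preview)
Your approach is essentially the same as the paper's: isolate the $L$-dependence in a single convolution $K_L\star$ acting on the output of the Hamiltonian vector field, then invoke the heat-kernel limit $K_L\star s\to s$. The paper's proof is slightly more compact---it assumes $h$ (rather than $f$) is Hamiltonian and writes the bracket directly as a composite operator $\mu(f\Theta_n\cotimes\mathds{1})(\mathds{1}^{\cotimes n-1}\cotimes(K_L\star X^h))$ without passing through elementary tensor representatives---but the underlying idea and the key step (pushing $K_L$ through the pairing into a convolution against the $\mathcal{E}\cotimes\mathcal{A}$-valued output of the symplectic vector field) are identical to yours.
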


\begin{proof}
Let $f\in\Hom_{\gf}(\mathcal{E}^{\cotimes n},\mathcal{A})$ and $h\in\Hom_{\gf}(\mathcal{E}^{\cotimes m},\mathcal{A})$ represent our functionals and assume that $h$ is Hamiltonian, in which case
\begin{displaymath}
\begin{split}
\{f,h\}_L &= (-1)^{|f|+|h|}\mu\bigl(f\cycsum{n}\cotimes\innprod_{\mathcal{A}}(\mathds{1}_{\mathcal{E\cotimes\mathcal{A}}}\cotimes X^h)\bigr)\bigl(\mathds{1}_{\mathcal{E}}^{\cotimes n-1}\cotimes K_L \cotimes\mathds{1}_{\mathcal{E}}^{\cotimes m-1}\bigr) \\
&= (-1)^{|f|+1}\mu\bigl(f\cycsum{n}\cotimes\mathds{1}_{\mathcal{A}}\bigr)\bigl(\mathds{1}_{\mathcal{E}}^{\cotimes n-1}\cotimes(\mathds{1}_{\mathcal{E}}\cotimes\innprod_{\mathcal{A}})(K_L\cotimes\mathds{1}_{\mathcal{E}\cotimes\mathcal{A}})X^h\bigr) \\
&= (-1)^{|f|}\mu\bigl(f\cycsum{n}\cotimes\mathds{1}_{\mathcal{A}}\bigr)\bigl(\mathds{1}_{\mathcal{E}}^{\cotimes n-1}\cotimes(K_L\star X^h)\bigr).
\end{split}
\end{displaymath}
The result now follows from Equation \eqref{eqn_vfieldaction}, Equation \eqref{eqn_Hambracket} and the fundamental property of the heat kernel.
\end{proof}

Next, we introduce the cobracket structure from \cite{movshevcobracket}.

\begin{defi}
Let $\mathcal{E}$ be a free BV-theory and $K\in\mathcal{E}\cotimes\mathcal{E}\cotimes\mathcal{A}$ be symmetric in $\mathcal{E}$ and of degree one. There is a well-defined map of degree one
\begin{equation} \label{eqn_cobracket}
\nabla_K:\KontHam{\mathcal{E}}\to\bigl[\KontHam{\mathcal{E}}\cotimes\KontHam{\mathcal{E}}\bigr]_{\sg{2}}\cotimes\mathcal{A}
\end{equation}
defined by
\[ \nabla_K(f_1\cdots f_n) := \sum_{1\leq i<j\leq n} (-1)^{p_{ij}} (f_{i+1}\cdots f_{j-1})(f_{j+1}\cdots f_n f_1\cdots f_{i-1})\cdot(f_i\otimes f_j)[K], \]
where again the sign
\[ p_{ij} := \sum_{1\leq r\leq i<s\leq n}|f_r||f_s| + |f_j|\Bigl(\sum_{r=1}^i|f_r|+\sum_{r=j+1}^n|f_r|\Bigr) + \sum_{r=1}^n|f_r| \]
is given by the Koszul sign rule as before.
\end{defi}

When $\mathcal{A}=\gf$, the bracket \eqref{eqn_bracket} and cobracket \eqref{eqn_cobracket} together satisfy the requisite axioms to form an involutive Lie bialgebra, see \cite{movshevcobracket}. This result may be expressed more generally by combining these structures to form a differential graded Lie algebra.

\subsubsection{Construction of the differential graded Lie algebras}

We use the following standard construction from an involutive differential graded Lie bialgebra. Note that both $\KontHam{\mathcal{E},\mathcal{A}}$ and $\intnoncomm{\mathcal{E},\mathcal{A}}$ have the canonical structure of an $\mathcal{A}$-bimodule.

\begin{defi}
Let $\mathcal{E}$ be a free BV-theory and $K\in\mathcal{E}\cotimes\mathcal{E}\cotimes\mathcal{A}$ be symmetric in $\mathcal{E}$ and of degree one. Consider the Chevalley-Eilenberg differential on $\intnuP{\mathcal{E},\mathcal{A}}:=\csalgP{\KontHam{\mathcal{E}}}\cotimes\mathcal{A}$ defined by
\[ \delta_K(x_1\cdots x_n\cdot a) := \sum_{1\leq i<j\leq n}\pm x_1\cdots\widehat{x_i}\cdots\widehat{x_j}\cdots x_n\cdot(\{x_i,x_j\}_K a), \]
for all $x_1,\ldots,x_n\in\KontHam{E}$ and $a\in\mathcal{A}$. As before, the sign is given by the Koszul sign rule.

Extend the cobracket \eqref{eqn_cobracket} to a differential $\nabla_K$ on $\intnuP{\mathcal{E},\mathcal{A}}$ using the Leibniz rule
\[ \nabla_K(x_1\cdots x_n\cdot a) = \sum_{i=1}^n \pm x_1\cdots\widehat{x_i}\cdots x_n\cdot(\nabla_K(x_i)a). \]
Again, the sign is given by the Koszul sign rule.

Lastly, extend the bracket \eqref{eqn_bracket} on $\KontHam{\mathcal{E},\mathcal{A}}$ to $\intnuP{\mathcal{E},\mathcal{A}}$ using the Leibniz rule
\[ \{x_1\cdots x_n\cdot a,y_1\cdots y_m\cdot b\}_K := \sum_{i=1}^n\sum_{j=1}^m \pm x_1\cdots\widehat{x_i}\cdots x_n\cdot y_1\cdots\widehat{y_j}\cdots y_m\cdot (\{x_i,y_j\}_Kab). \]
Again, the sign is given by the Koszul sign rule.

All the structures defined above may be extended to $\intnoncomm{\mathcal{E},\mathcal{A}}=\intnuP{\mathcal{E},\mathcal{A}}[[\gamma]]$ by $\gamma$-linearity. We define a noncommutative analogue of the BV-Laplacian on this space by
\[ \Delta_K:=\nabla_K+\gamma\delta_K:\intnoncomm{\mathcal{E},\mathcal{A}}\to\intnoncomm{\mathcal{E},\mathcal{A}}. \]
\end{defi}

\begin{rem} \label{rem_bracketextension}
Recall that in Definition \ref{def_Hambracket} we defined a bracket
\[ \{f,h\} = (-1)^{|f||h|+|h|}X^f(h); \quad f\in\KontHamL{\mathcal{E},\mathcal{A}},h\in\KontHam{\mathcal{E},\mathcal{A}}. \]
Now note that $\intnoncomm{\mathcal{E},\mathcal{A}}$ is a commutative algebra generated by $\KontHam{\mathcal{E},\mathcal{A}}$ and $\gamma$. We may therefore extend the above action of $\KontHamL{\mathcal{E},\mathcal{A}}$ to functionals $h\in\intnoncomm{\mathcal{E},\mathcal{A}}$ using the Leibniz rule
in the standard way. This action preserves the subspace $\intnoncommL{\mathcal{E},\mathcal{A}}$ of local functionals.

As in Lemma \ref{lem_bracketlimit}, given a gauge-fixing operator $Q^{\mathrm{GF}}$ we have
\begin{equation} \label{eqn_bracketlimit}
\lim_{L\to 0} \{f,h\}_L = \{f,h\}, \quad\text{for all }f\in\KontHamL{\mathcal{E},\mathcal{A}},h\in\intnoncomm{\mathcal{E},\mathcal{A}};
\end{equation}
where $\{f,h\}_L := -\{f,h\}_{K_L}$.
\end{rem}

The following result is standard, cf. \cite{movshevcobracket}.

\begin{prop} \label{prop_dglanoncomm}
Given a free BV-theory $\mathcal{E}$ and a symmetric $K\in\mathcal{E}\cotimes\mathcal{E}\cotimes\mathcal{A}$ of degree one that is a $(Q+d_{\mathrm{DR}})$-cycle,
\[ (\intnoncomm{\mathcal{E},\mathcal{A}},Q-d_{\mathrm{DR}}+\Delta_K,\cdot,\gamma\{-,-\}_K) \]
forms a BV-algebra.
\end{prop}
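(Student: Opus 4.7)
The plan is to verify the BV-algebra axioms piecewise, with most of the substance lying in the identity $D^2 = 0$ for $D := Q - d_{\mathrm{DR}} + \Delta_K$. The graded commutativity of the symmetric product on $\intnoncomm{\mathcal{E},\mathcal{A}}$ is immediate from the definition. For the Lie axioms, the bracket $\{-,-\}_K$ on $V := \KontHam{\mathcal{E},\mathcal{A}}$ is a graded Lie bracket of degree one by Kontsevich \cite{KontSympGeom} and Movshev \cite{movshevcobracket}, and its Leibniz extension to $\intnoncomm{\mathcal{E},\mathcal{A}}$ preserves Jacobi and the biderivation property tautologically. To recognize $\gamma\{-,-\}_K$ as the BV-bracket associated to $\Delta_K$, I would check on generators: the first-order derivation $\nabla_K$ contributes nothing to the BV-bracket, while a direct computation yields $\delta_K(xy) - \delta_K(x)y - (-1)^{|x|}x\delta_K(y) = \{x,y\}_K$ for generators $x,y\in V$, so the associated bracket agrees with $\gamma\{-,-\}_K$ on generators; since both sides are biderivations of the commutative product, this suffices.

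For the vanishing of $D^2$, I would decompose
\[ D^2 = (Q - d_{\mathrm{DR}})^2 + [Q - d_{\mathrm{DR}},\Delta_K] + \Delta_K^2 \]
and treat each piece separately. The first vanishes because $Q$ and $d_{\mathrm{DR}}$ graded-commute (acting on disjoint tensor factors) and each squares to zero. The second reduces, via the Leibniz and Chevalley--Eilenberg extensions, to the compatibility of $Q - d_{\mathrm{DR}}$ with $\{-,-\}_K$ and $\nabla_K$ on $V$, which in turn reduces to the cycle condition on $K$: when $Q - d_{\mathrm{DR}}$ is applied to the defining formula for the bracket or cobracket, the terms in which the derivative acts on the inputs cancel by the derivation property, leaving only the terms in which the derivative hits the kernel $K$, and these assemble---with the minus sign on $d_{\mathrm{DR}}$ absorbed by transposition across the pairing with $K$---into the vanishing expression $(Q\cotimes\mathds{1}\cotimes\mathds{1}+\mathds{1}\cotimes Q\cotimes\mathds{1}+\mathds{1}\cotimes\mathds{1}\cotimes d_{\mathrm{DR}})K = 0$.

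The core identity is $\Delta_K^2 = 0$. Expanding $\Delta_K = \nabla_K + \gamma\delta_K$ gives three contributions of distinct $\gamma$-weight and polynomial bi-degree which must therefore vanish individually: $\delta_K^2 = 0$ is the graded Jacobi identity for $\{-,-\}_K$ written at the level of the Chevalley--Eilenberg complex; $\nabla_K^2 = 0$ is the dual co-Jacobi identity for $\nabla_K$, preserved under the Leibniz extension; and $[\nabla_K,\delta_K] = 0$ is the Drinfeld compatibility between bracket and cobracket together with the involutivity of the Lie bialgebra structure on $V$. All three identities hold on $V$ itself by Movshev's theorem \cite{movshevcobracket} and extend formally to $\intnuP{\mathcal{E},\mathcal{A}}$ under the Leibniz and CE extensions. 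The main obstacle is the sign bookkeeping involved in these verifications in the cyclic coinvariant setting---in particular, the cobracket $\nabla_K$ has a complicated combinatorial form involving cyclic reorderings of words, and tracking Koszul signs through its iteration and through the mixed bracket $[\nabla_K,\delta_K]$ is the most delicate part---but once this is done the proposition follows.
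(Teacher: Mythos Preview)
Your proposal is correct and follows essentially the same route as the paper. The paper's proof is extremely terse---it simply notes that $\Delta_K^2=0$ follows from the involutive Lie bialgebra axioms (your three-piece decomposition $\delta_K^2=\nabla_K^2=[\nabla_K,\delta_K]=0$) and records the slick identity $[Q-d_{\mathrm{DR}},\Delta_K]=-\Delta_{(Q+d_{\mathrm{DR}})[K]}$, which is the clean packaging of your observation that the sign on $d_{\mathrm{DR}}$ is absorbed by transposition across the pairing with $K$; everything else, including the identification of $\gamma\{-,-\}_K$ as the BV-bracket, is left as ``straightforward to check.''
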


\begin{proof}
This is straightforward to check. We note that $\Delta_K^2$ vanishes as a consequence of the axioms for an involutive Lie bialgebra. The commutator
\[ [Q-d_{\mathrm{DR}},\Delta_K] = -\Delta_{(Q+d_{\mathrm{DR}})[K]} = 0. \]
\end{proof}

In particular, we may use Lemma \ref{lem_heatkernelidentities} to apply the above result to the heat kernel associated to a family of gauge-fixing operators. The above story of course has a simpler commutative analogue, which we now recall from \cite[\S 5.9]{CosEffThy}.

\begin{defi}
Let $\mathcal{E}$ be a free BV-theory and $K\in\mathcal{E}\cotimes\mathcal{E}\cotimes\mathcal{A}$ be symmetric in $\mathcal{E}$ and of degree one. The corresponding $\hbar$-linear BV-Laplacian $\Delta_K$ is defined on $\intcomm{\mathcal{E},\mathcal{A}}$ by
\[ \Delta_K(f_1\cdots f_n\cdot a) = \sum_{1\leq i < j \leq n} \pm f_1\cdots\widehat{f_i}\cdots\widehat{f_j}\cdots f_n ((f_i\otimes f_j)[K]\cdot a), \]
for all $f_1,\ldots,f_n\in\mathcal{E}^{\dag}$ and $a\in\mathcal{A}$. Again, the sign is given by the Koszul sign rule.
\end{defi}

\begin{prop} \label{prop_dglacomm}
Given a free BV-theory $\mathcal{E}$ and a symmetric $K\in\mathcal{E}\cotimes\mathcal{E}\cotimes\mathcal{A}$ of degree one that is a $(Q+d_{\mathrm{DR}})$-cycle,
\[ (\intcomm{\mathcal{E},\mathcal{A}},Q-d_{\mathrm{DR}}+\hbar\Delta_K,\cdot,\hbar\{-,-\}_K) \]
forms a BV-algebra.
\end{prop}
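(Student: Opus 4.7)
My plan is to verify the BV-algebra axioms in essentially the same manner as in the noncommutative analogue of Proposition \ref{prop_dglanoncomm}, exploiting the fact that in the commutative setting $\Delta_K$ is literally a second-order differential operator on $\intcomm{\mathcal{E},\mathcal{A}}$ whose principal symbol is contraction with $K$. The three requirements to verify are that $Q-d_{\mathrm{DR}}+\hbar\Delta_K$ squares to zero, that $\hbar\{-,-\}_K$ is a graded Lie bracket of degree one and a biderivation of the commutative product, and that the BV-identity relating these structures holds.

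For the first requirement, one has $(Q-d_{\mathrm{DR}})^2=0$ because $Q$ and $d_{\mathrm{DR}}$ are extended as derivations satisfying $Q^2=0$, $d_{\mathrm{DR}}^2=0$ and $[Q,d_{\mathrm{DR}}]=0$. The cross term reduces by a direct calculation on generators to $[Q-d_{\mathrm{DR}},\hbar\Delta_K]=-\hbar\Delta_{(Q+d_{\mathrm{DR}})[K]}$, which vanishes under the cycle hypothesis on $K$. The identity $\Delta_K^2=0$ is verified by expansion on a quartic $f_1f_2f_3f_4$ of linear elements of $\mathcal{E}^{\dag}$: the result is a sum indexed by the three ways of partitioning four variables into two unordered pairs, each term being the product of two contractions against $K$, and the Koszul signs combine with the degree-one symmetry of $K$ to produce cancellation in pairs. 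Standard arguments then propagate this to the whole algebra.

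For the BV-identity
\[ \Delta_K(fg) - \Delta_K(f)g - (-1)^{|f|}f\Delta_K(g) = (-1)^{|f|}\{f,g\}_K, \]
I would check it directly on monomial generators $f$ and $g$ in $\mathcal{E}^{\dag}$, where both sides may be read off immediately from the defining formulas for $\Delta_K$ and $\{-,-\}_K$; extension to the full algebra is automatic from the fact that both sides behave as biderivations in the remaining arguments, which reflects the second-order nature of $\Delta_K$ and the biderivation property of $\{-,-\}_K$ itself. Given this identity together with $\Delta_K^2=0$, the graded Jacobi identity and biderivation property of $\hbar\{-,-\}_K$ follow from the standard derived-bracket argument of evaluating $\Delta_K$ on a triple product in two different ways. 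Compatibility of the bracket with $Q-d_{\mathrm{DR}}$ then follows by applying the vanishing $[Q-d_{\mathrm{DR}},\Delta_K]=0$ to a product and invoking the BV-identity.

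The principal obstacle throughout is the careful management of Koszul signs, especially in verifying $\Delta_K^2=0$ on quartics and in the derived-bracket calculation that produces the Jacobi identity. The combinatorics is strictly simpler than in Proposition \ref{prop_dglanoncomm}, however, since one works in the symmetric algebra $\csalg{\mathcal{E}^{\dag}}$ rather than in the algebra of cyclic words, and no involutive Lie bialgebra identities are required.
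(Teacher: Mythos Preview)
Your approach is correct. The paper itself does not supply a proof of this proposition: it is stated as a result recalled from \cite[\S 5.9]{CosEffThy}, immediately after remarking that ``the above story of course has a simpler commutative analogue.'' For the noncommutative analogue (Proposition~\ref{prop_dglanoncomm}) the paper gives only the two-line observation that $\Delta_K^2=0$ follows from the involutive Lie bialgebra axioms and that $[Q-d_{\mathrm{DR}},\Delta_K]=-\Delta_{(Q+d_{\mathrm{DR}})[K]}=0$; your sketch reproduces exactly these two points in the commutative setting and then fills in the remaining BV-algebra axioms via the standard derived-bracket argument, which is the expected route.

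One small clarification on your description of $\Delta_K^2=0$: the six terms arising from applying $\Delta_K$ twice to a quartic are indexed not by the three partitions into pairs but by the three partitions \emph{together with} a choice of which pair is contracted first, and it is these two orderings of the same partition that cancel against one another via the odd degree of $K$. Your phrase ``cancellation in pairs'' is correct once read this way, but as written it could be misread as a pairing among the three partitions themselves.
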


\subsubsection{The passage from noncommutative to commutative geometry}

The commutative and noncommutative aspects of the BV-formalism are related very simply through the map $\sigma_{\gamma,\nu}$ defined by Definition \ref{def_mapNCtoCom}. We consider the differential graded Lie algebras
\[ \bigl(\intnoncomm{\mathcal{E},\mathcal{A}},Q-d_{\mathrm{DR}}+\Delta_K,\{-,-\}_K\bigr) \quad\text{and}\quad \bigl(\intcomm{\mathcal{E},\mathcal{A}},Q-d_{\mathrm{DR}}+\hbar\Delta_K,\{-,-\}_K\bigr) \]
that arise as a consequence of Proposition \ref{prop_dglanoncomm} and \ref{prop_dglacomm} above.

\begin{prop} \label{prop_dglanoncommtocomm}
Let $\mathcal{E}$ be a free BV-theory and $K\in\mathcal{E}\cotimes\mathcal{E}\cotimes\mathcal{A}$ be a degree one $(Q+d_{\mathrm{DR}})$-cycle that is symmetric in $\mathcal{E}$:
\begin{enumerate}
\item \label{itm_dglanoncommtocomm1}
The natural quotient map
\[ \sigma:\KontHam{\mathcal{E},\mathcal{A}}\to\intcomm{\mathcal{E},\mathcal{A}} \]
is a map of Lie algebras.
\item \label{itm_dglanoncommtocomm2}
The map
\[ \sigma_{\gamma,\nu}:\intnoncomm{\mathcal{E},\mathcal{A}} \to \intcomm{\mathcal{E},\mathcal{A}} \]
is a map of differential graded Lie algebras.
\end{enumerate}
\end{prop}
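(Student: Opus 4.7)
The plan is to unpack both structures and verify the compatibility by directly matching the explicit formulas. For part (1), I would write out $\sigma(\{f, g\}_K)$ using the formula from the beginning of Section \ref{sec_Liebialgebra}: it is a sum indexed by pairs $(i, j)$ of a cyclically reshuffled word $f_{i+1} \cdots f_m f_1 \cdots f_{i-1} g_{j+1} \cdots g_n g_1 \cdots g_{j-1}$ times the scalar contraction $(g_j \otimes f_i)[K]$. Passing to symmetric coinvariants via $\sigma$ erases the cyclic reshuffling, since cyclic permutations are particular symmetric permutations. The result is identically the formula defining $\{\sigma(f), \sigma(g)\}_K$ on $\intcomm{\mathcal{E},\mathcal{A}}$, proving that the two agree term by term.

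For part (2), I would check separately that $\sigma_{\gamma,\nu}$ intertwines the differentials and preserves the Lie brackets. The differential intertwining reduces, since $Q$ and $d_{\mathrm{DR}}$ act on letters and scalars and thus preserve the $(i, j, k)$-multidegree, to proving $\sigma_{\gamma,\nu}(\Delta_K x) = \hbar \Delta_K \sigma_{\gamma,\nu}(x)$ for $\Delta_K = \nabla_K + \gamma \delta_K$. On a single cyclic word of length $n$, the formula for $\nabla_K$ produces a sum of products of two cyclic words of total length $n-2$, and this matches termwise under $\sigma$ with the standard second-order formula for $\Delta_K$ on the corresponding symmetric word; extended by Leibniz, it matches everywhere. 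The Leibniz increment $k \mapsto k+1$ produced by $\nabla_K$ shifts the exponent $2i + j + k - 1$ in the definition of $\sigma_{\gamma,\nu}$ by $+1$, providing the prefactor $\hbar$ on the commutative side. For the term $\gamma \delta_K$, which merges two cyclic word factors via $\{-,-\}_K$, the shift $k \mapsto k - 1$ combined with the $\gamma$-prefactor $i \mapsto i + 1$ again shifts the exponent by $+1$; invoking part (1), this matches the bracket contribution to the second-order operator $\hbar \Delta_K$ on the commutative side.

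The bracket intertwining reduces, via the Leibniz extension of $\{-,-\}_K$ from $\KontHam{\mathcal{E},\mathcal{A}}$ to $\intnuP{\mathcal{E},\mathcal{A}}$ on the noncommutative side and from $\mathcal{E}^{\dag}$ to $\csalg{\mathcal{E}^{\dag}}$ on the commutative side, to the single-cyclic-word case furnished by part (1), combined with the appropriate $\hbar$-power bookkeeping determined by the exponent $2i + j + k - 1$. The fact that $\sigma$ is already shown to intertwine the brackets at the level of cyclic/symmetric words makes each Leibniz summand match after translating the $(\gamma,\nu,k)$-degrees to $\hbar$-degrees.

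The main obstacle will be the careful bookkeeping of Koszul signs together with the shifts in the exponent $2i + j + k - 1$, to verify that the noncommutative $(\gamma, \nu, k)$-grading tracks the commutative $\hbar$-grading on the nose for every term of the differential and of the Leibniz-extended bracket; the virtue of the formula $\hbar^{2i+j+k-1}$ is that each elementary operation (splitting a cyclic word, merging two of them with a $\gamma$, or a direct bracket contraction) shifts this exponent by exactly the power of $\hbar$ demanded on the commutative side.
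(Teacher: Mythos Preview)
Your proposal is correct and is precisely the ``straightforward check'' the paper alludes to; the paper's own proof is a single sentence citing Proposition 4.5 of \cite{GiGqHaZeLQT}, so your explicit unpacking of the formula-matching and the $\hbar$-power bookkeeping via the exponent $2i+j+k-1$ is exactly what lies behind that citation.
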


\begin{proof}
This is a straightforward check, cf. Proposition 4.5 of \cite{GiGqHaZeLQT}.
\end{proof}

\subsubsection{The transformation associated to an Open Topological Field Theory}

The transformation associated to an OTFT defined by Definition \ref{def_OTFTtransformation} also respects the Quantum BV-structures we have defined on $\intnoncomm{\mathcal{E},\mathcal{A}}$.

\begin{prop} \label{prop_dglanoncommOTFT}
Let $\mathcal{E}$ be a free BV-theory and $\textgoth{A}$ be a differential graded Frobenius algebra. Suppose that $K\in\mathcal{E}\cotimes\mathcal{E}\cotimes\mathcal{A}$ is a $(Q+d_{\mathrm{DR}})$-cycle of degree one that is symmetric in $\mathcal{E}$, and let $K_{\textgoth{A}}$ be defined by \eqref{eqn_Frobeniusheatkernel}. Consider the differential graded Lie algebras
\[ \bigl(\intnoncomm{\mathcal{E},\mathcal{A}},Q-d_{\mathrm{DR}}+\Delta_K,\{-,-\}_K\bigr) \quad\text{and}\quad \bigl(\intnoncomm{\mathcal{E}_{\textgoth{A}},\mathcal{A}},Q_{\textgoth{A}}-d_{\mathrm{DR}}+\Delta_{K_{\textgoth{A}}},\{-,-\}_{K_{\textgoth{A}}}\bigr). \]
The map $\Morita:\intnoncomm{\mathcal{E},\mathcal{A}}\longrightarrow\intnoncomm{\mathcal{E}_{\textgoth{A}},\mathcal{A}}$ is a map of differential graded Lie algebras.
\end{prop}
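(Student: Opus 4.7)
The plan is to verify separately that $\Morita$ intertwines the differentials and the brackets. The key tool throughout is the decomposition $K_{\textgoth{A}} = K \cotimes \innprod_{\textgoth{A}}^{-1}$ from Equation \eqref{eqn_Frobeniusheatkernel}, which splits each occurrence of $K_{\textgoth{A}}$ into an $\mathcal{E}$-factor and an $\textgoth{A}$-factor. Under this decomposition, the operations $\Delta_{K_{\textgoth{A}}}$ and $\{-,-\}_{K_{\textgoth{A}}}$ become the corresponding $K$-operations on the $\mathcal{E}$-entries of $\Morita(I)$---which match the original operations on $\intnoncomm{\mathcal{E},\mathcal{A}}$---tensored with a gluing of the $\textgoth{A}$-labels via $\innprod_{\textgoth{A}}^{-1}$.

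For the chain map condition, decompose $Q_{\textgoth{A}} = Q \cotimes \mathds{1} + \mathds{1} \cotimes d_{\textgoth{A}}$. Compatibility of $\Morita$ with $Q$ and with $d_{\mathrm{DR}}$ is immediate, since $\Morita$ alters only the $\textgoth{A}$-factor via the scalars $\OTFT{i,j}{\mathbf{r}}$. Compatibility with $d_{\textgoth{A}}$ reduces to the statement that each $\OTFT{i,j}{\mathbf{r}}$ is a cocycle for the dual of $d_{\textgoth{A}}$, which follows from $d_{\textgoth{A}}$ being a derivation of the multiplication on $\textgoth{A}$ and a coderivation of the trace pairing.

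For the bracket and the Laplacian $\Delta_K = \nabla_K + \gamma \delta_K$, represent elements of $\intnoncomm{\mathcal{E},\mathcal{A}}$ using the cyclic-word product representation of Example \ref{exm_cyclicwordproduct}. Each of the three operations pairs two positions in the same or different cyclic words via $K$ and alters the topological data $(i,j,\mathbf{r})$ in a specific way matching one case of Definition \ref{def_contractedge}: $\delta_K$ (fusing cycles from two distinct factors of a single product, with $\gamma$ incrementing the genus) corresponds to contracting a loop whose endpoints lie in distinct cycles of one vertex; $\nabla_K$ (splitting one cycle of a single factor into two) corresponds to contracting a loop with endpoints in the same cycle; and $\{-,-\}_K$ (fusing one cycle of one factor with one cycle of another factor) corresponds to contracting an edge joining two distinct vertices. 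What must be verified is that gluing the $\textgoth{A}$-factor of $\OTFT{i,j}{\mathbf{r}}$, or of two such maps, via $\innprod_{\textgoth{A}}^{-1}$ in the same pattern yields $\OTFT{i',j'}{\mathbf{r}'}$ for the updated data; these are the standard OTFT gluing axioms, verifiable directly from the defining formula for $\OTFT{g,b}{r_1,\ldots,r_k}$ using the two-sided dual basis property $\sum_i x_i \otimes y_i = \innprod_{\textgoth{A}}^{-1}$ together with the centrality of $\Xi_{\mathrm{bdry}}$ and $\Xi_{\mathrm{gen}}$.

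The main obstacle is the combinatorial bookkeeping: tracking Koszul signs, matching the cyclic orderings of half-edges across the three gluing patterns, and handling the degenerate sub-cases in Definition \ref{def_contractedge} where a cycle would become empty after contraction (forcing an increment of the boundary count $b$). An alternative route is to extract $\{-,-\}_K$ and $\Delta_K$ as the $O(K)$-coefficients of $W(-,K)$ (expanded with the aid of a formal source parameter so that both arguments of the bracket appear linearly) and then deduce the result from Theorem \ref{thm_flowOTFT}; the direct verification via OTFT gluing axioms, however, is both shorter and more transparent, and the required identities are essentially those that underlie the proof of Theorem 3.44 of \cite{NCRGF}.
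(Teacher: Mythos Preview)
Your proposal is correct and follows essentially the same approach as the paper: the paper's proof simply cites the OTFT axioms (specifically Lemma 3.13 and Theorem 5.1 of \cite{HamNCBV}), and your sketch unpacks exactly what those axioms say and how they match the three edge-contraction cases of Definition \ref{def_contractedge}. Your alternative route via Theorem \ref{thm_flowOTFT} is also valid, and indeed the gluing identities you isolate are, as you note, precisely those underlying the proof of Theorem 3.44 of \cite{NCRGF}.
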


\begin{proof}
This is a consequence of the axioms for an Open Topological Field Theory, cf. Lemma 3.13 and Theorem 5.1 of \cite{HamNCBV}.
\end{proof}

\subsubsection{Interpretation of the bracket and BV-Laplacian through Feynman diagrams} \label{sec_BVFeynmandiagrams}

The Lie bracket and cobracket defined in Section \ref{sec_Liebialgebra} precisely mirror the operations on stable ribbon graphs that arise from contracting loops and edges. In this section we will give a precise description of this fact, which is important not only because it provides an intuitive description of these operations, but also because it will provide a key combinatorial identity that we will use to prove that the renormalization group flow is compatible with the quantum master equation that is derived from these structures.

Let $\mathcal{E}$ be a free BV-theory and $K\in\mathcal{E}\cotimes\mathcal{E}\cotimes\mathcal{A}$ have degree one and be symmetric in $\mathcal{E}$, as before. Given a stable ribbon graph $G$ that is a corolla, set
\[ \mathcal{E}^{\dag}(\!(G)\!) := \mathcal{E}^{\dag}(\!(H(G))\!) = \mathcal{E}^{\dag}(\!(L(G))\!). \]
Recall that \eqref{eqn_attachmapscycdecompoff} determines a canonical map
\begin{equation} \label{eqn_corollamap}
\mathcal{E}^{\dag}(\!(G)\!)\cotimes\mathcal{A} \longrightarrow \intnoncomm{\mathcal{E},\mathcal{A}}, \qquad \phi\longmapsto \gamma^{g(G)}\nu^{b(G)}\bar{\phi};
\end{equation}
where $\bar{\phi}$ denotes the image of $\phi$ under the map defined by \eqref{eqn_attachmapscycdecompoff}.

Suppose that we are given a pair of distinct half-edges $e=\{h_1,h_2\}$ from the corolla $G$, by which we may form the corolla $G/e$ by treating $e$ as an edge and then contracting it. As before, we may attach $K$ to $e$ to form
\[ K(e)\in\mathcal{E}(\!(e)\!)\cotimes\mathcal{A}. \]
Given $\phi\in\Hom_{\gf}(\mathcal{E}(\!(G)\!),\mathcal{A})$, we will denote by $\phi[K(e)]$ the composite map
\begin{displaymath}
\xymatrix{
\mathcal{E}(\!(G/e)\!) \ar^-{x\mapsto K\otimes x}[rr] && \mathcal{E}(\!(e)\!)\cotimes\mathcal{A}\cotimes\mathcal{E}(\!(G/e)\!) = \mathcal{E}(\!(G)\!)\cotimes\mathcal{A} \ar^-{\phi\cotimes\mathds{1}}[r] & \mathcal{A}\cotimes\mathcal{A} \ar^{\mu}[r] & \mathcal{A}.
}
\end{displaymath}
Composing \eqref{eqn_corollamap} with the above, we define the map
\[ K_e:\mathcal{E}^{\dag}(\!(G)\!)\cotimes\mathcal{A}\longrightarrow\intnoncomm{\mathcal{E},\mathcal{A}}, \qquad \phi\longmapsto (-1)^{|\phi|}\gamma^{g(G/e)}\nu^{b(G/e)}\overline{\phi[K(e)]}. \]

\begin{prop} \label{prop_kernelloopcontract}
Let $\mathcal{E}$ be a free BV-theory and $K\in\mathcal{E}\cotimes\mathcal{E}\cotimes\mathcal{A}$ be symmetric in $\mathcal{E}$ and of degree one. Then for every stable ribbon graph $G$ that is a corolla, the following diagram commutes;
\begin{displaymath}
\xymatrix{
\mathcal{E}^{\dag}(\!(G)\!)\cotimes\mathcal{A} \ar^-{\phi\mapsto \gamma^{g(G)}\nu^{b(G)}\bar{\phi}}[rrr] \ar_{\sum_e K_e}[rrrd] &&& \intnoncomm{\mathcal{E},\mathcal{A}} \ar^{\Delta_K}[d] \\
&&& \intnoncomm{\mathcal{E},\mathcal{A}}
}
\end{displaymath}
where for the map defined by the bottom left arrow, we sum over all distinct pairs of half-edges $e$ from the corolla $G$.
\end{prop}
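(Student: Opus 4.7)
\textbf{Proof plan for Proposition \ref{prop_kernelloopcontract}.} The plan is to decompose both sides according to the cyclic decomposition $C(v)$ of the unique vertex $v$ of the corolla $G$, and to observe that the splitting $\Delta_K = \nabla_K + \gamma\delta_K$ corresponds exactly to the dichotomy in Definition \ref{def_contractedge}(2): whether the pair $e=\{h_1,h_2\}$ sits in one cycle of $C(v)$ or in two distinct cycles. Writing $C(v)=\{c_1,\ldots,c_k\}$, the sum $\sum_e K_e$ naturally splits as
\[ \sum_e K_e \;=\; \sum_{\substack{e=\{h_1,h_2\}\\ h_1\in c_i,\,h_2\in c_j,\,i\neq j}} K_e \;+\; \sum_{\substack{e=\{h_1,h_2\}\\ h_1,h_2\in c_i}} K_e, \]
and I will identify the first summand with $\gamma\delta_K$ and the second with $\nabla_K$. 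By continuity and multilinearity, it suffices to check the identity on pure tensors $\phi=\phi_{c_1}\otimes\cdots\otimes\phi_{c_k}$ with $\phi_{c}\in\mathcal{E}^{\dag}(\!(c)\!)$, so that $\bar\phi$ is the commutative product of cyclic words $\lceil\phi_{c_1}\rceil\cdots\lceil\phi_{c_k}\rceil$ in $\intnuP{\mathcal{E},\mathcal{A}}$, with the convention (coming from $\csalgdelim(\gf)=\gf[[\nu]]$) that an empty cyclic word is identified with $\nu$.

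For a pair $e$ with endpoints in two different cycles $c_i$ and $c_j$, Definition \ref{def_contractedge}(2a) tells us that contracting $e$ merges $c_i$ and $c_j$ into a single cycle of length $|c_i|+|c_j|-2$, with $g(v)$ increasing by one, and, in the exceptional case $|c_i|=|c_j|=1$, $b(v)$ also increasing by one. Comparing with the definition of $\{-,-\}_K$ in Section \ref{sec_Liebialgebra}, the contribution of $K_e$ to $\overline{\phi[K(e)]}$ is exactly the term in $\{\lceil\phi_{c_i}\rceil,\lceil\phi_{c_j}\rceil\}_K\cdot\prod_{r\neq i,j}\lceil\phi_{c_r}\rceil$ obtained by inserting $K$ at positions $h_1,h_2$; summing over such $e$ reassembles the full bracket, whose contribution to $\delta_K(\bar\phi)$ is precisely the $(i,j)$-term. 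The extra factor $\gamma$ in the definition of $K_e$ (coming from $g(G/e)=g(G)+1$) matches the $\gamma$ in $\gamma\delta_K$, and the exceptional $\nu$ appears automatically, since the bracket of two arity-$1$ cyclic words lives in $\gf\cdot\nu\subset\csalgdelim(\gf)$.

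For a pair $e$ with both endpoints in the same cycle $c_i$, Definition \ref{def_contractedge}(2b) says the cycle splits into two with cyclic orderings inherited from $c_i$, with $g(v)$ and $b(v)$ unchanged. Comparing with the formula for $\nabla_K$, the contribution of $K_e$ is exactly the term in $\nabla_K(\lceil\phi_{c_i}\rceil)\cdot\prod_{r\neq i}\lceil\phi_{c_r}\rceil$ that pairs positions $h_1$ and $h_2$; the two exceptional subcases in which one or both emergent cycles are empty are again absorbed by the $\nu$-identification. No $\gamma$ or extra $\nu$ appears, consistent with $g$ and $b$ being unchanged. Adding the two contributions reproduces $\Delta_K\bigl(\gamma^{g(G)}\nu^{b(G)}\bar\phi\bigr)$.

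The main obstacle is bookkeeping of Koszul signs: the explicit sign $(-1)^{|\phi|}$ in the definition of $K_e$ (coming from moving $K$ past $\phi$), the signs $(-1)^{p_{ij}}$ in the formulas for $\{-,-\}_K$ and $\nabla_K$, and the signs hidden in the identification $\mathcal{E}^{\dag}(\!(H(G))\!)\cong\widehat{\bigotimes}_{c\in C(v)}\mathcal{E}^{\dag}(\!(c)\!)$ when reshuffling the positions $h_1,h_2$ to the ends of their cyclic words. The cleanest route is to fix, for each $c_i$, a linear representative of $\phi_{c_i}$ and use the cyclic rotations and $\sg{k}$-symmetrization already built into the maps \eqref{eqn_attachmapscycdecompon}--\eqref{eqn_attachmapscycdecompoff} to reduce to the case where $h_1,h_2$ occupy the last position of their respective cycles; this is the configuration in which the bracket and cobracket of Section \ref{sec_Liebialgebra} were defined (following \cite{KontSympGeom, movshevcobracket}) with signs chosen to match exactly the contraction of $K$ against two $\mathcal{E}^{\dag}$ slots.
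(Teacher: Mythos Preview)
Your proposal is correct and follows essentially the same approach as the paper: the paper's proof is a one-liner stating that the result is a straightforward consequence of the definitions of the bracket and cobracket from Section~\ref{sec_Liebialgebra} together with the loop-contraction rules from Section~\ref{sec_stabribgraph}. Your write-up simply unpacks that one line --- splitting $\Delta_K=\nabla_K+\gamma\delta_K$ according to whether the two half-edges lie in the same or distinct cycles, matching case (2b) versus (2a) of Definition~\ref{def_contractedge}, and tracking the exceptional empty-cycle cases via the $\nu$-identification --- which is exactly the content the paper's proof is gesturing at.
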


\begin{proof}
This is a straightforward consequence of our definition for the Lie bracket and cobracket that we introduced in Section \ref{sec_Liebialgebra} and the rules that we introduced for contracting loops in stable ribbon graphs in Section \ref{sec_stabribgraph}, cf. Figure \ref{fig_ContractLoopCobracket}.
\end{proof}

\begin{figure}[htp]
\centering
\includegraphics{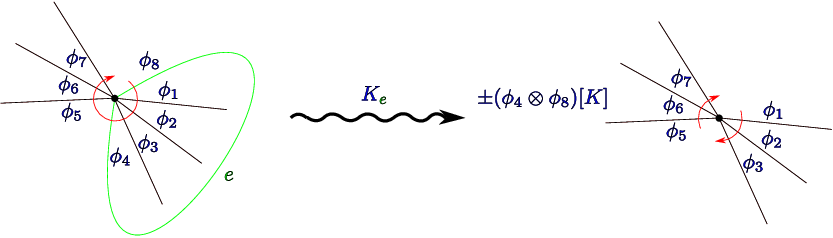}
\caption{Contracting loops in a ribbon graph defines the cobracket on $\KontHam{\mathcal{E},\mathcal{A}}$.}
\label{fig_ContractLoopCobracket}
\end{figure}

A similar story applies to the Lie bracket on $\intnoncomm{\mathcal{E},\mathcal{A}}$. Suppose that $G_1$ and $G_2$ are two corollas and take half-edges $h_1$ and $h_2$ from $G_1$ and $G_2$ respectively. By contracting the edge $e:=\{h_1,h_2\}$, we may form the corolla $(G_1\sqcup G_2)/e$. Given
\[ \phi_1\in\Hom_{\gf}(\mathcal{E}(\!(G_1)\!),\mathcal{A}) \quad\text{and}\quad \phi_2\in\Hom_{\gf}(\mathcal{E}(\!(G_2)\!),\mathcal{A}), \]
we will denote by $(\phi_1\otimes\phi_2)[K(e)]$ the map defined by the following commutative diagram
\begin{displaymath}
\xymatrix{
\mathcal{E}(\!(G_1\sqcup G_2/e)\!) \ar^-{x\mapsto K\otimes x}[rr] \ar_{(\phi_1\otimes\phi_2)[K(e)]}[d] && \mathcal{E}(\!(e)\!)\cotimes\mathcal{A}\cotimes\mathcal{E}(\!(G_1\sqcup G_2/e)\!) \ar@{=}[d] \\
\mathcal{A} & \mathcal{A}\cotimes\mathcal{A}\cotimes\mathcal{A} \ar_{\mu}[l] & \mathcal{E}(\!(G_1)\!)\cotimes\mathcal{E}(\!(G_2)\!) \cotimes\mathcal{A} \ar_-{\phi_1\cotimes\phi_2\cotimes\mathds{1}}[l]
}
\end{displaymath}
Composing \eqref{eqn_corollamap} with the above defines a map
\begin{multline*}
K_e:\bigl(\mathcal{E}^{\dag}(\!(G_1)\!)\cotimes\mathcal{A}\bigr)\cotimes\bigl(\mathcal{E}^{\dag}(\!(G_2)\!)\cotimes\mathcal{A}\bigr) \longrightarrow \intnoncomm{\mathcal{E},\mathcal{A}}, \\
(\phi_1,\phi_2)\longmapsto (-1)^{|\phi_1|+|\phi_2|}\gamma^{g(G_1\sqcup G_2/e)}\nu^{b(G_1\sqcup G_2/e)}\overline{(\phi_1\otimes\phi_2)[K(e)]}.
\end{multline*}

\begin{prop} \label{prop_kerneledgecontract}
Let $\mathcal{E}$ be a free BV-theory and $K\in\mathcal{E}\cotimes\mathcal{E}\cotimes\mathcal{A}$ be symmetric in $\mathcal{E}$ and of degree one. If $G_1$ and $G_2$ are any two stable ribbon graph corollas, then the following diagram commutes;
\begin{displaymath}
\xymatrix{
\bigl(\mathcal{E}^{\dag}(\!(G_1)\!)\cotimes\mathcal{A}\bigr)\cotimes\bigl(\mathcal{E}^{\dag}(\!(G_2)\!)\cotimes\mathcal{A}\bigr) \ar[r] \ar_{\sum_e K_e}[rd] & \intnoncomm{\mathcal{E},\mathcal{A}}\cotimes\intnoncomm{\mathcal{E},\mathcal{A}} \ar^{\{-,-\}_K}[d] \\
& \intnoncomm{\mathcal{E},\mathcal{A}}
}
\end{displaymath}
where the top arrow applies the maps defined by \eqref{eqn_corollamap}, and for the map defined by the bottom left arrow, we sum over all pairs $e=\{h_1,h_2\}$ for which the half-edges $h_1$ and $h_2$ are taken from $G_1$ and $G_2$ respectively.
\end{prop}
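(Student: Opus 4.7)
The plan is to prove commutativity by unwinding both sides on pure tensors and matching the combinatorics term-by-term, in the same spirit as the sketch given for Proposition \ref{prop_kernelloopcontract}. By multilinearity, I would reduce to the case where each $\phi_i \in \mathcal{E}^{\dag}(\!(G_i)\!)\cotimes\mathcal{A}$ is a pure tensor; the image $\bar{\phi}_i$ in $\intnoncomm{\mathcal{E},\mathcal{A}}$ is then the product of cyclic words indexed by the cycles $c \in C(G_i)$, multiplied by an element of $\mathcal{A}$.

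Next, I would invoke the fact that $\{-,-\}_K$ is extended to $\intnoncomm{\mathcal{E},\mathcal{A}}$ from $\KontHam{\mathcal{E},\mathcal{A}}$ via the Leibniz rule on the symmetric product. This means $\{\bar{\phi}_1, \bar{\phi}_2\}_K$ decomposes as a double sum over pairs of cycles $(c_1, c_2) \in C(G_1)\times C(G_2)$, with the remaining cycles surviving as spectators. On the other side, each half-edge $h_i$ of $G_i$ lies in a unique cycle, so the map $\sum_e K_e$ admits the same decomposition: for each pair $(c_1, c_2)$, one sums over choices of $h_1 \in c_1$ and $h_2 \in c_2$. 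This reduces the identity to the case of single-cycle corollas, with the spectator cycles and the $\gamma^{g(G_i)}\nu^{b(G_i)}$ prefactors dropping out uniformly on both sides.

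For the reduced case, the explicit Kontsevich formula for the bracket on $\KontHam{\mathcal{E},\mathcal{A}}$ produces, for each pair $(i,j)$ of positions in the two cyclic words, the spliced cyclic word obtained by joining the cycles at these positions with an insertion of $(g_j\otimes f_i)[K]$. This matches term-by-term the construction of $(G_1\sqcup G_2)/e$ from Definition \ref{def_contractedge}(1), in which contracting the edge $\{h_1, h_2\}$ merges the two cycles with $K$ filling the contracted edge. The $\gamma,\nu$ prefactors also agree, since contracting an edge between distinct vertices preserves total genus and total boundary.

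The hardest step will be handling the exceptional case in which both cycles have length one, so that Definition \ref{def_contractedge}(1) prescribes an additional boundary rather than permitting an empty cycle. Algebraically, the Kontsevich bracket of two length-one cyclic words lands in the length-zero component $\gf \subset \KontHam{\mathcal{E}}$; under the canonical identification $\csalg{\KontHam{\mathcal{E}}} = \gf[[\nu]]\cotimes\csalg{\KontHamP{\mathcal{E}}}$ used to define $\intnoncomm{\mathcal{E},\mathcal{A}}$, this scalar becomes a factor of $\nu$, which is exactly the extra boundary prescribed on the graph side. Running parallel to this, one must verify that the sign $(-1)^{|\phi_1|+|\phi_2|}$ built into $K_e$ matches the Koszul signs coming from the cyclic symmetrizations $\cycsum{n}$ and from transposing the degree-one kernel $K$ past the inputs inside the explicit bracket formula.
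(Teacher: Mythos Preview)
Your proposal is correct and follows the same approach as the paper: the paper's proof simply states that the result is a straightforward consequence of the definition of the Lie bracket $\{-,-\}_K$ and the rules for contracting edges in stable ribbon graphs, referring to Figure~\ref{fig_ContractEdgeBracket}. Your write-up is in fact considerably more detailed than the paper's, and in particular your explicit treatment of the exceptional length-one case (where the scalar output of the bracket is identified with an extra factor of $\nu$) and of the matching of the $\gamma,\nu$ prefactors under Definition~\ref{def_contractedge}(1) makes precise what the paper leaves implicit.
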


\begin{proof}
This is again a straightforward consequence of our definition for the Lie bracket $\{-,-\}_K$ and the rules for contracting edges in stable ribbon graphs, cf. Figure \ref{fig_ContractEdgeBracket}.
\end{proof}

\begin{figure}[htp]
\centering
\includegraphics{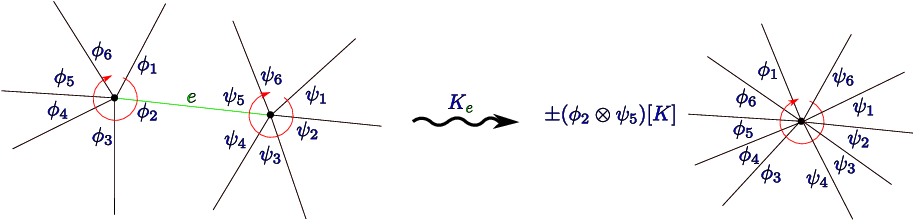}
\caption{Contracting edges in a ribbon graph defines the bracket on $\KontHam{\mathcal{E},\mathcal{A}}$.}
\label{fig_ContractEdgeBracket}
\end{figure}

\subsection{Quantum master equation} \label{sec_QME}

Having constructed our differential graded Lie algebra from the noncommutative geometry introduced by Kontsevich in \cite{KontSympGeom}, we are ready now to introduce the quantum master equation. Following \cite{CosEffThy}, this must be described by an equation at every length scale. We will discover, as in \cite{CosEffThy}, that the renormalization group flow that we defined in Section \ref{sec_rgflownoncomm} connects these solutions to the quantum master equation at different length scales.

\subsubsection{The scale $L$ quantum master equation}

\begin{defi}
Let $\mathcal{E}$ be a free BV-theory with a family of gauge-fixing operators $Q^{\mathrm{GF}}$ and let $K_t$ be the heat kernel corresponding to this family. Define
\[ \{I,J\}_L := -\{I,J\}_{K_L} \quad\text{and}\quad \Delta_L(I) := -\Delta_{K_L}(I) \]
for $I,J\in\intnoncomm{\mathcal{E},\mathcal{A}}$.

We will say that an interaction $I\in\intnoncommI{\mathcal{E},\mathcal{A}}$ satisfies the \emph{scale $L$ quantum master equation} (QME) if
\begin{equation} \label{eqn_scaleQME}
(Q-d_{\mathrm{DR}}+\Delta_L)I + \frac{1}{2}\{I,I\}_L = 0.
\end{equation}
\end{defi}

In a similar way, we may define the scale $L$ quantum master equation in $\intcomm{\mathcal{E},\mathcal{A}}$ using the differential graded Lie algebra from Proposition \ref{prop_dglacomm}, cf. \cite[\S 5.9.2]{CosEffThy}.

\subsubsection{Compatibility with the renormalization group flow}

We now establish one of the main results of this paper; that the renormalization group flow transforms solutions to the quantum master equation at scale $\varepsilon$ into solutions at scale $L$. For this theorem we will require the special Feynman amplitudes defined in Section \ref{sec_specialFamplitudes} and the following basic definition.

\begin{defi}
Let $\Gamma$ be a connected stable ribbon graph (or stable graph) and let $e$ be an edge of $\Gamma$. Denote by $(\Gamma-e)$, the result of cutting the graph $\Gamma$ at the edge $e$, which is effected by simply removing $e$ from the set of edges $E(\Gamma)$. We say that the edge $e$ is \emph{separating} if the graph becomes disconnected  after cutting it at $e$. Otherwise, we call the edge $e$ \emph{nonseparating}.
\end{defi}

We will also need the following basic lemma.

\begin{lemma} \label{lem_cyclicsetdettachattach}
Let $\mathcal{E}$ be a free BV-theory and let $u$ and $v$ be two sets equipped with cyclic decompositions. Consider the  canonical maps
\[ \mathcal{E}^{\dag}(\!(u)\!)\to\csalg{\KontHamP{\mathcal{E}}} \quad\text{and}\quad \csalg{\KontHamP{\mathcal{E}}}\to\mathcal{E}^{\dag}(\!(v)\!) \]
defined by \eqref{eqn_attachmapscycdecompoff} and \eqref{eqn_attachmapscycdecompon} respectively. Their composite is the map
\[ \mathcal{E}^{\dag}(\!(u)\!)\to\mathcal{E}^{\dag}(\!(v)\!), \qquad x\mapsto\sum_{\iota:u\to v}\iota^{\#}(x); \]
where the sum is taken over all those bijections $\iota:u\to v$ respecting the cyclic decompositions on $u$ and $v$ (with the answer being zero if there are none).
\end{lemma}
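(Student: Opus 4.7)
The plan is to prove this by directly unwinding the definitions of the two maps \eqref{eqn_attachmapscycdecompoff} and \eqref{eqn_attachmapscycdecompon}, and observing how the intermediate symmetrizations combine. Let me write $C$ and $C'$ for the cyclic decompositions of $u$ and $v$, and set $k:=|C|$, $k':=|C'|$. Since $\csalg{\KontHamP{\mathcal{E}}} = \prod_{n\geq 0}\bigl[\KontHamP{\mathcal{E}}^{\cotimes n}\bigr]_{\sg{n}}$, the first map lands in the component indexed by $k$ while the second map restricts from the component indexed by $k'$; consequently the composite vanishes unless $k=k'$, in which case there are no bijections $u\to v$ respecting the cyclic decompositions anyway (any such bijection must carry $C$ to $C'$ as a set of cycles). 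The same argument applied one level deeper shows that the composite also vanishes unless $C$ and $C'$ have the same multiset of cycle lengths, again consistent with the claimed formula.

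So assume $k=k'$ and write a typical element of $\mathcal{E}^{\dag}(\!(u)\!) = \bigcotimes_{c\in C}\mathcal{E}^{\dag}(\!(c)\!)$ as $x = \bigotimes_c x_c$ with $x_c\in\mathcal{E}^{\dag}(\!(c)\!)$. Under \eqref{eqn_attachmapscycdecompoff}, each $x_c$ is projected to its cyclic coinvariant class $[x_c]\in\KontHamP{\mathcal{E}}$, and the resulting tensor is pushed to the $\sg{k}$-coinvariants. Applying the transfer $\sum_{\varsigma\in\sg{k}}\varsigma$ lifts this into the $\sg{k}$-invariants of $\KontHamP{\mathcal{E}}^{\cotimes k}$, and the subsequent identification with $\KontHamP{\mathcal{E}}(\!(C')\!) = \bigcotimes_{c'\in C'}\KontHamP{\mathcal{E}}$ expresses this invariant as a sum, over all bijections $\varphi:C\to C'$ of the indexing sets of cycles, of the tensor that places $[x_{\varphi^{-1}(c')}]$ in the $c'$-slot.

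Finally, on each factor indexed by $c'\in C'$, the remaining map $\KontHamP{\mathcal{E}}\to\mathcal{E}^{\dag}(\!(c')\!)$ is (the component of) the cyclic symmetrization $\cycsum{|c'|}$ defined by \eqref{eqn_cyclicsum}, which vanishes unless $|c'|=|\varphi^{-1}(c')|$, and in that case produces the sum $\sum_{\rho} \rho^{\#}(x_{\varphi^{-1}(c')})$ over all bijections $\rho:\varphi^{-1}(c')\to c'$ respecting the cyclic orderings on each side. Assembling the contributions from every cycle $c'\in C'$ and every cycle-level bijection $\varphi:C\to C'$, the resulting total sum is precisely $\sum_{\iota:u\to v}\iota^{\#}(x)$, where $\iota$ ranges over all bijections from $u$ to $v$ that respect the cyclic decompositions (each such $\iota$ is uniquely determined by a $\varphi$ together with cycle-by-cycle cyclic bijections $\rho$). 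The sign $(-1)^{|\phi|}$-type Koszul signs cancel throughout because all the maps involved are even, so no sign bookkeeping is required.

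The main obstacle will be purely notational: keeping the two layers of symmetrization—the outer $\sg{k}$ symmetrization over cycles and the inner $\cyc{n}$ symmetrization within each cycle—distinct while writing out the argument, and verifying that the combinatorial claim ``$\sg{k}$-transfer followed by cyclic symmetrization equals summation over cyclic-order-preserving bijections'' is correctly stated. There are no analytic subtleties since the maps in \eqref{eqn_attachmapscycdecompon} and \eqref{eqn_attachmapscycdecompoff} are continuous linear maps of nuclear spaces and all operations involved are finite sums on each fixed tensor degree.
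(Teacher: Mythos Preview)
Your proposal is correct and is exactly the unwinding-of-definitions argument that the paper's one-line proof (``follows tautologically from the definitions'') gestures at. The only minor wrinkle is that the sum over bijections $\varphi:C\to C'$ arises from the combination of the coinvariant-to-invariant transfer $\sum_{\varsigma\in\sg{k}}\varsigma$ with the two canonical identifications $\KontHamP{\mathcal{E}}(\!(C)\!)\leftrightarrow[\KontHamP{\mathcal{E}}^{\cotimes k}]_{\sg{k}}$ and $[\KontHamP{\mathcal{E}}^{\cotimes k}]^{\sg{k}}\leftrightarrow\KontHamP{\mathcal{E}}(\!(C')\!)$, which you state correctly but might want to make more explicit; otherwise the two-layer symmetrization bookkeeping is handled cleanly.
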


\begin{proof}
The statement ultimately follows tautologically from the definitions.
\end{proof}

\begin{theorem} \label{thm_QMEflow}
Let $\mathcal{E}$ be a free BV-theory with a family of gauge-fixing operators $Q^{\mathrm{GF}}$ and let $P(\varepsilon,L)$ denote the canonical propagator defined by Equation \eqref{eqn_canonicalpropagator}. Then for all interactions $I\in\intnoncommI{\mathcal{E},\mathcal{A}}$,
\begin{multline} \label{eqn_QMEflow}
\bigl(Q-d_{\mathrm{DR}}+\Delta_L\bigr)W(I,P(\varepsilon,L)) +\frac{1}{2}\bigl\{W(I,P(\varepsilon,L)),W(I,P(\varepsilon,L))\bigr\}_L \\
= \sum_{(\Gamma,v)}\frac{\gamma^{g(\Gamma)}\nu^{b(\Gamma)}}{|\Aut(\Gamma,v)|}w_{(\Gamma,v)}\Bigl(I,(Q-d_{\mathrm{DR}}+\Delta_{\varepsilon})I+\frac{1}{2}\{I,I\}_{\varepsilon};P(\varepsilon,L)\Bigr);
\end{multline}
where the sum is taken over all (isomorphism classes from the category of) connected stable ribbon graphs with a distinguished vertex.

In particular, the renormalization group flow $W(-,P(\varepsilon,L))$ takes a solution to the scale $\varepsilon$ QME, to a solution to the scale $L$ QME.
\end{theorem}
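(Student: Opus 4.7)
The plan is to expand both sides of \eqref{eqn_QMEflow} as sums over connected stable ribbon graphs and match contributions, organized according to whether a distinguished vertex or a distinguished edge carries the ``special'' data. The key ingredients are the graphical interpretations of $\{-,-\}_L$ and $\Delta_L$ given by Propositions \ref{prop_kerneledgecontract} and \ref{prop_kernelloopcontract}; the propagator--heat kernel identity
\[
\bigl(Q\cotimes\mathds{1}\cotimes\mathds{1}+\mathds{1}\cotimes Q\cotimes\mathds{1}+\mathds{1}\cotimes\mathds{1}\cotimes d_{\mathrm{DR}}\bigr)P(\varepsilon,L)=K_\varepsilon-K_L
\]
of Lemma \ref{lem_kernelhomotopy}; and the category equivalence between pairs $(\Gamma,e)$ and triples $((\Gamma/e,v),\iota,G(e))$ from Theorem \ref{thm_catequiv}.

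First I would expand $W(I,P(\varepsilon,L))=\sum_\Gamma \tfrac{\gamma^{g(\Gamma)}\nu^{b(\Gamma)}}{|\Aut(\Gamma)|}w_\Gamma(I,P(\varepsilon,L))$. Since $Q-d_{\mathrm{DR}}$ acts as a derivation on the tensors making up $w_\Gamma$, the Leibniz rule splits $(Q-d_{\mathrm{DR}})W$ into a sum over (graph, distinguished vertex) and a sum over (graph, distinguished edge). At a distinguished vertex $v$, the interaction is replaced by $(Q-d_{\mathrm{DR}})I$ via duality; at a distinguished edge $e$, the propagator is replaced by $K_\varepsilon-K_L$ via Lemma \ref{lem_kernelhomotopy}. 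Reindexing $\sum_\Gamma \tfrac{1}{|\Aut(\Gamma)|}\sum_{v}$ and $\sum_\Gamma \tfrac{1}{|\Aut(\Gamma)|}\sum_{e}$ as sums over isomorphism classes of pairs by orbit--stabilizer produces the expected $|\Aut(\Gamma,v)|$ and $|\Aut(\Gamma,e)|$ denominators. Meanwhile, Propositions \ref{prop_kernelloopcontract} and \ref{prop_kerneledgecontract} applied to the corolla $\Gamma/\Gamma$ show that $\Delta_L W+\tfrac12\{W,W\}_L$ is precisely the sum over pairs $(\Gamma',e')$ with $e'$ carrying $K_L$ and the remaining edges carrying $P(\varepsilon,L)$: the Laplacian supplies those $(\Gamma',e')$ for which removing $e'$ leaves a connected graph, while $\tfrac12\{W,W\}_L$ supplies those for which $\Gamma'-e'$ has two components, the factor of $\tfrac12$ being absorbed by the involution swapping the two components.

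Next I would exploit the cancellation. The $-K_L$ contributions from $(Q-d_{\mathrm{DR}})W$ and the $+K_L$ contributions from $\Delta_L W+\tfrac12\{W,W\}_L$ are indexed by the same set of pairs $(\Gamma',e')$ with matching automorphism counts, and so cancel. What remains is a sum of distinguished-vertex contributions $w_{(\Gamma,v)}(I,(Q-d_{\mathrm{DR}})I;P(\varepsilon,L))$ together with distinguished-edge contributions $w_{(\Gamma,e)}(I;P(\varepsilon,L),K_\varepsilon)$. For the latter, Theorem \ref{thm_catequiv} reorganizes the sum over $(\Gamma,e)$ as a sum over triples $((\Gamma/e,v),\iota,G(e))$; summing over the possible one-edge graphs $G(e)$ and their attaching bijections $\iota$, and applying Propositions \ref{prop_kernelloopcontract} and \ref{prop_kerneledgecontract} locally at $v$, converts these into exactly $\Delta_\varepsilon I+\tfrac12\{I,I\}_\varepsilon$ inserted at the distinguished vertex (the loop-on-one-vertex case of $G(e)$ producing $\Delta_\varepsilon I$, the edge-joining-two-vertices case producing $\tfrac12\{I,I\}_\varepsilon$). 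Linearity in the distinguished-vertex slot then combines the three contributions into the single sum appearing on the right-hand side of \eqref{eqn_QMEflow}. The final assertion --- that $W(-,P(\varepsilon,L))$ intertwines the scale-$\varepsilon$ and scale-$L$ QMEs --- follows at once, since the right-hand side vanishes whenever $I$ satisfies the scale-$\varepsilon$ QME.

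The principal obstacle is the combinatorial bookkeeping of automorphism factors. To execute the $\pm K_L$ cancellation one must verify that every pair $(\Gamma',e')$ arising from $\Delta_L W+\tfrac12\{W,W\}_L$ arises also from the edge-action in $(Q-d_{\mathrm{DR}})W$ with identical automorphism counts; in particular the factor of $\tfrac12$ in the bracket must be shown to cancel precisely against the $\mathbb{Z}/2$ symmetry inside $\Aut(\Gamma',e')$ when the distinguished edge is separating and the two resulting components are isomorphic. A parallel orbit-counting step is required to reassemble the $K_\varepsilon$ terms into the distinguished-vertex form through Theorem \ref{thm_catequiv}. The remaining delicate point is careful sign tracking as $Q$ and $d_{\mathrm{DR}}$ pass through the duality identifications used to define $w_\Gamma$, and as the cobracket $\nabla_L$ inside $\Delta_L$ is identified graphically with loop contraction.
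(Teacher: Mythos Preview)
Your proposal is correct and follows essentially the same approach as the paper's proof: Leibniz-rule expansion of $(Q-d_{\mathrm{DR}})W$ into vertex and edge terms, graphical interpretation of $\Delta_L W+\tfrac12\{W,W\}_L$ via Propositions~\ref{prop_kernelloopcontract} and~\ref{prop_kerneledgecontract}, cancellation of the $K_L$ contributions, and then reassembly of the remaining $K_\varepsilon$ edge terms into distinguished-vertex form using the equivalence of Theorem~\ref{thm_catequiv} together with Lemma~\ref{lem_cyclicsetdettachattach}. The orbit--stabilizer bookkeeping and the absorption of the $\tfrac12$ factor that you flag as the principal obstacles are exactly the combinatorial verifications the paper carries out explicitly.
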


\begin{proof}
First, we compute
\begin{equation} \label{eqn_QMEflowaux2}
\begin{split}
(Q-d_{\mathrm{DR}})W(I,P(\varepsilon,L)) =& \sum_{\Gamma}\sum_{v\in V(\Gamma)} \frac{\gamma^{g(\Gamma)}\nu^{b(\Gamma)}}{|\Aut(\Gamma)|}w_{(\Gamma,v)}\bigl(I,(Q-d_{\mathrm{DR}})I;P(\varepsilon,L)\bigr) \\
& -\sum_{\Gamma}\sum_{e\in E(\Gamma)} \frac{\gamma^{g(\Gamma)}\nu^{b(\Gamma)}}{|\Aut(\Gamma)|}w_{(\Gamma,e)}\bigl(I;P(\varepsilon,L),(Q+d_{\mathrm{DR}})P(\varepsilon,L)\bigr).
\end{split}
\end{equation}
This is just a simple consequence of the Leibniz rule.

Next, it follows from Proposition \ref{prop_kernelloopcontract} that
\begin{equation} \label{eqn_QMEflowaux1}
\Delta_L W(I,P(\varepsilon,L)) = -\sum_{\Gamma}\sum_{e} \frac{\gamma^{g(\Gamma+e)}\nu^{b(\Gamma+e)}}{|\Aut(\Gamma)|}w_{(\Gamma+e,e)}\bigl(I;P(\varepsilon,L),K_L\bigr),
\end{equation}
where we sum over all pairs of half-edges $e$ that are taken from the legs of $\Gamma$, and $(\Gamma+e)$ denotes the graph that is obtained from $\Gamma$ by including $e$ as an edge.

Likewise, it follows from Proposition \ref{prop_kerneledgecontract} that
\begin{multline} \label{eqn_QMEflowaux3}
\bigl\{W(I,P(\varepsilon,L)),W(I,P(\varepsilon,L))\bigr\}_L \\
= -\sum_{\Gamma,\Gamma'}\sum_e \frac{\gamma^{g(\Gamma\sqcup\Gamma'+e)}\nu^{b(\Gamma\sqcup\Gamma'+e)}}{|\Aut(\Gamma)||\Aut(\Gamma')|} w_{(\Gamma\sqcup\Gamma'+e,e)}\bigl(I;P(\varepsilon,L),K_L\bigr),
\end{multline}
where the sum is taken over all those pairs of half-edges $e$ in which one half-edge is taken from the legs of the connected stable ribbon graph $\Gamma$, and the other is taken from the legs of $\Gamma'$. The graph $(\Gamma\sqcup\Gamma'+e)$ above denotes the connected stable ribbon graph formed by joining $\Gamma$ to $\Gamma'$ with the new edge $e$.

We may rewrite Equation \eqref{eqn_QMEflowaux1} as
\[ \Delta_L W(I,P(\varepsilon,L)) = -\sum_{(\Gamma,e)} \frac{\gamma^{g(\Gamma)}\nu^{b(\Gamma)}N_{\Gamma-e}}{|\Aut(\Gamma-e)|}w_{(\Gamma,e)}\bigl(I;P(\varepsilon,L),K_L\bigr); \]
where we sum over all connected stable ribbon graphs $\Gamma$ with a distinguished \emph{nonseparating} edge $e$, and $N_{\Gamma-e}$ counts the number of pairs $e'=\{h_1,h_2\}$ of half-edges $h_1$ and $h_2$ taken from $L(\Gamma-e)$ for which there is an automorphism of $(\Gamma-e)$ taking $e$ to $e'$. Obviously, $N_{\Gamma-e}$ may be computed using the orbit-stabilizer theorem, which provides us with
\begin{equation} \label{eqn_QMEflowaux4}
\Delta_L W(I,P(\varepsilon,L)) = -\sum_{(\Gamma,e)} \frac{\gamma^{g(\Gamma)}\nu^{b(\Gamma)}}{|\Aut(\Gamma,e)|}w_{(\Gamma,e)}\bigl(I;P(\varepsilon,L),K_L\bigr).
\end{equation}

Applying Lemma \ref{lem_kernelhomotopy} to Equation \eqref{eqn_QMEflowaux2} and employing the same counting arguments as above, we may arrive at the equation
\begin{equation} \label{eqn_QMEflowaux5}
\begin{split}
(Q-d_{\mathrm{DR}})W(I,P(\varepsilon,L)) =& \sum_{(\Gamma,v)} \frac{\gamma^{g(\Gamma)}\nu^{b(\Gamma)}}{|\Aut(\Gamma,v)|}w_{(\Gamma,v)}\bigl(I,(Q-d_{\mathrm{DR}})I;P(\varepsilon,L)\bigr) \\
& +\sum_{(\Gamma,e)} \frac{\gamma^{g(\Gamma)}\nu^{b(\Gamma)}}{|\Aut(\Gamma,e)|}w_{(\Gamma,e)}\bigl(I;P(\varepsilon,L),K_L-K_{\varepsilon})\bigr);
\end{split}
\end{equation}
where the first sum is taken over all connected graphs $\Gamma$ with distinguished vertex $v$, and the second over all connected graphs $\Gamma$ with distinguished edge $e$.

Turning now to Equation \eqref{eqn_QMEflowaux3}, we note that when $\Gamma$ is not isomorphic to $\Gamma'$ then that pair appears twice in the double sum; while when they are isomorphic, that pair appears only once, but then $(\Gamma\sqcup\Gamma')$ obtains an additional symmetry arising from permuting the two connected components. From this it follows that
\[ \frac{1}{2}\bigl\{W(I,P(\varepsilon,L)),W(I,P(\varepsilon,L))\bigr\}_L = -\sum_{(\Gamma,e)} \frac{\gamma^{g(\Gamma)}\nu^{b(\Gamma)}N_{\Gamma-e}}{|\Aut(\Gamma-e)|} w_{(\Gamma,e)}\bigl(I;P(\varepsilon,L),K_L\bigr), \]
where we sum over all connected stable ribbon graphs $\Gamma$ with a distinguished \emph{separating} edge $e$, and $N_{\Gamma-e}$ counts the number of pairs $e'=\{h_1,h_2\}$ of half-edges $h_1$ and $h_2$ taken from separate components of $(\Gamma-e)$, for which there is an automorphism of $(\Gamma-e)$ taking $e$ to $e'$. Again, this may be computed from the orbit-stabilizer theorem, and combining the result with Equation \eqref{eqn_QMEflowaux4} yields
\begin{multline} \label{eqn_QMEflowaux6}
\Delta_L W(I,P(\varepsilon,L)) + \frac{1}{2}\bigl\{W(I,P(\varepsilon,L)),W(I,P(\varepsilon,L))\bigr\}_L \\
= -\sum_{(\Gamma,e)} \frac{\gamma^{g(\Gamma)}\nu^{b(\Gamma)}}{|\Aut(\Gamma,e)|}w_{(\Gamma,e)}\bigl(I;P(\varepsilon,L),K_L\bigr);
\end{multline}
where now we sum over \emph{all} connected stable ribbon graphs $\Gamma$ with a distinguished edge~$e$.

Combining Equation \eqref{eqn_QMEflowaux5} with Equation \eqref{eqn_QMEflowaux6} we arrive at
\begin{multline} \label{eqn_QMEflowaux7}
(Q-d_{\mathrm{DR}}+\Delta_L)W(I,P(\varepsilon,L)) + \frac{1}{2}\bigl\{W(I,P(\varepsilon,L)),W(I,P(\varepsilon,L))\bigr\}_L = \\
\sum_{(\Gamma,v)} \frac{\gamma^{g(\Gamma)}\nu^{b(\Gamma)}}{|\Aut(\Gamma,v)|}w_{(\Gamma,v)}\bigl(I,(Q-d_{\mathrm{DR}})I;P(\varepsilon,L)\bigr) - \sum_{(\Gamma,e)} \frac{\gamma^{g(\Gamma)}\nu^{b(\Gamma)}}{|\Aut(\Gamma,e)|}w_{(\Gamma,e)}\bigl(I;P(\varepsilon,L),K_{\varepsilon})\bigr).
\end{multline}

Applying Theorem \ref{thm_intgrpact} to Equation \eqref{eqn_QMEflowaux6} with $L=\varepsilon$ yields
\[ \Delta_{\varepsilon}I + \frac{1}{2}\bigl\{I,I\bigr\}_{\varepsilon} = -\sum_G \frac{\gamma^{g(G)}\nu^{b(G)}}{|\Aut(G)|}w_G\bigl(I,K_{\varepsilon}\bigr), \]
where the sum is now taken over all connected stable ribbon graphs $G$ that have precisely one edge, to which the heat kernel $K_{\varepsilon}$ is attached.

Given a connected stable ribbon graph $\Gamma$ with a distinguished vertex $v$, we may apply Lemma \ref{lem_cyclicsetdettachattach} to the preceding equation to get
\[ w_{(\Gamma,v)}\Bigl(I,\Delta_{\varepsilon}I+\frac{1}{2}\{I,I\}_{\varepsilon};P(\varepsilon,L)\Bigr) = -\sum_{G,\iota}\frac{1}{|\Aut(G)|}w_{((\Gamma,v),\iota,G)}(I;P(\varepsilon,L),K_{\varepsilon}); \]
where the sum is taken over all connected stable ribbon graphs $G$:
\begin{itemize}
\item
with precisely one edge, and
\item
satisfying $g(G)=g(v)$ and $b(G)=b(v)$,
\end{itemize}
and all bijections $\iota$ from the legs of $G$ to the incident half-edges of $v$ that preserve the cyclic decompositions. From this we get
\begin{multline*}
\sum_{(\Gamma,v)}\frac{\gamma^{g(\Gamma)}\nu^{b(\Gamma)}}{|\Aut(\Gamma,v)|}w_{(\Gamma,v)}\Bigl(I,\Delta_{\varepsilon}I+\frac{1}{2}\{I,I\}_{\varepsilon};P(\varepsilon,L)\Bigr) \\
= -\sum_{((\Gamma,v),\iota,G)} \frac{\gamma^{g(\Gamma)}\nu^{b(\Gamma)}N_{((\Gamma,v),\iota,G)}}{|\Aut(\Gamma,v)||\Aut(G)|} w_{((\Gamma,v),\iota,G)}(I;P(\varepsilon,L),K_{\varepsilon}),
\end{multline*}
where the sum is taken over all (isomorphism classes of) triples from the category defined in Definition \ref{def_2ndCat} and $N_{((\Gamma,v),\iota,G)}$ counts the number of bijections $\iota'$ from the legs of $G$ to the incident half-edges of $v$ that can be obtained from $\iota$ through an automorphism of both $(\Gamma,v)$ and $G$. This is easily calculated in the same way as before, yielding
\[ N_{((\Gamma,v),\iota,G)} = \frac{|\Aut(\Gamma,v)||\Aut(G)|}{|\Aut((\Gamma,v),\iota,G)|}. \]

Now consider the equivalence of categories defined by Theorem \ref{thm_catequiv}. Under this correspondence we have
\[ w_{(\Gamma,e)}(I;P(\varepsilon,L),K_{\varepsilon}) = w_{((\Gamma/e,v),\iota,G(e))}(I;P(\varepsilon,L),K_{\varepsilon}). \]
Hence, using this correspondence we may write
\begin{multline*}
\sum_{(\Gamma,v)}\frac{\gamma^{g(\Gamma)}\nu^{b(\Gamma)}}{|\Aut(\Gamma,v)|}w_{(\Gamma,v)}\Bigl(I,\Delta_{\varepsilon}I+\frac{1}{2}\{I,I\}_{\varepsilon};P(\varepsilon,L)\Bigr) \\
= -\sum_{(\Gamma,e)} \frac{\gamma^{g(\Gamma)}\nu^{b(\Gamma)}}{|\Aut(\Gamma,e)|}w_{(\Gamma,e)}(I;P(\varepsilon,L),K_{\varepsilon}),
\end{multline*}
where the latter sum is taken over all connected stable ribbon graphs with a distinguished edge. It only remains to combine the above with Equation \eqref{eqn_QMEflowaux7} to complete our proof of Equation \eqref{eqn_QMEflow}.
\end{proof}

\subsubsection{The scale $L$ classical master equation and the tree-level flow} \label{sec_CMEtreelevelflow}

Let $\mathcal{E}$ be a free BV-theory with a family of gauge-fixing operators $Q^{\mathrm{GF}}$ and consider the filtration on $\intnoncomm{\mathcal{E},\mathcal{A}}$ defined by Definition \ref{def_NCfiltration}. Note that this is a filtration of the differential graded Lie algebra defined by Proposition \ref{prop_dglanoncomm} and hence we may consider the corresponding scale $L$ quantum master equation that is defined on the relevant quotients. In particular, if we look at the tree-level interactions
\[ I\in\intnoncommItree{\mathcal{E},\mathcal{A}} = \intnoncommI{\mathcal{E},\mathcal{A}}/F_1 \intnoncommI{\mathcal{E},\mathcal{A}} \]
then the BV-Laplacian $\Delta_L$ disappears in the filtration and the scale $L$ quantum master equation becomes
\[ (Q-d_{\mathrm{DR}})I + \frac{1}{2}\{I,I\}_L = 0, \]
which we call the \emph{scale $L$ classical master equation} for the tree-level interaction $I$.

Now, recall that Theorem \ref{thm_locinteffthy} states that there is a one-to-one correspondence between local tree-level interactions and tree-level pretheories that assigns to a local tree-level interaction $I$, the pretheory
\begin{equation} \label{eqn_treelevelcorrespondence}
I^\mathrm{R}[L] :=  \lim_{\varepsilon\to 0}W^{\mathrm{Tree}}\bigl(I,P(\varepsilon,L)\bigr),
\end{equation}
where in the above we have used the fact that there are no tree-level counterterms, see Lemma \ref{lem_treecterterm}\eqref{itm_treecterterm2}.

\begin{rem} \label{rem_treelimit}
Note also that it follows from Lemma \ref{lem_treecterterm}\eqref{itm_treecterterm1} and Equation \eqref{eqn_intgrpactzero} that
\[ \lim_{L\to 0} I^\mathrm{R}[L] = I, \]
see Proposition 4.30 of \cite{NCRGF}. We emphasize that this only holds for tree-level pretheories.
\end{rem}

\begin{prop} \label{prop_treeCME}
Let $\mathcal{E}$ be a free BV-theory with a family of gauge-fixing operators $Q^{\mathrm{GF}}$ and let
\[ I\in\intnoncommLItree{\mathcal{E},\mathcal{A}}\subset\KontHamL{\mathcal{E},\mathcal{A}} \]
be a local tree-level interaction. Then $I$ satisfies the classical master equation
\begin{equation} \label{eqn_treeCME}
(Q-d_{\mathrm{DR}})I + \frac{1}{2}\{I,I\} = 0
\end{equation}
if and only if for all $L>0$, the tree-level interactions $I^\mathrm{R}[L]$ satisfy the scale $L$ classical master equation
\[ (Q-d_{\mathrm{DR}})I^\mathrm{R}[L] + \frac{1}{2}\bigl\{I^\mathrm{R}[L],I^\mathrm{R}[L]\bigr\}_L = 0. \]
\end{prop}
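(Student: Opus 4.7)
The plan is to deduce both implications from Theorem \ref{thm_QMEflow} restricted to the tree-level filtration, combined with the limit identities provided by Remark \ref{rem_treelimit} and Equation \eqref{eqn_bracketlimit}.

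For the forward direction, I would work modulo $F_1\intnoncommI{\mathcal{E},\mathcal{A}}$: the BV-Laplacian contributions $\Delta_L$ and $\Delta_\varepsilon$ appearing in Theorem \ref{thm_QMEflow} each strictly raise the loop-number filtration by one and therefore vanish, while Proposition \ref{prop_treeflow} replaces $W$ by $W^{\mathrm{Tree}}$. The resulting identity asserts that $(Q-d_{\mathrm{DR}})W^{\mathrm{Tree}}(I,P(\varepsilon,L)) + \tfrac{1}{2}\{W^{\mathrm{Tree}}(I,P(\varepsilon,L)),W^{\mathrm{Tree}}(I,P(\varepsilon,L))\}_L$ equals the sum over trees $(\Gamma,v)$ with a distinguished vertex of tree amplitudes with $J_\varepsilon := (Q-d_{\mathrm{DR}})I + \tfrac{1}{2}\{I,I\}_\varepsilon$ inserted at the marked vertex $v$. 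I would then let $\varepsilon \to 0$: the left-hand side converges to $(Q-d_{\mathrm{DR}})I^{\mathrm{R}}[L] + \tfrac{1}{2}\{I^{\mathrm{R}}[L],I^{\mathrm{R}}[L]\}_L$ by the definition \eqref{eqn_treelevelcorrespondence} and the smoothness of $K_L$ for $L>0$, while $J_\varepsilon \to (Q-d_{\mathrm{DR}})I + \tfrac{1}{2}\{I,I\}$ by Equation \eqref{eqn_bracketlimit} (since $I$ is local). Under the CME hypothesis this limiting insertion vanishes, so each summand on the right converges to zero, yielding the scale-$L$ CME for $I^{\mathrm{R}}[L]$.

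For the backward direction, I would take $L\to 0$ of the scale-$L$ CME. By Remark \ref{rem_treelimit}, $I^{\mathrm{R}}[L] \to I$, while Equation \eqref{eqn_bracketlimit} gives $\{I,I\}_L \to \{I,I\}$. To transport the convergence through the bracket $\{I^{\mathrm{R}}[L],I^{\mathrm{R}}[L]\}_L$, I would exploit graded symmetry to expand
\begin{equation*}
\{I^{\mathrm{R}}[L], I^{\mathrm{R}}[L]\}_L = \{I,I\}_L + 2\{I^{\mathrm{R}}[L]-I, I\}_L + \{I^{\mathrm{R}}[L]-I, I^{\mathrm{R}}[L]-I\}_L
\end{equation*}
and control the two correction terms using the asymptotic locality of $I^{\mathrm{R}}[L]$ (Remark \ref{rem_asymplocal}): the asymptotic expansion of $I^{\mathrm{R}}[L] - I$ has components vanishing faster than any singularity of $\{-,-\}_L$ coming from the heat kernel, so both corrections tend to zero. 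Combined with $(Q-d_{\mathrm{DR}})I^{\mathrm{R}}[L] \to (Q-d_{\mathrm{DR}})I$, the scale-$L$ CME passes to the CME for $I$ in the limit.

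The main obstacle is justifying the $\varepsilon\to 0$ limit on the right-hand side of the forward direction: the insertion $\{I,I\}_\varepsilon$ is not local for $\varepsilon>0$ (it is built from the heat kernel $K_\varepsilon$), so continuity of the tree Feynman amplitude in this non-local insertion must be verified. I would handle this via Proposition \ref{prop_kerneledgecontract}, which rewrites the bracket contribution as edge-contraction against $K_\varepsilon$ and thereby reassembles the individual summands into Feynman amplitudes over enlarged trees whose propagator data is $P(\varepsilon,L)$ on old edges and $K_\varepsilon$ on the new edge; the resulting amplitudes are then covered by the tree convergence result of Lemma \ref{lem_treecterterm}\eqref{itm_treecterterm1}, and the vanishing of their limit follows from the CME hypothesis on $I$.
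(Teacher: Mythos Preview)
Your forward direction is essentially the paper's argument: reduce Theorem~\ref{thm_QMEflow} modulo $F_1$, pass to $W^{\mathrm{Tree}}$, and send $\varepsilon\to 0$ using locality of $I$ and convergence of the convolution $P(\varepsilon,L)\star$. The paper records the resulting identity as its Equation~\eqref{eqn_treeCMEaux2}, with the local functional $(Q-d_{\mathrm{DR}})I+\tfrac12\{I,I\}$ inserted at the marked vertex.

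The backward direction is where you diverge. The paper simply reuses the identity~\eqref{eqn_treeCMEaux2}: its right-hand side is a sum over marked trees of Feynman weights with the \emph{local} insertion $(Q-d_{\mathrm{DR}})I+\tfrac12\{I,I\}$, so Lemma~\ref{lem_treecterterm}\eqref{itm_treecterterm1} applies directly and the $L\to 0$ limit degenerates to a sum over corollas, yielding exactly $(Q-d_{\mathrm{DR}})I+\tfrac12\{I,I\}$. If the left-hand side vanishes for all $L$, so must this limit. You instead attack the left-hand side head-on, expanding $\{I^{\mathrm{R}}[L],I^{\mathrm{R}}[L]\}_L$ and trying to control the cross terms.

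The gap in your route is the appeal to asymptotic locality. Remark~\ref{rem_asymplocal} only says that $I^{\mathrm{R}}[L]$ admits an expansion $\sum f_r(L)\Phi_r$ with local $\Phi_r$ and error of order $L^{d_R}$; it says nothing about the leading term being $I$ itself, nor does it compare the rate of $I^{\mathrm{R}}[L]-I\to 0$ against the $L^{-\dim M/2}$ blow-up of $K_L$. So the sentence ``components vanishing faster than any singularity of $\{-,-\}_L$'' is not justified. To actually control $\{I^{\mathrm{R}}[L]-I,I\}_L$ and $\{I^{\mathrm{R}}[L]-I,I^{\mathrm{R}}[L]-I\}_L$ you would have to unfold $I^{\mathrm{R}}[L]-I$ as a sum of non-corolla tree amplitudes, reinterpret the brackets via Proposition~\ref{prop_kerneledgecontract} as contractions along a new $K_L$-edge, and then invoke tree-amplitude convergence on the enlarged graphs. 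But that is precisely how one arrives at the right-hand side of~\eqref{eqn_treeCMEaux2}, so the cleaner move is to use that identity from the start, as the paper does.
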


\begin{proof}
From Equation \eqref{eqn_QMEflow} we get
\begin{multline} \label{eqn_treeCMEaux1}
\bigl(Q-d_{\mathrm{DR}}\bigr)W^{\mathrm{Tree}}(I,P(\varepsilon,L)) +\frac{1}{2}\bigl\{W^{\mathrm{Tree}}(I,P(\varepsilon,L)),W^{\mathrm{Tree}}(I,P(\varepsilon,L))\bigr\}_L \\
= \sum_{(\Gamma,v)}\frac{\gamma^{g(\Gamma)}\nu^{b(\Gamma)}}{|\Aut(\Gamma,v)|}w_{(\Gamma,v)}\Bigl(I,(Q-d_{\mathrm{DR}})I+\frac{1}{2}\{I,I\}_{\varepsilon};P(\varepsilon,L)\Bigr),
\end{multline}
where the sum is taken over all trees with a distinguished vertex.

Now it follows from Lemma \ref{lem_bracketlimit} that
\[ \lim_{\varepsilon\to 0}\Bigl[(Q-d_{\mathrm{DR}})I+\frac{1}{2}\{I,I\}_{\varepsilon}\Bigr] = (Q-d_{\mathrm{DR}})I+\frac{1}{2}\{I,I\}. \]
Hence taking the limit as $\varepsilon\to 0$ in \eqref{eqn_treeCMEaux1} yields
\begin{multline} \label{eqn_treeCMEaux2}
\bigl(Q-d_{\mathrm{DR}}\bigr)I^\mathrm{R}[L] +\frac{1}{2}\bigl\{I^\mathrm{R}[L],I^\mathrm{R}[L]\bigr\}_L \\
= \sum_{(\Gamma,v)}\frac{\gamma^{g(\Gamma)}\nu^{b(\Gamma)}}{|\Aut(\Gamma,v)|}\lim_{\varepsilon\to 0}\biggl[w_{(\Gamma,v)}\Bigl(I,(Q-d_{\mathrm{DR}})I+\frac{1}{2}\{I,I\};P(\varepsilon,L)\Bigr)\biggl].
\end{multline}
Of course, we must be careful above when taking the limit on the right-hand side of \eqref{eqn_treeCMEaux1}, as $P(\varepsilon,L)$ does not converge as $\varepsilon\to 0$. However, we are assisted by the fact that $I$ is local, the graphs are trees, and the corresponding convolution operator $P(\varepsilon,L)\star$ does converge; so the argument proceeds similarly to Lemma 4.13 of \cite{NCRGF}.

The conclusion now follows by taking the limit as $L\to 0$ in \eqref{eqn_treeCMEaux2}. By Lemma \ref{lem_treecterterm}\eqref{itm_treecterterm1} the sum on the right-hand side degenerates into a sum over all corollas, which by Equation \eqref{eqn_intgrpactzero} yields
\[ (Q-d_{\mathrm{DR}})I+\frac{1}{2}\{I,I\}. \]
\end{proof}

\subsubsection{A vanishing criteria arising from the Loday-Quillen-Tsygan Theorem} \label{sec_vanishcriteria}

An important vanishing criteria for our interaction terms arises as a consequence of the Loday-Quillen-Tsygan Theorem \cite{LodayQuillen, Tsygan}. This criteria, as we will explore in more detail in Section \ref{sec_EffBVtheories}, will allow us to connect the behavior of our theories in the noncommutative framework to the behavior of gauge theories in the large $N$ limit. This criteria appears as Theorem 5.2 of \cite{NCRGF}.

\begin{theorem} \label{thm_vanishing}
Let $\mathcal{E}$ be a free BV-theory. A functional $I\in\intnoncomm{\mathcal{E},\mathcal{A}}$ vanishes if and only if
\[ \sigma_{\gamma,\nu}\Morita[\mat{N}{\gf}](I)=0, \quad\text{for all } N\geq 1. \]
\end{theorem}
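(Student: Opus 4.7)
The "only if" direction is immediate from linearity of $\sigma_{\gamma,\nu}$ and $\Morita[\mat{N}{\gf}]$. For the "if" direction, the plan is to exploit the polynomial dependence in $N$ of the target to peel off the components of $I$ one graded piece at a time, and then to invoke the Loday-Quillen-Tsygan theorem to deduce that the underlying cyclic-word data must themselves vanish.

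Write $I = \sum_{i,j,k,l,\mathbf{r}} \gamma^i\nu^j \lceil I^{\mathbf{r}}_{ijkl}\rceil$ as in Example \ref{exm_cyclicwordproduct}, with $I^{\mathbf{r}}_{ijkl} \in (\mathcal{E}^\dag)^{\cotimes l}\cotimes\mathcal{A}$. Combining Example \ref{exm_matrixOTFT}, Definition \ref{def_OTFTtransformation}, and Definition \ref{def_mapNCtoCom}, a direct computation would yield
$$\sigma_{\gamma,\nu}\Morita[\mat{N}{\gf}]\bigl(\gamma^i\nu^j\lceil I^{\mathbf{r}}_{ijkl}\rceil\bigr) = \hbar^{2i+j+k-1}\, N^j \cdot \sigma\bigl(\lceil I^{\mathbf{r}}_{ijkl}\otimes T_{\mathbf{r}}\rceil\bigr),$$
where $T_{\mathbf{r}}$ is the product of pure trace functions $\Tr(-_1\cdots -_{r_1})\cdots \Tr(-_1\cdots -_{r_k})$ on $\mat{N}{\gf}^{\otimes l}$. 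Summing over $(i,j,k,l,\mathbf{r})$ therefore exhibits the image as a series in $\hbar$ whose coefficients depend polynomially on $N$.

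First I would extract, for each fixed $\hbar$-power $m$, the polynomial in $N$; since by hypothesis it vanishes at every $N \geq 1$, it vanishes identically as a polynomial, and hence each $N^j$-coefficient vanishes separately. Fixing such a pair $(m,j)$ reduces matters to the assertion that a specific finite sum of symmetric-coinvariant expressions built from $k$-fold products of trace words on $\mathcal{E}\otimes\mat{N}{\gf}$, indexed by the $(i,k)$ satisfying $2i+k = m+1-j$, vanishing for all sufficiently large $N$ forces the corresponding combination of cyclic words in $\mathcal{E}^\dag$ to vanish. This is exactly the content of the Loday-Quillen-Tsygan theorem in its stabilized form: the map from products of cyclic tensors to symmetric-coinvariant trace polynomials on $\mathfrak{gl}_N$ is asymptotically injective as $N \to \infty$, with trace identities of a given combined word-length disappearing once $N$ exceeds that length. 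Iterating the argument across all pairs $(m,j)$ forces each $I^{\mathbf{r}}_{ijkl}$ to vanish, and hence so does $I$.

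The main obstacle I foresee is topological rather than algebraic: the classical Loday-Quillen-Tsygan theorem is formulated for discrete vector spaces, whereas here one works with nuclear Fréchet spaces, completed projective tensor products, and a base change by the topological algebra $\mathcal{A}$. The work will be to show that the stabilized trace maps remain injective after these analytic enhancements. Because the relevant trace identities reduce to finite-dimensional polynomial relations once one fixes the word lengths and the arguments, nuclearity should allow one to transfer the algebraic injectivity statement across $\cotimes\mathcal{A}$, so the argument ultimately reduces to Procesi-Razmyslov-style invariant theory in finite rank, as is carried out in Section 5 of \cite{NCRGF}.
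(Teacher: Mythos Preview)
The paper does not prove this statement; it simply cites it as Theorem~5.2 of \cite{NCRGF}. Your sketch outlines essentially the strategy one finds there---use the explicit matrix OTFT formula of Example~\ref{exm_matrixOTFT} to exhibit the $N$-dependence, separate by powers of $N$, and then invoke Loday--Quillen--Tsygan/Procesi--Razmyslov injectivity in the stable range---and you correctly identify the passage to completed tensor products and the base $\mathcal{A}$ as the genuine technical content.

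Two small points. First, the conclusion should be that each $I_{ijkl}$ vanishes, not each representative $I^{\mathbf{r}}_{ijkl}$; the latter are not canonical (cf.\ Example~\ref{exm_cyclicwordproduct}). Second, your phrase ``the coefficients depend polynomially on $N$'' needs care, since the target space $\intcomm{\mathcal{E}_{\mat{N}{\gf}},\mathcal{A}}$ itself varies with $N$. To make this precise one either works in the stable range (for $N$ exceeding the relevant word lengths the trace monomials are linearly independent and comparable across ranks) or evaluates on fixed test matrices embedded in all sufficiently large $\mat{N}{\gf}$ to obtain genuine scalar polynomials in $N$. Both routes are standard and are what Section~5 of \cite{NCRGF} carries out, so your reference there is apt.
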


From this we obtain the following corollary.

\begin{cor}
Let $\mathcal{E}$ be a free BV-theory with a family of gauge-fixing operators $Q^{\mathrm{GF}}$. An interaction $I\in\intnoncommI{\mathcal{E},\mathcal{A}}$ satisfies the scale $L$ quantum master equation if and only if for all $N\geq 1$, the interaction
\begin{equation} \label{eqn_intlargeN}
I^{\mat{N}{\gf}}:=\sigma_{\gamma,\nu}\Morita[\mat{N}{\gf}](I)\in\intcomm{\mathcal{E}_{\mat{N}{\gf}},\mathcal{A}}
\end{equation}
satisfies the scale $L$ quantum master equation.
\end{cor}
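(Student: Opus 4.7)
The plan is to combine the Loday--Quillen--Tsygan vanishing criterion of Theorem \ref{thm_vanishing} with the compatibility of the maps $\sigma_{\gamma,\nu}$ and $\Morita[\mat{N}{\gf}]$ with the BV-structures at the fixed length scale $L$. First, I would denote the noncommutative scale-$L$ QME obstruction by
\[
F_L(I):=(Q-d_{\mathrm{DR}}+\Delta_L)I + \frac{1}{2}\{I,I\}_L\in\intnoncomm{\mathcal{E},\mathcal{A}},
\]
and the analogous commutative scale-$L$ QME obstruction on $\intcomm{\mathcal{E}_{\mat{N}{\gf}},\mathcal{A}}$ by $F^{\mathrm{c}}_{L,N}$. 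By Theorem \ref{thm_vanishing}, the vanishing $F_L(I)=0$ is equivalent to the family of vanishings $\sigma_{\gamma,\nu}\Morita[\mat{N}{\gf}](F_L(I))=0$ for all $N\geq 1$, so the corollary reduces to identifying each of these images with $F^{\mathrm{c}}_{L,N}(I^{\mat{N}{\gf}})$.

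The key step is to invoke the two intertwining properties. Proposition \ref{prop_dglanoncommOTFT} says that $\Morita[\mat{N}{\gf}]$ is a map of differential graded Lie algebras into $\bigl(\intnoncomm{\mathcal{E}_{\mat{N}{\gf}},\mathcal{A}},\,Q_{\mat{N}{\gf}}-d_{\mathrm{DR}}+\Delta_{K_{\mat{N}{\gf}}},\,\{-,-\}_{K_{\mat{N}{\gf}}}\bigr)$, and Proposition \ref{prop_dglanoncommtocomm}\eqref{itm_dglanoncommtocomm2} says that $\sigma_{\gamma,\nu}$ is a map of differential graded Lie algebras to the commutative BV side. The crucial compatibility is that the heat kernel $(K_L)_{\mat{N}{\gf}}=K_L\otimes\innprod_{\mat{N}{\gf}}^{-1}$ produced by Equation \eqref{eqn_Frobeniusheatkernel} is precisely the heat kernel governing the scale-$L$ structure on the extended free BV-theory $\mathcal{E}_{\mat{N}{\gf}}$, so the differential and bracket that these two maps intertwine are exactly those appearing in the scale-$L$ QME. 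It follows that
\[
\sigma_{\gamma,\nu}\Morita[\mat{N}{\gf}](F_L(I)) = F^{\mathrm{c}}_{L,N}\bigl(\sigma_{\gamma,\nu}\Morita[\mat{N}{\gf}](I)\bigr) = F^{\mathrm{c}}_{L,N}(I^{\mat{N}{\gf}}),
\]
and combining this identity with Theorem \ref{thm_vanishing} delivers the corollary.

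The main obstacle is purely book-keeping: one must verify that the $\gamma,\nu$-weights on the noncommutative side align with the $\hbar$-weights on the commutative side, so that the noncommutative scale-$L$ QME is carried onto the commutative scale-$L$ QME as defined just after Proposition \ref{prop_dglacomm}. This is a direct consequence of Definition \ref{def_mapNCtoCom}, which prescribes the substitution $\gamma^i\nu^j x_1\cdots x_k\mapsto\hbar^{2i+j+k-1}\sigma(x_1)\cdots\sigma(x_k)$, together with the OTFT propagator identity \eqref{eqn_OTFTpropagator} that underpins Proposition \ref{prop_dglanoncommOTFT}; no genuinely new analytic input beyond what was already developed for the two intertwiners is required.
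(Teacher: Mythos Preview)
Your proposal is correct and follows essentially the same route as the paper, which simply states that the result follows immediately from Theorem~\ref{thm_vanishing}, Proposition~\ref{prop_dglanoncommtocomm} and Proposition~\ref{prop_dglanoncommOTFT}. You have spelled out in detail precisely how these three ingredients combine---namely, that the composite $\sigma_{\gamma,\nu}\Morita[\mat{N}{\gf}]$ carries the noncommutative scale-$L$ QME obstruction to the commutative one, after which the vanishing criterion applies---and this is exactly the argument the paper has in mind.
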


\begin{proof}
This follows immediately from Theorem \ref{thm_vanishing}, Proposition \ref{prop_dglanoncommtocomm} and Proposition \ref{prop_dglanoncommOTFT}.
\end{proof}

Another related criteria that we will need is the following.
\begin{cor} \label{cor_vanishingmodconstants}
Let $\mathcal{E}$ be a free BV-theory with a family of gauge-fixing operators $Q^{\mathrm{GF}}$ and let $I\in\intnoncommI{\mathcal{E},\mathcal{A}}$ be an interaction; then
\[ (Q-d_{\mathrm{DR}}+\Delta_L)I + \frac{1}{2}\{I,I\}_L \in \gf[[\gamma,\nu]]\cotimes\mathcal{A} \]
if and only if for all $N\geq 1$, the interaction $I^{\mat{N}{\gf}}$ defined by \eqref{eqn_intlargeN} satisfies
\[ (Q-d_{\mathrm{DR}}+\hbar\Delta_L)I^{\mat{N}{\gf}} + \frac{1}{2}\bigl\{I^{\mat{N}{\gf}},I^{\mat{N}{\gf}}\bigr\}_L \in \gf[[\hbar]]\cotimes\mathcal{A}. \]
\end{cor}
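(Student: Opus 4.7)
The argument follows the same template as the proof of the preceding (unnumbered) corollary, modified to track the ``field-independent'' parts on both sides of the correspondence. Write $F := (Q-d_{\mathrm{DR}}+\Delta_L)I + \frac{1}{2}\{I,I\}_L$ for the noncommutative scale-$L$ QME residual and $F^N$ for the analogous commutative expression built from the interaction $I^{\mat{N}{\gf}}$. By the differential graded Lie algebra morphism properties recorded in Propositions \ref{prop_dglanoncommtocomm} and \ref{prop_dglanoncommOTFT}, the maps $\sigma_{\gamma,\nu}$ and $\Morita[\mat{N}{\gf}]$ intertwine the QME operators, yielding the identity
\[ \sigma_{\gamma,\nu}\Morita[\mat{N}{\gf}](F) = F^N. \]
This is precisely the identity that underpins the proof of the preceding corollary, so its verification carries over verbatim.

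The essential new input is that these operations respect the decomposition of the spaces of functionals into ``field-independent'' and ``field-dependent'' pieces. In the expansion $I = \sum \gamma^i\nu^j\lceil I_{ijkl}^{\mathbf{r}}\rceil$, the intersection of $\gf[[\gamma,\nu]]\cotimes\mathcal{A}$ with $\intnoncomm{\mathcal{E},\mathcal{A}}$ corresponds exactly to the $k=0$ summands, since $\KontHamP{\mathcal{E}}$ is concentrated in strictly positive polynomial degree in $\mathcal{E}^{\dag}$; analogously, $\gf[[\hbar]]\cotimes\mathcal{A}\subset\intcomm{\mathcal{E}_{\mat{N}{\gf}},\mathcal{A}}$ is precisely the polynomial-degree-zero part in $\mathcal{E}_{\mat{N}{\gf}}^{\dag}$. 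Both $\Morita[\mat{N}{\gf}]$ and $\sigma_{\gamma,\nu}$ preserve polynomial degree tautologically from their definitions: $\Morita[\mat{N}{\gf}]$ because it tensors each $(\mathcal{E}^{\dag})^{\cotimes l}$ slot with an $l$-fold OTFT coefficient lying in $(\mat{N}{\gf}^{\dag})^{\otimes l}$, and $\sigma_{\gamma,\nu}$ because it only passes from cyclic to symmetric coinvariants on each tensor factor. Writing $F = F_0 + F_+$ with $F_0 \in \gf[[\gamma,\nu]]\cotimes\mathcal{A}$ and $F_+$ of strictly positive polynomial degree in $\mathcal{E}^{\dag}$, we deduce that $F^N = \sigma_{\gamma,\nu}\Morita[\mat{N}{\gf}](F_0) + \sigma_{\gamma,\nu}\Morita[\mat{N}{\gf}](F_+)$, with the first summand in $\gf[[\hbar]]\cotimes\mathcal{A}$ and the second of strictly positive polynomial degree in $\mathcal{E}_{\mat{N}{\gf}}^{\dag}$; hence $F^N \in \gf[[\hbar]]\cotimes\mathcal{A}$ if and only if $\sigma_{\gamma,\nu}\Morita[\mat{N}{\gf}](F_+) = 0$.

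It now remains to apply the Loday-Quillen-Tsygan vanishing criterion of Theorem \ref{thm_vanishing} to $F_+$: this gives $F_+ = 0$ if and only if $\sigma_{\gamma,\nu}\Morita[\mat{N}{\gf}](F_+) = 0$ for every $N\geq 1$. Chaining these equivalences, $F \in \gf[[\gamma,\nu]]\cotimes\mathcal{A}$ is equivalent to $F_+ = 0$, which is equivalent to $\sigma_{\gamma,\nu}\Morita[\mat{N}{\gf}](F_+) = 0$ for every $N$, which in turn is equivalent to $F^N \in \gf[[\hbar]]\cotimes\mathcal{A}$ for every $N$. The main point requiring care is the bookkeeping in the second paragraph---checking that $\sigma_{\gamma,\nu}$ and $\Morita[\mat{N}{\gf}]$ strictly preserve polynomial degree, so that the constant and non-constant parts are each intertwined separately, and reconciling the convention $I_{i00}=0$ with the meaning of $\gf[[\gamma,\nu]]\cotimes\mathcal{A}$ inside $\intnoncomm{\mathcal{E},\mathcal{A}}$. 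Once this is in place, the result is an immediate consequence of Theorem \ref{thm_vanishing} applied to $F_+$ combined with the dgla morphism identity from the preceding corollary.
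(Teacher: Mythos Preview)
Your argument is correct and follows essentially the same route as the paper: the paper's one-line proof invokes Corollary~5.3 of \cite{NCRGF}, which is precisely the ``modulo constants'' refinement of Theorem~\ref{thm_vanishing}, while you reconstruct that refinement on the spot by splitting $F=F_0+F_+$ and observing that $\sigma_{\gamma,\nu}$ and $\Morita[\mat{N}{\gf}]$ preserve polynomial degree in $\mathcal{E}^{\dag}$. The bookkeeping you flag (that both maps send constants to constants and non-constants to non-constants, and that $F_+\in\intnoncomm{\mathcal{E},\mathcal{A}}$ so Theorem~\ref{thm_vanishing} applies) is exactly what underlies the cited corollary.
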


\begin{proof}
This follows in a similar manner from Corollary 5.3 of \cite{NCRGF}.
\end{proof}

\section{Noncommutative effective field theories in the BV-formalism} \label{sec_NCEffThyBV}

We are now ready to give a definition, following \cite{CosEffThy}, of a theory in the framework of the Batalin-Vilkovisky formalism and noncommutative geometry. Such a theory will, on its face, depend upon the choice of a gauge-fixing operator. However, this dependence disappears when we pass to homotopy classes of theories. This independence on the choice of gauge is expressed by the statement that the space of theories is a Kan fibration over the space of gauge-fixing conditions.

In what follows, $\mathcal{A}$ will be the de Rham algebra on some smooth manifold $X$ with corners. Occasionally, we will consider the commutative algebra $\mathcal{A}\otimes\gf[\zeta]$, where $\zeta$ is a commuting generator of degree minus-one (and so $\zeta^2=0$).

\subsection{Theories in the BV-formalism} \label{sec_EffBVtheories}

After providing the definition for a theory in the noncommutative framework, we will proceed to describe how to pass from this noncommutative framework to the commutative framework used by Costello in \cite{CosEffThy}. We will then explain how to put these ideas together to provide a picture that describes the string/gauge theory correspondence in its most general sense.

\subsubsection{The definition of a theory}

As in \cite{CosEffThy}, theories will be families of effective interactions parameterized by the length scale and satisfying the quantum master equation that applies in this noncommutative framework.

\begin{defi}
Let $\mathcal{E}$ be a free BV-theory with a family of gauge-fixing operators $Q^{\mathrm{GF}}$. A \emph{theory in the noncommutative BV-formalism} consists of a pretheory
\[ I\in\NCPreThy{\mathcal{E},\mathcal{A}} \]
such that $I[L]$ satisfies the scale $L$ quantum master equation \eqref{eqn_scaleQME} for all $L>0$. We will denote the set of all such theories by $\NCThy{\mathcal{E},\mathcal{A}}$.
\end{defi}

\begin{rem}
Recalling Definition 9.3.1 of \cite[\S 5.9.3]{CosEffThy}, the definition of a theory in the (commutative) BV-formalism is obtained from the above by replacing $\NCPreThy{\mathcal{E},\mathcal{A}}$ with $\PreThy{\mathcal{E},\mathcal{A}}$. We will denote the set of all such (commutative) theories by $\Thy{\mathcal{E},\mathcal{A}}$.
\end{rem}

\begin{rem}
As in Remark \ref{rem_GFdependence}, we will denote the set of theories by
\[ \NCThy{\mathcal{E},\mathcal{A};Q^{\mathrm{GF}}} \quad\text{and}\quad \Thy{\mathcal{E},\mathcal{A};Q^{\mathrm{GF}}} \]
when we wish to make the dependence on the family of gauge-fixing operators $Q^{\mathrm{GF}}$ explicit, and omit $\mathcal{A}$ from the notation when $\mathcal{A}=\gf$.
\end{rem}

For the purposes of describing how to construct such a theory, a process known as \emph{quantization}, we will need the definition of a level $p$ theory. Recall that the filtration defined by Definition \ref{def_NCfiltration} is a filtration of the differential graded Lie algebra defined by Proposition \ref{prop_dglanoncomm}, hence the quantum master equation makes sense on the corresponding quotients.

\begin{defi}
Let $\mathcal{E}$ be a free BV-theory with a family of gauge-fixing operators $Q^{\mathrm{GF}}$. A \emph{level $p$ theory in the noncommutative BV-formalism} consists of a level $p$ pretheory
\[ I\in\NCPreThyL{p}{\mathcal{E},\mathcal{A}} \]
such that $I[L]$ satisfies the scale $L$ quantum master equation (modulo $F_{p+1}$) for all $L>0$. We will denote the set of all such level $p$ theories by $\NCThyL{p}{\mathcal{E},\mathcal{A}}$.
\end{defi}

\begin{rem}
As before, we may talk about level $p$ (commutative) theories, though we will not need them in this paper.
\end{rem}

\subsubsection{The passage from noncommutative to commutative geometry}

The map $\sigma_{\gamma,\nu}$ defined by Definition \ref{def_mapNCtoCom} provides us with a mechanism to pass from noncommutative theories to commutative theories.

\begin{prop} \label{prop_noncommtocommthy}
Let $\mathcal{E}$ be a free BV-theory with a family of gauge-fixing operators $Q^{\mathrm{GF}}$. The map $\sigma_{\gamma,\nu}$ transforms noncommutative theories into commutative theories,
\[ \sigma_{\gamma,\nu}:\NCThy{\mathcal{E},\mathcal{A}}\longrightarrow\Thy{\mathcal{E},\mathcal{A}}. \]
\end{prop}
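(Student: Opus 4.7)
The plan is to verify the two defining requirements of a commutative theory for $\sigma_{\gamma,\nu}(I)$: that it is a pretheory in the sense of Remark \ref{rem_prethy}, and that its value at each scale $L$ solves the commutative scale $L$ quantum master equation. Since $\sigma_{\gamma,\nu}$ is $\smooth{0,\infty}$-linear, it extends to $L$-parameterized families, and the noncommutative interaction constraints \eqref{eqn_interactionNCconstraints} translate into the commutative constraints \eqref{eqn_interactionconstraints} under the reindexing $(i,j,k,l)\mapsto(2i+j+k-1,l)$ baked into $\sigma_{\gamma,\nu}$, so the image lands in $\intcommI{\mathcal{E},\mathcal{A}}$ at every $L$ to begin with.

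The pretheory conditions will be dispatched via two earlier compatibility results. The renormalization group equation for $\sigma_{\gamma,\nu}(I)$ is immediate from Theorem \ref{thm_flowNCtoCom} applied to the noncommutative RG equation of Definition \ref{def_NCprethy}. For asymptotic locality, each commutative component $\sigma_{\gamma,\nu}(I)_{i'j'}[L]$ is a finite sum of images of noncommutative components $I_{ijkl}[L]$ under $\sigma_{\gamma,\nu}$, indexed by tuples with $2i+j+k-1=i'$ and $l=j'$; so the noncommutative asymptotic expansions of those $I_{ijkl}[L]$ guaranteed by Definition \ref{def_NCprethy}, combined with the fact that $\sigma_{\gamma,\nu}$ preserves locality (noted just after Definition \ref{def_OTFTtransformation}), assemble into an asymptotic expansion of $\sigma_{\gamma,\nu}(I)_{i'j'}[L]$ in local commutative interactions in the sense of Remark \ref{rem_asymplocal}.

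For the scale $L$ quantum master equation the crucial input is Proposition \ref{prop_dglanoncommtocomm}\eqref{itm_dglanoncommtocomm2}, applied to the heat kernel $K_L$ of the family $Q^{\mathrm{GF}}$ (which is a symmetric $(Q+d_{\mathrm{DR}})$-cycle by Lemma \ref{lem_heatkernelidentities}\eqref{itm_heatkernelidentities1}). This asserts that $\sigma_{\gamma,\nu}$ is a morphism of the differential graded Lie algebras that carry the scale $L$ QMEs on either side as their respective Maurer-Cartan equations, and hence automatically sends solutions on the noncommutative side to solutions on the commutative side. There is no individually difficult step; the result is essentially a packaging of Theorem \ref{thm_flowNCtoCom} and Proposition \ref{prop_dglanoncommtocomm}. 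The only bookkeeping hurdle worth flagging is matching the $\hbar$-versus-$\gamma,\nu$ normalizations in the two versions of the scale $L$ QME, i.e., verifying that the weighting $\hbar^{2i+j+k-1}$ built into $\sigma_{\gamma,\nu}$ correctly absorbs the passage between $\Delta_K$ and $\hbar\Delta_K$ and between the respective Poisson-type brackets, so that the two Maurer-Cartan equations line up term by term.
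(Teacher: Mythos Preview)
Your proposal is correct and follows essentially the same approach as the paper, which simply cites Theorem \ref{thm_flowNCtoCom} and Proposition \ref{prop_dglanoncommtocomm} as the two ingredients. Your version is more explicit in unpacking the pretheory conditions (in particular the asymptotic locality check), but the underlying argument is identical.
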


\begin{proof}
This is a direct consequence of Theorem \ref{thm_flowNCtoCom} and Proposition \ref{prop_dglanoncommtocomm}.
\end{proof}

\subsubsection{The transformation of theories arising from an Open Topological Field Theory}

A mechanism for producing noncommutative theories arises from the transformation $\Morita$ defined by Definition \ref{def_OTFTtransformation} that arises from an Open Topological Field Theory.

\begin{prop} \label{prop_OTFTthy}
Let $\mathcal{E}$ be a free BV-theory with a family of gauge-fixing operators $Q^{\mathrm{GF}}$ and let $\textgoth{A}$ be a differential graded Frobenius algebra. The map $\Morita$ transforms noncommutative theories defined on $\mathcal{E}$ into noncommutative theories defined on $\mathcal{E}_{\textgoth{A}}$,
\[ \Morita:\NCThy{\mathcal{E},\mathcal{A}}\longrightarrow\NCThy{\mathcal{E}_{\textgoth{A}},\mathcal{A}}. \]
\end{prop}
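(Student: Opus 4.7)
The plan is to verify the two defining conditions of a noncommutative theory in turn: first that $\Morita(I)$ is a pretheory on $\mathcal{E}_{\textgoth{A}}$, and then that at every length scale $L>0$ it satisfies the scale $L$ quantum master equation defined from the gauge-fixing operator $Q^{\mathrm{GF}}_{\textgoth{A}}$ introduced in Example \ref{exm_OTFTfreethy}.

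For the pretheory condition, the renormalization group equation $\Morita(I)[L']=W(\Morita(I)[L],P_{\textgoth{A}}(L,L'))$ is an immediate consequence of Theorem \ref{thm_flowOTFT}, applied with $P=P(L,L')$, once one identifies (using Equation \eqref{eqn_OTFTpropagator}) the propagator for $Q^{\mathrm{GF}}_{\textgoth{A}}$ as $P_{\textgoth{A}}(L,L')=P(L,L')\otimes\innprod_{\textgoth{A}}^{-1}$. The asymptotic locality requirement reduces, component by component, to the observation that the map $\Morita$ of Definition \ref{def_OTFTtransformation} sends a local interaction $\Phi_r\in\intnoncommLI{\mathcal{E},\mathcal{A}}$ to a local interaction $\Morita(\Phi_r)\in\intnoncommLI{\mathcal{E}_{\textgoth{A}},\mathcal{A}}$; hence if $I[L]_{ijkl}\simeq\sum_r f_r(L)\Phi_r$ in the sense of Remark \ref{rem_asymplocal}, then $\Morita(I)[L]_{ijkl}\simeq\sum_r f_r(L)\Morita(\Phi_r)$ with the same asymptotic bounds, because $\Morita$ is continuous and linear on each graded piece.

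For the quantum master equation, I would invoke Proposition \ref{prop_dglanoncommOTFT} with the specific choice $K:=K_L$, the heat kernel at scale $L$ of $Q^{\mathrm{GF}}$. By Equation \eqref{eqn_Frobeniusheatkernel}, $(K_L)_{\textgoth{A}}=K_L\otimes\innprod_{\textgoth{A}}^{-1}$ is precisely the scale $L$ heat kernel associated to the family $Q^{\mathrm{GF}}_{\textgoth{A}}$. Since $\Morita$ is a morphism of differential graded Lie algebras between
$$\bigl(\intnoncomm{\mathcal{E},\mathcal{A}},\,Q-d_{\mathrm{DR}}+\Delta_{K_L},\,\{-,-\}_{K_L}\bigr)\longrightarrow\bigl(\intnoncomm{\mathcal{E}_{\textgoth{A}},\mathcal{A}},\,Q_{\textgoth{A}}-d_{\mathrm{DR}}+\Delta_{(K_L)_{\textgoth{A}}},\,\{-,-\}_{(K_L)_{\textgoth{A}}}\bigr),$$
applying $\Morita$ termwise to the scale $L$ quantum master equation satisfied by $I[L]$ yields
$$\bigl(Q_{\textgoth{A}}-d_{\mathrm{DR}}+\Delta^{\textgoth{A}}_L\bigr)\Morita(I)[L]+\tfrac{1}{2}\bigl\{\Morita(I)[L],\Morita(I)[L]\bigr\}^{\textgoth{A}}_L=\Morita\!\left(\bigl(Q-d_{\mathrm{DR}}+\Delta_L\bigr)I[L]+\tfrac{1}{2}\{I[L],I[L]\}_L\right)=0,$$
which is exactly the scale $L$ quantum master equation on $\mathcal{E}_{\textgoth{A}}$.

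There is no real obstacle here beyond careful bookkeeping: the substance of the proposition is entirely contained in the compatibility of $\Morita$ with the renormalization group flow (Theorem \ref{thm_flowOTFT}) and with the BV dgla structure (Proposition \ref{prop_dglanoncommOTFT}), and the only mildly delicate point is verifying that $\Morita$ preserves the asymptotic-locality clause of Definition \ref{def_NCprethy}, which as indicated follows from the manifestly local description of $\Morita$ in Definition \ref{def_OTFTtransformation}.
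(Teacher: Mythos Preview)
Your proof is correct and follows exactly the approach of the paper, which simply states that the result is a direct consequence of Theorem \ref{thm_flowOTFT} and Proposition \ref{prop_dglanoncommOTFT}. Your version is more detailed---in particular, you make explicit the asymptotic-locality check that the paper leaves implicit---but the underlying argument is the same.
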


\begin{proof}
Again, this is a direct consequence of Theorem \ref{thm_flowOTFT} and Proposition \ref{prop_dglanoncommOTFT}.
\end{proof}

\subsubsection{The large $N$ string/gauge theory correspondence} \label{sec_largeN}

It was discovered by 't~Hooft in \cite{tHooftplanar} that when calculations in $U(N)$ gauge theories are expanded in powers of the rank $N$, the Feynman diagram expansion arranges and refines itself into a sum over diagrams describing the interactions of open strings in which the planar (genus zero) diagrams dominate. This is often understood to be perhaps the most general manifestation of the string/gauge theory correspondence, cf. \cite[Ch. 1]{MAGOO}.

A noncommutative BV-theory is defined through the noncommutative renormalization group flow, which is in turn described through the use of stable ribbon graphs. These graphs parameterize orbi-cells in a certain compactification of the moduli space of Riemann surfaces with marked points \cite{KontAiry, LooCompact}; in other words, they describe open string interactions in which the ends of the open string are fixed to the marked points. Put another way, a noncommutative BV-theory is a kind of open string theory.

The transformations described in Proposition \ref{prop_noncommtocommthy} and \ref{prop_OTFTthy} provide a realization of this string/gauge theory correspondence. If $\mathcal{E}$ is a free BV-theory with a family of gauge-fixing operators $Q^{\mathrm{GF}}$ and we take $\textgoth{A}$ to be the Frobenius algebra $\mat{N}{\gf}$, then we have the following diagram of theories:
\begin{equation} \label{dig_largeN}
\xymatrix{
& \NCThy{\mathcal{E},\mathcal{A}} \ar^<<<<<<<<{\ldots}_>>>>>>>{\sigma_{\gamma,\nu}\circ\Morita[\mat{1}{\gf}]}[ld] \ar_{\ldots}^>>>{\, \sigma_{\gamma,\nu}\circ\Morita[\mat{N}{\gf}]}[d] \ar^>>>>>>{\qquad \sigma_{\gamma,\nu}\circ\Morita[\mat{N+1}{\gf}] \ \ldots}[rd] \\
\Thy{\mathcal{E}_{\mat{1}{\gf}},\mathcal{A}}\ldots & \ldots\Thy{\mathcal{E}_{\mat{N}{\gf}},\mathcal{A}} & \Thy{\mathcal{E}_{\mat{N+1}{\gf}},\mathcal{A}}\ldots
}
\end{equation}

There are no horizontal arrows in Diagram \eqref{dig_largeN}. In other words, there is no natural way to pass between theories at different ranks $N$. It is the notion of a noncommutative theory that unifies the theories for different ranks and enables the consideration of large $N$ phenomena. From Example \ref{exm_matrixOTFT} we see that the parameter $\nu$ is sent to $N\hbar$ under the above maps, while the parameter $\gamma$ is mapped to $\hbar^2$. Hence we see that at fixed order in $\hbar$, the genus zero diagrams dominate in the large $N$ expansion.

The results of Section \ref{sec_vanishcriteria} enable us to draw conclusions about the corresponding gauge theories from the properties of the noncommutative theory and vice-versa. We will apply this principle in Section \ref{sec_NCCSTheory} to the noncommutative analogue of Chern-Simons theory that was constructed in Section \ref{sec_NCChernSimonsCME}.

\subsection{Homotopy theory}

We now wish to provide a definition for when two noncommutative theories are equivalent. As in \cite[\S 5.10]{CosEffThy}, this is described in terms of abstract homotopy theory.

\subsubsection{Simplicial homotopy theory} \label{sec_simplicialhtopthy}

Let $\mathcal{E}$ be a free BV-theory. Recall that our convention has been to take our commutative topological algebra $\mathcal{A}$ to be the de Rham algebra on a standard $n$-simplex $\Delta^n$. Doing so leads to a simplicial set of gauge-fixing operators for the free BV-theory $\mathcal{E}$. The noncommutative theories over $\mathcal{E}$ can then likewise be turned into a simplicial set that is fibered over the simplicial set of gauge-fixing operators. This allows us to engage the apparatus of simplicial homotopy theory. We will use the standard notation $d_i$ to denote the face maps of a simplicial set.

\begin{defi}
Let $\mathcal{E}$ be a free BV-theory and let
\[ Q_0^{\mathrm{GF}},Q_1^{\mathrm{GF}}:\mathcal{E}\to\mathcal{E} \]
be two gauge-fixing operators for $\mathcal{E}$. We will say that two noncommutative theories
\[ I^0\in\NCThy{\mathcal{E};Q_0^{\mathrm{GF}}} \quad\text{and}\quad I^1\in\NCThy{\mathcal{E};Q_1^{\mathrm{GF}}} \]
are \emph{equivalent} if there is a family of gauge-fixing operators
\[ Q^{\mathrm{GF}}:\mathcal{E}\to\mathcal{E}\cotimes\Omega(\Delta^1,\gf) \]
and a family of theories
\[ I\in\NCThy{\mathcal{E},\Omega(\Delta^1,\gf);Q^{\mathrm{GF}}} \]
such that:
\begin{itemize}
\item
$d_0(Q^{\mathrm{GF}})=Q_0^{\mathrm{GF}}$ and $d_1(Q^{\mathrm{GF}})=Q_1^{\mathrm{GF}}$,
\item
$d_0(I)=I^0$ and $d_1(I)=I^1$.
\end{itemize}
\end{defi}

\subsubsection{Large length scale limit and solutions to the QME on cohomology}

Under certain conditions a noncommutative theory will have a large length scale limit, as in \cite[\S 5.10.7]{CosEffThy}. This limit will provide a solution to the (noncommutative) quantum master equation on the cohomology of the free BV-theory. The algebraic structure so formed will be a type of deformation of the structure of a cyclic $\ai$-algebra \cite{KontFeyn}. This was described in \cite[\S 5]{HamQME} where it was shown that such structures produce classes in a compactification of the moduli space of Riemann surfaces; cf. \cite{baran, HamQME, KontFeyn}. We begin here by describing a sufficient condition for such convergence to take place.

\begin{defi}
Let $\mathcal{E}$ be a free BV-theory. We will say that a gauge-fixing operator
\[ Q^{\mathrm{GF}}:\mathcal{E}\to\mathcal{E} \]
is \emph{nonnegative} if there is a Hermitian/Euclidean\footnote{Depending upon whether the ground field $\gf$ is $\mathbb{C}$ or $\mathbb{R}$ respectively.} metric $\dblinnprod$ on the vector bundle $E$ such that:
\begin{itemize}
\item
the metric $\dblinnprod$ is homogeneous in the sense that there is a metric on each graded component $E^n$, and $\dblinnprod$ is the sum of all such metrics;
\item
the operators $Q$ and $Q^{\mathrm{GF}}$ are adjoint under the pairing induced by $\dblinnprod$; that is,
\begin{equation} \label{eqn_nonnegadjoint}
\int_M\dblinnprod[Qs_1,s_2]\mathrm{d}\varrho = \int_M\dblinnprod[s_1,Q^{\mathrm{GF}}s_2]\mathrm{d}\varrho, \quad\text{for all }s_1,s_2\in\mathcal{E};
\end{equation}
where $\mathrm{d}\varrho$ is the density constructed from the Riemannian metric on $M$ that is associated to the generalized Laplacian $H=[Q,Q^{\mathrm{GF}}]$.
\end{itemize}
\end{defi}

\begin{example}
Let $M$ be a compact oriented Riemannian three-manifold and consider the Frobenius algebra $\textgoth{A}:=\mat{N}{\gf}$. We consider the free part of our noncommutative Chern-Simons theory defined by Example \ref{exm_NCCStheory}, which had space of fields
\[ \mathcal{E} = \Sigma\dRham{M}\underset{\mathbb{R}}{\otimes}\textgoth{A} \]
and operator $Q:=d_{\mathrm{DR}}\otimes\mathds{1}$. We define the gauge-fixing operator according to Example~\ref{exm_metricgaugefixing} by
\[ Q^{\mathrm{GF}}:=(-1)^k\ast Q\ast:\Sigma\dRham[k]{M}\underset{\mathbb{R}}{\otimes}\textgoth{A}\longrightarrow \Sigma\dRham[k-1]{M}\underset{\mathbb{R}}{\otimes}\textgoth{A}, \]
where now $\ast$ combines the standard Hodge star operator defined by the Riemannian metric with the standard star operator on $\textgoth{A}$ that takes a matrix to its conjugate transpose.

Define a metric on the vector bundle $E:=\Sigma\Lambda^{\bullet}T^*M\otimes_{\mathbb{R}}\textgoth{A}$ by
\[ \dblinnprod[\omega_k,\omega'_k] := \overline{\Tr\bigl(\ast(\omega_k\wedge\ast\omega'_k)\bigr)}, \quad\omega_k,\omega'_k\in\Lambda^k T^*M\underset{\mathbb{R}}{\otimes}\textgoth{A}. \]
Then the operators $Q$ and $Q^{\mathrm{GF}}$ satisfy Equation \eqref{eqn_nonnegadjoint}.
\end{example}

If $Q^{\mathrm{GF}}:\mathcal{E}\to\mathcal{E}$ is a nonnegative gauge-fixing operator then it follows from the results of \cite{BerGetVer} that:
\begin{itemize}
\item
The operator $D:=Q+Q^{\mathrm{GF}}$ is a self-adjoint Dirac operator with respect to the pairing induced by the metric $\dblinnprod$.
\item
Consequently, the generalized Laplacian $H:=[Q,Q^{\mathrm{GF}}]=D^2$ has nonnegative eigenvalues.
\item
There is, by Proposition 3.48 of \cite{BerGetVer}, a Hodge decomposition
\begin{displaymath}
\begin{split}
\mathcal{E} &= \Ker(D)\oplus\Image(D), \\
\Ker(D) &= \Ker(H) = \Ker(Q)\cap\Ker(Q^{\mathrm{GF}}), \\
\Image(D) &= \Image(H) = \Image(Q)\oplus\Image(Q^{\mathrm{GF}}).
\end{split}
\end{displaymath}
\item
Since $H$ is self-adjoint with respect to the skew-symmetric pairing $\innprod$, the subspaces $\Ker(H)$ and $\Image(H)$ are orthogonal with respect to this pairing as well, and not just the one induced by $\dblinnprod$. In particular, the pairing $\innprod$ must be nondegenerate on $\Ker(H)$.
\item
The corresponding inclusions and projections onto the kernel
\begin{equation} \label{eqn_homologymaps}
i_0:\Ker(H)\rightleftharpoons\mathcal{E}:\pi_0
\end{equation}
are quasi-isomorphisms of $Q$-complexes and hence the cohomology $\mathcal{H}:=\mathds{H}(\mathcal{E},Q)$ may be canonically identified with $\Ker(H)$, which must be finite-dimensional by Proposition 2.36 of \cite{BerGetVer}.
\item
Since the skew-symmetric pairing $\innprod$ on $\mathcal{E}$ is nondegenerate on cohomology, there exists an inverse pairing
\[ \innprod_0^{-1}\in\Ker(H)\otimes\Ker(H)\subset\mathcal{E}\cotimes\mathcal{E} \]
such that $\pi_0=\bigl(\innprod_0^{-1}\bigr)\star$. Denote the corresponding inverse pairing on $\mathcal{H}$ by $\innprod^{-1}_{\mathcal{H}}$.
\item
Denoting the heat kernel of the generalized Laplacian $H$ by $K_t$, it follows from Proposition 2.37 of \cite{BerGetVer}:
\begin{itemize}
\item
that the heat kernel $K_t$ converges to $\innprod_0^{-1}$ as $t\to\infty$, and
\item
the canonical propagator
\[ P(\varepsilon,L):= \int_{t=\varepsilon}^L \bigl(Q^{\mathrm{GF}}\cotimes\mathds{1}\bigr)[K_t]\,\mathrm{d}t \]
defined by Definition \ref{def_canonicalpropagator} converges as $L\to\infty$ (the only source of divergences are the zero modes, but these are annihilated by the operator $Q^{\mathrm{GF}}$). We will denote the limit by $P(\varepsilon,\infty)$.
\end{itemize}
\end{itemize}

As a consequence, we have well-defined differential graded Lie algebras
\[ \bigl(\intnoncomm{\mathcal{E}},Q+\Delta_{\infty},\{-,-\}_{\infty}\bigr) \quad\text{and}\quad \bigl(\intnoncomm{\mathcal{H}},\Delta_{\mathcal{H}},\{-,-\}_{\mathcal{H}}\bigr) \]
where
\begin{align*}
\{I,J\}_{\infty} &:= -\{I,J\}_{\innprod_0^{-1}}, & \Delta_{\infty}(I) &:= -\Delta_{\innprod_0^{-1}}(I); \\
\{I,J\}_{\mathcal{H}} &:= -\{I,J\}_{\innprod^{-1}_{\mathcal{H}}}, & \Delta_{\mathcal{H}}(I) &:= -\Delta_{\innprod^{-1}_{\mathcal{H}}}(I).
\end{align*}

The maps shown in \eqref{eqn_homologymaps} define maps of these differential graded Lie algebras
\[ i_0^{\dag}:\intnoncomm{\mathcal{E}}\rightleftharpoons\intnoncomm{\mathcal{H}}:\pi_0^{\dag}. \]

\begin{theorem}
Let $\mathcal{E}$ be a free BV-theory with a nonnegative gauge-fixing operator $Q^{\mathrm{GF}}:\mathcal{E}\to\mathcal{E}$. Given a noncommutative theory $I\in\NCThy{\mathcal{E}}$:
\begin{itemize}
\item
The limit
\[ I[\infty] :=\lim_{L\to\infty} I[L] \in \intnoncommI{\mathcal{E}} \]
exists.
\item
This limit satisfies the scale $\infty$ quantum master equation
\[ (Q+\Delta_{\infty})I[\infty] + \frac{1}{2}\{I[\infty],I[\infty]\}_{\infty} = 0. \]
\item
As such, it defines a solution $I_{\mathcal{H}}:=i_0^{\dag}(I[\infty])$ to the quantum master equation in $\intnoncomm{\mathcal{H}}$,
\[ \Delta_{\mathcal{H}}I_{\mathcal{H}} + \frac{1}{2}\{I_{\mathcal{H}},I_{\mathcal{H}}\}_{\mathcal{H}} = 0. \]
\end{itemize}
\end{theorem}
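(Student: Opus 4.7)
The plan is to establish each of the three assertions by taking careful limits as $L\to\infty$ of the corresponding scale $L$ data, exploiting the fact that, by nonnegativity, the heat kernel $K_t$ converges to $\innprod_0^{-1}\in\Ker(H)\otimes\Ker(H)\subset\mathcal{E}\cotimes\mathcal{E}$ as $t\to\infty$, and consequently the canonical propagator $P(L,L')$ admits a limit $P(L,\infty)$ as $L'\to\infty$.

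For existence of $I[\infty]$, I would fix $L>0$ and use the renormalization group equation $I[L']=W(I[L],P(L,L'))$ from Definition \ref{def_NCprethy}. Working modulo $F_{n+1}\intnoncommI{\mathcal{E}}$ in the complete Hausdorff filtration of Definition \ref{def_NCfiltration}, the expansion \eqref{eqn_RGFnoncomm} of $W(I[L],P(L,L'))_{[n]}$ reduces to a finite sum of Feynman weights $w_\Gamma(I[L],P(L,L'))$ over isomorphism classes of connected stable ribbon graphs of bounded combinatorial type (this is the content of Proposition \ref{prop_filterformula}). Each Feynman weight is a finite composition of continuous maps built from $I[L]$ and copies of $P(L,L')$ attached at the edges, so the convergence $P(L,L')\to P(L,\infty)$ propagates stagewise, giving a limit in each quotient $\intnoncommI{\mathcal{E}}/F_{n+1}\intnoncommI{\mathcal{E}}$ and hence in $\intnoncommI{\mathcal{E}}$ itself. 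Independence of the limit from the choice of base scale $L$ follows at once from the semigroup identity \eqref{eqn_intgrpactsum}.

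For the scale $\infty$ quantum master equation, I would invoke Theorem \ref{thm_QMEflow} applied to $I[L]$ on the interval $(L,L')$. Since $\mathcal{A}=\gf$ the $d_{\mathrm{DR}}$ term disappears, and because $I[L]$ satisfies the scale $L$ QME by hypothesis, the obstruction attached to the distinguished vertex on the right-hand side of \eqref{eqn_QMEflow} vanishes identically. This leaves
\[ (Q+\Delta_{L'})I[L'] + \tfrac{1}{2}\{I[L'],I[L']\}_{L'} = 0 \]
for every $L'>0$. I would then pass to the limit $L'\to\infty$, using the same filtration-by-filtration argument as above to transfer convergence $K_{L'}\to\innprod_0^{-1}$ into convergence $\Delta_{L'}\to\Delta_\infty$ and $\{-,-\}_{L'}\to\{-,-\}_\infty$ (each structure being built by contracting against $K_{L'}$), which combined with $I[L']\to I[\infty]$ yields the desired equation.

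For the descent to cohomology, I would verify that $i_0^{\dag}$ is a morphism of differential graded Lie algebras from $(\intnoncomm{\mathcal{E}},Q+\Delta_\infty,\{-,-\}_\infty)$ to $(\intnoncomm{\mathcal{H}},\Delta_{\mathcal{H}},\{-,-\}_{\mathcal{H}})$: the inclusion $i_0$ lands in $\Ker(Q)\cap\Ker(Q^{\mathrm{GF}})$, so it kills $Q$ under dualization, while $(i_0\otimes i_0)^{\dag}$ transports $\innprod_0^{-1}$ to $\innprod_{\mathcal{H}}^{-1}$ by construction, transporting $\Delta_\infty$ and $\{-,-\}_\infty$ to their cohomological counterparts. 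Applying $i_0^{\dag}$ to the scale $\infty$ QME for $I[\infty]$ then immediately yields the cohomological QME for $I_{\mathcal{H}}$.

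The main obstacle I anticipate is the continuity argument underlying the first step: one must justify interchanging the infinite sum over graphs with the limit $L'\to\infty$. The filtration reduces this to a finite-graph statement at each stage, but some care is still required to handle graphs whose weights are iterated contractions with the propagator, so that an intermediate propagator may appear many times in a single weight. Once the combinatorial bookkeeping from Proposition \ref{prop_filterformula} is marshalled to bound the graph type at each filtration stage, the remaining analytic content is just continuity of a fixed finite composition of continuous maps, and the rest of the argument is formal.
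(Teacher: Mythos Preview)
Your proposal is correct and follows essentially the same route as the paper: use the renormalization group equation to write $I[L']=W(I[L],P(L,L'))$ and pass to the limit $L'\to\infty$ using convergence of the propagator, then take the limit of the scale $L'$ quantum master equation using convergence of the heat kernel. One small redundancy: your invocation of Theorem \ref{thm_QMEflow} in the second step is unnecessary, since $I[L']$ already satisfies the scale $L'$ QME for every $L'$ by definition of a theory; the paper simply takes the limit of this identity directly, citing Lemma \ref{lem_compositionconverge} for the continuity of the composite operations where you argue via the filtration.
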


\begin{proof}
From the renormalization group equation we see that $I[L]$ converges to
\[ I[\infty] = \lim_{L\to\infty} W(I[\varepsilon],P(\varepsilon,L)) = W(I[\varepsilon],P(\varepsilon,\infty)). \]
Taking the limit of the scale $L$ quantum master equation \eqref{eqn_scaleQME} satisfied by $I[L]$ and applying Lemma \ref{lem_compositionconverge} we get
\begin{displaymath}
\begin{split}
0 &= \lim_{L\to\infty}\Bigl[(Q+\Delta_L)I[L] + \frac{1}{2}\{I[L],I[L]\}_L\Bigr] \\
&= (Q+\Delta_{\infty})I[\infty] + \frac{1}{2}\{I[\infty],I[\infty]\}_{\infty}.
\end{split}
\end{displaymath}
\end{proof}

\begin{rem}
Recall from Proposition \ref{prop_noncommtocommthy} that the map $\sigma_{\gamma,\nu}$ defined by Definition \ref{def_mapNCtoCom} transforms noncommutative theories into commutative theories. As has already been mentioned, Costello showed in \cite{CosEffThy} that the $L\to\infty$ limit of such a commutative theory yields a solution in $\intcomm{\mathcal{H}}$ to the corresponding quantum master equation. This solution will be the image under $\sigma_{\gamma,\nu}$ of the solution in $\intnoncomm{\mathcal{H}}$ defined above.

The same reasoning may be applied to the map $\Morita$ defined by a differential graded Frobenius algebra $\textgoth{A}$, which transforms noncommutative theories according to Proposition \ref{prop_OTFTthy}. The corresponding solutions to the quantum master equation on the cohomology will likewise correspond under $\Morita$.

Applying this to Diagram \eqref{dig_largeN}, which describes our large $N$ correspondence, we get a diagram
\begin{displaymath}
\xymatrix{
& \intnoncomm{\mathcal{H}} \ar^<<<<<<<<{\ldots}_>>>>>>>{\sigma_{\gamma,\nu}\circ\Morita[\mat{1}{\gf}]}[ld] \ar_{\ldots}^>>>{\, \sigma_{\gamma,\nu}\circ\Morita[\mat{N}{\gf}]}[d] \ar^>>>>>>{\qquad \sigma_{\gamma,\nu}\circ\Morita[\mat{N+1}{\gf}] \ \ldots}[rrd] \\
\intcomm{\mathcal{H}_{\mat{1}{\gf}}}\ldots & \ldots\intcomm{\mathcal{H}_{\mat{N}{\gf}}} && \intcomm{\mathcal{H}_{\mat{N+1}{\gf}}}\ldots
}
\end{displaymath}
under which the associated solutions to the quantum master equation on cohomology correspond.
\end{rem}

\begin{rem}
Solutions to the quantum master equation in $\intnoncomm{E}$, where $E$ is a finite-dimensional symplectic vector space such as $\mathcal{H}$, were studied in \cite{HamQME}; cf. \cite{baran, KontFeyn}. The tree level part of such a structure consists of a cyclic $\ai$-algebra. In this way, these solutions may be viewed as a type of deformation of this cyclic $\ai$-structure.

In \cite{baran, HamQME, KontFeyn} various explanations were provided for how to produce classes in compactifications of the moduli space of Riemann surfaces from such structures. In this way, we observe that to any noncommutative theory (with a nonnegative gauge-fixing condition) there is a canonically associated family of classes living on a compactification of the moduli space of Riemann surfaces.
\end{rem}

\subsection{Obstruction theory}

Suppose that $\mathcal{E}$ is a free BV-theory with a family of gauge-fixing operators $Q^{\mathrm{GF}}$. We now consider the problem of extending a level $p$ theory to a level $(p+1)$ theory. Following \cite[\S 5.11]{CosEffThy}, we identify the relevant obstruction and the cohomology theory in which it lives. Underlying this approach is the filtration of the differential graded Lie algebra
\[ \mathfrak{g}:=\bigl(\intnoncommP{\mathcal{E},\mathcal{A}},Q-d_{\mathrm{DR}}+\Delta_L,\{-,-\}_L\bigr) \]
defined by Definition \ref{def_NCfiltration} and the corresponding short exact sequence
\begin{equation} \label{eqn_SESfiltration}
0 \to F_{p+1}\mathfrak{g}/F_{p+2}\mathfrak{g} \to \mathfrak{g}/F_{p+2}\mathfrak{g} \to \mathfrak{g}/F_{p+1}\mathfrak{g} \to 0.
\end{equation}
Note that under this filtration
\begin{equation} \label{eqn_filtrationidentities}
(Q-d_{\mathrm{DR}})F_p\mathfrak{g}\subset F_p\mathfrak{g}, \quad \Delta_L F_p\mathfrak{g}\subset F_{p+1}\mathfrak{g} \quad\text{and}\quad \{F_p\mathfrak{g},F_q\mathfrak{g}\}_L\subset F_{p+q}\mathfrak{g}.
\end{equation}

The natural quotient maps formed from the filtration give us a canonical diagram
\begin{displaymath}
\xymatrix{
\NCThy{\mathcal{E},\mathcal{A}} \ar[d] \ar[rd] \\
\NCThyL{p}{\mathcal{E},\mathcal{A}} & \NCThyL{p+1}{\mathcal{E},\mathcal{A}} \ar[l]
}
\end{displaymath}
providing the formulation of our lifting problem. Note that by Theorem \ref{thm_locinteffthy}, any level $p$ \emph{pretheory} lifts to a level $(p+1)$ \emph{pretheory}. Proposition \ref{prop_prethyliftgrpact} parameterizes all such possible lifts.

Suppose that we start with a tree-level theory, which by Theorem \ref{thm_locinteffthy} and Proposition \ref{prop_treeCME} must be defined by Equation \eqref{eqn_treelevelcorrespondence} from a local tree-level interaction
\[ I^{\mathrm{Tree}}\in\intnoncommLI{\mathcal{E},\mathcal{A}}/F_1 \intnoncommLI{\mathcal{E},\mathcal{A}} = \intnoncommLItree{\mathcal{E},\mathcal{A}} \subset \KontHamL{\mathcal{E},\mathcal{A}} \]
satisfying the classical master equation \eqref{eqn_treeCME}. Denote by $\intnoncommLP{\mathcal{E},\mathcal{A}}$ the subspace of $\intnoncommP{\mathcal{E},\mathcal{A}}$ consisting of all local functionals. We will see that the cohomology theory describing the obstructions to lifting our tree-level theory to a full theory will be provided by the complexes
\begin{equation} \label{eqn_obscomplex}
\bigl(F_{p+1}\intnoncommLP{\mathcal{E},\mathcal{A}}/F_{p+2}\intnoncommLP{\mathcal{E},\mathcal{A}},Q-d_{\mathrm{DR}}+\{I^{\mathrm{Tree}},-\}\bigr), \quad p\geq 0.
\end{equation}
Note that the above definition for the differential of the complex makes sense by Remark~\ref{rem_bracketextension}.

\subsubsection{Definition of the obstruction at length scale $L$}

\begin{defi}
Let $\mathcal{E}$ be a free BV-theory with a family of gauge-fixing operators $Q^{\mathrm{GF}}$. Given a level $p$ theory $I\in\NCThyL{p}{\mathcal{E},\mathcal{A}}$, choose a lift $\tilde{I}$ of $I$ to a level $(p+1)$ pretheory and define
\begin{multline} \label{eqn_obstruction}
O_{p+1}[L] := (Q-d_{\mathrm{DR}}+\Delta_L)\tilde{I}[L] + \frac{1}{2}\bigl\{\tilde{I}[L],\tilde{I}[L]\bigr\}_L \\
\in F_{p+1}\intnoncommP{\mathcal{E},\mathcal{A}}/F_{p+2}\intnoncommP{\mathcal{E},\mathcal{A}}, \quad L>0;
\end{multline}
where we have used the short exact sequence \eqref{eqn_SESfiltration}.
\end{defi}

Note that this obstruction depends upon the choice of lift $\tilde{I}$ to a level $(p+1)$ pretheory. The next proposition describes how the obstruction depends upon this choice.

\begin{prop} \label{prop_obsscaleLclass}
The obstruction is a well-defined cohomology class
\[ O_{p+1}[L]\in\mathds{H}\bigl(F_{p+1}\intnoncommP{\mathcal{E},\mathcal{A}}/F_{p+2}\intnoncommP{\mathcal{E},\mathcal{A}},Q-d_{\mathrm{DR}}+\{I_{[0]}[L],-\}_L\bigr), \]
where $I_{[0]}$ denotes the tree-level theory underlying the level $p$ theory $I$.
\end{prop}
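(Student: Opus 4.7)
The plan is to establish two things: that $O_{p+1}[L]$ represents a cycle for the indicated differential, and that the resulting cohomology class does not depend on the choice of lift. Write $d := Q - d_{\mathrm{DR}} + \Delta_L$ and $F(J) := dJ + \tfrac{1}{2}\{J,J\}_L$, so by definition $O_{p+1}[L] = F(\tilde{I}[L]) \bmod F_{p+2}\intnoncommP{\mathcal{E},\mathcal{A}}$. That this expression actually lies in $F_{p+1}\intnoncommP{\mathcal{E},\mathcal{A}}/F_{p+2}\intnoncommP{\mathcal{E},\mathcal{A}}$ follows from the hypothesis that $I$ is a level $p$ theory, i.e.\ that $F(\tilde{I}[L]) \in F_{p+1}\intnoncommP{\mathcal{E},\mathcal{A}}$. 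I first note that $Q - d_{\mathrm{DR}} + \{I_{[0]}[L],-\}_L$ really is a differential on the quotient: by the filtration identities \eqref{eqn_filtrationidentities}, $\Delta_L$ acts as zero on $F_{p+1}/F_{p+2}$, so dropping it is justified, and a direct computation using the dgLa structure of Proposition \ref{prop_dglanoncomm} gives
\[ (Q - d_{\mathrm{DR}} + \{I_{[0]}[L],-\}_L)^2 = \bigl\{(Q-d_{\mathrm{DR}})I_{[0]}[L] + \tfrac{1}{2}\{I_{[0]}[L],I_{[0]}[L]\}_L,\,-\bigr\}_L, \]
which vanishes by Proposition \ref{prop_treeCME} applied to the tree-level theory $I_{[0]}[L]$ underlying $I$.

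The cycle condition is a consequence of the formal Bianchi identity $(d + \{J,-\}_L)F(J) = 0$, valid for any $J$ of degree zero. This identity is a standard computation in a dgLa: $d^2 = 0$ handles the first term of $dF(J)$; Leibniz for $d$ on the odd bracket gives $d\{J,J\}_L = 2\{dJ,J\}_L$; the odd Jacobi identity yields $\{J,\{J,J\}_L\}_L = 0$; and graded symmetry of the odd bracket combines these into the desired cancellation. Applying this to $J = \tilde{I}[L]$ and reducing modulo $F_{p+2}$: $\Delta_L O_{p+1}[L] \in F_{p+2}$ by \eqref{eqn_filtrationidentities}, and writing $\tilde{I}[L] = I_{[0]}[L] + \tilde{I}_{\geq 1}[L]$ with $\tilde{I}_{\geq 1}[L] \in F_1$, the bracket $\{\tilde{I}_{\geq 1}[L], O_{p+1}[L]\}_L$ also lands in $F_{1+(p+1)} = F_{p+2}$. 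Thus Bianchi reduces modulo $F_{p+2}$ to
\[ (Q - d_{\mathrm{DR}} + \{I_{[0]}[L],-\}_L)\,O_{p+1}[L] = 0, \]
as required.

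For independence, given two level-$(p+1)$ pretheory lifts $\tilde{I}^{(1)}, \tilde{I}^{(2)}$ of $I$, their difference $\delta := \tilde{I}^{(2)}[L] - \tilde{I}^{(1)}[L]$ lies in $F_{p+1}\intnoncommP{\mathcal{E},\mathcal{A}}/F_{p+2}\intnoncommP{\mathcal{E},\mathcal{A}}$ since both restrict to the same level-$p$ pretheory $I[L]$. Direct expansion gives
\[ F(\tilde{I}^{(2)}[L]) - F(\tilde{I}^{(1)}[L]) = d\delta + \{\tilde{I}^{(1)}[L], \delta\}_L + \tfrac{1}{2}\{\delta,\delta\}_L. \]
The term $\{\delta,\delta\}_L$ lies in $F_{2(p+1)} \subseteq F_{p+2}$ (for $p \geq 0$), while the same filtration arguments as above reduce $d\delta$ to $(Q - d_{\mathrm{DR}})\delta$ and $\{\tilde{I}^{(1)}[L], \delta\}_L$ to $\{I_{[0]}[L], \delta\}_L$ modulo $F_{p+2}$. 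Consequently,
\[ O_{p+1}^{(2)}[L] - O_{p+1}^{(1)}[L] \equiv (Q - d_{\mathrm{DR}} + \{I_{[0]}[L],-\}_L)\,\delta \pmod{F_{p+2}\intnoncommP{\mathcal{E},\mathcal{A}}}, \]
which is a coboundary.

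The proof is largely formal once the Bianchi identity is in hand; the only real content beyond that is careful bookkeeping with the filtration, ensuring the nontrivially length-scale-dependent pieces ($\Delta_L$ and the bracket with $\tilde{I}_{\geq 1}$) are swept into $F_{p+2}$. I therefore do not anticipate a serious obstacle—the verification that Proposition \ref{prop_treeCME} matches the quadratic part of the candidate differential's square is the conceptual content, and everything else is dgLa calculus within the BV algebra of Proposition \ref{prop_dglanoncomm}.
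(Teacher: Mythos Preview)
Your proposal is correct and follows essentially the same approach as the paper: both use the Bianchi identity $dO(x)+[x,O(x)]=0$ for the cocycle condition and a direct expansion of $F(\tilde{I}[L]+W_{p+1})-F(\tilde{I}[L])$ for independence, then reduce modulo $F_{p+2}$ via the filtration identities \eqref{eqn_filtrationidentities}. Your write-up is in fact slightly more complete, since you explicitly verify that $Q-d_{\mathrm{DR}}+\{I_{[0]}[L],-\}_L$ squares to zero; note however that you do not really need Proposition~\ref{prop_treeCME} for this---the scale $L$ classical master equation for $I_{[0]}[L]$ follows immediately from the hypothesis that $I$ is a level $p$ theory, by reducing the scale $L$ QME modulo $F_1$.
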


\begin{proof}
The fact that $O_{p+1}[L]$ is a cocycle is a consequence of the filtration identities \eqref{eqn_filtrationidentities} and the general fact that if $x$ is a degree zero element in a differential graded Lie algebra $\mathfrak{g}$ (with an odd bracket) then
\[ dO(x)+[x,O(x)] = 0, \quad\text{where }O(x):=dx+\frac{1}{2}[x,x]. \]

Now if we add $W_{p+1}\in F_{p+1}\intnoncommI{\mathcal{E},\mathcal{A}}/F_{p+2}\intnoncommI{\mathcal{E},\mathcal{A}}$ to $\tilde{I}[L]$ then our obstruction changes by
\begin{multline} \label{eqn_obschangecobdryscaleL}
(Q-d_{\mathrm{DR}}+\Delta_L)\bigl[\tilde{I}[L]+W_{p+1}\bigr] + \frac{1}{2}\bigl\{\tilde{I}[L]+W_{p+1},\tilde{I}[L]+W_{p+1}\bigr\}_L = \\
O_{p+1}[L] + (Q-d_{\mathrm{DR}})W_{p+1} + \bigl\{I_{[0]}[L],W_{p+1}\bigr\}_L \mod F_{p+2},
\end{multline}
where again we have used the filtration identities \eqref{eqn_filtrationidentities}. Hence the cohomology class is independent of the choice of lift $\tilde{I}$.
\end{proof}

\subsubsection{Transformation property of the scale $L$ obstruction}

To get a definition for the obstruction that does not depend upon the length parameter $L$ will require us to understand how the obstruction transforms according to the length scale. We will presently see that it transforms according to the renormalization group flow. However, since the obstruction has degree one it will be necessary to multiply it by a formal parameter $\zeta$ of degree minus-one in order to apply the flow. For this, we will use the commutative algebra
\[ \mathcal{A}_{\zeta} := \mathcal{A}\otimes\gf[\zeta]. \]
Note that the commutativity constraint imposes the relation $\zeta^2=0$, so that $\gf[\zeta]$ is two-dimensional.

\begin{prop} \label{prop_obstransform}
Let $\mathcal{E}$ be a free BV-theory with a family of gauge-fixing operators $Q^{\mathrm{GF}}:\mathcal{E}\to\mathcal{E}\cotimes\mathcal{A}$. Given a level $p$ theory $I\in\NCThyL{p}{\mathcal{E},\mathcal{A}}$, define the obstruction~\eqref{eqn_obstruction} using a lift $\tilde{I}$ of $I$ to a level $(p+1)$ pretheory. Then this obstruction transforms according to the formula
\begin{equation} \label{eqn_obstransform}
\zeta O_{p+1}[L'] = \sum_{\Gamma\in\ptree[p+1]}\frac{\gamma^{g(\Gamma)}\nu^{b(\Gamma)}}{|\Aut(\Gamma)|} w_{\Gamma}\bigl(I_{[0]}[L]+\zeta O_{p+1}[L],P(L,L')\bigr), \quad\text{for all }L,L'>0;
\end{equation}
where the sum is taken over all $(p+1)$-trees $\Gamma$.
\end{prop}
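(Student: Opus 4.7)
The plan is to specialize Theorem \ref{thm_QMEflow} to the interaction $I:=\tilde{I}[L]$ and then perform a filtration analysis in the spirit of Proposition \ref{prop_filterformula}. Since $\tilde{I}$ is a level $(p+1)$ pretheory, the renormalization group equation gives $\tilde{I}[L']=W(\tilde{I}[L],P(L,L'))$ modulo $F_{p+2}$, so the left-hand side of Equation \eqref{eqn_QMEflow} applied to $\tilde{I}[L]$ reduces to $O_{p+1}[L']$ modulo $F_{p+2}$. On its right-hand side, the expression $(Q-d_{\mathrm{DR}}+\Delta_L)\tilde{I}[L]+\tfrac{1}{2}\{\tilde{I}[L],\tilde{I}[L]\}_L$ attached at the distinguished vertex is by definition $O_{p+1}[L]$ modulo $F_{p+2}$, while the remaining vertices carry $\tilde{I}[L]$. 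This gives the congruence
\[
O_{p+1}[L'] \equiv \sum_{(\Gamma,v)}\frac{\gamma^{g(\Gamma)}\nu^{b(\Gamma)}}{|\Aut(\Gamma,v)|}\,w_{(\Gamma,v)}\bigl(\tilde{I}[L],O_{p+1}[L];P(L,L')\bigr) \pmod{F_{p+2}}.
\]

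Next I would identify the stable ribbon graphs $(\Gamma,v)$ that actually survive modulo $F_{p+2}$. Using the identity $\ell(\Gamma)=\mathbf{b}_1(\Gamma)+\sum_{v'\in V(\Gamma)}l(v')$ from Equation \eqref{eqn_loopnum} and the filtration of Definition \ref{def_NCfiltration}, the Feynman weight $\gamma^{g(\Gamma)}\nu^{b(\Gamma)}w_{(\Gamma,v)}$ sits at filtration level $\mathbf{b}_1(\Gamma)+l(v)+\sum_{v'\neq v}l(v')$. The distinguished vertex forces $l(v)=p+1$ because $O_{p+1}[L]$ lives in $F_{p+1}/F_{p+2}$; for the total level to be at most $p+1$ we must therefore have $\mathbf{b}_1(\Gamma)=0$ and $l(v')=0$ for every $v'\neq v$. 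This is exactly the condition defining a $(p+1)$-tree $\Gamma\in\ptree[p+1]$ with distinguished vertex $v$ equal to its unique vertex of loop number $p+1$. On the non-distinguished vertices only the tree-level piece $I_{[0]}[L]$ of $\tilde{I}[L]$ contributes, and since the distinguished vertex is uniquely characterized within $\Gamma$ every automorphism of $\Gamma$ must fix it, so $|\Aut(\Gamma,v)|=|\Aut(\Gamma)|$.

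Finally, I would package the resulting sum using the $\zeta$-trick over $\mathcal{A}_\zeta=\mathcal{A}\otimes\gf[\zeta]$ with $\zeta^2=0$. Expanding the interaction $I_{[0]}[L]+\zeta O_{p+1}[L]$ by $\mathcal{A}_\zeta$-multilinearity inside the weight of a $(p+1)$-tree $\Gamma$, the $\zeta^0$ coefficient vanishes because the tree-level interaction $I_{001}[L]$ cannot be attached at the unique vertex of loop number $p+1\geq 1$, while the $\zeta^1$ coefficient collapses to the single contribution in which $\zeta O_{p+1}[L]$ is attached at that very vertex (any other choice would require $I_{001}[L]$ to sit at a vertex of loop number $p+1$). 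Hence
\[
w_{\Gamma}\bigl(I_{[0]}[L]+\zeta O_{p+1}[L],P(L,L')\bigr) = \zeta\,w_{(\Gamma,v)}\bigl(I_{[0]}[L],O_{p+1}[L];P(L,L')\bigr)
\]
for every $\Gamma\in\ptree[p+1]$, and multiplying the displayed congruence of the first paragraph by $\zeta$ produces Equation \eqref{eqn_obstransform}. The principal obstacle is the filtration argument of the second paragraph: one must verify that $\gamma^{g(\Gamma)}\nu^{b(\Gamma)}w_{(\Gamma,v)}$ does land at the filtration level predicted by Equation \eqref{eqn_loopnum} and then pin down the $(p+1)$-tree condition by ruling out graphs with either positive first Betti number or with extra vertices of positive loop number — combinatorics that closely parallel the argument behind Proposition \ref{prop_filterformula}.
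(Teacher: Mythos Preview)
Your proposal is correct and follows essentially the same route as the paper's proof: apply Theorem \ref{thm_QMEflow} together with the renormalization group equation to obtain the sum over $(\Gamma,v)$, then use the filtration identity \eqref{eqn_loopnum} to reduce to $(p+1)$-trees with a uniquely determined distinguished vertex (so $|\Aut(\Gamma,v)|=|\Aut(\Gamma)|$), and finally invoke the $\zeta$-trick to repackage the sum. The paper's argument is slightly terser but proceeds through exactly the same steps in the same order.
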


\begin{proof}
Applying Equation \eqref{eqn_QMEflow} of Theorem \ref{thm_QMEflow} to the renormalization group equation~\eqref{eqn_RGequation} of Definition \ref{def_NCprethy} yields
\[ O_{p+1}[L'] = \sum_{(\Gamma,v)}\frac{\gamma^{g(\Gamma)}\nu^{b(\Gamma)}}{|\Aut(\Gamma,v)|}w_{(\Gamma,v)}\bigl(\tilde{I}[L],O_{p+1}[L];P(L,L')\bigr). \]
Of course, the equation remains true if we multiply both $O_{p+1}[L']$ and $O_{p+1}[L]$ by $\zeta$. Since we are working in $F_{p+1}$ modulo $F_{p+2}$, we need only sum over graphs $\Gamma$ having loop number $(p+1)$. Likewise, the terms in the above sum will vanish unless the vertex $v$ also has loop number $(p+1)$, in which case by Equation \eqref{eqn_loopnum} the loop number of all the other vertices of $\Gamma$ must vanish, along with the first Betti number $\mathbf{b}_1(\Gamma)$; that is, $\Gamma$ must be a $(p+1)$-tree. Since $v$ will be the only vertex of $\Gamma$ with a nonzero loop number, any automorphism of $\Gamma$ must fix $v$, and the sum above reduces to \eqref{eqn_obstransform}.
\end{proof}

\subsubsection{Locality of the obstruction}

The next property that we must demonstrate, in order to get a definition for the obstruction that is independent of $L$, is that $O_{p+1}[L]$ is an asymptotically local functional in the sense of Remark \ref{rem_asymplocal}.

\begin{prop} \label{prop_obslocal}
Let $\mathcal{E}$ be a free BV-theory with a family of gauge-fixing operators $Q^{\mathrm{GF}}$. As before, define the obstruction $O_{p+1}[L]$ for a level $p$ theory $I\in\NCThyL{p}{\mathcal{E},\mathcal{A}}$ using a lift $\tilde{I}$ of $I$ to a level $(p+1)$ pretheory. Then $O_{p+1}[L]$ is an asymptotically local functional as $L\to 0$.
\end{prop}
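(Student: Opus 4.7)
The plan is to analyze separately each of the three contributions
\[
(Q - d_{\mathrm{DR}})\tilde{I}[L], \qquad \Delta_L \tilde{I}[L], \qquad \tfrac{1}{2}\bigl\{\tilde{I}[L], \tilde{I}[L]\bigr\}_L
\]
appearing in the obstruction \eqref{eqn_obstruction} and to show that each admits a small-$L$ asymptotic expansion with local coefficients in the sense of Remark \ref{rem_asymplocal}. The starting point is the asymptotic expansion of the pretheory itself: by Definition \ref{def_NCprethy}, each component $\tilde{I}_{ijkl}[L]$ admits a small-$L$ asymptotic expansion $\sum_r f_r(L) \Phi_r$ with local coefficients $\Phi_r$.

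First, the action of $Q - d_{\mathrm{DR}}$ on $\intnoncomm{\mathcal{E}, \mathcal{A}}$ is induced by the local differential operator $Q$ on $\mathcal{E}$ (dualized to act tensorially on $\mathcal{E}^{\dag}$ and then extended by Leibniz to $\intnoncomm{\mathcal{E},\mathcal{A}}$), together with the de Rham differential on $\mathcal{A}$. Both preserve local distributions in the sense of Definition \ref{def_localdistribution}, so termwise substitution of the asymptotic expansion of $\tilde{I}[L]$ yields an asymptotic expansion of $(Q - d_{\mathrm{DR}}) \tilde{I}[L]$ with local coefficients $(Q - d_{\mathrm{DR}}) \Phi_r$.

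Second, the BV-Laplacian $\Delta_L$ and the bracket $\{-,-\}_L$ both contract legs of their input(s) through the heat kernel $K_L$. The key input here is the classical small-time asymptotic expansion $K_L \simeq \sum_{k \geq 0} L^{k - n/2}\, \phi_k$ as $L \to 0^+$ (cf.\ Chapter 2 of \cite{BerGetVer}), where $n$ is the dimension of $M$ and each $\phi_k$ is a smooth section of $E \boxtimes E \otimes_{\mathbb{R}} \mathcal{A}$ supported in an arbitrarily small neighborhood of the diagonal in $M \times M$, built out of the Riemannian metric and its covariant derivatives along the diagonal. Contracting two legs of a local distribution against such a diagonal-supported kernel is equivalent to precomposing with a local differential operator, which preserves locality. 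Consequently, for any $\Phi, \Psi \in \intnoncommLI{\mathcal{E}, \mathcal{A}}$, both $\Delta_L \Phi$ and $\{\Phi, \Psi\}_L$ admit small-$L$ asymptotic expansions with local coefficients. Plugging in the asymptotic expansion of $\tilde{I}[L]$ and forming the Cauchy products of the resulting asymptotic series yields expansions of $\Delta_L \tilde{I}[L]$ and $\{\tilde{I}[L], \tilde{I}[L]\}_L$ with local coefficients.

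Summing the three contributions and reducing modulo $F_{p+2}$ produces the sought asymptotic expansion of $O_{p+1}[L]$ with local coefficients. The main technical obstacle is the rigorous verification that contraction of a local distribution against each heat-kernel asymptote $\phi_k$ preserves locality and produces a distribution expressible as a finite sum of the form \eqref{eqn_localdistribution}; once this step is established, the remainder of the argument reduces to multilinear bookkeeping of formal asymptotic series.
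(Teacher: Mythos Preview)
Your treatment of $(Q-d_{\mathrm{DR}})\tilde{I}[L]$ is correct and matches the paper. The argument for the remaining two terms, however, has a real gap---and not the one you flag at the end. The step ``plugging in the asymptotic expansion of $\tilde{I}[L]$'' is not mere bookkeeping: writing $\tilde{I}[L]=\sum_{r\leq R}f_r(L)\Phi_r + R_R(L)$ with $R_R(L)=o(L^{d_R})$ in the distributional topology, you must then control $\Delta_L R_R(L)$ and $\{\Phi_r,R_R(L)\}_L$. Both involve pairing $R_R(L)$ against $K_L$, which diverges in every $C^k$ seminorm as $L\to 0$; since $\{L^m K_L:L\in(0,1]\}$ is not a bounded subset of $\mathcal{E}\cotimes\mathcal{E}\cotimes\mathcal{A}$ for any fixed $m$, strong-dual convergence of the remainder gives no bound here without extra information on its distributional order. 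There is also a secondary issue: the small-time heat kernel asymptotics carry the Gaussian factor $e^{-d(x,y)^2/4L}$, so the coefficients are not fixed smooth kernels $\phi_k$ as you write; and for the bracket of two \emph{local} functionals (a double integral over $M\times M$ against $K_L$) extracting a local asymptotic expansion already requires a genuine Laplace-type analysis---this is essentially the single-edge case of Costello's Appendix~1, not a triviality about contracting against a diagonally supported kernel.

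The paper sidesteps both problems by a different route. Via Theorem~\ref{thm_locinteffthy} one writes $\tilde{I}[L]=\lim_{\varepsilon\to 0}W\bigl(\hat{I}-\hat{I}^{\mathrm{CT}}(\varepsilon),P(\varepsilon,L)\bigr)$ for a local interaction $\hat{I}$, and then Equation~\eqref{eqn_QMEflowaux6} expresses $\Delta_L\tilde{I}[L]+\tfrac12\{\tilde{I}[L],\tilde{I}[L]\}_L$ directly as a $\lim_{\varepsilon\to 0}$ of Feynman weights over connected stable ribbon graphs with a distinguished edge carrying $K_L$, the remaining edges carrying $P(\varepsilon,L)$, and all vertices decorated by the fixed local functional $\hat{I}-\hat{I}^{\mathrm{CT}}(\varepsilon)$. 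The asymptotic locality (in $L$) of such weights is supplied by Theorem~4.0.2 and Remark~4.2 of \cite[App.~1]{CosEffThy}, which are stated for an edge carrying $K_\varepsilon$; one then observes that $K_L-K_\varepsilon=\int_\varepsilon^L\partial_t K_t\,\mathrm{d}t=-\int_\varepsilon^L(\mathds{1}\cotimes\mathds{1}\cotimes\mu)(\mathds{1}\cotimes\tau\cotimes\mathds{1})(H\cotimes\mathds{1})K_t\,\mathrm{d}t$ is of propagator type and hence also covered by that appendix, and adds the two expansions. The point is that by going back to the Feynman-graph representation one never has to substitute one asymptotic series into another, and the hard analysis is entirely absorbed into Costello's existing results.
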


\begin{proof}
By Theorem \ref{thm_locinteffthy} there is a local interaction $\hat{I}$ such that the pretheory
\[ \tilde{I}[L] = \hat{I}^R[L] = \lim_{\varepsilon\to 0}W\bigl(\hat{I}-\hat{I}^{\mathrm{CT}}(\varepsilon),P(\varepsilon,L)\bigr). \]
First we note that since $\tilde{I}[L]$ is asymptotically local, the same will be true after we apply $(Q-d_{\mathrm{DR}})$. Hence we need only show that
\[ \Delta_L\tilde{I}[L] + \frac{1}{2}\bigl\{\tilde{I}[L],\tilde{I}[L]\bigr\}_L = -\lim_{\varepsilon\to 0}\bigg[\sum_{(\Gamma,e)} \frac{\gamma^{g(\Gamma)}\nu^{b(\Gamma)}}{|\Aut(\Gamma,e)|}w_{(\Gamma,e)}\bigl(\hat{I}-\hat{I}^{\mathrm{CT}}(\varepsilon);P(\varepsilon,L),K_L\bigr)\bigg] \]
is asymptotically local, where in the above we have applied Equation \eqref{eqn_QMEflowaux6}.

In \cite[App. 1]{CosEffThy} it is explicitly stated in Remark 4.2 following Theorem 4.0.2 that the Feynman weight appearing in the above sum has a small $\varepsilon$ asymptotic expansion in terms of asymptotically local functionals---\emph{at least when we replace $K_L$ by $K_{\varepsilon}$}. To obtain the same statement for $K_L$ in place of $K_{\varepsilon}$, we note that those same results also imply such an expansion exists when we replace $K_L$ by
\[ -\int_{t=\varepsilon}^L (\mathds{1}_{\mathcal{E}\cotimes\mathcal{E}}\cotimes\mu)(\mathds{1}\cotimes\tau\cotimes\mathds{1})(H\cotimes\mathds{1}_{\mathcal{E}\cotimes\mathcal{A}})K_t\,\mathrm{d}t = \int_{t=\varepsilon}^L\frac{\mathrm{d}}{\mathrm{d}t}K_t\,\mathrm{d}t = K_L-K_{\varepsilon}. \]
We then simply add the two expansions together to get the expansion for the Feynman weight in the sum above.
\end{proof}

\subsubsection{Definition of the obstruction and its associated cohomology theory} \label{sec_obsdef}

We now have everything in place that we need to provide a definition for the obstruction that does not depend on the length scale $L$.

\begin{prop} \label{prop_obsdef}
Let $\mathcal{E}$ be a free BV-theory with a family of gauge-fixing operators $Q^{\mathrm{GF}}:\mathcal{E}\to\mathcal{E}\cotimes\mathcal{A}$ and define the obstruction $O_{p+1}[L]$ for a level $p$ theory $I\in\NCThyL{p}{\mathcal{E},\mathcal{A}}$ using a lift $\tilde{I}$ of $I$ to a level $(p+1)$ pretheory:
\begin{enumerate}
\item \label{itm_obsdef1}
The family of interactions
\[ \tilde{I}[L]+\zeta O_{p+1}[L]\in\intnoncommI{\mathcal{E},\mathcal{A}_{\zeta}}, \quad L>0; \]
form a level $(p+1)$ pretheory.
\item \label{itm_obsdef2}
There is a unique $O_{p+1}\in F_{p+1}\intnoncommLP{\mathcal{E},\mathcal{A}}/F_{p+2}\intnoncommLP{\mathcal{E},\mathcal{A}}$ such that
\begin{equation} \label{eqn_obsdef}
\zeta O_{p+1}[L] = \sum_{\Gamma\in\ptree[p+1]}\frac{\gamma^{g(\Gamma)}\nu^{b(\Gamma)}}{|\Aut(\Gamma)|} \lim_{\varepsilon\to 0}w_{\Gamma}\bigl(I^{\mathrm{Tree}}+\zeta O_{p+1},P(\varepsilon,L)\bigr),
\end{equation}
where the sum is taken over all $(p+1)$-trees, and
\begin{equation} \label{eqn_treethylimit}
I^{\mathrm{Tree}} := \lim_{L\to 0}I_{[0]}[L]
\end{equation}
denotes the local tree-level interaction underlying the tree-level theory $I_{[0]}$, see Remark \ref{rem_treelimit}. The obstruction $O_{p+1}$ has degree one.
\item \label{itm_obsdef3}
We have
\[ \lim_{L\to 0} O_{p+1}[L] = O_{p+1}. \]
\end{enumerate}
\end{prop}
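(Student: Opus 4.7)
The approach is to reduce each claim to the bijection between local interactions and pretheories (Theorem \ref{thm_locinteffthy}), applied over the enlarged coefficient algebra $\mathcal{A}_\zeta$, together with the transformation properties of the obstruction already established in Propositions \ref{prop_obstransform} and \ref{prop_obslocal}.

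For part (\ref{itm_obsdef1}), I would first observe that $\tilde{I}[L]$ satisfies the renormalization group equation and is asymptotically local by virtue of being a pretheory, while Proposition \ref{prop_obslocal} supplies the asymptotic locality of $O_{p+1}[L]$. To obtain the RG equation for the combined family modulo $F_{p+2}$, apply Proposition \ref{prop_filterformula} with $I = \tilde{I}[L]$ and $J = \zeta O_{p+1}[L] \in F_{p+1}\intnoncommI{\mathcal{E},\mathcal{A}_\zeta}$: this expresses $W(\tilde{I}[L]+\zeta O_{p+1}[L], P(L,L'))$ as $\tilde{I}[L']$ plus a sum of Feynman weights over $(p+1)$-trees. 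The latter sum is precisely the right-hand side of the transformation formula \eqref{eqn_obstransform} in Proposition \ref{prop_obstransform}, and so equals $\zeta O_{p+1}[L']$, completing the verification.

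For part (\ref{itm_obsdef2}), I apply the bijection in Theorem \ref{thm_locinteffthy} over $\mathcal{A}_\zeta$ to the level $(p+1)$ pretheory constructed in part (\ref{itm_obsdef1}): this yields a unique local interaction $K \in \intnoncommLI{\mathcal{E},\mathcal{A}_\zeta}/F_{p+2}$ whose renormalization recovers $\tilde{I}[L]+\zeta O_{p+1}[L]$. Using the $\mathcal{A}$-module decomposition $\mathcal{A}_\zeta = \mathcal{A}\oplus\zeta\mathcal{A}$, write $K = \hat{I} + \zeta\hat{O}$; setting $\zeta=0$ identifies $\hat{I}$ as the local antecedent of $\tilde{I}[L]$. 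To see that $\hat{O}$ lies in $F_{p+1}/F_{p+2}$, reduce modulo $F_{k+1}$ for each $0 \leq k \leq p$: since $\zeta O_{p+1}[L] \in F_{p+1}$ contributes trivially on the pretheory side, the uniqueness half of Theorem \ref{thm_locinteffthy} forces $\zeta\hat{O} \equiv 0 \pmod{F_{k+1}}$, whence $\hat{O} \in F_{p+1}\intnoncommLP{\mathcal{E},\mathcal{A}}/F_{p+2}\intnoncommLP{\mathcal{E},\mathcal{A}}$. Set $O_{p+1}:=\hat{O}$; its degree is one since $\zeta$ has degree minus one and $\zeta O_{p+1}$ has degree zero. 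Equation \eqref{eqn_obsdef} then arises by applying Proposition \ref{prop_prethyliftgrpact} to $\hat{I}$ and $\zeta O_{p+1}\in\mathcal{G}_{p+1}$, using Remark \ref{rem_treelimit} to identify $\hat{I}_{[0]}$ with $I^{\mathrm{Tree}}$. Uniqueness is the reverse of this construction: any two candidates satisfying \eqref{eqn_obsdef} yield the same level $(p+1)$ pretheory via renormalization, so the bijection in Theorem \ref{thm_locinteffthy} forces them to coincide.

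For part (\ref{itm_obsdef3}), pass to the limit $L \to 0$ in \eqref{eqn_obsdef}. Every $(p+1)$-tree satisfies the hypotheses of Lemma \ref{lem_treecterterm}\eqref{itm_treecterterm1}---vanishing first Betti number, and all but one vertex having loop number zero---so each Feynman weight converges as $\varepsilon \to 0$ and tends to zero as $L \to 0$ unless the graph is a corolla. The only surviving contributions come from corollas consisting of a single distinguished vertex decorated by $\zeta O_{p+1}$, whose Feynman weight is $\zeta O_{p+1}$ itself; collecting these gives $\lim_{L\to 0}\zeta O_{p+1}[L] = \zeta O_{p+1}$, from which the claim follows by extracting the $\zeta$-coefficient. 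The main technical subtlety lies in part (\ref{itm_obsdef2}), where one must carefully verify that the filtered correspondence of Theorem \ref{thm_locinteffthy} respects the $\zeta$-decomposition in exactly the way required to locate $\hat{O}$ at the filtration level $p+1$; this bookkeeping, rather than any analytic difficulty, is the critical step.
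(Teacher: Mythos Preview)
Your proposal is correct and follows essentially the same approach as the paper's proof. Parts (\ref{itm_obsdef1}) and (\ref{itm_obsdef3}) match the paper almost verbatim: Proposition \ref{prop_filterformula} plus \eqref{eqn_obstransform} for the RG equation, Proposition \ref{prop_obslocal} for asymptotic locality, and Lemma \ref{lem_treecterterm}\eqref{itm_treecterterm1} for the corolla degeneration.

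For part (\ref{itm_obsdef2}) there is a minor stylistic difference worth noting. You pull back through the bijection of Theorem \ref{thm_locinteffthy} to a local interaction $K=\hat{I}+\zeta\hat{O}$, then argue by induction on the filtration that $\hat{O}\in F_{p+1}/F_{p+2}$, and finally push forward via Proposition \ref{prop_prethyliftgrpact}. The paper instead stays on the pretheory side: it observes that $\tilde{I}[L]+\zeta O_{p+1}[L]$ and $\tilde{I}[L]$ are \emph{two} level $(p+1)$ pretheories over $\mathcal{A}_\zeta$ that both lift $I$, and invokes the free transitive action of $\mathcal{G}_{p+1}$ on the fiber (from Theorem \ref{thm_locinteffthy} and Proposition \ref{prop_prethyliftgrpact}) to conclude directly that their difference is of the form \eqref{eqn_obsdef} for a unique element of $\mathcal{G}_{p+1}$. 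This sidesteps your filtration bookkeeping, since the group $\mathcal{G}_{p+1}$ already lives at level $p+1$ by construction; the $\zeta$-purity of that element is then immediate from the fact that the two pretheories agree at $\zeta=0$. The two arguments are equivalent, but the paper's comparison-of-lifts formulation is slightly more economical.
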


\begin{proof}
Item \eqref{itm_obsdef1} follows directly from Proposition \ref{prop_filterformula}, the renormalization group equation and Equation \eqref{eqn_obstransform} of Proposition \ref{prop_obstransform}. Asymptotic locality follows from Proposition~\ref{prop_obslocal}.

Now $(\tilde{I}[L]+\zeta O_{p+1}[L])$ and $\tilde{I}[L]$ are both level $(p+1)$ pretheories that lift $I$. By Theorem \ref{thm_locinteffthy} they must be related by the action of the group $\mathcal{G}_{p+1}$ defined by \eqref{eqn_liftgroup} that is described by Proposition \ref{prop_prethyliftgrpact}. From this, \eqref{itm_obsdef2} follows.

Item \eqref{itm_obsdef3} now follows from taking the limit as $L\to 0$ in Equation \eqref{eqn_obsdef} and applying Lemma \ref{lem_treecterterm}\eqref{itm_treecterterm1}. In the limit, the sum degenerates into a sum over corollas and yields $\zeta O_{p+1}$ by Equation \eqref{eqn_intgrpactzero}.
\end{proof}

Of course, as before, the obstruction $O_{p+1}$ still depends upon a choice of lift $\tilde{I}$; but as we shall presently see, it determines a well-defined cohomology class in the complex \eqref{eqn_obscomplex}.

\begin{theorem} \label{thm_obsclass}
Let $\mathcal{E}$ be a free BV-theory with a family of gauge-fixing operators $Q^{\mathrm{GF}}$ and define the obstruction $O_{p+1}$ for a level $p$ theory $I$ using a lift $\tilde{I}$ of $I$ to a level $(p+1)$ pretheory:
\begin{enumerate}
\item \label{itm_obsclass1}
The obstruction is a cocycle,
\[ (Q-d_{\mathrm{DR}})O_{p+1} + \{I^{\mathrm{Tree}},O_{p+1}\} = 0; \]
where $I^{\mathrm{Tree}}$ is the local tree-level interaction underlying the tree-level theory defined by \eqref{eqn_treethylimit}.
\item \label{itm_obsclass2}
Choosing a different lift of $I$ to a level $(p+1)$ pretheory only changes the obstruction by a coboundary.
\item \label{itm_obsclass3}
Consequently, to every level $p$ theory $I$, there is associated a well-defined obstruction
\[ O_{p+1}\in\mathds{H}\bigl(F_{p+1}\intnoncommLP{\mathcal{E},\mathcal{A}}/F_{p+2}\intnoncommLP{\mathcal{E},\mathcal{A}},Q-d_{\mathrm{DR}}+\{I^{\mathrm{Tree}},-\}\bigr). \]
\end{enumerate}
\end{theorem}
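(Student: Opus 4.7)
The plan is to prove \eqref{itm_obsclass1} and \eqref{itm_obsclass2} separately, since \eqref{itm_obsclass3} is an immediate consequence of these together with the locality of $O_{p+1}$ guaranteed by Proposition \ref{prop_obsdef}\eqref{itm_obsdef2}. The common strategy in both cases is to first establish the analogous statement at a fixed length scale $L$---where it reduces to a standard identity in the filtered differential graded Lie algebra of Proposition \ref{prop_dglanoncomm}---and then to take the limit $L \to 0$ using the convergence results assembled in Proposition \ref{prop_obsdef}.

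For \eqref{itm_obsclass1}, the key input is the general Maurer--Cartan identity: in any differential graded Lie algebra $(\mathfrak{g}, d, [-,-])$ with odd bracket, the curvature $O(x) := dx + \tfrac{1}{2}[x,x]$ of any degree-zero element $x$ satisfies $d\,O(x) + [x, O(x)] = 0$. Applying this to $x = \tilde{I}[L]$ and working modulo $F_{p+2}$---where the filtration identities \eqref{eqn_filtrationidentities} kill $\Delta_L O_{p+1}[L]$ and ensure that only the tree-level part $I_{[0]}[L]$ of $\tilde{I}[L]$ contributes to the bracket with $O_{p+1}[L]$---one obtains
\[ (Q - d_{\mathrm{DR}})\,O_{p+1}[L] + \{I_{[0]}[L], O_{p+1}[L]\}_L \equiv 0 \mod F_{p+2}. \]
Letting $L \to 0$ and invoking Proposition \ref{prop_obsdef}\eqref{itm_obsdef3} for $O_{p+1}[L] \to O_{p+1}$, Remark \ref{rem_treelimit} for $I_{[0]}[L] \to I^{\mathrm{Tree}}$, and \eqref{eqn_bracketlimit} for the bracket, then yields the desired cocycle equation.

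For \eqref{itm_obsclass2}, Theorem \ref{thm_locinteffthy} ensures that any two lifts $\tilde{I}, \tilde{I}'$ of $I$ to a level $(p+1)$ pretheory correspond to local interactions differing by an element $J \in \mathcal{G}_{p+1} = F_{p+1}\intnoncommLI{\mathcal{E},\mathcal{A}}/F_{p+2}\intnoncommLI{\mathcal{E},\mathcal{A}}$. Setting $W_{p+1}[L] := \tilde{I}'[L] - \tilde{I}[L] \mod F_{p+2}$, Proposition \ref{prop_prethyliftgrpact} expresses this difference as a sum over $(p+1)$-trees, and Lemma \ref{lem_treecterterm}\eqref{itm_treecterterm1} shows that only corolla contributions survive the limit $L \to 0$, leaving $W_{p+1}[L] \to J$. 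Equation \eqref{eqn_obschangecobdryscaleL}, established in the proof of Proposition \ref{prop_obsscaleLclass}, gives the scale-$L$ variation
\[ O'_{p+1}[L] - O_{p+1}[L] = (Q - d_{\mathrm{DR}})\,W_{p+1}[L] + \{I_{[0]}[L], W_{p+1}[L]\}_L \mod F_{p+2}, \]
and passing to $L \to 0$ produces $O'_{p+1} - O_{p+1} = (Q - d_{\mathrm{DR}})J + \{I^{\mathrm{Tree}}, J\}$, which exhibits the difference as a coboundary in the complex \eqref{eqn_obscomplex}.

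The main technical obstacle will be justifying the $L \to 0$ limits of the bracket terms $\{I_{[0]}[L], -\}_L$ in both arguments above: Equation \eqref{eqn_bracketlimit} directly handles only the case when one slot is a fixed local functional, whereas here both arguments are $L$-dependent families converging only asymptotically locally in the sense of Remark \ref{rem_asymplocal}. The cleanest way to proceed is to observe that in each identity the left-hand side converges by Proposition \ref{prop_obsdef}\eqref{itm_obsdef3} and the $(Q - d_{\mathrm{DR}})$-term converges by continuity, so the bracket term must also converge; the identification of its limit as $\{I^{\mathrm{Tree}}, -\}$ applied to the limiting argument can then be obtained by combining \eqref{eqn_bracketlimit} with the asymptotic local expansion of $I_{[0]}[L]$ near $L = 0$ and Remark \ref{rem_bracketextension}.
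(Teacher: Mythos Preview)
Your overall strategy matches the paper's exactly: establish the cocycle and coboundary identities at scale $L$ using the filtered DGLA structure (as in Proposition \ref{prop_obsscaleLclass}), then pass to the limit $L\to 0$ via Proposition \ref{prop_obsdef}\eqref{itm_obsdef3}. You also correctly isolate the one genuine technical obstacle, namely the limit of $\{I_{[0]}[L], O_{p+1}[L]\}_L$ (and of $\{I_{[0]}[L], W_{p+1}[L]\}_L$), where both slots and the bracket itself depend on $L$ and the kernel $K_L$ diverges.

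However, your proposed resolution of this obstacle does not quite work. Knowing that the bracket term converges (by subtraction from the other two terms) does not by itself identify the limit as $\{I^{\mathrm{Tree}}, O_{p+1}\}$; you still need an independent computation of that limit. Your suggestion to combine \eqref{eqn_bracketlimit} with the asymptotic local expansion of $I_{[0]}[L]$ is problematic: \eqref{eqn_bracketlimit} requires the first argument to be a \emph{fixed} local functional, and an asymptotic expansion of $I_{[0]}[L]$ does not justify interchanging the limit with a bracket whose kernel is simultaneously blowing up. The paper avoids this difficulty by a different route: rather than appealing to \eqref{eqn_bracketlimit} directly, it uses \eqref{eqn_treelevelcorrespondence}, \eqref{eqn_obsdef}, and Proposition \ref{prop_kerneledgecontract} to rewrite $\{I_{[0]}[L], O_{p+1}[L]\}_L$ explicitly as a sum over graphs $(\Gamma\sqcup\Gamma'+e,e)$---with trees $\Gamma$, $(p+1)$-trees $\Gamma'$, and the \emph{local} functionals $I^{\mathrm{Tree}}$, $O_{p+1}$ at the vertices. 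Because $\Gamma\sqcup\Gamma'+e$ is again a $(p+1)$-tree and all vertex labels are local, the argument of Lemma \ref{lem_treecterterm}\eqref{itm_treecterterm1} applies and the sum degenerates to corollas as $L\to 0$, after which \eqref{eqn_bracketlimit} finishes the identification. The same graph expansion handles $\{I_{[0]}[L], W_{p+1}[L]\}_L$ in part \eqref{itm_obsclass2}. This Feynman-diagrammatic rewriting is the key step you are missing.
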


\begin{proof}
Note that \eqref{itm_obsclass1} will follow from Proposition \ref{prop_obsscaleLclass} and Proposition \ref{prop_obsdef}\eqref{itm_obsdef3} by taking the limit as $L\to 0$ of the equation that expresses the fact that $O_{p+1}[L]$ is a cycle, providing that we can show that
\[ \lim_{L\to 0}\{I_{[0]}[L],O_{p+1}[L]\}_L = \{I^{\mathrm{Tree}},O_{p+1}\}. \]
Some care should be used in taking the above limit, as the kernel $K_L$ defining the bracket diverges as $L\to 0$.

Using Equation \eqref{eqn_treelevelcorrespondence}, Equation \eqref{eqn_obsdef} and Proposition \ref{prop_kerneledgecontract} we obtain the expression
\begin{multline*}
\zeta\{I_{[0]}[L],O_{p+1}[L]\}_L = \\
\sum_{\Gamma,\Gamma'}\sum_e \frac{\gamma^{g(\Gamma\sqcup\Gamma'+e)}\nu^{b(\Gamma\sqcup\Gamma'+e)}}{|\Aut(\Gamma)||\Aut(\Gamma')|} \lim_{\varepsilon\to 0}\Bigl[w_{(\Gamma\sqcup\Gamma'+e,e)}\bigl(I^{\mathrm{Tree}}+\zeta O_{p+1};P(\varepsilon,L),K_L\bigr)\Bigr];
\end{multline*}
where the sum is taken over all trees $\Gamma$, all $(p+1)$-trees $\Gamma'$, and all pairs of half-edges $e$ in which one half-edge is taken from the legs of $\Gamma$ and the other from the legs of $\Gamma'$. Now, since both $I^{\mathrm{Tree}}$ and $O_{p+1}$ are local functionals and $\Gamma\sqcup\Gamma'$ is a $(p+1)$-tree, an argument similar to Lemma \ref{lem_treecterterm}\eqref{itm_treecterterm1} (cf. Lemma 4.13 of \cite{NCRGF}) shows that when we take the limit as $L\to 0$ on the right hand side above, the sum degenerates into a sum over corollas and it follows from Equation \eqref{eqn_bracketlimit} that
\[ \lim_{L\to 0}\{I_{[0]}[L],O_{p+1}[L]\}_L = \lim_{L\to 0}\{I^{\mathrm{Tree}},O_{p+1}\}_L = \{I^{\mathrm{Tree}},O_{p+1}\}. \]

Now to prove \eqref{itm_obsclass2}, note that if $\tilde{I}'$ is a different lift of $I$ to a level $(p+1)$ pretheory then it follows from Theorem \ref{thm_locinteffthy} and Proposition \ref{prop_prethyliftgrpact} that there is an $\omega_{p+1}\in\mathcal{G}_{p+1}$---where $\mathcal{G}_{p+1}$ was defined by \eqref{eqn_liftgroup}---for which
\begin{equation} \label{eqn_intactchange}
\tilde{I}'[L] = \tilde{I}[L]  + \sum_{\Gamma\in\ptree[p+1]}\frac{\gamma^{g(\Gamma)}\nu^{b(\Gamma)}}{|\Aut(\Gamma)|} \lim_{\varepsilon\to 0}w_{\Gamma}\bigl(I^{\mathrm{Tree}}+\omega_{p+1},P(\varepsilon,L)\bigr).
\end{equation}
Denoting the sum on the right-hand side above by $W_{p+1}[L]$, it follows from Equation \eqref{eqn_obschangecobdryscaleL} that the obstruction $O_{p+1}[L]$ changes by
\[ O'_{p+1}[L] = O_{p+1}[L] + (Q-d_{\mathrm{DR}})W_{p+1}[L] + \bigl\{I_{[0]}[L],W_{p+1}[L]\bigr\}_L. \]
Note that it follows from Lemma \ref{lem_treecterterm}\eqref{itm_treecterterm1} that $W_{p+1}[L]$ converges to $\omega_{p+1}$ as $L\to 0$. Therefore, by Proposition \ref{prop_obsdef}\eqref{itm_obsdef3}, to prove that $O'_{p+1}$ and $O_{p+1}$ differ by a coboundary we need only show that the bracket
\begin{multline*}
\bigl\{I_{[0]}[L],W_{p+1}[L]\bigr\}_L = \\
\sum_{\Gamma,\Gamma'}\sum_e \frac{\gamma^{g(\Gamma\sqcup\Gamma'+e)}\nu^{b(\Gamma\sqcup\Gamma'+e)}}{|\Aut(\Gamma)||\Aut(\Gamma')|} \lim_{\varepsilon\to 0}\Bigl[w_{(\Gamma\sqcup\Gamma'+e,e)}\bigl(I^{\mathrm{Tree}}+\omega_{p+1};P(\varepsilon,L),K_L\bigr)\Bigr]
\end{multline*}
converges to $\{I^{\mathrm{Tree}},\omega_{p+1}\}$ as $L\to 0$; where as before, we have applied Proposition \ref{prop_kerneledgecontract} to obtain the above expression, in which we likewise sum over all trees $\Gamma$, all $(p+1)$-trees $\Gamma'$, and all pairs of half-edges $e$, as we did previously. The aforementioned convergence, however, just follows from the same degeneration argument that we used earlier, yielding the equation
\begin{equation} \label{eqn_obschangecobdry}
O'_{p+1} = O_{p+1} + (Q-d_{\mathrm{DR}})\omega_{p+1} + \bigl\{I^{\mathrm{Tree}},\omega_{p+1}\bigr\}.
\end{equation}
\end{proof}

\subsubsection{Vanishing obstructions and lifting level $p$ theories}

The importance of the obstruction defined in the preceding sections is that it controls the ability to lift a level $p$ theory to a level $(p+1)$ theory.

\begin{theorem} \label{thm_obsvanish}
Let $\mathcal{E}$ be a free BV-theory with a family of gauge-fixing operators $Q^{\mathrm{GF}}$:
\begin{enumerate}
\item \label{itm_obsvanish1}
A level $p$ theory $I\in\NCThy{\mathcal{E},\mathcal{A}}$ has a lift to a level $(p+1)$ theory if and only if the obstruction cohomology class $O_{p+1}$ defined by Proposition \ref{prop_obsdef}\eqref{itm_obsdef2} vanishes.
\item \label{itm_obsvanish2}
The fiber of level $(p+1)$ theories $\tilde{I}$ that sit over a given level $p$ theory $I$ is acted upon freely and transitively by the abelian group of degree zero cocycles $\omega_{p+1}$ of the complex \eqref{eqn_obscomplex} according to the formula
\[ \tilde{I}[L] \longmapsto \tilde{I}[L] + \sum_{\Gamma\in\ptree[p+1]}\frac{\gamma^{g(\Gamma)}\nu^{b(\Gamma)}}{|\Aut(\Gamma)|} \lim_{\varepsilon\to 0}w_{\Gamma}\bigl(I^{\mathrm{Tree}}+\omega_{p+1},P(\varepsilon,L)\bigr),
 \]
where the sum is taken over all $(p+1)$-trees $\Gamma$.
\end{enumerate}
\end{theorem}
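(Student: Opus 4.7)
The plan is to reduce both statements to the obstruction-theoretic machinery already assembled. The central observation is that equation \eqref{eqn_obsdef}, together with Proposition \ref{prop_obsdef}\eqref{itm_obsdef3}, provides a two-way dictionary: $O_{p+1}[L]=0$ for every $L>0$ if and only if $O_{p+1}=0$ as a local functional. The ``only if'' direction is immediate from \eqref{itm_obsdef3} by taking $L\to 0$. The ``if'' direction follows by inspecting the right-hand side of \eqref{eqn_obsdef}: the sum is taken over $(p+1)$-trees, each of which has exactly one vertex of loop number $p+1$; since $I^{\mathrm{Tree}}$ only contributes vertices of loop number zero, that distinguished vertex can only be decorated by $\zeta O_{p+1}$, so every summand vanishes when $O_{p+1}=0$.

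For part (\ref{itm_obsvanish1}), I would proceed as follows. Pick any lift $\tilde{I}$ of $I$ to a level $(p+1)$ pretheory via Theorem \ref{thm_locinteffthy}, and form the associated local functional $O_{p+1}$ from Proposition \ref{prop_obsdef}\eqref{itm_obsdef2}. If a level $(p+1)$ theory $\tilde{I}'$ lifting $I$ exists, its obstruction $O'_{p+1}[L]$ vanishes for every $L$, so $O'_{p+1}=0$ as a local functional; since $[O_{p+1}]=[O'_{p+1}]$ in the complex \eqref{eqn_obscomplex} by Theorem \ref{thm_obsclass}\eqref{itm_obsclass2}, the class $[O_{p+1}]$ is trivial. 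Conversely, if $[O_{p+1}]=0$ then there is an $\omega_{p+1}\in\mathcal{G}_{p+1}$ with $O_{p+1}+(Q-d_{\mathrm{DR}})\omega_{p+1}+\{I^{\mathrm{Tree}},\omega_{p+1}\}=0$. I would then define $\tilde{I}'$ by acting on $\tilde{I}$ by $\omega_{p+1}$ via Proposition \ref{prop_prethyliftgrpact}; equation \eqref{eqn_obschangecobdry} from the proof of Theorem \ref{thm_obsclass} then gives $O'_{p+1}=0$ as a local functional, and therefore $O'_{p+1}[L]=0$ for every $L>0$, so $\tilde{I}'$ is a level $(p+1)$ theory.

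For part (\ref{itm_obsvanish2}), I would begin with the fact, already established by Theorem \ref{thm_locinteffthy} and Proposition \ref{prop_prethyliftgrpact}, that the fiber of level $(p+1)$ \emph{pretheories} over $I$ is a torsor for $\mathcal{G}_{p+1}$ under the prescribed action. The task is to identify the subset consisting of genuine level $(p+1)$ theories. Given two such theories $\tilde{I}$ and $\tilde{I}'$, with $\tilde{I}'$ obtained from $\tilde{I}$ by acting by $\omega_{p+1}\in\mathcal{G}_{p+1}$, the vanishing of both $O_{p+1}$ and $O'_{p+1}$ together with \eqref{eqn_obschangecobdry} forces $(Q-d_{\mathrm{DR}})\omega_{p+1}+\{I^{\mathrm{Tree}},\omega_{p+1}\}=0$, making $\omega_{p+1}$ a degree zero cocycle of the complex \eqref{eqn_obscomplex}. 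Conversely, if $\tilde{I}$ is a theory and $\omega_{p+1}$ is such a cocycle, \eqref{eqn_obschangecobdry} yields $O'_{p+1}=0$, so the translated lift is again a theory. Freeness and transitivity for the action restricted to cocycles is inherited from the corresponding statement at the pretheory level.

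The main obstacle I anticipate is verifying carefully the two-way dictionary between scale-$L$ vanishing and vanishing as a local functional at the level of the recursive identity \eqref{eqn_obsdef}; one must confirm that the combinatorics of $(p+1)$-trees really does force the distinguished vertex to be decorated by $O_{p+1}$, with no hidden contribution arising from $I^{\mathrm{Tree}}$. Apart from this, all remaining steps reduce to direct substitution into \eqref{eqn_obschangecobdry} and appeals to Proposition \ref{prop_prethyliftgrpact} and Theorem \ref{thm_obsclass}, with no new analytic input required.
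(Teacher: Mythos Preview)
Your proposal is correct and follows essentially the same approach as the paper: both arguments hinge on the two-way dictionary between $O_{p+1}=0$ as a local functional and $O_{p+1}[L]=0$ for all $L$ (via Proposition~\ref{prop_obsdef}\eqref{itm_obsdef3} and the structure of the $(p+1)$-tree sum in \eqref{eqn_obsdef}), together with the coboundary formula \eqref{eqn_obschangecobdry} and the free transitive action of $\mathcal{G}_{p+1}$ from Proposition~\ref{prop_prethyliftgrpact}. Your worry about hidden contributions from $I^{\mathrm{Tree}}$ is unfounded: by definition a $(p+1)$-tree has exactly one vertex of nonzero loop number, and since $I^{\mathrm{Tree}}$ has filtration level zero it cannot decorate that vertex, so the sum in \eqref{eqn_obsdef} is genuinely linear in $\zeta O_{p+1}$.
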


\begin{proof}
Obviously, if $I$ lifts to a level $(p+1)$ theory $\tilde{I}$, then the representative $O_{p+1}[L]$ for the obstruction that is defined by \eqref{eqn_obstruction} will be zero, and taking the limit as $L\to 0$ we conclude that $O_{p+1}$ vanishes.

Conversely, if the cohomology class vanishes then we may write
\[ O_{p+1} + (Q-d_{\mathrm{DR}})\omega_{p+1} + \bigl\{I^{\mathrm{Tree}},\omega_{p+1}\bigr\} = 0, \]
for some $\omega_{p+1}\in\mathcal{G}_{p+1}$. Replacing the level $(p+1)$ pretheory $\tilde{I}$ used to define $O_{p+1}$ with the level $(p+1)$ pretheory $\tilde{I}'$ defined by Equation \eqref{eqn_intactchange}, our new obstruction $O'_{p+1}$ associated to this pretheory, which is given by Equation \eqref{eqn_obschangecobdry}, will vanish. It then follows from Equation \eqref{eqn_obsdef} that $O'_{p+1}[L]$ vanishes for all $L$; that is, $\tilde{I}'[L]$ satisfies the scale $L$ quantum master equation modulo $F_{p+2}$ for all $L>0$. This proves \eqref{itm_obsvanish1}.

The same arguments apply to prove \eqref{itm_obsvanish2}. We know by Theorem \ref{thm_locinteffthy} and Proposition \ref{prop_prethyliftgrpact} that $\mathcal{G}_{p+1}$ acts freely and transitively on the fiber of level $(p+1)$ pretheories that sit over $I$ according to the formula \eqref{eqn_intactchange}. Equation \eqref{eqn_obschangecobdry} describes how the representative for the obstruction changes under this action, and a pretheory is a theory if and only if this representative vanishes (as a cochain).
\end{proof}

\subsection{Independence of the choice of gauge}

One immediate and straightforward application of the obstruction theory that we have developed in the preceding section is to prove the independence of a theory in the BV-formalism on the choice of gauge-fixing condition, cf. \cite[\S 5.11.2]{CosEffThy}. This is expressed in the language of simplicial homotopy theory as the statement that the space of theories forms a fibration over the space of gauge-fixing conditions.

Recall from Section \ref{sec_simplicialhtopthy} that taking our commutative algebra $\mathcal{A}$ to be the de Rham algebra on an $n$-simplex $\Delta_n$, the collection of all noncommutative theories on a given free BV-theory will form a simplicial set that is fibered over the simplicial set of gauge-fixing operators.

\begin{theorem}
Given any free BV-theory $\mathcal{E}$ and local tree-level interaction
\[ I^{\mathrm{Tree}}\in\intnoncommLItree{\mathcal{E}} \]
satisfying the classical master equation
\[ QI^{\mathrm{Tree}} + \frac{1}{2}\{I^{\mathrm{Tree}},I^{\mathrm{Tree}}\} = 0, \]
the simplicial set of theories in the noncommutative BV-formalism that have fixed local tree-level interaction $I^{\mathrm{Tree}}$---that is, that satisfy Equation \eqref{eqn_treethylimit}---forms a Kan fibration over the simplicial set of gauge-fixing operators.
\end{theorem}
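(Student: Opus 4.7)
The plan is to verify the Kan lifting property directly. Given a family $Q^{\mathrm{GF}}$ of gauge-fixing operators parameterized by $\Delta^n$ and a noncommutative theory $I_\partial$ defined over the horn $\Lambda^n_k$ that is compatible with the restriction of $Q^{\mathrm{GF}}$ and has tree-level interaction $I^{\mathrm{Tree}}$, I would produce a theory $I$ on the full simplex extending $I_\partial$. The construction proceeds by induction on the level $p$, using the filtration of Definition \ref{def_NCfiltration}. For the base case $p=0$, Proposition \ref{prop_treeCME} together with Theorem \ref{thm_locinteffthy} shows that the tree-level theory is uniquely produced from $I^{\mathrm{Tree}}$ by the renormalization procedure $I \mapsto I^R$; since $I^{\mathrm{Tree}}$ lives over $\gf$, the induced tree-level theories over $\dRham{\Delta^n}$ and over $\dRham{\Lambda^n_k}$ automatically agree on restriction.

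For the inductive step, assume we have constructed a level $p$ theory $I^{(p)}$ on $\Delta^n$ restricting to $I_\partial$ modulo $F_{p+1}$ on the horn. First I would extend $I^{(p)}$ to some level $(p+1)$ \emph{pretheory} $\tilde{I}^{(p+1)}$ on $\Delta^n$ whose restriction to $\Lambda^n_k$ matches $I_\partial$ modulo $F_{p+2}$. By Theorem \ref{thm_locinteffthy} this reduces to extending a local interaction class from $\dRham{\Lambda^n_k}$-coefficients to $\dRham{\Delta^n}$-coefficients; the surjectivity of the restriction map on smooth differential forms, combined with the coefficient-wise description of local operators, makes this possible. Next, apply the obstruction theory developed in Section \ref{sec_obsdef}: by Proposition \ref{prop_obsdef} and Theorem \ref{thm_obsclass} the pretheory $\tilde{I}^{(p+1)}$ determines a cocycle
\[ O_{p+1} \in F_{p+1}\intnoncommLP{\mathcal{E},\dRham{\Delta^n}}/F_{p+2}\intnoncommLP{\mathcal{E},\dRham{\Delta^n}} \]
with respect to the differential $Q - d_{\mathrm{DR}} + \{I^{\mathrm{Tree}},-\}$. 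Since $I_\partial$ was already a theory on the horn, the representative $O_{p+1}$ vanishes \emph{as a cochain} upon restriction to $\Lambda^n_k$, so it lies in the subcomplex of functionals whose de Rham coefficients vanish on the horn.

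The key computation is that this subcomplex is acyclic. The inclusion $\Lambda^n_k \hookrightarrow \Delta^n$ is a deformation retract, hence the standard pullback-along-retraction homotopy $h$ contracts the relative de Rham complex of forms on $\Delta^n$ vanishing on $\Lambda^n_k$. Tensoring $h$ with the identity on the complex $F_{p+1}\intnoncommLP{\mathcal{E}}/F_{p+2}\intnoncommLP{\mathcal{E}}$, equipped with the internal differential $Q + \{I^{\mathrm{Tree}},-\}$, produces via the standard formula for tensor-product homotopies a cochain $\omega_{p+1}$ satisfying
\[ (Q - d_{\mathrm{DR}} + \{I^{\mathrm{Tree}},-\})\omega_{p+1} = O_{p+1}, \qquad \omega_{p+1}\big|_{\Lambda^n_k}=0. \]
Applying Theorem \ref{thm_obsvanish} to modify $\tilde{I}^{(p+1)}$ by $-\omega_{p+1}$ then yields a level $(p+1)$ theory on $\Delta^n$; because $\omega_{p+1}$ vanishes on the horn, the restriction to $\Lambda^n_k$ is unchanged, preserving the given $I_\partial$. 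Passing to the inverse limit in $p$ produces the required theory on $\Delta^n$.

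The main obstacle will be ensuring that the primitive $\omega_{p+1}$ produced by the contracting homotopy can be chosen to lie in $\intnoncommLP$ rather than merely in $\intnoncommP$, that is, that the chain-level null-homotopy respects locality. The homotopy $h$ acts only on the coefficient algebra $\dRham{\Delta^n}$, which ought to preserve locality of the underlying differential operators coefficient-wise, but verifying its compatibility with pullback along the retraction---and confirming that the resulting $\omega_{p+1}$ genuinely defines a local interaction, so that Theorem \ref{thm_obsvanish} applies as stated---is the delicate point. This locality-preservation step is the noncommutative counterpart of the corresponding argument in \cite[\S 5.11.2]{CosEffThy}, and the same general strategy should carry over after the combinatorial replacements provided by Section \ref{sec_QuantumBV}.
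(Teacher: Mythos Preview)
Your approach is correct and reaches the same conclusion, but the execution differs from the paper's in an instructive way. The paper also inducts on the level $p$ and uses Theorems \ref{thm_locinteffthy}, \ref{thm_obsclass}, and \ref{thm_obsvanish}; however, it organizes the inductive step in the opposite order. Rather than first extending the pretheory to match $I_\partial$ on the horn and then finding a primitive for the obstruction in the \emph{relative} complex, the paper first observes that each face map $d_i$ is a quasi-isomorphism on the obstruction complex \eqref{eqn_obscomplex} (since $d_iO_{p+1}$ vanishes for $i\neq k$, the class $O_{p+1}$ itself must vanish), chooses an arbitrary lift $J(\!(p+1)\!)$ to a level $(p+1)$ theory, and then corrects the mismatch on the faces by invoking the Kan extension property of the simplicial abelian group of degree-zero cocycles in \eqref{eqn_obscomplex}. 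This two-step ``lift, then glue'' argument bypasses exactly the issue you flag at the end: because the correction term $\omega_{p+1}$ is obtained from the abstract fact that simplicial abelian groups are Kan complexes, no explicit contracting homotopy on $\dRham{\Delta^n}$ is ever applied, and hence there is nothing to check about whether such a homotopy preserves locality of functionals. Your route is more direct conceptually---a single correction in the relative complex---but it trades that directness for the locality verification you identify; the paper's route avoids that verification entirely at the cost of a slightly less transparent two-stage correction.
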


\begin{proof}
We recall that by Theorem \ref{thm_locinteffthy} we may identify pretheories with local interactions, which we will henceforth do for the purpose of this proof. Under this correspondence, the action of the group of degree zero cocycles of the complex \eqref{eqn_obscomplex} on level $(p+1)$ theories that is provided by Theorem \ref{thm_obsvanish}\eqref{itm_obsvanish2} corresponds, by Proposition \ref{prop_prethyliftgrpact}, to simply adding such a cocycle to the local interaction. Note that this group of cocycles forms a simplicial abelian group and hence must be a Kan complex, see Theorem 17.1 of \cite{MaySimplicial}. We will use this fact later in our proof.

Our starting data consists of:
\begin{itemize}
\item
A family of gauge-fixing operators
\[ Q^{\mathrm{GF}}:\mathcal{E}\to\mathcal{E}\cotimes\dRham{\Delta_n}. \]
\item
A sequence of theories
\[ \bigl[I^i\bigr]^{\mathrm{R}}\in\NCThy{\mathcal{E},\dRham{\Delta_{n-1}};d_iQ^{\mathrm{GF}}}, \quad 0\leq i\leq n, \ i\neq k; \]
represented by local interactions $I^i\in\intnoncommLI{\mathcal{E},\dRham{\Delta_{n-1}}}$ satisfying:
\begin{displaymath}
\begin{array}{ll}
d_i I^j = d_{j-1}I^i, &\text{for all }0\leq i<j\leq n\text{ with }i,j\neq k; \\
I^i_{[0]} = I^{\mathrm{Tree}}, &\text{for all }0\leq i\leq n\text{ with }i\neq k.
\end{array}
\end{displaymath}
\end{itemize}
To fill in the horn, we must construct a theory
\[ I^{\mathrm{R}}\in\NCThy{\mathcal{E},\dRham{\Delta_n};Q^{\mathrm{GF}}} \]
represented by a local interaction $I\in\intnoncommLI{\mathcal{E},\dRham{\Delta_n}}$ satisfying:
\begin{equation} \label{eqn_hornfill}
\begin{array}{ll}
d_i I = I^i, &\text{for all }0\leq i\leq n\text{ with }i\neq k; \\
I_{[0]} = I^{\mathrm{Tree}}.
\end{array}
\end{equation}

To do so, we construct by induction a sequence of local interactions
\[ I(\!(p)\!)\in\intnoncommLI{\mathcal{E},\dRham{\Delta_n}}/F_{p+1}\intnoncommLI{\mathcal{E},\dRham{\Delta_n}}, \quad p\geq 0 \]
satisfying the following conditions:
\begin{itemize}
\item
the initial interaction $I(\!(0)\!)$ is the interaction $I^{\mathrm{Tree}}$,
\item
the interaction $I(\!(p+1)\!)$ lifts $I(\!(p)\!)$ under the canonical projection map, for all $p\geq 0$;
\item
the level $p$ pretheory $I(\!(p)\!)^{\mathrm{R}}$ defined by the local interaction $I(\!(p)\!)$ is a theory; that is, it satisfies the quantum master equation;
\item
the equation
\begin{equation} \label{eqn_levelpface}
d_i I(\!(p)\!) = I^{i} \mod F_{p+1}
\end{equation}
holds for all $p\geq 0$ and all $0\leq i\leq n$ with $i\neq k$.
\end{itemize}
Using the completeness of the filtration, we may then find a local interaction $I$ that agrees with $I(\!(p)\!)$ modulo $F_{p+1}$ for all $p\geq 0$, and hence will satisfy the conditions required by \eqref{eqn_hornfill}.

To construct such a sequence, suppose that $I(\!(p)\!)$ has been constructed with the required properties. Since the interactions $I^i$ all define theories, it follows from Equation \eqref{eqn_levelpface} and Theorem \ref{thm_obsvanish}\eqref{itm_obsvanish1} that the obstruction cohomology class $d_i O_{p+1}$ associated to the theory defined by the interaction $d_i I(\!(p)\!)$ must vanish, where $O_{p+1}$ is the obstruction that is associated to the interaction $I(\!(p)\!)$. However, all the face maps $d_i$ defined on the complex \eqref{eqn_obscomplex} are quasi-isomorphisms, and hence the obstruction $O_{p+1}$ itself must vanish. From this it follows that the theory defined by $I(\!(p)\!)$ has a lift to a level $(p+1)$ theory, defined by some local interaction
\[ J(\!(p+1)\!)\in\intnoncommLI{\mathcal{E},\dRham{\Delta_n}}/F_{p+2}\intnoncommLI{\mathcal{E},\dRham{\Delta_n}} \]
that lifts $I(\!(p)\!)$.

Nonetheless, the gluing condition \eqref{eqn_levelpface} may not hold for this choice of interaction, so for $0\leq i\leq n$ with $i\neq k$, we set
\[ \omega_{p+1}^i := I^i - d_i J(\!(p+1)\!) \in F_{p+1}\intnoncommLP{\mathcal{E},\dRham{\Delta_{n-1}}}/F_{p+2}\intnoncommLP{\mathcal{E},\dRham{\Delta_{n-1}}}; \]
which must be a cocycle by Theorem \ref{thm_obsvanish}\eqref{itm_obsvanish2}. By the extension property of simplicial abelian groups, there is a cocycle
\[ \omega_{p+1}\in F_{p+1}\intnoncommLP{\mathcal{E},\dRham{\Delta_n}}/F_{p+2}\intnoncommLP{\mathcal{E},\dRham{\Delta_n}} \]
such that $d_i\omega_{p+1} = \omega_{p+1}^i$, for all $0\leq i\leq n$ with $i\neq k$. Now setting
\[ I(\!(p+1)\!) := J(\!(p+1)\!) + \omega_{p+1} \]
completes the inductive step.
\end{proof}

\section{Noncommutative Chern--Simons theory} \label{sec_NCCSTheory}

In Section \ref{sec_NCChernSimonsCME} we briefly described how to define an analogue of Chern-Simons theory in our framework of noncommutative geometry. This subject was also considered in \cite[\S 6]{NCRGF}. There it was shown that this noncommutative analogue yields $U(N)$ Chern-Simons theory at all ranks $N$ under the large $N$ correspondence defined by Diagram \eqref{dig_largeN}, at least as far as \emph{pretheories} are concerned; the issue of quantization in the BV-formalism was not considered there. We take up this problem in this section.

Since the complexification of the real Lie algebra of $N$-by-$N$ skew Hermitian matrices is $\mathfrak{gl}_N(\mathbb{C})$, Chern-Simons theory for the gauge group $U(N)$ is the same as that for the group $GL_N(\mathbb{C})$, see \cite[\S 6.1.2]{NCRGF}. We adopt the latter point of view for the remainder of this section, and hence work over the ground field $\mathbb{C}$.

In this section we will not consider families of theories or gauge-fixing conditions. Hence our commutative algebra will be the ground field $\mathcal{A}:=\mathbb{C}$.

\subsection{A noncommutative analogue of Chern-Simons theory}

The analogue of Chern-Simons theory that we will construct in this section will be very simple-minded. The noncommutative geometry will arise as a result of treating the Chern-Simons interactions through open string diagrams; that is, using ribbon graphs rather than ordinary graphs. This is in contrast to those approaches, cf. \cite{SussCS}, that deform the Chern-Simons interaction using a Moyal star product; which is what is more typically understood when the term ``noncommutative field theory'' is employed \cite{NoteonNCCSThy}. Instead, our treatment is more in keeping with the approach of \cite{CosTCFT} and Witten's treatment of Chern-Simons theory in \cite{WitCSstring}.

\subsubsection{Noncommutative Chern-Simons theory} For the remainder of the paper, we will work over a compact oriented Riemannian three-manifold $M$, although the results of this section remain true for any manifold of odd dimension at least three, providing that we are willing to exchange a $\mathbb{Z}$-grading for a $\mathbb{Z}/2\mathbb{Z}$-grading.

\begin{defi}
The free noncommutative Chern-Simons theory on $M$ is the theory defined by Example \ref{exm_NCCStheory} for the Frobenius algebra $\textgoth{A}:=\mathbb{C}$, and hence has space of fields
\[ \mathcal{E}^{\mathrm{NC}} := \Sigma\dRham{M,\mathbb{C}}. \]
As in Example \ref{exm_CStheory}, the operator $Q$ will be the exterior derivative. The Riemannian metric defines the Hodge adjoint of $Q$, which is a gauge-fixing operator that we denote by $Q^{\mathrm{GF}}$.

A local tree-level interaction $I^{\mathrm{NC}}\in\intnoncommLItree{\mathcal{E}^{\mathrm{NC}}}$ satisfying the classical master equation \eqref{eqn_CMEinteraction} may be defined by Equation \eqref{eqn_CSinteraction}, where of course now the matrices are just scalars. Applying the maps from Diagram \eqref{dig_largeN} to this interaction defines local interactions
\[ I^{U(N)} := \sigma_{\gamma,\nu}\Morita[\mat{N}{\mathbb{C}}](I^{\mathrm{NC}}) \in \intcommLI{\mathcal{E}^{U(N)}}, \]
where $\mathcal{E}^{U(N)}:=\Sigma\dRham{M,\mathbb{C}}\otimes\mathfrak{gl}_N(\mathbb{C})$ is the space of fields for $U(N)$ Chern-Simons theory. Using the formula from Example \ref{exm_matrixOTFT}, we see that these are just the standard Chern-Simons interactions for the gauge group $U(N)$.
\end{defi}

\subsubsection{Large $N$ correspondence}

Since the interactions $I^{\mathrm{NC}}$ and $I^{U(N)}$ are related through the maps of diagram \eqref{dig_largeN}, the same will be true for the pretheories which they define through the formula of Definition \ref{def_intrenormalized};
\begin{equation} \label{eqn_CScorrespondence}
(I^{U(N)})^{\mathrm{R}}[L] = \sigma_{\gamma,\nu}\Morita[\mat{N}{\mathbb{C}}]\bigl((I^{\mathrm{NC}})^{\mathrm{R}}[L]\bigr), \quad\text{for all }N\geq 1;
\end{equation}
see Proposition 6.1 of \cite{NCRGF}. (For the sake of clarity, we emphasize that these pretheories define interactions in $\intcomm{\mathcal{E}^{U(N)}}$ and $\intnoncomm{\mathcal{E}^{\mathrm{NC}}}$; that is, they are not just tree-level pretheories.)

From this correspondence, we may draw conclusions about our noncommutative Chern-Simons theory using the vanishing criteria from Section \ref{sec_vanishcriteria}. One such result, which is Proposition 6.3 of \cite{NCRGF}, describes the vanishing of the counterterms.

\begin{prop}
If the Riemannian metric on $M$ is flat, then the counterterms
\[ (I^{\mathrm{NC}})^{\mathrm{CT}}(\varepsilon)\in\intnoncomm{\mathcal{E}^{\mathrm{NC}}} \]
for our noncommutative Chern-Simons theory vanish modulo constants; that is, for all $\varepsilon>0$,
\[ (I^{\mathrm{NC}})^{\mathrm{CT}}(\varepsilon)\in\mathbb{C}[[\gamma,\nu]]. \]
\end{prop}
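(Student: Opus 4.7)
The plan is to reduce the statement to the analogous result for commutative $U(N)$ Chern-Simons theory by leveraging the large $N$ correspondence \eqref{eqn_CScorrespondence} together with the vanishing criteria furnished by the Loday-Quillen-Tsygan Theorem (Theorem \ref{thm_vanishing} and Corollary \ref{cor_vanishingmodconstants}). The overall logical structure is: transport the counterterms through $\sigma_{\gamma,\nu}\circ\Morita[\mat{N}{\mathbb{C}}]$, invoke a known vanishing statement for commutative Chern-Simons in a flat metric, then pull the conclusion back to the noncommutative theory.

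First, I would verify the identity
\[
\sigma_{\gamma,\nu}\Morita[\mat{N}{\mathbb{C}}]\bigl((I^{\mathrm{NC}})^{\mathrm{CT}}(\varepsilon)\bigr) = (I^{U(N)})^{\mathrm{CT}}(\varepsilon), \quad\text{for all }N\geq 1.
\]
To see this, apply $\sigma_{\gamma,\nu}\Morita[\mat{N}{\mathbb{C}}]$ to the defining limit
\[
(I^{\mathrm{NC}})^{\mathrm{R}}[L] = \lim_{\varepsilon\to 0}W\bigl(I^{\mathrm{NC}}-(I^{\mathrm{NC}})^{\mathrm{CT}}(\varepsilon),P(\varepsilon,L)\bigr).
\]
By Theorem \ref{thm_flowOTFT} and Theorem \ref{thm_flowNCtoCom}, the maps $\Morita[\mat{N}{\mathbb{C}}]$ and $\sigma_{\gamma,\nu}$ intertwine the renormalization group flow with its commutative counterpart (with propagator $P_{\mat{N}{\mathbb{C}}}$), so the right-hand side becomes a renormalization of $I^{U(N)}$ with counterterm series $\sigma_{\gamma,\nu}\Morita[\mat{N}{\mathbb{C}}]((I^{\mathrm{NC}})^{\mathrm{CT}}(\varepsilon))$. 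The left-hand side, by \eqref{eqn_CScorrespondence}, equals $(I^{U(N)})^{\mathrm{R}}[L]$. Since $\sigma_{\gamma,\nu}\Morita[\mat{N}{\mathbb{C}}]$ preserves locality and the purely singular property, the uniqueness of the commutative counterterm series forces the claimed identity.

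Second, I would invoke Costello's result from \cite[\S 15]{CosBVrenormalization}, which establishes that on a flat compact Riemannian three-manifold the counterterms for $U(N)$ Chern-Simons theory vanish modulo constants; that is, $(I^{U(N)})^{\mathrm{CT}}(\varepsilon)\in\mathbb{C}[[\hbar]]$ for every $N\geq 1$ and every $\varepsilon>0$. Combined with the identity of the previous paragraph, this yields
\[
\sigma_{\gamma,\nu}\Morita[\mat{N}{\mathbb{C}}]\bigl((I^{\mathrm{NC}})^{\mathrm{CT}}(\varepsilon)\bigr)\in\mathbb{C}[[\hbar]], \quad\text{for all }N\geq 1.
\]
Finally, a modulo-constants variant of the vanishing criteria of Theorem \ref{thm_vanishing} (argued analogously to Corollary \ref{cor_vanishingmodconstants} by applying Theorem \ref{thm_vanishing} to the non-constant part) says that a functional in $\intnoncomm{\mathcal{E}^{\mathrm{NC}}}$ lies in $\mathbb{C}[[\gamma,\nu]]$ if and only if its image under $\sigma_{\gamma,\nu}\Morita[\mat{N}{\mathbb{C}}]$ lies in $\mathbb{C}[[\hbar]]$ for every $N\geq 1$. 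Applying this to $(I^{\mathrm{NC}})^{\mathrm{CT}}(\varepsilon)$ gives the conclusion.

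The main obstacle I anticipate is the rigorous justification of the first step: confirming that the counterterms correspond exactly, rather than merely up to a constant shift, under $\sigma_{\gamma,\nu}\Morita[\mat{N}{\mathbb{C}}]$. This hinges on showing that the image of a purely singular noncommutative counterterm series remains purely singular in the commutative renormalization scheme, so that the uniqueness clause of the commutative counterterm theorem applies verbatim. Everything else is a consequence of compatibility results already assembled in Section \ref{sec_RGFlow} and Section \ref{sec_QuantumBV}, together with the cited external input on ordinary Chern-Simons.
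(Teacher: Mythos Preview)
Your proposal is correct and follows essentially the same route that the paper indicates: the statement is quoted here as Proposition 6.3 of \cite{NCRGF} without an independent proof, and the surrounding text makes clear that the argument proceeds exactly as you outline---transport the counterterms through $\sigma_{\gamma,\nu}\circ\Morita[\mat{N}{\mathbb{C}}]$ via the large $N$ correspondence \eqref{eqn_CScorrespondence}, invoke Costello's flat-metric vanishing result for $U(N)$ Chern-Simons, and then apply the modulo-constants vanishing criterion (Corollary 5.3 of \cite{NCRGF}, cf.\ Corollary \ref{cor_vanishingmodconstants}). Your identification of the only delicate point---that the counterterm series correspond exactly under $\sigma_{\gamma,\nu}\circ\Morita[\mat{N}{\mathbb{C}}]$, which follows from uniqueness once one checks preservation of locality and pure singularity---is also the content of Proposition 6.1 of \cite{NCRGF}.
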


\subsection{Theories modulo constants}

We will use the correspondence \eqref{eqn_CScorrespondence} and Costello's results from \cite{CosBVrenormalization} to deduce that our noncommutative theory is quantizable. One curious feature of Costello's results is that they only hold modulo constants. Here we will explain what that means.

\begin{defi}
If $\mathcal{E}$ is a free BV-theory then we say that a functional $I\in\intnoncomm{\mathcal{E}}$ is \emph{constant} if
\[ I_{ijk} = 0, \quad\text{for all }k\geq 1. \]
We will denote the subspace of $\intnoncomm{\mathcal{E}}$ consisting of all constants simply by
\[ \intnoncommconstants\subset\mathbb{C}[[\gamma,\nu]], \]
since this is the space that we would recover from $\intnoncomm{\mathcal{E}}$ by setting $\mathcal{E}=0$. Likewise, $\intnoncommIconstants$ will denote the corresponding subspace of $\intnoncommI{\mathcal{E}}$.
\end{defi}

\subsubsection{The quantum master equation modulo constants}

Since the constants form a differential graded ideal of the differential graded Lie algebra that arises from Proposition \ref{prop_dglanoncomm}, the quantum master equation makes sense modulo constants.

\begin{defi}
If $\mathcal{E}$ is a free BV-theory with a gauge-fixing operator $Q^{\mathrm{GF}}$, then we will say that an interaction $I\in\intnoncommI{\mathcal{E}}/\intnoncommIconstants$ satisfies the scale $L$ quantum master equation \emph{modulo constants} if
\[ (Q+\Delta_L)I + \frac{1}{2}\{I,I\}_L \in \mathbb{C}[[\gamma,\nu]]. \]
\end{defi}

\subsubsection{The renormalization group flow modulo constants}

Since the only connected graphs that have a vertex of valency zero are corollas with no legs, it follows that the renormalization group flow is well-defined modulo constants and we hence have a well-defined map
\[ W(-,P):\intnoncommI{\mathcal{E}}/\intnoncommIconstants\longrightarrow\intnoncommI{\mathcal{E}}/\intnoncommIconstants, \]
defined for any free BV-theory $\mathcal{E}$ and propagator $P\in\mathcal{E}\cotimes\mathcal{E}$.

\begin{defi}
Let $\mathcal{E}$ be a free BV-theory with a gauge-fixing operator $Q^{\mathrm{GF}}$. We will say that a family of interactions
\[ I[L]\in\intnoncommI{\mathcal{E}}/\intnoncommIconstants, \quad L>0 \]
is a theory in the noncommutative BV formalism \emph{modulo constants} if:
\begin{itemize}
\item
the renormalization group equation \eqref{eqn_RGequation} holds,
\item
each $I_{ijkl}[L]$ is asymptotically local as $L\to 0$, and
\item
for all $L>0$, the interaction $I[L]$ satisfies the scale $L$ quantum master equation modulo constants.
\end{itemize}
\end{defi}

\subsubsection{The powerseries associated to a theory modulo constants} \label{sec_powerseries}

If $I[L]$ is a theory modulo constants, then Equation \eqref{eqn_QMEflow} of Theorem \ref{thm_QMEflow} and Equation \eqref{eqn_intgrpactzero} of Theorem \ref{thm_intgrpact} together imply that the powerseries
\[ (Q+\Delta_L)I[L] + \frac{1}{2}\{I[L],I[L]\}_L \in \mathbb{C}[[\gamma,\nu]] \]
does not depend on the length scale parameter $L$. Hence, we may canonically associate a powerseries to any theory modulo constants.

\begin{rem}
The notion of a theory modulo constants has an obvious commutative analogue for families of interactions taken from $\intcomm{\mathcal{E}}$. In this case, the powerseries assigned to such a theory will belong to the ring $\mathbb{C}[[\hbar]]$. From Proposition \ref{prop_dglanoncommtocomm} and \ref{prop_dglanoncommOTFT}, along with the formula from Example \ref{exm_matrixOTFT}, it then follows that the powerseries $p_{U(N)}(\hbar)$ for those commutative theories that are obtained from a noncommutative theory modulo constants through the large $N$ correspondence described by Diagram \eqref{dig_largeN}, may be calculated for all $N$ from the powerseries $p^{\mathrm{NC}}(\gamma,\nu)$ for the noncommutative theory by making the substitutions $\gamma=\hbar^2$ and $\nu=N\hbar$;
\[ p_{U(N)}(\hbar) = \hbar^{-1}p^{\mathrm{NC}}(\hbar^2,N\hbar), \quad\text{for all }N\geq 1. \]
\end{rem}

\subsection{Quantization of Chern-Simons in a flat metric}

We now deduce, as a consequence of our large $N$ correspondence, that our noncommutative Chern-Simons interaction $I^{\mathrm{NC}}$ defines a noncommutative theory modulo constants when the Riemannian metric on the manifold $M$ is flat.

\subsubsection{Results from $U(N)$ Chern-Simons theory}

We will deduce the above promised result from the corresponding result for $U(N)$ Chern-Simons theory, which was proved (for a general gauge group) in Theorem 15.1.3 of \cite[\S 15]{CosBVrenormalization}.

\begin{theorem} \label{thm_quantizeU(N)}
For a compact oriented flat Riemannian three-manifold $M$, the $U(N)$ Chern-Simons interaction $I^{U(N)}\in\intcommdelim{(\mathcal{E}^{U(N)})}$ defines a (commutative) theory modulo constants $(I^{U(N)})^{\mathrm{R}}$; that is, the interactions $(I^{U(N)})^{\mathrm{R}}[L]$ satisfy the scale $L$ quantum master equation modulo constants, for all $L>0$.
\end{theorem}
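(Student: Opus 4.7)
The result is Costello's Theorem 15.1.3 from \cite{CosBVrenormalization}, so my plan is to follow his strategy. The proof proceeds by obstruction theory, inductively constructing lifts along the filtration by loop number, with the key technical input being translation invariance on flat $\mathbb{R}^3$ and Kontsevich's analysis of configuration space integrals from \cite{KontFeyn}.

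First I would establish the base case at tree level. Since $I^{U(N)} = \sigma_{\gamma,\nu}\Morita[\mat{N}{\mathbb{C}}](I^{\mathrm{NC}})$ and $I^{\mathrm{NC}}$ satisfies the classical master equation (Example \ref{exm_NCCStheory}), Propositions \ref{prop_dglanoncommtocomm} and \ref{prop_dglanoncommOTFT} give that $I^{U(N)}$ itself satisfies the classical master equation; the commutative analogue of Proposition \ref{prop_treeCME} then yields that $(I^{U(N)})^{\mathrm{R}}$ is a tree-level theory. Next I would proceed inductively up the loop filtration. At each stage, the obstruction to lifting a level $p$ theory modulo constants to a level $(p+1)$ theory modulo constants lives in the degree-one cohomology of a complex of local functionals (the commutative analogue of \eqref{eqn_obscomplex}), taken modulo the ideal $\mathbb{C}[[\hbar]]\subset \intcomm{\mathcal{E}^{U(N)}}$. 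The goal at each step is to show this obstruction class lies in $\mathbb{C}[[\hbar]]$.

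The essential analytical input is flatness of the metric: the heat kernel is then translation invariant, so the scale $L$ obstruction \eqref{eqn_obstruction} can be expressed, up to local counterterms, as a limit of integrals over compactifications of configuration spaces of points in $\mathbb{R}^3$ built from Kontsevich's propagator. Translation invariance cuts down the obstruction to a finite-dimensional problem in translation-invariant local functionals of the gauge field of the correct ghost degree. Symmetry considerations---the antisymmetry of the cubic Chern-Simons vertex \eqref{eqn_CSinteraction}, together with the cyclic trace structure on $\mat{N}{\mathbb{C}}$---combined with degree counting in the configuration space integrals, force every potential obstruction into the subspace of constants.

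The main obstacle is the configuration space integral analysis: one must carefully match the effective field theoretic obstruction $O_{p+1}$ to Kontsevich's combinatorial framework, control the boundary contributions in the Fulton-MacPherson style compactifications, and verify that after renormalization by purely singular local counterterms the residual obstruction is represented by exactly the integrals to which Kontsevich's vanishing results apply. This matching---especially tracking the compactification strata where multiple points collide---is technically intricate and constitutes the bulk of Section 15 of \cite{CosBVrenormalization}, together with the background from \cite{KontFeyn}; it is precisely the step where all the delicate analytic work resides.
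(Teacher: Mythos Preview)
Your identification of the source is correct, and your sketch of Costello's argument is a reasonable high-level summary of the strategy in \cite[\S 15]{CosBVrenormalization}. However, you should be aware that the paper itself does \emph{not} prove this theorem: it is stated without proof and attributed directly to Theorem 15.1.3 of \cite{CosBVrenormalization} as an imported external result. The paper's only contribution here is to invoke Costello's theorem as a black box, then use it together with the large $N$ correspondence (Equation \eqref{eqn_CScorrespondence}) and the vanishing criterion of Corollary \ref{cor_vanishingmodconstants} to deduce the noncommutative analogue, Theorem \ref{thm_quantizeNC}.

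So while your outline of the obstruction-theoretic argument and the role of Kontsevich's configuration-space integrals is broadly faithful to Costello's approach, it goes well beyond what the paper does. If your aim is to match the paper, you need only cite the result; if your aim is to actually supply a proof, then the ``main obstacle'' you flag---matching the effective-field-theoretic obstruction to Kontsevich's framework and controlling boundary strata---is indeed where all the work lies, and your sketch does not resolve it but merely names it.
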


\subsubsection{Application of the large $N$ correspondence}

Since all of the $U(N)$ theories are quantizable, the same will be true of our noncommutative theory.

\begin{theorem} \label{thm_quantizeNC}
For a compact oriented flat Riemannian three-manifold $M$, the noncommutative Chern-Simons interaction $I^{\mathrm{NC}}\in\intnoncommdelim{(\mathcal{E}^{\mathrm{NC}})}$ defines a (noncommutative) theory modulo constants $(I^{\mathrm{NC}})^{\mathrm{R}}$.
\end{theorem}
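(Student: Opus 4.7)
The plan is to deduce the result directly from its $U(N)$ counterpart (Theorem \ref{thm_quantizeU(N)}) by invoking the vanishing criteria that the Loday-Quillen-Tsygan Theorem provides through Corollary \ref{cor_vanishingmodconstants}. The renormalized interaction $(I^{\mathrm{NC}})^{\mathrm{R}}[L]$ is a pretheory by Theorem \ref{thm_locinteffthy}, so the only thing that needs to be established in order to promote it to a theory modulo constants is that, for every $L>0$,
\[
O^{\mathrm{NC}}[L] := (Q+\Delta_L)(I^{\mathrm{NC}})^{\mathrm{R}}[L] + \tfrac{1}{2}\bigl\{(I^{\mathrm{NC}})^{\mathrm{R}}[L],(I^{\mathrm{NC}})^{\mathrm{R}}[L]\bigr\}_L
\]
lies in $\mathbb{C}[[\gamma,\nu]]$.

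The first step is to apply Corollary \ref{cor_vanishingmodconstants} with $\mathcal{A}=\mathbb{C}$ and $I=(I^{\mathrm{NC}})^{\mathrm{R}}[L]$. This reduces the problem to showing that, for every integer $N\geq 1$, the commutative interaction
\[
\bigl((I^{\mathrm{NC}})^{\mathrm{R}}[L]\bigr)^{\mat{N}{\mathbb{C}}} := \sigma_{\gamma,\nu}\Morita[\mat{N}{\mathbb{C}}]\bigl((I^{\mathrm{NC}})^{\mathrm{R}}[L]\bigr)
\]
satisfies the scale $L$ commutative QME modulo constants, that is, lies in $\mathbb{C}[[\hbar]]$ after applying $(Q+\hbar\Delta_L)(-)+\tfrac{1}{2}\{-,-\}_L$.

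The second step is to identify this interaction with the renormalized $U(N)$ Chern-Simons interaction. By Equation \eqref{eqn_CScorrespondence},
\[
\sigma_{\gamma,\nu}\Morita[\mat{N}{\mathbb{C}}]\bigl((I^{\mathrm{NC}})^{\mathrm{R}}[L]\bigr) = (I^{U(N)})^{\mathrm{R}}[L],
\]
so the required vanishing reduces to saying that each $(I^{U(N)})^{\mathrm{R}}[L]$ defines a commutative theory modulo constants; but this is exactly the content of Theorem \ref{thm_quantizeU(N)}. Combining the two reductions yields $O^{\mathrm{NC}}[L]\in\mathbb{C}[[\gamma,\nu]]$ for every $L>0$, completing the argument.

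There is essentially no obstacle in the proof itself — the work has already been done, once on the commutative side by Costello in \cite{CosBVrenormalization} (which supplies Theorem \ref{thm_quantizeU(N)}) and once in the preceding sections of the paper, where the maps $\sigma_{\gamma,\nu}$ and $\Morita$ were shown in Propositions \ref{prop_dglanoncommtocomm} and \ref{prop_dglanoncommOTFT} to intertwine the BV-structures, and where the Loday-Quillen-Tsygan Theorem was leveraged to give the necessary ``$N$-by-$N$ for all $N$'' detection criterion in Corollary \ref{cor_vanishingmodconstants}. The only mildly delicate point is bookkeeping: one should note that both the renormalization group flow and the obstruction $O^{\mathrm{NC}}[L]$ make sense modulo constants (since constants arise only from corollas with no legs, which cannot appear as building blocks in the propagator-connecting graphs that contribute to $W(-,P)$, $\Delta_L$, or $\{-,-\}_L$), so that all of the appeals to Theorems \ref{thm_flowNCtoCom}, \ref{thm_flowOTFT} and to the vanishing criterion remain valid once we pass to the quotient by $\mathscr{N}_{\gamma,\nu}$.
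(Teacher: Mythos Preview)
Your proof is correct and follows exactly the same route as the paper: reduce to the scale $L$ QME modulo constants, then combine Equation \eqref{eqn_CScorrespondence} with Theorem \ref{thm_quantizeU(N)} and invoke Corollary \ref{cor_vanishingmodconstants}. You have simply unpacked in more detail what the paper states in one sentence.
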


\begin{proof}
We need only show that the interactions $(I^{\mathrm{NC}})^{\mathrm{R}}[L]\in\intnoncommdelim{(\mathcal{E}^{\mathrm{NC}})}$ satisfy the scale $L$ quantum master equation modulo constants. This follows from the large $N$ correspondence determined by Equation \eqref{eqn_CScorrespondence}, as a simple corollary of Theorem \ref{thm_quantizeU(N)} and Corollary~ \ref{cor_vanishingmodconstants}.
\end{proof}

\subsubsection{Generalization to curved spacetimes}

We emphasize that stronger results than Theorem \ref{thm_quantizeNC} are possible. In \cite[\S 15]{CosBVrenormalization} Costello uses Theorem \ref{thm_quantizeU(N)} to prove that a canonical quantization for Chern-Simons theory on a curved spacetime exists, for a general gauge group. To do so however requires him to prove that theories in the BV formalism form a sheaf over the manifold $M$.

This invites us to employ the same approach to our noncommutative Chern-Simons theory, and prove that it has a canonical quantization on any curved spacetime. However, to introduce the extra technology required to carry this out would significantly lengthen the current article, and so we will postpone such a treatment to a future paper.

\appendix
\section{Topological vector spaces}

In this appendix we recall some basic definitions and facts about topological vector spaces that we use throughout the paper. Here we will be very brief, and refer the reader desiring additional details to \cite[App. A]{NCRGF}, or the original cited sources, such as \cite{Treves}.

Consider the space $\mathcal{E}:=\Gamma(M,E)$ of smooth sections of some vector bundle $E$ over a smooth manifold $M$.

\begin{defi} \label{def_Cinftopology}
For any open neighborhood $U$ of the zero section in the $i$th jet bundle $J^i(E)$ of $E$, and any compact subset $K$ of $M$, consider the subset of $\mathcal{E}$ defined by
\[ \mathscr{G}^i(K,U):=\{\gamma\in\mathcal{E}:\gamma^i(K)\subset U\}, \]
where $\gamma^i$ denotes the prolongation of $\gamma$ to $J^i(E)$. Allowing $K$, $U$ and $i$ to vary generates a basis of neighborhoods of zero for the $C^{\infty}$-topology on $\mathcal{E}$.
\end{defi}

With the $C^{\infty}$-topology, $\mathcal{E}$ is a nuclear Fr\'echet space. The completed projective tensor product of two such spaces may again be identified as the space of sections of the external tensor product.

\begin{prop} \label{prop_tensorsections}
There is a canonical isomorphism of topological vector spaces
\begin{displaymath}
\begin{array}{ccc}
\Gamma(M,E)\cotimes\Gamma(N,F) & \cong & \Gamma(M\times N,E\boxtimes F) \\
(\xi,\eta) & \longmapsto & \left[(x,y)\mapsto\xi(x)\otimes\eta(y)\right]
\end{array}
\end{displaymath}
\end{prop}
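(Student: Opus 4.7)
The plan is to construct the map explicitly, verify that it extends continuously to the completed projective tensor product, and then show this extension is a topological isomorphism by reducing to the scalar case on trivial bundles and invoking a classical theorem of Grothendieck.

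First, I would check that the bilinear assignment $(\xi,\eta)\mapsto[(x,y)\mapsto\xi(x)\otimes\eta(y)]$ lands in $\Gamma(M\times N,E\boxtimes F)$ and is jointly continuous in the $C^{\infty}$-topology. Continuity follows from the Leibniz rule: any seminorm on $\Gamma(M\times N,E\boxtimes F)$ controlling prolongations up to order $k$ over a compact product $K_1\times K_2$ is dominated by the product of the corresponding seminorms on $\Gamma(M,E)$ over $K_1$ and $\Gamma(N,F)$ over $K_2$. By the universal property of the projective tensor product, and since $\Gamma(M\times N,E\boxtimes F)$ is a (complete) nuclear Fr\'echet space, this induces a continuous linear map
\[ \Phi:\Gamma(M,E)\cotimes\Gamma(N,F)\to\Gamma(M\times N,E\boxtimes F). \]

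Second, I would show $\Phi$ is a topological isomorphism by reducing to the trivial scalar case. Every smooth vector bundle over a smooth manifold is a direct summand of a trivial bundle, so choose complementary bundles $E'$, $F'$ with $E\oplus E'\cong M\times\mathbb{R}^a$ and $F\oplus F'\cong N\times\mathbb{R}^b$. The map $\Phi$ is natural with respect to bundle homomorphisms, hence respects the induced direct-sum decompositions on both sides. Since complemented closed subspaces of Fr\'echet spaces behave well under the completed projective tensor product (functoriality of $\cotimes$ applied to continuous idempotents), it suffices to treat trivial bundles. Both sides of $\Phi$ are then canonically finite direct sums of copies of $\smooth{M}\cotimes\smooth{N}$ and $\smooth{M\times N}$ respectively, reducing the claim to the identification
\[ \smooth{M}\cotimes\smooth{N}\cong\smooth{M\times N}, \]
which is a classical theorem of Grothendieck on nuclear Fr\'echet spaces; see for instance Treves. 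A continuous bijection between Fr\'echet spaces is automatically a topological isomorphism by the open mapping theorem, so this completes the argument.

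The principal obstacle will be justifying the reduction to trivial bundles rigorously. Concretely, one must verify that if $p$ and $q$ are continuous idempotents on $\smooth{M}^a$ and $\smooth{N}^b$ whose images are identified with $\Gamma(M,E)$ and $\Gamma(N,F)$ respectively, then the image of $p\cotimes q$ inside $\smooth{M}^a\cotimes\smooth{N}^b$ coincides with $\Gamma(M,E)\cotimes\Gamma(N,F)$ under $\Phi$, and that $\Phi$ carries it onto the corresponding summand $\Gamma(M\times N,E\boxtimes F)$ of $\smooth{M\times N}^{ab}$. This is a routine but essential bookkeeping exercise, relying on the fact that complementary subspaces of Fr\'echet spaces are preserved by $\cotimes$.
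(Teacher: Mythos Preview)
Your proof is correct and follows the standard route. However, the paper does not actually supply a proof of this proposition: it is stated without argument in the appendix, with the reader referred to \cite[App.~A]{NCRGF} and to Tr\`eves \cite{Treves} for background. In other words, the paper treats this as a known result from the theory of nuclear Fr\'echet spaces rather than something to be established within the text.

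Your approach---reducing to trivial bundles via complementary summands and then invoking Grothendieck's theorem $\smooth{M}\cotimes\smooth{N}\cong\smooth{M\times N}$---is exactly the argument one finds in standard references (e.g.\ Tr\`eves, Theorem 51.6 and its corollaries), so in spirit you are reconstructing precisely the literature result the paper is citing. The bookkeeping you flag concerning idempotents and $\cotimes$ is indeed routine: the completed projective tensor product is functorial for continuous linear maps and preserves finite direct sums, so images of tensored idempotents behave as expected.
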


Hence, the resulting space will again be a nuclear Fr\'echet space---a fact which is occasionally useful.

Any nuclear Fr\'echet space is a Montel space, and hence reflexive; see Corollary 3 of Proposition 50.2 and the corollary of Proposition 36.9 in \cite{Treves}.

Given locally convex Hausdorff topological vector spaces $\mathcal{U}$ and $\mathcal{V}$, the two main topologies on $\Hom_{\gf}(\mathcal{U},\mathcal{V})$ are:
\begin{itemize}
\item
the \emph{weak} topology of pointwise convergence, and
\item
the \emph{strong} topology of uniform convergence on bounded sets.
\end{itemize}
The Banach-Steinhaus Theorem \cite[Ch. 33]{Treves} often allows us to conclude convergence in the latter from the former.

\begin{lemma} \label{lem_compositionconverge}
Let $\mathcal{U}$ and $\mathcal{V}$ be nuclear Fr\'echet spaces and $\mathcal{W}$ be a locally convex Hausdorff topological vector space and suppose that as $t\to 0$, we have weakly converging one-parameter families of linear operators;
\[ \phi_t:\mathcal{U}\to\mathcal{V},\quad\psi_t:\mathcal{V}\to\mathcal{W}; \qquad t\in (0,1). \]
Then the one-parameter family $\psi_t\circ\phi_t:\mathcal{U}\to\mathcal{W}$ converges strongly as $t\to 0$.
\end{lemma}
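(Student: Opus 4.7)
The plan is to extract two consequences from the Banach–Steinhaus theorem and then combine them using the Montel property. Since $\mathcal{U}$ and $\mathcal{V}$ are nuclear Fréchet, they are barreled, so Banach–Steinhaus applies to both families $\{\phi_t\}$ and $\{\psi_t\}$: pointwise (i.e.\ weak) convergence forces each family to be equicontinuous. Moreover, on a barreled space, an equicontinuous family that converges pointwise converges uniformly on every compact subset. Since $\mathcal{U}$ and $\mathcal{V}$ are also Montel (nuclear Fréchet spaces are Montel), bounded sets have compact closure; hence uniform convergence on compact subsets upgrades automatically to uniform convergence on bounded subsets. So the first step is to record that $\phi_t\to\phi_0$ strongly in $\Hom_{\gf}(\mathcal{U},\mathcal{V})$ and $\psi_t\to\psi_0$ strongly in $\Hom_{\gf}(\mathcal{V},\mathcal{W})$.

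Next, I would fix a bounded set $B\subset\mathcal{U}$ and a neighborhood $W$ of $0$ in $\mathcal{W}$ and show that $\psi_t\phi_t(u)-\psi_0\phi_0(u)\in W$ for all $u\in B$ whenever $t$ is small enough. Write the splitting
\[ \psi_t\phi_t(u)-\psi_0\phi_0(u) = \psi_t\bigl(\phi_t(u)-\phi_0(u)\bigr) + \bigl(\psi_t-\psi_0\bigr)\phi_0(u). \]
For the first term, I would use equicontinuity of $\{\psi_t\}$ to pick a neighborhood $V$ of $0$ in $\mathcal{V}$ with $\psi_t(V)\subset \tfrac{1}{2}W$ for every $t$, and then use the strong convergence $\phi_t\to\phi_0$ on $B$ to force $\phi_t(u)-\phi_0(u)\in V$ uniformly in $u\in B$ for $t$ small. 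For the second term, I would note that $\phi_0(B)$ is bounded in $\mathcal{V}$ and therefore relatively compact by the Montel property, so uniform convergence of $\psi_t-\psi_0\to 0$ on this compact set handles the term uniformly in $u\in B$.

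I do not expect any serious obstacle: the only subtle point is making sure that the ``strong'' convergence $\phi_t\to\phi_0$ really is available from the hypothesis of weak convergence, which is exactly the content of the Banach–Steinhaus argument combined with the Montel property, as indicated in the paragraph immediately preceding the lemma. Once both operator families converge strongly and the equicontinuity of $\{\psi_t\}$ has been recorded, the splitting above gives the result in one line.
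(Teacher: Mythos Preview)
Your argument is correct and is exactly the approach the paper intends: the paper does not give a proof of this lemma, but the sentence immediately preceding it points to the Banach--Steinhaus theorem as the mechanism for upgrading weak to strong convergence, and your use of equicontinuity together with the Montel property of nuclear Fr\'echet spaces fills in precisely those details.
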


Another property we make use of follows from Proposition 50.7 of \cite{Treves}.

\begin{prop} \label{prop_tensordual}
Given any two nuclear Fr\'echet spaces $\mathcal{U}$ and $\mathcal{V}$,
\[ (\mathcal{U}\cotimes\mathcal{V})^{\dag} = \mathcal{U}^\dag\cotimes\mathcal{V}^{\dag}; \]
\end{prop}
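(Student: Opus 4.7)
The plan is to invoke the standard duality theorem for tensor products of nuclear Fréchet spaces, which is essentially Proposition 50.7 of Treves. The forward direction is easy: there is a canonical continuous bilinear pairing $\mathcal{U}^{\dag}\times\mathcal{V}^{\dag}\to(\mathcal{U}\cotimes\mathcal{V})^{\dag}$ sending $(\phi,\psi)$ to the functional $u\otimes v\mapsto \phi(u)\psi(v)$, and by the universal property of the completed projective tensor product this extends to a continuous linear map
\[ \Phi:\mathcal{U}^{\dag}\cotimes\mathcal{V}^{\dag}\to(\mathcal{U}\cotimes\mathcal{V})^{\dag}. \]
The real content is that $\Phi$ is a topological isomorphism when both sides are equipped with the strong topology.

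First I would verify that the domain and codomain have compatible topological-vector-space structures. Since $\mathcal{U}$ and $\mathcal{V}$ are nuclear Fréchet, so is $\mathcal{U}\cotimes\mathcal{V}$ by Proposition~50.1 of Treves; in particular it is reflexive and Montel. Consequently its strong dual $(\mathcal{U}\cotimes\mathcal{V})^{\dag}$ is a (DF)-space, and the same holds separately for $\mathcal{U}^{\dag}$ and $\mathcal{V}^{\dag}$. This places us squarely in the setting where Grothendieck's theory of nuclear spaces gives precise duality statements.

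Next I would exploit nuclearity to reduce to the injective (epsilon) tensor product: for nuclear spaces, the projective and injective tensor products agree, so $\mathcal{U}^{\dag}\cotimes\mathcal{V}^{\dag}$ coincides with $\mathcal{U}^{\dag}\,\widehat{\otimes}_{\varepsilon}\,\mathcal{V}^{\dag}$. The epsilon tensor product of two strong duals of nuclear Fréchet spaces is canonically identified with the space of continuous linear maps from $\mathcal{U}$ (with its original topology) into $\mathcal{V}^{\dag}$, or equivalently with the space of continuous bilinear forms on $\mathcal{U}\times\mathcal{V}$ that are hypocontinuous with respect to equicontinuous sets. Since $\mathcal{U}$ and $\mathcal{V}$ are Fréchet, separate continuity together with equicontinuity forces joint continuity, and such forms are precisely continuous linear functionals on $\mathcal{U}\cotimes\mathcal{V}$. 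Tracking topologies, this identification matches the strong topology on each side, which shows $\Phi$ is a bijection and a homeomorphism.

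The hard part will not be any individual step but rather being careful about which topology sits on which space and invoking the correct version of Grothendieck's result; the cleanest route is simply to cite Proposition~50.7 of Treves, which packages all of this for nuclear Fréchet spaces. The nuclearity hypothesis is essential—without it the projective and injective tensor products separate, and one obtains at best an inclusion of $\mathcal{U}^{\dag}\cotimes\mathcal{V}^{\dag}$ into $(\mathcal{U}\cotimes\mathcal{V})^{\dag}$, not an equality.
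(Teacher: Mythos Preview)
Your proposal is correct and aligns with the paper's approach: the paper does not give an independent proof but simply states that the result follows from Proposition~50.7 of Tr\`eves, which is exactly the reference you invoke. Your additional sketch of the mechanism behind that proposition (reduction to the $\varepsilon$-tensor product via nuclearity and identification with separately continuous bilinear forms) is accurate and goes beyond what the paper records, but the underlying route is the same.
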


Finally, we make use of the following fact throughout the paper.

\begin{prop}
If $\mathcal{U}$ is a nuclear Fr\'echet space and $\mathcal{V}$ is a complete locally convex Hausdorff topological vector space then
\[ \mathcal{U}^{\dag}\cotimes\mathcal{V} = \Hom_{\gf}(\mathcal{U},\mathcal{V}). \]
\end{prop}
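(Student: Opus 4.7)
The plan is to identify this as a standard consequence of Grothendieck's representation theorem for operators on nuclear Fréchet spaces, and to structure the proof by constructing the canonical map, extending it by continuity, and then invoking the representation theorem to exhibit its inverse.

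First, I would define the canonical bilinear map
\[ B:\mathcal{U}^{\dag}\times\mathcal{V}\to\Hom_{\gf}(\mathcal{U},\mathcal{V}), \qquad B(\phi,v)(u):=\phi(u)\cdot v, \]
which sends a pair $(\phi,v)$ to the associated rank-one operator. I would equip $\Hom_{\gf}(\mathcal{U},\mathcal{V})$ with the strong topology of uniform convergence on bounded sets of $\mathcal{U}$, as specified in the notational conventions of the paper. Using that bounded subsets of $\mathcal{U}^{\dag}$ are equicontinuous (since $\mathcal{U}$ is a Fréchet, hence barrelled, space) together with the continuity of the seminorms of $\mathcal{V}$, one checks that $B$ is jointly continuous. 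By the universal property of the projective tensor product, $B$ therefore extends to a continuous linear map $\tilde B:\mathcal{U}^{\dag}\otimes_\pi\mathcal{V}\to\Hom_{\gf}(\mathcal{U},\mathcal{V})$.

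Next, I would extend $\tilde B$ to the completion. Since $\mathcal{U}$ is barrelled and $\mathcal{V}$ is complete, the strong dual $\Hom_{\gf}(\mathcal{U},\mathcal{V})$ is itself complete (this is the standard Bourbaki/Treves criterion for completeness of spaces of continuous linear maps). Thus $\tilde B$ extends uniquely to a continuous linear map
\[ \tilde B:\mathcal{U}^{\dag}\cotimes\mathcal{V}\longrightarrow\Hom_{\gf}(\mathcal{U},\mathcal{V}). \]

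The main step, and the main technical obstacle, is to identify $\tilde B$ as an isomorphism of topological vector spaces. Surjectivity is where nuclearity enters essentially: Grothendieck's structure theorem for operators out of a nuclear Fréchet space asserts that every $T\in\Hom_{\gf}(\mathcal{U},\mathcal{V})$ admits a nuclear representation
\[ T(u)=\sum_{i=1}^{\infty}\lambda_i\,\phi_i(u)\,v_i, \qquad \sum_i|\lambda_i|<\infty, \]
with $(\phi_i)$ equicontinuous in $\mathcal{U}^{\dag}$ and $(v_i)$ a bounded null sequence in $\mathcal{V}$; this is precisely the content of Proposition 50.5 (and the discussion of the $\varepsilon$-$\pi$ identity) in Treves. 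Such a representation corresponds to an element $\sum_i\lambda_i\,\phi_i\otimes v_i\in\mathcal{U}^{\dag}\cotimes\mathcal{V}$, whose image under $\tilde B$ is $T$. Injectivity is routine: if $\tilde B(\xi)=0$, the representation of $\xi$ as an absolutely convergent series combined with the fact that $\mathcal{U}^{\dag}$ separates the points of $\mathcal{U}$ and $\mathcal{V}$ is Hausdorff forces $\xi=0$. Finally, that $\tilde B$ is a topological isomorphism (not merely a continuous bijection) follows by matching seminorms directly: the projective tensor seminorm defined by a continuous seminorm on $\mathcal{U}^{\dag}$ (obtained from a bounded set of $\mathcal{U}$) and one on $\mathcal{V}$ coincides, up to the usual infimum over representations, with the seminorm of uniform convergence on that bounded set.

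The main obstacle is genuinely the surjectivity clause: everything else is formal bookkeeping of continuity, completeness, and separation, whereas the fact that \emph{every} continuous operator out of a nuclear Fréchet space is nuclear requires the full strength of the theory of nuclear spaces. Since this is a well-documented standard result, I would ultimately present the argument as a citation to the appropriate proposition in Treves rather than reprove it from scratch, using the first two paragraphs above to fix notation and make the identification explicit.
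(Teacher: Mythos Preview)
Your proposal is correct and aligns with the paper's treatment: the paper does not give a proof of this proposition at all, but simply records it as a standard fact in the appendix with a blanket reference to Tr\`eves, and your argument is precisely the standard one culminating in the citation to Tr\`eves that the paper implicitly intends. Your write-up is more detailed than anything the paper provides, so there is nothing further to compare.
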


We emphasize that when we apply this identification throughout the text, we make use of the Koszul sign rule $(f\otimes v)[u]:=(-1)^{|v||u|}f(u)v$.

\end{document}